\newcommand{\C}{\mathcal{C}}
\newcommand{\Z}{\mathbb{Z}}
\newcommand{\F}{\mathbb{F}}
\newcommand{\Inndiag}{\mathrm {Inndiag}}
\def\skipa{\vspace{-1.5mm} & \vspace{-1.5mm} & \vspace{-1.5mm}\\}
\newlength{\standardunitlength}
\newtheorem{prop}{Proposition}[section]
\newtheorem{lemma}[prop]{Lemma}
\newtheorem{cor}[prop]{Corollary}
\newtheorem{theorem}[prop]{Theorem}
\newcommand{\aut}{\mathrm Aut}
\begin{document}

\begin{center} \title [Bounds on conjugacy classes] {\bf Bounds on the
number and sizes of conjugacy classes in finite Chevalley groups with
Applications to Derangements}
\end{center}

\author{Jason Fulman}
\address{University of Southern California \\  Los Angeles, CA 90089-2532}
\email{fulman@usc.edu}

\author{Robert Guralnick}
\address{University of Southern California \\
Los Angeles, CA 90089-2532}
\email{guralnic@usc.edu}

\keywords{number of conjugacy classes, simple group, Chevalley groups,
partition, derangements, generating function}

\subjclass[2000]{20G40, 20B15}

\date{January 23, 2009}

\thanks{Fulman was partially supported by National Science Foundation grants DMS
0503901, DMS 0802082, and National Security Agency grants MDA904-03-1-004,
H98230-08-1-0133. Guralnick was partially supported by National Science
Foundation grants DMS 0140578 and DMS 0653873}

\begin{abstract} We present explicit upper bounds for the number and
size of conjugacy classes in finite Chevalley groups and their variations.
These results
have been used by many authors to study zeta functions associated to
representations
of finite simple groups,  random walks on Chevalley groups,
 the final solution to
the Ore conjecture about commutators in finite simple groups
and other similar problems.
In this paper, we solve a strong version of the Boston-Shalev conjecture on
derangements in simple groups for most of the families of
primitive permutation
group representations of finite simple groups
 (the remaining cases are settled in
two other papers of the authors and applications are given in a third).
\end{abstract}

\maketitle

\section{Introduction} \label{intro}

One might expect that there is nothing more to be done with the study of
conjugacy classes of finite Chevalley groups. For instance over forty
years ago Wall \cite{W} determined the conjugacy classes and their sizes
for the unitary, symplectic, and orthogonal groups.  However the
formulas involved are complicated and it is not automatic to derive
upper bounds on numbers of classes or their sizes. Moreover many
applications seem to require such bounds (in particular, universal explicit
bounds of the form $cq^r$ where $r$ is the rank of the ambient algebraic
group and $q$ is the size of the field of definition). To convince the reader of
this, we mention some places in the literature where bounds on the
number of conjugacy classes in finite classical groups  were
needed:

\begin{enumerate}
\item The work of Gluck \cite{Gl} on convergence rates
of random walks on finite classical groups. His bounds were of the form
$c q^{3r}$.
\item The work of Liebeck and Pyber \cite{LP} on number of
conjugacy classes in arbitrary groups; for finite groups of Lie type their
bound was $(6q)^r$.
\item
The work of Maslen and Rockmore \cite{MR} on computations of Fourier
transforms; they obtained a bound of $q^n$ for $GL(n,q)$ and $8.26 q^n$
for $U(n,q)$. These bounds are of the type we prove here, namely $c q^r$
where $c$ is explicit.
\item Liebeck and Shalev \cite{LS1} have used bounds in the current paper
to study probabilistic results about homomorphisms of certain Fuchsian
groups into Chevalley groups, and random walks on Chevalley groups
\cite{LS2}. Shalev used these results in a crucial way to study the images
of word maps \cite{Sh}. Our bounds were also critical in the solution of
the Ore conjecture on commutators in finite simple groups \cite{LOST}.
\item Our results have been used in studying various versions of Brauer's
$k(GV)$ problem  \cite{GT} -- in particular, the noncomprime version and some new
related conjectures of Geoff Robinson   \cite{R}.  They were used
in \cite{GR} in obtaining new results about the commuting probability
in finite groups.  In particular,
these results will be useful in improving results of Liebeck-Pyber \cite{LP}
and Mar\'oti \cite{Ma} about the number of conjugacy classes in completely
reducible linear groups over finite fields and in permutation groups.
\end{enumerate}

We also use our results to prove a large part
 of  a conjecture of Boston et. al.
\cite{Bo} and Shalev stating that the proportion of fixed point free
elements of a finite simple group in a transitive action on a finite set
$X$ with $|X|>1$ is bounded away from zero.   This immediately reduces to
the case of primitive actions (and so to studying maximal subgroups of
simple groups).   This conjecture has applications to random
generation of groups \cite{FG4} and to maps between varieties over finite
fields \cite{GW}.  In fact, we prove much stronger results for many
actions of almost simple groups in this paper as a consequence of our
bounds on class numbers and centralizer sizes.  See \cite{FG1} where the
bounded rank case was handled. The remaining cases are treated in
\cite{FG2, FG3}.

We now state some of the main results of the paper.  If $G$ is a finite group,
we let $k(G)$ denote the number of conjugacy classes of $G$.   See
\cite{GLS3} for background on Chevalley groups.

\begin{theorem} \label{A}  Let $G$ be a connected
simple algebraic group of rank $r$ over a field of positive
characteristic. Let $F$ be a Steinberg-Lang endomorphism of $G$ with $G^F$
a finite Chevalley group over the field  $\F_q$.
\begin{enumerate}
\item   $q^r < k(G^F) \le 27.2q^r$.   
\item   $k(G^F) \le q^r + 68q^{r-1}$.  In particular,          
$lim_{q \rightarrow \infty} k(G^F)/q^r =1$ and the convergence is uniform
with respect to $r$.
\item The number of conjugacy classes of $G^F$ that are not semisimple
is at most $68q^{r-1}$.
\item The number of conjugacy classes of $G$ that are $F$-stable
is between $q^r$ and $27.2 q^r$.
\end{enumerate}
\end{theorem}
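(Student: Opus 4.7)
The plan is to use Jordan decomposition to reduce the problem to counting semisimple classes in $G$ and unipotent classes in centralizers. I would first treat the simply connected case, where Steinberg's theorem gives that the number of $F$-stable semisimple classes in $G$ equals exactly $q^r$. Since $G$ simply connected forces every semisimple centralizer $C_G(s)$ to be connected, Lang's theorem yields that each $F$-stable semisimple class of $G$ produces exactly one $G^F$-class of semisimple elements, and the standard Jordan decomposition identity
$$k(G^F) \;=\; \sum_{[s]} k_u\bigl(C_G(s)^F\bigr)$$
holds, where the sum runs over $F$-stable semisimple classes $[s]$ of $G$ with $G^F$-representative $s$ and $k_u(H^F)$ denotes the number of unipotent classes of $H^F$. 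The lower bound $q^r<k(G^F)$ in (1) and the lower bound in (4) follow immediately from this. Non-simply-connected cases would be handled by passing to the simply connected cover and controlling the fusion/splitting of classes through the component groups of centralizers, which contributes only an absolute factor depending on the Dynkin type.

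Next, I would obtain a uniform bound on $k_u(H^F)$ for $H$ a connected reductive subgroup of rank at most $r$. In good characteristic the unipotent classes of $H^F$ are in bijection with the algebraic unipotent classes of $H$, a finite set determined by the root datum; in bad characteristic there is a small, still bounded, correction. For classical $H$ the count can be controlled by a partition generating function, while for exceptional types the bound comes from the known tables of unipotent classes. This gives a uniform constant-times-1 bound on $k_u$, so combined with the $q^r$ semisimple classes one already sees $k(G^F)=O(q^r)$.

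For the sharper estimates in (2) and (3), I would split the Jordan sum according to whether $s$ is regular. Regular semisimple classes have centralizer a maximal torus and contribute exactly one unipotent class. Non-regular $F$-stable semisimple classes lie on a codimension-one subvariety of the set of semisimple classes, so their number is $O(q^{r-1})$; an explicit enumeration of the possible pseudo-Levi centralizer types, combined with the unipotent bound above, yields the constant $68$. The non-semisimple class count in (3) is exactly the contribution from those $s$ whose centralizer contains a nontrivial unipotent class, i.e., from non-regular $s$, which is the same bound. Part (4) is obtained by running the analogous Jordan decomposition inside the algebraic group $G$, replacing $k_u(C_G(s)^F)$ by the number of $F$-stable unipotent classes of $C_G(s)$, which is bounded in the same way.

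The main obstacle is pinning down the explicit constants $27.2$ and $68$ uniformly across all Lie types, both isogeny classes (simply connected versus adjoint and intermediate), and both good and bad characteristic. The combinatorial bookkeeping for classical groups, where the $F$-stable semisimple classes and their centralizer types are indexed by partition-valued data and the estimates must be assembled type by type via generating functions, is where the delicate work sits; for the exceptional types one falls back on a case check against the published tables of unipotent and pseudo-Levi centralizer structure.
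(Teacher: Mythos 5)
Your reduction of (3) to (2) and of (4) to (1), and your use of Steinberg's count of $q^r$ semisimple classes for the lower bounds, match the paper exactly. But the core of your upper-bound argument contains a genuine gap: the claim that $k_u(H^F)$ admits ``a uniform constant-times-1 bound'' for connected reductive $H$ of rank at most $r$ is false. The unipotent classes of $GL(n,q)$ are indexed by partitions of $n$, so $k_u(GL(n,q)^F)=p(n)$, and similarly $k_u(Sp(2n,q))$ and $k_u(O^{\pm}(2n,q))$ grow like the partition function; none of these is bounded independently of the rank. Consequently the step ``combined with the $q^r$ semisimple classes one already sees $k(G^F)=O(q^r)$'' does not follow: the Jordan sum $\sum_{[s]} k_u(C_G(s)^F)$ cannot be estimated by (number of semisimple classes) $\times$ (uniform bound on $k_u$). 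The same problem infects your treatment of the non-regular classes: a central semisimple element already contributes $p(n)$ unipotent classes, so weighting the $O(q^{r-1})$ non-regular classes by ``the unipotent bound above'' does not give $68q^{r-1}$ by any enumeration of pseudo-Levi types alone. The whole difficulty of the theorem, and the reason the result is nontrivial, is precisely that the subexponential growth of the unipotent class counts must be offset by the rarity of semisimple classes with large centralizers, and this trade-off has to be controlled quantitatively and uniformly in $r$.

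The paper handles this by working with generating functions that encode the full Jordan sum at once: Feit--Fine's identity $\sum_n k(GL(n,q))t^n=\prod_i \frac{1-t^i}{1-qt^i}$, Wall's generating functions for the unitary, symplectic and orthogonal groups, Andrews' resolution of the Lusztig--Macdonald--Wall conjecture in characteristic $2$, and Macdonald's formulas relating $k(SL)$, $k(PGL)$, $k(PSL)$ to $k(GL)$ (and their unitary analogues). The analytic device is to factor each generating function as $\prod_i\frac{1-t^i}{1-qt^i}$ times a series with nonnegative coefficients, bound the first factor's coefficients by $q^{n-m}$, and evaluate the second at $t=1/q$; this is what produces the explicit constants $27.2$ and $68$ uniformly in $r$, after passing between isogeny types via Gallagher's lemma and using L\"ubeck's tables for the exceptional groups. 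If you want to salvage your Jordan-decomposition outline, you would in effect have to reprove these generating-function identities, since they are exactly the bookkeeping of $\sum_{[s]} k_u(C_G(s)^F)$ that your sketch defers to ``delicate work.''
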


There are much better bounds on the constants in Theorem \ref{A} for many
of the families. The correct upper bound for part 1 is about $15.2 q^r$
(for $Sp(2n,2)$).   We give limiting values for $k(G^F)/{q^r}$ (as $r
\rightarrow \infty$ with $q$ fixed) for each of the  families of classical
groups.  In particular, we see that this ratio does not tend  to $1$ for
$q$ fixed. See the tables  in Section \ref{exceptional} for a summary of
the results. There are precise formulas for the number of conjugacy
classes for the exceptional Chevalley groups -- see \cite{Lu} (and
similarly, one can work out such formulas for the low rank classical
groups).   As we have already noted earlier in the introduction, the
existence of a $C(q)$ that depends on $r$ has already been proved (and
this is straightforward). In fact, using the results of Lusztig and others
about unipotent classes, it is easy to prove that $k(G^F)$ is bounded
above by a monic polynomial in $q$ of degree $r$ (independently of $q$),
whence for a fixed $r$, it follows  that $k(G^F)/q^r \rightarrow 1$
as $q \rightarrow \infty$.  One of the  key features of our result is that our
bounds are independent of the rank of the group, and many of the applications
depend on this.

We show that one can get a similar bound for almost simple
Chevalley groups allowing all types of outer automorphisms.

\begin{cor} \label{B} Let $G$ be an almost simple
group with socle $S$, a Chevalley group of rank $r$ defined over
$\F_q$.
\begin{enumerate}
\item $k(S) \le 15.2 q^r$.
\item  $k(S) \le q^r + 30q^{r-1}$.
\item  $k(G) \le  100 q^r$.
\end{enumerate}
\end{cor}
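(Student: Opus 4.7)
The plan is to deduce the three parts from Theorem~\ref{A} by combining a quotient argument (from the algebraic-fixed-point group down to the simple quotient $S$) with a coset-by-coset analysis (from $S$ up to an almost simple extension $G$).

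For parts (1) and (2), let $\tilde G = G_{\mathrm{sc}}^F$ be the finite group obtained from the simply connected cover of the ambient algebraic group; then $S$ is a quotient of $\tilde G$ by the central subgroup $Z(\tilde G)$, and since homomorphic images do not increase the number of conjugacy classes, $k(S) \le k(\tilde G)$. Theorem~\ref{A} then gives $k(\tilde G) \le 27.2\,q^r$ and $k(\tilde G) \le q^r + 68\,q^{r-1}$. The sharper constants $15.2$ and $30$ claimed in the corollary come from the family-by-family analysis advertised immediately after Theorem~\ref{A}: in each family the upper bound on $k(\tilde G)$ is in fact at most $15.2\,q^r$ (the worst case being $Sp(2n,2)$), and at most $q^r + 30\,q^{r-1}$.

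For part (3), decompose $G$ into cosets of $S$: each coset $S\sigma$ with $\sigma \in G/S \hookrightarrow \mathrm{Out}(S)$ contributes at most as many $G$-classes as there are $\sigma$-twisted $S$-conjugacy classes, so
\[
k(G) \;\le\; k(S) \;+\; \sum_{\sigma \neq 1} k_\sigma(S).
\]
The identity coset contributes $\le 15.2\,q^r$ by part (1). For a nontrivial $\sigma$, lift $\sigma$ to an algebraic automorphism commuting with $F$ and apply a Steinberg--Lang-type identity to bound the number of twisted classes by $k$ of the fixed-point subgroup, itself a Chevalley group to which Theorem~\ref{A} again applies. Concretely, diagonal and graph cosets produce Chevalley groups of the same rank over $\F_q$ (possibly of twisted type), each contributing $O(q^r)$, whereas a field-automorphism coset of order $e$ produces a Chevalley group over $\F_{q^{1/e}}$, contributing only $O(q^{r/e})$. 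The crucial point is that although $|G/S|$ can grow with $f = \log_p q$, only a bounded number of cosets (the diagonal and graph ones) contribute at the full $q^r$ scale, while the field-automorphism cosets together contribute $\sum_{1 < e\,\mid\,f} \phi(e)\,q^{r/e} = O(q^{r/2}\log q)$, which is dwarfed by $q^r$.

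The main obstacle is the numerical bookkeeping needed to keep the aggregate constant below $100$ uniformly in $r$, $q$, type, and isogeny. This demands case analysis across the classical and exceptional families, careful handling of the disconnectedness of centralizers of outer automorphisms (which brings in $H^1$-corrections to the twisted class count), and separate treatment of a handful of small-rank or small-field groups where the loose constants in Theorem~\ref{A} leave too little room to absorb the outer contributions.
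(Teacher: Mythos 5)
Your overall strategy is the same as the paper's. For parts (1) and (2) the paper likewise passes to the simply connected group $\tilde G=G_{\mathrm{sc}}^F$ (this is Corollary \ref{simple1}, extracted from the proof of Theorem \ref{A} family by family), and $k(S)\le k(\tilde G)$ since $S$ is a central quotient; your reduction is exactly that, and the remaining work is indeed only the numerical tightening from $27.2$ and $68$ to $15.2$ and $30$, which the paper also leaves as ``keeping track of the constants.'' For part (3) the paper also splits $G$ over cosets and bounds each outer coset by the number of stable classes via Lemma \ref{weak shintani}, with Shintani descent showing that field-automorphism cosets contribute only $O(q^{r/e})$; that is the same mechanism you describe for killing the $\log q$ coming from $[G:G\cap\Inndiag(S)]$.

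There is, however, one concrete error in your accounting for part (3): the claim that ``only a bounded number of cosets (the diagonal and graph ones) contribute at the full $q^r$ scale.'' The diagonal outer automorphism group is \emph{not} bounded: for $S=\PSL(n,q)$ or $\PSU(n,q)$ it has order $\gcd(n,q\mp 1)$, which grows with $r$. If each of these $d$ cosets really contributed $\Theta(q^r)$ classes, your sum would be $\Theta(dq^r)$ and the bound $100q^r$ would fail for large $n$. The argument survives only because in type A each diagonal coset contributes roughly $q^r/d$ classes, not $q^r$: the paper gets this from Corollary \ref{same} (the semisimple classes of $\Inndiag(S)$ are \emph{equidistributed} over the cosets of $S$) combined with the Macdonald-formula bounds of Corollaries \ref{PGLbetween} and \ref{ucor}, which bound $k(H)$ for any $S\le H\le\Inndiag(S)$ by $(q^{r}+5q^{r-1})/[\Inndiag(S):H]$, so that the whole inner-diagonal block contributes $q^r+O(q^{r-1})$ in total regardless of $d$. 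You need to insert this equidistribution step (or, equivalently, observe that $k(S)$ itself is only about $q^r/d$ in type A) before your coset-by-coset sum closes; as written, the per-coset bound ``$O(q^r)$ each, boundedly many'' is false in the one family where the bookkeeping is most delicate.
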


Recall that a permutation is called a derangement if it has no fixed
points. If $G$ is a transitive group on a finite set $\Omega$, define
$\delta(G,\Omega)$ to be proportion of derangements in $G$. By an old
theorem of Jordan, it follows that $\delta(G,\Omega) > 0$ if $|\Omega|>1$.
By a much more recent (but still elementary) theorem \cite{CC},
$\delta(G,\Omega) \ge 1/|\Omega|$ for $|\Omega| > 1$.  See Serre \cite{Se}
for many applications of Jordan's theorem.  See \cite{GW} for applications
of better bounds of $\delta(G,\Omega)$ and bounds on derangements in a
given coset.  If $\Omega$ is the coset space $G/H$, we write
$\delta(G,\Omega)$ as $\delta(G,H)$.

\begin{theorem} \label{C} Let $G$ be a classical Chevalley group
defined over $\F_q$ of rank $r$. Let $H$ be a maximal subgroup of $G$ that
acts irreducibly and primitively on the natural module and  does not
preserve an extension field structure. Then there is a universal constant $\delta >
0$ such that $\delta(G,H) > \delta$.  Moreover, $\delta(G,H) \rightarrow
1$ as $r \rightarrow \infty$.
\end{theorem}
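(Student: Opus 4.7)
My plan is to combine Aschbacher's classification of the maximal subgroups of a finite classical group with the basic identity
$$1-\delta(G,H) \;=\; \frac{1}{|G|}\Big|\bigcup_{g\in G} gHg^{-1}\Big| \;\le\; \sum_{C:\,C\cap H\neq\emptyset} \frac{1}{|C_G(g_C)|},$$
the sum running over $G$-conjugacy classes $C$ that meet $H$, with representatives $g_C$. Under the hypotheses, $H$ cannot lie in the Aschbacher classes $\C_1$, $\C_2$, or $\C_3$, so $H$ belongs to one of $\C_4,\C_5,\C_6,\C_7,\C_8$ or to the class $\mathcal{S}$ of almost simple, irreducibly embedded subgroups. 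I treat each class separately. The role of Theorem~\ref{A} is to keep $k(G)$ bounded by $27.2q^r$, while Corollary~\ref{B} bounds $k(H)$ when $H$ is almost simple.

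For the geometric classes $\C_4,\C_5,\C_7,\C_8$, every $h\in H$ has characteristic polynomial on the natural module confined to a thin subset of all monic polynomials of degree $n$: self-dual for $\C_8$, with coefficients in a proper subfield for $\C_5$, or a product polynomial adapted to the tensor decomposition for $\C_4$ and $\C_7$. Since the characteristic polynomial is a class invariant for regular semisimple elements, the number of $G$-classes meeting $H$ is bounded by the size of this thin subset, which is $O(q^{(1-\epsilon)r})$ for an explicit $\epsilon>0$. Combined with the generic lower bound $|C_G(g_C)| \geq q^r - 1$ coming from a cyclic maximal torus, this gives a non-derangement contribution of $O(q^{-\epsilon r})$. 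Class $\C_6$ is easier: $|H|$ is polynomially bounded in $q^r$, and absolute irreducibility forces $|C_G(h)|$ to be much larger than $|H|$ for noncentral $h$, so $\sum_{h\in H} 1/|C_G(h)|$ decays trivially.

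For class $\mathcal{S}$ I would invoke Liebeck's universal bound $|H|\le q^{3r}$ (for absolutely irreducible almost simple subgroups not lying in $\C_3$) together with Corollary~\ref{B} applied to the socle of $H$, giving $k(H) \le c\,q^{r_H}$ where $r_H$ is the rank of the socle. One then obtains
$$1-\delta(G,H)\;\le\;\frac{k(H)}{\min\{|C_G(h)| : 1\ne h\in H\}} \,+\, O(1/|G|),$$
with the minimum centralizer controlled from below by the irreducibility hypothesis. The main obstacle will be handling the subcase of $\mathcal{S}$ in which the socle of $H$ is itself a Chevalley group of moderate rank, where both $k(H)$ and the possible centralizers $|C_G(h)|$ can be close to the critical threshold and more delicate embedding information is needed. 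I expect the strategy above to capture the generic contributions, with the sharpest remaining subcases deferred to the companion papers~\cite{FG2,FG3}, as the authors indicate in the introduction.
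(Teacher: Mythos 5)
Your overall skeleton --- Aschbacher's classification, bounding the number of $G$-classes that meet $H$, and dividing by a minimal centralizer size --- is exactly the paper's strategy (Theorem \ref{stronger} and the lemmas of Section \ref{boston-shalev}). But several steps as written do not go through. First, for the geometric classes your count of $G$-classes meeting $H$ via characteristic polynomials only controls the regular semisimple classes; the many classes sharing a given characteristic polynomial (unipotent and mixed classes) are not bounded by your ``thin subset'' of polynomials, and you never return to them. The paper avoids this entirely by using the trivial bound that the number of $G$-classes meeting $H$ is at most $k(H)$, together with $k(M) < Cq^{(r+1)/2}$ for $M \in \C_i(G)$, $i>3$ (Lemma \ref{tool2}, which follows from Corollary \ref{B} and the structure of these subgroups). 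Second, your asserted lower bound $|C_G(g_C)| \ge q^r - 1$ is false as a bound over all classes meeting $H$: the minimum centralizer size in a classical group of rank $r$ necessarily carries a factor like $1/(1+\log_q r)$ (see the remark after Theorem \ref{GLsmallcent}: the relevant infinite product vanishes), and establishing the correct bound $|C_S(x)| \ge q^{r-1}/(A(1+\log_q r))$ (Theorem \ref{D}) occupies all of Section \ref{mincent}. The logarithmic loss is harmless for the application, but the bound you quote cannot be proved.

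Third, and most seriously, your treatment of the class $\mathcal{S}$ is not a proof. The companion papers \cite{FG2, FG3} treat the families $\C_1$, $\C_2$, $\C_3$ (reducible, imprimitive, and extension-field subgroups), which are excluded by the hypotheses of Theorem \ref{C}; they do not treat $\mathcal{S}$, so the ``sharpest remaining subcases'' you propose to defer are precisely the ones this theorem must handle. The paper does so by splitting $\mathcal{S}$ into four subclasses according to the socle (alternating or sporadic; cross-characteristic Chevalley; defining characteristic with non-restricted representation; defining characteristic with restricted representation) and using minimal-degree bounds for projective representations to force the socle to have rank or degree $O(r^{1/2})$ (apart from the natural permutation module for $A_m$, handled separately), whence $k(M) = O(q^{3r^{1/2}})$ or better; this quadratic gap between the rank of the socle of $H$ and $r$ is exactly what disposes of the ``critical threshold'' you worry about. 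Finally, all of your estimates are of the form $O(q^{-\epsilon r})$ for $r$ large, which yields the limit statement but not a universal $\delta>0$ for bounded $r$; the paper supplies that case separately via Theorem \ref{bstronger}, using \cite{LMS} to bound the number of classes of maximal subgroups by $c(r) + 2r\log\log q$ and then letting $q \rightarrow \infty$.
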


The case where $r$   is bounded (including the case of exceptional groups)
was dealt with in \cite{FG1}.   In that case, the first statement
about the existence of $\delta$ is the same.  The second statement is
valid if and only if $H$ does not contain a maximal torus.
We give another proof here.  In fact, we
prove a much stronger result than Theorem \ref{C}. See Theorems \ref{bstronger} and \ref{stronger}.
We also prove some results about derangements in cosets of simple groups.
See Section \ref{boston-shalev}.

The three remaining families of maximal subgroups (reducible subgroups --
including parabolic subgroups, groups preserving an extension field
structure and imprimitive groups) are dealt with in \cite{FG2, FG3}.

The Boston-Shalev conjecture was proved for alternating and symmetric
groups in \cite{LuP}.   See also \cite{D}, \cite{DFG}, and \cite{FG3}.

Two other results of interest that we prove and use in the
preceding result are:

\begin{theorem}  \label{DD}  Let $G$ be a connected
simple algebraic group of rank $r$  of adjoint type over a field of positive
characteristic. Let $F$ be a Steinberg-Lang endomorphism of $G$ with $G^F$
a finite Chevalley group over the field $\F_q$.   There is an absolute constant $A$ such
that for all $x \in G^F$,
 $$
 |C_{G^F}(x)| >  \frac{q^{r}}{A (1 + \log_q r)}.
 $$
\end{theorem}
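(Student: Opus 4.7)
The plan is to combine Jordan decomposition with structural properties of centralizers, reducing the bound to a lower estimate on the orders of maximal tori of $G^F$. Write $x = su = us$ with $s$ semisimple and $u$ unipotent, and set $L = C_G(s)^\circ$. Then $L$ is a connected $F$-stable reductive subgroup of $G$ of the same rank $r$; since $G$ is adjoint, the finite group $C_G(s)/L$ has order bounded by the fundamental group of the root system, a constant depending only on the Dynkin type of $G$. So up to this constant it suffices to lower-bound $|(C_L(u)^\circ)^F|$.

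Apply the Levi decomposition $C_L(u)^\circ = V \cdot M$ with $V$ the unipotent radical and $M$ a reductive part. The Lang--Steinberg theorem gives $|V^F| = q^{\dim V}$, while the standard order formula for the connected reductive group $M$ (of rank $r_M := \mathrm{rank}(M)$) yields $|M^F| \geq q^{\dim M - r_M} \cdot |T^F|$ for any $F$-stable maximal torus $T$ of $M$. By Steinberg's theorem on centralizers of unipotent elements, $\dim C_L(u) \geq \mathrm{rank}(L) = r$, so
$$
|(C_L(u)^\circ)^F| \geq q^{r - r_M} \cdot |T^F|.
$$
When $r_M = 0$ this already gives $|C_{G^F}(x)| \geq q^r$ up to a constant; otherwise the bound reduces to the torus estimate $|T^F| \geq q^{r_M}/(A(1 + \log_q r_M))$ for the relevant $T$ inside $G$.

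The key step is therefore the torus estimate. Tori of $G^F$ are parametrized by $F$-twisted $W$-conjugacy classes $[w]$, with $|T_w^F|$ a product of evaluated cyclotomic polynomials determined by the characteristic polynomial of $wF$ on the character lattice. For classical types this reduces to $|T_w^F| = \prod_i (q^{a_i} \pm 1)$ indexed by a (signed) partition $(a_i)$ of the rank. The critical constraint is that $T$ must actually arise as the reductive part of $C_G(x)^\circ$ for some $x \in G^F$; in the regular semisimple case this forces at most $q - 1$ of the parts $a_i$ to equal $1$, since $\F_q^*$ has only $q - 1$ elements and one needs distinct eigenvalues. Optimizing $\prod(q^{a_i}-1)$ under this constraint forces some $a_i$ to be of size $\Theta(\log_q r)$, and a direct calculation shows $\min |T^F|/q^{r_M}$ is $\Theta(1/(1+\log_q r_M))$.

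The main obstacle is carrying out this torus/partition optimization uniformly across all types, including twisted forms (such as unitary groups, which require signed partitions), and completing a finite case-check for exceptional types. The logarithmic factor is genuinely necessary: it is realized, up to the absolute constant $A$, by regular semisimple elements whose centralizing torus has many small cycle parts, a configuration available precisely when $q$ is small compared to $r$; for example, elements of $\mathrm{PGL}_n(q)$ whose maximal torus corresponds to a partition dominated by parts of size $\lceil \log_q n \rceil$.
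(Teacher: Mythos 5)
Your overall architecture---Jordan decomposition, stripping off the unipotent radical via Lang--Steinberg, the bound $\dim C_G(x)\ge r$, and an optimization over the orders of the tori that can occur, driven by the scarcity of low-degree irreducible polynomials---is close in spirit to the paper's proof. The paper works from Wall's explicit centralizer formulas (multiplicative over irreducible factors of the characteristic polynomial), uses a partition-rearrangement lemma to reduce each factor to the one-row case, handles the $z\pm1$ part in types $B$, $C$, $D$ by precisely your kind of algebraic-group dimension argument ($\dim C_U(g)\ge r$ for $U$ the unipotent radical of a Borel normalized by $g$), and then carries out the same optimization $\prod_d(1-q^{-d})^{N(q;d)}$ using $\sum_{d\mid m}dN(q;d)=q^m-1$ and $(1-1/s)^s\ge e^{-(1+1/s)}$. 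So you have correctly located the source of the logarithm.

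There is, however, a genuine gap in your reduction. The inequality $|M^F|\ge q^{\dim M-r_M}|T^F|$ fails for some $F$-stable maximal tori (already for $M=\SL(2,q)$ with $T$ nonsplit, $q(q^2-1)<q^2(q+1)$), and, more damagingly, the torus estimate $|T^F|\ge q^{r_M}/(A(1+\log_q r_M))$ that you then invoke is false for many maximal tori of $M$: if $M$ contains a factor of type $A_{m-1}$ with $m$ large compared to $q$ (e.g.\ $M$ the reductive part of the centralizer of a semisimple element of $PGL(n,q)$ with two eigenvalues of large multiplicity), its split maximal torus has $|T^F|=(q-1)^m=q^m(1-1/q)^m$, which decays exponentially in $m$. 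The constraint you invoke to exclude bad tori---that $T$ must arise as the reductive part of $C_G(x)^\circ$---applies to $M$ itself, not to a maximal torus of $M$, so it does not rescue the estimate; as written, your chain of inequalities breaks at this step for such elements even though the theorem holds for them trivially. The repair is to split $M$ into its derived group and its connected center: the derived group contributes at least a universal constant times $q^{\dim M'}$ because its invariant degrees are all at least $2$, while the polynomial-counting constraint (at most roughly $q^d/d$ irreducible polynomials of degree $d$, whence $\prod_d(1-q^{-d})^{N(q;d)}\gg1/(1+\log_q r)$) applies to the connected center of $C_G(s)$, whose $\F_q$-structure is dictated by the distinct irreducible factors of the characteristic polynomial of $s$. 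Even with that repair, the central quantitative claim---that the constrained optimization yields $\Theta(1/(1+\log_q r))$ uniformly across all classical types, twisted forms included, together with the half-integral powers of $q$ arising from the $z\pm1$ part---is asserted rather than proved, and it is exactly where the paper expends its effort.
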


See Section \ref{mincent} for bounds of the form in Theorem \ref{DD}, with
explicit constants in all cases.
The result holds for the finite simple Chevalley
groups as well except that if $G = PSL(n,q)$ or $PSU(n,q)$,    then $q^{n-1}$
needs to be replaced by $q^{n-2}$.   The result also holds for orthogonal
groups except that in even dimension, $q^r$ needs to be replaced
by $2q^{r-1}$ (but only for elements outside $SO$).

\begin{theorem}  \label{E} Let $G$ be a finite simple Chevalley group
defined over $\F_q$ of rank $r$ with $q$ a power of the prime $p$.
The number of conjugacy classes of maximal subgroups of
$G$ is at most  $Ar(r + r^{1/2}p^{3r^{1/2}} + \log \log q)$
for some constant $A$.
\end{theorem}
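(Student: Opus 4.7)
The plan is to apply Aschbacher's structure theorem for maximal subgroups of finite classical groups, together with the Liebeck--Seitz classification in exceptional rank, and to bound the number of $G$-conjugacy classes contributed by each class separately. Aschbacher partitions the maximal subgroups of a classical $G$ into the geometric classes $\C_1,\ldots,\C_8$ and an almost-simple class $\mathcal{S}$. In each of the geometric classes, the $G$-conjugacy classes are parameterized by discrete data of modest combinatorial complexity: a subspace dimension or flag type for $\C_1$, a decomposition type for $\C_2$, a prime divisor of the natural degree $n$ for $\C_3$ (and the analogous divisor data for $\C_4,\C_7$), a prime divisor of $\log_p q$ for $\C_5$, a fixed extraspecial prime for $\C_6$, and a form type for $\C_8$. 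Elementary summation produces a bound of the form $A'(r^2 + r\log\log q)$ for these classes, which accounts for two of the three summands in the asserted inequality; in exceptional rank the Liebeck--Seitz tables give $O(1)$ classes per family, which is absorbed.

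The work then reduces to the class $\mathcal{S}$, where the socle $L$ of $H$ acts absolutely irreducibly on the natural module of $G$. I would split into two subcases. For cross-characteristic $L$, Liebeck's inequality $|H|\le q^{3n}$ together with the Landazuri--Seitz--Zalesski lower bounds on the smallest faithful representations of simple groups of Lie type (and the analogous elementary bounds for alternating and sporadic groups) forces $|L|$ to be polynomially bounded in $r$. The number of such isomorphism types is then polynomial in $r$, and each contributes at most $k(L)\le \mathrm{poly}(r)$ irreducible $\F_q L$-modules of the right dimension, so this case contributes $O(\mathrm{poly}(r))$, absorbed into the $Ar^2$ term.

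The hard part is the defining-characteristic case, where $L=L_0(p^a)$ for an algebraic simple $L_0$ of rank $s$ with $a\mid \log_p q$. By Steinberg's tensor product theorem, every irreducible $L$-module factors as $\bigotimes_{i=0}^{a-1} L(\lambda_i)^{[p^i]}$ with the $\lambda_i$ restricted dominant weights, and the total dimension is the product $\prod_i \dim L(\lambda_i)$. Since $|L|$ grows roughly like $p^{a\cdot s^2}$, Liebeck's inequality $|H|\le q^{3n}$ forces $s \le C\sqrt r$, leaving only $O(\sqrt r)$ root-system types to consider. For each, the number of restricted dominant weights is at most $p^s$, and a careful count of Steinberg tuples of dimension at most $n$, together with the $O(\log\log q)$ choices of $a$, produces a bound of the shape $r^{1/2} p^{3\sqrt r}$, giving the final summand. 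The main obstacle will be verifying that Premet--Jantzen dimension bounds on restricted Weyl modules genuinely absorb the restricted-weight count into $p^{Cs}$ uniformly in $L_0$, and that dualities, Galois conjugations, and diagonal/graph outer automorphisms of $L$ do not inflate the exponent beyond $3\sqrt r$ in $p$.
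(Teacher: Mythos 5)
Your outline follows essentially the same route as the paper: Aschbacher's theorem (plus the bounded-rank result of Liebeck--Martin--Shalev for the exceptional groups), the bound $8r\log r + r\log\log q$ for the geometric classes $\C_1$--$\C_8$, and a split of $\mathcal{S}$ into cross-characteristic versus defining-characteristic socles, with the $r^{1/2}p^{3r^{1/2}}$ term arising exactly as you say, from the rank bound $s\le 3r^{1/2}$ and the count of restricted weights in the defining-characteristic restricted case. Two intermediate claims are wrong as stated, though neither damages the final bound. First, in cross characteristic $|L|$ is not polynomially bounded in $r$ (only quasipolynomially); what the paper actually uses is that the number of isomorphism types of $L$ is $O(r)$ and that $k(\widehat{L})=O(r)$, via the minimal-degree tables and Corollary~\ref{B}, giving $O(r^3)$ classes. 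Second, the alternating groups do \emph{not} contribute $O(\mathrm{poly}(r))$: excluding the natural permutation module one has $m\le 3r^{1/2}$, but the number of embeddings of $A_m$ is controlled by $k(\widehat{A_m})=e^{\Theta(m^{1/2})}=e^{\Theta(r^{1/4})}$, which is superpolynomial; it is nevertheless absorbed into the $r^{3/2}p^{3r^{1/2}}$ term, and the natural permutation module (where $m\sim 2r$) must be treated separately since there only $O(1)$ classes of subgroups occur. The paper isolates these as a separate subclass $\mathcal{S}_1$ precisely for this reason.
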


In fact, we conjecture that the $r^{1/2}p^{3r^{1/2}}$ term can be removed
above.    If $r$ is bounded,  a much stronger result is given in
\cite{LMS}.

The organization of this paper is as follows. Section \ref{outer} studies
the number of conjugacy classes in a given coset of a normal subgroup.
In particular, we give a very short proof (Lemma \ref{weak shintani})
of a generalization of results in \cite{BW, I}  on the distribution of conjugacy
classes over cosets of some normal subgroup.
Section \ref{boundnumber} obtains explicit and sharp upper bounds and
asymptotics for the number of conjugacy classes in finite classical groups
(some of these were announced in the survey \cite{FG1}). This mostly
involves a careful analysis of Wall's generating functions for class
numbers, but we do obtain new generating functions for groups such as
$SO^{\pm}(2n,q)$ and $\Omega^{\pm}(2n,q)$  with $q$ odd.
 Section \ref{exceptional} tabulates some of the results
from previous sections and summarizes corresponding results for
exceptional Chevalley groups, due to L\"ubeck \cite{Lu} and others. In
Section \ref{class results}, we turn to almost simple groups, proving
Theorem \ref{A}, Corollary \ref{B}, and some related results.
 Section \ref{mincent} derives explicit lower bounds on centralizer
sizes (and so  upper bounds on the sizes of conjugacy classes) in finite
classical groups.
In Section \ref{boston-shalev}, we get upper bounds for the number of
conjugacy classes in a maximal subgroup aside from three families of
maximal subgroups.  We then combine those results with Theorem \ref{D} to
obtain Theorem \ref{C}, Theorem \ref{stronger}, and related results on
derangements.

\section{Outer Automorphisms} \label{outer}

In this section, we prove some results about the number of conjugacy
classes in a given coset.   This will allow us to pass between
various forms of our group.
We first recall an elementary result of  Gallagher \cite{Ga}.

\begin{lemma} \label{Boblemma}  Let $H, N$ be subgroups of the finite
group $G$ with $N$ normal in $G$. Then
\begin{enumerate}
\item $|G:H|^{-1}k(H) \le k(G) \le |G:H|k(H)$; and
\item $k(G) \le k(N)k(G/N)$.
\end{enumerate}
\end{lemma}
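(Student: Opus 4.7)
The plan is to derive both parts from Burnside's class formula $k(K) = |K|^{-1}\sum_{x \in K}|C_K(x)|$ together with two elementary centralizer comparisons.

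For part (1), I introduce the auxiliary quantity $n_H(G)$, the number of orbits of $H$ acting on $G$ by conjugation. Burnside's lemma applied to this action gives $n_H(G) = |H|^{-1}\sum_{h \in H}|C_G(h)|$. I will sandwich $n_H(G)$ in two ways. First, every $H$-orbit lies inside a unique $G$-class, so $k(G) \le n_H(G)$; and the second isomorphism theorem yields $[C_G(h) : C_H(h)] = [C_G(h) : C_G(h) \cap H] \le [G:H]$, so summing $|C_G(h)| \le [G:H]\cdot|C_H(h)|$ over $h \in H$ gives $n_H(G) \le |G:H|\cdot k(H)$. Chaining these two bounds yields the right-hand inequality of (1). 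For the left-hand inequality, the bound $|C_H(h)| \le |C_G(h)|$ together with Burnside gives $k(H) \le n_H(G)$; and inside any $G$-class $C$ with representative $c$, the $G$-equivariant bijection $C \cong G/C_G(c)$ turns the $H$-conjugation action on $C$ into left translation on $G/C_G(c)$, whose orbits are the double cosets $H\backslash G/C_G(c)$, of which there are at most $|G:H|$. Summing over the $k(G)$ classes of $G$ gives $n_H(G) \le |G:H|\cdot k(G)$, and hence $k(H) \le |G:H|\cdot k(G)$.

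For part (2), the key centralizer inequality is $|C_G(g)| \le |C_N(g)|\cdot|C_{G/N}(gN)|$, which holds because $C_G(g)/C_N(g) = C_G(g)/(C_G(g) \cap N)$ embeds into $C_{G/N}(gN)$ via the projection $G \to G/N$. Plugging this into Burnside's formula and regrouping the sum over $g \in G$ by cosets of $N$ gives
$$
|G|\, k(G) \; \le \; \sum_{gN \in G/N} |C_{G/N}(gN)| \sum_{x \in gN}|C_N(x)|.
$$
For the inner sum, the double-counting identity $\sum_{x \in gN}|C_N(x)| = \sum_{n \in N}|gN \cap C_G(n)|$, combined with the observation that a nonempty intersection $gN \cap C_G(n)$ is precisely a single coset of $C_N(n)$, gives the bound $\sum_{x \in gN}|C_N(x)| \le \sum_{n \in N}|C_N(n)| = |N|\cdot k(N)$. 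Applying Burnside to $G/N$ on the outer sum produces the factor $|G/N|\cdot k(G/N)$, and the claim follows.

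Both parts are essentially bookkeeping with Burnside's lemma and the second isomorphism theorem; the only step requiring a small observation beyond routine is the double-coset identification in part (1). I do not expect a genuine obstacle.
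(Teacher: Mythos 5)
Your proof is correct. Note, however, that the paper does not actually prove this lemma: it is recorded as a known elementary result and attributed to Gallagher, with a citation in place of an argument. So there is no in-paper proof to compare against; what you have supplied is a self-contained derivation of the cited facts. Your route -- orbit counting via Burnside's lemma, the centralizer comparison $|C_G(h)| \le [G:H]\,|C_H(h)|$ coming from the coset injection $c(C_G(h)\cap H) \mapsto cH$ (this is really just that injection rather than the second isomorphism theorem, which needs no normality here), the identification of $H$-orbits on a $G$-class with double cosets $H\backslash G/C_G(c)$, and for part (2) the embedding of $C_G(g)/C_N(g)$ into $C_{G/N}(gN)$ together with the double-counting bound $\sum_{x \in gN}|C_N(x)| \le |N|\,k(N)$ -- is the standard character-free argument and is essentially how these inequalities are established in the literature. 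All four inequalities are verified correctly, and I see no gap.
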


The following lemma will be useful in getting some better bounds
for almost simple groups.
 See also \cite{BW, I, K} for similar but somewhat weaker results.
 Many of the proofs of related results use character (or Brauer
 character) theory (in particular, Brauer's Lemma), and thus do
 not immediately extend to the case of $\pi$-elements.

 Our method of proof is entirely different -- it is shorter, more elementary
 and based on a very easy variant of what is known as Burnside's Lemma.

\begin{lemma} \label{weak shintani} Let $N$ be a normal subgroup
 of the finite group $G$ with $G/N$ cyclic and generated by $aN$.
 Let $\pi$ be a set of primes containing all prime divisors of $|G/N|$.
 Set $\alpha$ to be the number of $G$-invariant conjugacy classes
 of $\pi$-elements of $N$.
 \begin{enumerate}
 \item  The number of $G$-conjugacy classes of $\pi$-elements
 in the coset $bN$
 that are a single $N$-orbit is equal to $\alpha$ for any coset $bN$.
 \item The number of $G$ conjugacy classes of $\pi$-elements in the coset $aN$
 is $\alpha$.
 \end{enumerate}
 \end{lemma}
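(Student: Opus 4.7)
My plan is to derive both parts from a single Burnside-type count. Part (2) is almost immediate: any $\pi$-element $x\in aN$ satisfies $x\in C_G(x)$ with $xN=aN$ generating $G/N$, so $C_G(x)N=G$, which says that the $N$-class $[x]_N$ is $G$-stable. Hence every $N$-class of $\pi$-elements in $aN$ is already a single $G$-class, so (2) reduces to counting $N$-classes of $\pi$-elements in $aN$, which will come from (1) with $b=a$. For (1), a $G$-class $T\subseteq bN$ of $\pi$-elements is a single $N$-orbit precisely when $T=[x]_N$ is $G$-invariant, i.e.\ when $C_G(x)N=G$. Writing $f(b)$ for the number of such $N$-classes in the coset $bN$, the task is to show $f(b)=f(e)=\alpha$ for every coset.

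The counting set I would use is
\[
X_b=\{(x,g)\;:\;x\in(bN)_\pi,\;g\in C_G(x)\cap aN\},
\]
where $(bN)_\pi$ denotes the $\pi$-elements of $bN$. First, grouping by $x$: for each $N$-class $[x]_N\subseteq(bN)_\pi$ the intersection $C_G(x)\cap aN$ is either empty (iff $[x]_N$ is not $G$-invariant, since $\langle aN\rangle=G/N$) or a single coset of $C_N(x)$ of size $|C_N(x)|$. Summing over $N$-classes and using $|[x]_N|=|N|/|C_N(x)|$ gives $|X_b|=|N|\cdot f(b)$. Second, grouping by $g$: $|X_b|=\sum_{g\in aN}|C_G(g)\cap(bN)_\pi|$. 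The proof reduces to showing each summand is independent of $b$ and equal to $|C_N(g)_\pi|$; once this is done, summing over $g$ makes $|X_b|$ independent of $b$, forcing $f(b)=f(e)=\alpha$.

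The main obstacle is precisely this coset-wise equidistribution of $\pi$-elements in $C_G(g)$, which I would handle as follows. Fix $g\in aN$ and set $H=C_G(g)$, $K=C_N(g)=H\cap N$. Since $g\in H$ with $gN=aN$ generating $G/N$, we have $HN=G$, and so $H/K\cong G/N$ is cyclic of $\pi$-order, generated by $gK$. The crucial point is that $g\in Z(H)$ by the very definition of $H$, so the $\pi$- and $\pi'$-parts $g_\pi,g_{\pi'}$ of $g$ also lie in $Z(H)$; and since $g_{\pi'}$ has $\pi'$-order while $H/K$ is a $\pi$-group, $g_{\pi'}$ must lie in $K$. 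Thus $gK=g_\pi K$, and the cosets of $K$ in $H$ are exactly $\{g_\pi^i K\}_{0\le i<n}$. Now $g_\pi\in Z(H)$ is a $\pi$-element, so for any $k\in K$ the element $g_\pi^i k$ is a $\pi$-element iff $k$ is --- the nontrivial direction using $k=g_\pi^{-i}(g_\pi^i k)$, a product of commuting $\pi$-elements. The map $k\mapsto g_\pi^i k$ therefore restricts to a bijection $K_\pi\leftrightarrow g_\pi^i K\cap H_\pi$, giving $|g^iK\cap H_\pi|=|K_\pi|$ independently of $i$, which translates back to $|C_G(g)\cap(bN)_\pi|=|C_N(g)_\pi|$ and completes the argument.
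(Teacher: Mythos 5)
Your proof is correct and follows essentially the same route as the paper: both arguments rest on the coset version of Burnside's Lemma (which you make explicit by double-counting the incidence set $X_b$, where the paper cites \cite[\S13]{FGS}) together with the key computation that $|C_G(g)\cap (bN)_\pi|=|C_N(g)_\pi|$ independently of the coset, obtained by replacing $g$ with its $\pi$-part (the paper's $y=x^e$ with $e\equiv 1 \bmod [G:N]$ is exactly your $g_\pi$). The deduction of (2) from (1) via $G=NC_G(x)$ for $x\in aN$ is also identical.
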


\begin{proof}    Note that $G$ acts on the coset $bN$ by conjugation,
and thus on the $\pi$-elements in that coset.
We want to calculate the number of common $G, N$ orbits on
the $\pi$-elements of $bN$.

By a slight variation of Burnside's Lemma (with essentially the same proof
--- see \cite[\S13]{FGS}), this is the average number of fixed points of an
element in the coset $aN$.   Let $x \in aN$.  It follows that
$C_G(x) \cap a^iN =  x^iC_N(x)$.    Let $y=x^e$ where
$e \equiv 1 \mod [G:N]$ and $y$ is a $\pi$-element (this is possible
since $\pi$ contains all prime divisors of $[G:N]$).  Then $y^iN=x^iN$
for all $i$, and so $y^iC_N(x)=x^iC_N(x)$ for all $i$.
 If $w \in C_N(x) \le C_N(y)$, then $y^iw$ is a $\pi$-element
if and only if $w$ is.  Thus, the number of $\pi$-elements
in $C_G(x) \cap a^iN$ is the number of $\pi$-elements in $C_N(x)$  --
in particular, this number is independent of the coset.  This proves (1).

If $x  \in aN$, then  $G=NC_G(x)$ and so
$x^N=x^G$, whence every $G$-conjugacy class in $aN$
is a single $N$-orbit.    So (1) implies (2).
\end{proof}

This allows us to prove a generalization of part of Lemma \ref{Boblemma}.
If $\pi$ is  a set of primes and $X$ is a finite group, let
$k_{\pi}(X)$ be the number of conjugacy classes of $\pi$-elements of $X$.

\begin{lemma} \label{Boblemma2}  Let $ N$ be  a normal subgroup of the finite
group $G$    Let $\pi$ be a set of primes.
Let $x_iN$ denote a set of representatives of the $\pi$-conjugacy
classes of $G/N$.  Let $f(x_i)$ denote the number of $N$-conjugacy
classes of $\pi$-elements that are $x_i$-invariant.
$$
k_{\pi} (G) \le \sum_{i=1}^r f(x_i)  \le  k_{\pi}(G/N)k_{\pi}(N).
$$
 \end{lemma}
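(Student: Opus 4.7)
The plan is to partition the $\pi$-conjugacy classes of $G$ according to the $G/N$-conjugacy class of their image in $G/N$, and then bound the contribution of each part using Lemma \ref{weak shintani}. Every $\pi$-element of $G$ projects to a $\pi$-element of $G/N$, and $G$-conjugate elements have $G/N$-conjugate images, so each $G$-class of $\pi$-elements of $G$ projects into the $G/N$-class of exactly one $x_iN$ from the chosen transversal. It therefore suffices to bound, for each $i$, the number $N_i$ of $G$-classes of $\pi$-elements of $G$ whose image is $G/N$-conjugate to $x_iN$.

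Fix $i$ and let $G_i$ be the preimage in $G$ of $\langle x_iN\rangle$, so that $G_i/N$ is cyclic generated by $x_iN$. Because $x_iN$ is a $\pi$-element, $|G_i/N|$ is a $\pi$-number, and the hypothesis of Lemma \ref{weak shintani} is met for the pair $(G_i, N)$. Part (2) of that lemma then says that the number of $G_i$-conjugacy classes of $\pi$-elements in the coset $x_iN$ equals the number of $G_i$-invariant $N$-conjugacy classes of $\pi$-elements of $N$. Since $N$ acts trivially on its own set of conjugacy classes, $G_i$-invariance is the same as $x_i$-invariance, so this count is exactly $f(x_i)$.

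It remains to compare $G$-conjugacy with $G_i$-conjugacy inside $x_iN$. The subgroup of $G$ that stabilizes the coset $x_iN$ setwise under conjugation is the preimage $T_i \le G$ of $C_{G/N}(x_iN)$, and the $G$-classes counted by $N_i$ are in natural bijection with the $T_i$-orbits on $\pi$-elements of $x_iN$. Since $G_i \le T_i$, each $T_i$-orbit is a union of $G_i$-orbits, so $N_i \le f(x_i)$. Summing over $i$ yields $k_\pi(G)\le \sum_i f(x_i)$, and the second inequality $\sum_i f(x_i)\le k_\pi(G/N)k_\pi(N)$ is then immediate from $f(x_i)\le k_\pi(N)$ and $r=k_\pi(G/N)$. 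The only point that needs care is identifying the stabilizer $T_i$ correctly and reducing the count of $G$-classes to $T_i$-orbits in the single coset $x_iN$; once that bookkeeping is in place, Lemma \ref{weak shintani} applied to the cyclic extension $(G_i,N)$ does all the real work.
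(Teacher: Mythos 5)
Your proposal is correct and follows essentially the same route as the paper: partition the $\pi$-classes of $G$ by the $G/N$-class of their image, pass to $G_i=\langle N,x_i\rangle$, and apply Lemma \ref{weak shintani}(2) to count $G_i$-classes in the coset $x_iN$ as $f(x_i)$. Your explicit introduction of the coset stabilizer $T_i$ merely spells out the step the paper compresses into ``thus the number of $G$-classes meeting $x_iN$ is at most $f(x_i)$,'' since each such $G$-class meets $x_iN$ in a single $T_i$-orbit, which is a union of $G_i$-orbits.
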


\begin{proof}
If $y \in G$ is a $\pi$-element, then $yN$ is conjugate to $x_iN$
for some $i$.

Set $G_i = \langle  N, x_i \rangle$ and note that every prime divisor
of $[G_i:N]$ is in $\pi$.
By Lemma \ref{weak shintani}, the number of  $G_i$ conjugacy classes
of $\pi$-elements in $x_iN$  is at most $f(x_i) \le k_{\pi}(N)$.    Thus, the number of
$G$-conjugacy classes of $\pi$-elements that intersect $x_iN$
is at most $f(x_i)$.  This completes the proof.
\end{proof}

\begin{cor} \label{outer2}  Let $N$ be a normal subgroup of $G$
with $\pi$ a set of primes containing all prime divisors of $|G/N|$. If
each $\pi$-element $g$ of $G$ satisfies $G=NC_G(g)$, then $G/N$ is abelian
and the  number of $\pi$-conjugacy classes of $G$  in any coset of
$N$ is $k_{\pi}(N)$.
\end{cor}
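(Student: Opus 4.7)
The plan has two parts: first show $G/N$ is abelian, then deduce the coset count by running Lemma~\ref{weak shintani} coset by coset and comparing $H$-conjugacy to $G$-conjugacy.

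For the abelian claim, I would first note that $G/N$ is a $\pi$-group, so every coset of $N$ contains a $\pi$-element: for any $x \in G$, its $\pi'$-part $x_{\pi'}$ is a $\pi'$-element whose image in the $\pi$-group $G/N$ must be trivial, so $x_{\pi'} \in N$ and hence $x_{\pi} N = xN$. Now take arbitrary cosets and lift them to $\pi$-elements $g_1, g_2$. The hypothesis $G = N C_G(g_1)$ lets me write $g_2 = nc$ with $n \in N$ and $c \in C_G(g_1)$, whence $g_2 N = cN$ commutes with $g_1 N$ in $G/N$. Since the $g_iN$ were arbitrary, $G/N$ is abelian.

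For the counting, fix a coset $bN$ with $b$ chosen to be a $\pi$-element. Set $H = \langle N, b \rangle$, so that $H/N$ is cyclic of $\pi$-order generated by $bN$ and Lemma~\ref{weak shintani}(2) applies to $H$: the number of $H$-conjugacy classes of $\pi$-elements in $bN$ equals $\alpha$, the number of $H$-invariant conjugacy classes of $\pi$-elements of $N$. I would then verify $\alpha = k_{\pi}(N)$ directly: for any $\pi$-element $x$ of $N$, the hypothesis $G = N C_G(x)$ gives $b = n'c'$ with $c' \in C_G(x)$, so $bxb^{-1} = n'xn'^{-1}$ lies in the $N$-class of $x$, and together with the obvious invariance under $N$ this shows every $N$-class of $\pi$-elements of $N$ is $H$-invariant.

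Finally, to upgrade from $H$-conjugacy to $G$-conjugacy in the coset $bN$, I would use the centralizer hypothesis once more: for any $\pi$-element $y \in bN$, writing a conjugating element $g \in G$ as $g = n''c''$ with $c'' \in C_G(y)$ shows $gyg^{-1} = n''yn''{}^{-1}$, so $G$-conjugacy on $bN$ collapses to $N$-conjugacy; the same argument inside $H$ yields the same collapse there. Hence the $G$-conjugacy classes of $\pi$-elements in $bN$ are exactly the $H$-conjugacy classes, and by the previous paragraph this count is $k_{\pi}(N)$. The main obstacle is essentially bookkeeping rather than ideas: Lemma~\ref{weak shintani} is stated only for cyclic quotients, so the abelianness of $G/N$ must be exploited by passing coset by coset to a cyclic subquotient, after which the centralizer hypothesis collapses everything to the $N$-level count.
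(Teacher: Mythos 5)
Your proposal is correct and follows essentially the same route as the paper: both arguments establish abelianness by lifting cosets to $\pi$-elements and using $G=NC_G(g)$ to push commutators into $N$, then apply Lemma~\ref{weak shintani} to $H=\langle N,x\rangle$ and use the collapse $g^G=g^N$ (forced by the centralizer hypothesis) to identify the coset count with $k_{\pi}(N)$. The only cosmetic difference is that you invoke part~(2) of the lemma on the generating coset and verify $\alpha=k_{\pi}(N)$ explicitly, whereas the paper invokes part~(1) and phrases the collapse in terms of common $H,N$-orbits.
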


\begin{proof}    We first show that $G/N$ is abelian.  Consider a coset
$xN$.    Let $x \in G$.  Then $xN=yN$ where $y$ is a $\pi$-element
(as in the previous proof).   Then $[y,G]=[y,NC_G(y)] \le N$.
Hence $[G,G] \le N$, and so $G/N$ is abelian.

The hypothesis implies that every $\pi$-class of $G$ is a single
$N$-orbit. Applying Lemma \ref{weak shintani} to the subgroup $H:=\langle
N, x \rangle$ shows that the number of common $H, N$-orbits on
$\pi$-elements of $xN$ is the number of common $H,N$-orbits on
$\pi$-elements in $N$. The hypothesis implies that all $G$-orbits on
$\pi$-elements are $N$-orbits, whence the number of conjugacy classes of
$\pi$-elements in $xN$ is $k_{\pi}(N)$.
\end{proof}

The hypotheses  apply to the case where $N$ is a quasisimple Chevalley
group in characteristic $p$  and $G$ is contained in the
group of inner diagonal automorphisms of $N$ with
$\pi$ consisting of all primes other than $p$.  See
\cite[2.12]{S1}.    Thus, we have:

\begin{cor} \label{same}    Let $S$ be a quasisimple
Chevalley group.  Assume that $S \le G \le \Inndiag(S)$.
Then
the number of semisimple conjugacy classes  in each coset
of $S$ in $G$ is the same.
 \end{cor}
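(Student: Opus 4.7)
The plan is to derive Corollary \ref{same} as a direct application of Corollary \ref{outer2}, specialized to $N=S$ with $\pi$ chosen to be the set of all primes different from the defining characteristic $p$ of the Chevalley group $S$. The first step is simply to translate terminology: in a Chevalley group in characteristic $p$, the semisimple elements are exactly the $p'$-elements, so ``semisimple classes'' and ``$\pi$-classes'' mean the same thing, and the desired count of semisimple classes in a coset $gS$ is $k_\pi$ of that coset.

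Next, I would verify the two hypotheses of Corollary \ref{outer2}. That $\pi$ contains all prime divisors of $|G/S|$ follows because $G/S \le \Inndiag(S)/S$, and the diagonal outer automorphism group of a Chevalley group has order coprime to $p$ (a standard structural fact for Chevalley groups, recorded in \cite{GLS3}). The second, more substantive, hypothesis is that $G = S\,C_G(g)$ for every semisimple $g \in G$. This is precisely the content of the reference already flagged in the text (\cite[2.12]{S1}): the centralizer in $\Inndiag(S)$ of any semisimple element of $\Inndiag(S)$ surjects onto $\Inndiag(S)/S$, which is essentially Steinberg's connectedness theorem for centralizers of semisimple elements in a simply connected algebraic group translated to the finite setting. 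Intersecting with $G$ preserves the surjection, so the hypothesis is met.

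With both hypotheses in place, Corollary \ref{outer2} delivers that the number of $\pi$-conjugacy classes of $G$ meeting any coset of $S$ is exactly $k_\pi(S)$. Since the right-hand side depends only on $S$, not on the chosen coset, the count of semisimple classes is the same in every coset of $S$ in $G$, which is the claim.

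The only nontrivial ingredient here is the structural fact $G = S\,C_G(g)$ for semisimple $g$, and this is quoted rather than reproved; everything else is a direct citation of the preceding lemmas. So there is no real obstacle — the work is in Corollary \ref{outer2} and in the structure theory of $\Inndiag(S)$, and Corollary \ref{same} is a clean specialization.
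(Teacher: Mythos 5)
Your proposal is correct and is essentially identical to the paper's own argument: the paper derives Corollary \ref{same} in exactly this way, as the specialization of Corollary \ref{outer2} to $N=S$ with $\pi$ the set of primes other than the defining characteristic, citing \cite[2.12]{S1} for the key fact that $G=SC_G(g)$ for semisimple $g$. You have merely spelled out the routine verifications (semisimple $=$ $p'$-element, and $|\Inndiag(S)/S|$ prime to $p$) that the paper leaves implicit.
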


Another easy consequence of Lemma \ref{weak shintani} is showing
that
the finiteness of unipotent classes in a disconnected reductive
group follows from the result for connected reductive groups.
There have been several proofs of the finiteness of the number
of unipotent classes in the connected case -- see \cite{Lz1}.
The result is also known for the disconnected case (see
\cite{gurint} for a generalization).

\begin{lemma}   Let $G$ be an algebraic group defined
over a finite field $L$ of characteristic $p$.   Let $H$ be its connected
component.   Suppose that $H$ has  finitely many  conjugacy classes of unipotent
elements.  Then $G$ has finitely many conjugacy classes of
unipotent elements.
\end{lemma}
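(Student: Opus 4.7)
Since $H$ is the identity component of the algebraic group $G$, the quotient $G/H$ is a finite group. The plan is to write $G = \bigsqcup_{i=1}^m x_iH$ as a disjoint union of $H$-cosets and bound the number of unipotent $G$-conjugacy classes in each coset separately; this reduction is legitimate because $H$ is normal in $G$, so conjugation by $G$ preserves each coset of $H$, and hence the unipotent conjugacy classes of $G$ partition according to the coset containing them.

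Any unipotent element has $p$-power order, so a coset $x_iH$ can contain a unipotent element only if $x_iH$ is a $p$-element of $G/H$. In that case, setting $K_i := \langle H, x_i \rangle$, the quotient $K_i/H$ is finite cyclic of $p$-power order, and a $G$-conjugacy class in $x_iH$ is a union of $K_i$-conjugacy classes, so it suffices to bound the number of $K_i$-conjugacy classes of unipotent elements in $x_iH$. I would then adapt the Burnside-style argument behind Lemma \ref{weak shintani} to produce an injection from these $K_i$-conjugacy classes into the set of $x_i$-stable $H$-conjugacy classes of unipotent elements of $H$. Since by hypothesis $H$ has only finitely many unipotent classes in total, at most finitely many can be $x_i$-stable, yielding the desired bound.

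The main obstacle is that Lemma \ref{weak shintani} is a finite-group statement relying on orbit counting, whereas $x_iH$ is an infinite algebraic variety, so the Burnside counting identity does not directly apply. I would sidestep this by passing to $\F_{q^n}$-rational points: after enlarging $L$ so that each of the finitely many geometric unipotent classes of $H$ is defined over $L$ with its $x_i$-stability determined at that level, Lemma \ref{weak shintani} applied to the finite groups $H(\F_{q^n}) \triangleleft K_i(\F_{q^n})$ yields a bound on the number of $K_i(\F_{q^n})$-conjugacy classes of unipotent elements in $x_iH(\F_{q^n})$ that is uniform in $n$. Because each geometric $K_i$-conjugacy class of unipotent elements in $x_iH$ is a nonempty constructible subvariety defined over some fixed finite extension of $L$, it contains $\F_{q^n}$-points for $n$ sufficiently large, so the uniform finite-field bound translates into a bound on the number of geometric $K_i$-conjugacy classes, completing the proof.
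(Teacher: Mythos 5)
Your overall strategy is the same as the paper's: pass to the finite groups of rational points, apply the Burnside/Shintani-type count of Lemma \ref{weak shintani} (equivalently Lemma \ref{Boblemma2}) there, and transfer back to geometric classes by noting that distinct geometric classes meeting the rational points give distinct rational classes. The paper packages this slightly differently --- it bounds the number of $p$-classes of $G(L')$ in one stroke via Lemma \ref{Boblemma2} rather than coset by coset, and it cites \cite[Prop 1.1]{GLMS} for the final transfer --- but these are organizational differences, not mathematical ones.

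There is, however, one substantive step you assert without justification: that the bound produced by Lemma \ref{weak shintani} for $H(\F_{q^n}) \triangleleft K_i(\F_{q^n})$ is \emph{uniform in $n$}. What the lemma gives is a bound by the number of $x_i$-stable $H(\F_{q^n})$-conjugacy classes of unipotent elements, whereas the hypothesis only controls the number of \emph{geometric} unipotent classes of $H$. A single geometric class $C$ with $C(\F_{q^n})\neq\emptyset$ splits into several $H(\F_{q^n})$-classes, and a priori the number of pieces could grow with $n$; enlarging $L$ so that the classes are defined over the base field does not address this, because the splitting is governed not by the field of definition of the orbit but by the component group of the centralizer. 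This is precisely where the paper does its real work: by Lang's theorem the $H(L')$-classes inside a geometric class $C$ are indexed by $F$-conjugacy classes in $C_H(u)/C_H(u)^{\circ}$, so their number is at most $e$, the maximal number of connected components of $C_H(u)$ over the finitely many unipotent classes; hence $H(L')$ has at most $me$ unipotent classes for \emph{every} finite extension $L'$. Once you insert that observation (and replace ``for $n$ sufficiently large'' by ``for suitable $n$,'' i.e.\ common multiples of the relevant degrees), your argument closes up and coincides with the proof in the paper.
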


\begin{proof}  Let $U$ be the variety of unipotent elements
in $G$.   So $U$ is defined over $L$.    Let $k$ be the algebraic closure
of $L$.        Suppose that $H$ has $m$ conjugacy classes of unipotent
elements. Let $L'/L$ be a finite extension. By Lang's theorem,  $H(L')$
has at most $me$ conjugacy classes of unipotent elements, where $e$ is the
maximal number of connected components in $C_H(u)$ for $u$ a unipotent
element of $H$.

Let $s$ be the number of conjugacy classes of $p$-elements in $G/H$ (which
is isomorphic to $G(L')/H(L')$). By Lemma \ref{Boblemma2},  $G(L')$ has at
most $sme$ conjugacy classes of $p$-elements.

Since $G(k)$ is the union of the $G(L')$ as $L'$ ranges over
all finite extensions of $L'/L$, it follows that $G(k)$ has at most
$sme$ conjugacy classes of unipotent elements.

By   \cite[Prop 1.1]{GLMS}), it follows that the number of unipotent
classes of $G$ is the same as the number of $G(k)$ classes of unipotent
elements..
\end{proof}

 The following must surely be known, but it follows easily
from Lemma \ref{weak shintani}.

\begin{cor}  \label{altsym}  Let $m > 3$.
Then $k(A_m) <  k(S_m)$.
\end{cor}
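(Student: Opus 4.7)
The plan is to apply Lemma~\ref{weak shintani} with $G=S_m$, $N=A_m$, $a$ any transposition, and $\pi$ the set of all primes. Because $[G:N]=2$, part (2) of the lemma says that the number of $S_m$-invariant $A_m$-classes equals the number of $S_m$-classes contained in the coset $aN$ of odd permutations. I would use this identity to reduce $k(S_m)-k(A_m)$ to a clean combinatorial quantity.

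The $S_m$-classes inside $A_m$ split in the familiar two ways: an $S_m$-class either remains a single $A_m$-class (non-split), and is then trivially $S_m$-invariant as an $A_m$-class, or it breaks into two $A_m$-classes (split), in which case any odd permutation interchanges the two halves and neither is $S_m$-invariant. The split $S_m$-classes are precisely those whose cycle type has all parts odd and distinct. Let $s$ denote the number of split $S_m$-classes in $A_m$ and $u$ the number of $S_m$-classes in $aN$; Lemma~\ref{weak shintani} tells us that the number of non-split $S_m$-classes in $A_m$ equals $u$, so
$$
k(A_m)=u+2s,\qquad k(S_m)=2u+s,\qquad k(S_m)-k(A_m)=u-s.
$$

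The remaining task is to verify $u>s$ for $m>3$. I would do this through an explicit injection $\phi$ from the partitions of $m$ into distinct odd parts into the cycle types of odd permutations of $m$: given $\lambda=(\lambda_1>\lambda_2>\cdots>\lambda_k)$ with distinct odd parts, note that $\lambda_1\ge 3$ once $m>3$, and set $\phi(\lambda)=(\lambda_1-1,\lambda_2,\dots,\lambda_k,1)$, rearranged as a partition. The unique even part of $\phi(\lambda)$ is $\lambda_1-1\ge 2$, making the associated permutation odd, and one recovers $\lambda$ by adding $1$ to this even part and deleting one entry equal to $1$, giving injectivity. Finally, I would observe that the cycle type $(m)$ when $m$ is even, and $(m-2,2)$ when $m$ is odd, each represents an odd permutation yet contains no part equal to $1$, so lies outside the image of $\phi$; hence $u>s$, and the corollary follows.

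No step here is genuinely hard once Lemma~\ref{weak shintani} has yielded the identity $k(S_m)-k(A_m)=u-s$; the main obstacle is only the combinatorial bookkeeping in the last paragraph, where a small case distinction between $m$ even and $m$ odd is needed to exhibit an odd cycle type outside the image of the injection.
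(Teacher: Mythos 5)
Your proof is correct and follows essentially the same route as the paper: both use Lemma \ref{weak shintani} to derive the identity $k(S_m)-k(A_m)=a-b$ (your $u-s$, with $a=u$ by the lemma) and then exhibit a non-surjective injection out of the set of split classes, i.e.\ partitions of $m$ into distinct odd parts. The only difference is cosmetic: the paper injects these into the $S_m$-stable classes of $A_m$ (replacing the largest odd cycle by two $2$-cycles and fixed points, and missing an element of order $4$), whereas you inject them into the odd cycle types (decrementing the largest part and appending a $1$, missing $(m)$ or $(m-2,2)$); since the lemma identifies the two target sets, the arguments are interchangeable, and your map has the minor advantage of working uniformly for all $m>3$ without the paper's separate check of $m=4,5$.
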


\begin{proof}  Let $a$ be the number
of $A_m$ classes that are stable under $S_m$.
Let $b$ be the number of $S_m$ classes in $A_m$
that are not $A_m$ classes. Clearly  $k(A_m)=a+2b$,
and by Lemma \ref{weak shintani}, $k(S_m)=2a+b$.
 So we only need to show that
$a > b$.    If $m= 4, 5$,
the result is clear.  So we show that $a > b$ for $m > 5$.

Note that $b$ is precisely the number of classes where all cycle lengths
are distinct and odd.  There clearly is an injection into stable classes;
namely since $m>4$ the largest cycle is odd of length $j \geq 5$, so one
can replace it by a product of two $2$-cycles and $j-4$ fixed points. The
image misses an element of order $4$, and so the injection is not
surjective.
\end{proof}

 Another easy consequence of Lemma \ref{weak shintani} is:

\begin{cor}  \label{outer3} Let $N$ be a normal subgroup of the finite group $G$.
Let $K$ be a  subgroup of $G$ containing $N$ with $K/N$  cyclic and
central in $G/N$.    Let $\pi$ be a set of primes containing
all prime divisors of $|K/N|$. Let $\Delta$ be the set of $G$-conjugacy classes
of $\pi$-elements
such that $K=NC_G(g)$.  Then $\Delta$ is equally distributed
among the cosets of $N$ contained in $K$.
\end{cor}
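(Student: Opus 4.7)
The plan is to apply Lemma \ref{weak shintani} to the subgroup $K$ with its normal subgroup $N$ (legitimate since $K/N$ is cyclic and $\pi$ contains the prime divisors of $|K/N|$) and then to refine the resulting count to isolate the $G$-classes in $\Delta$. First I would observe that any $g \in \Delta$ satisfies $g \in C_G(g) \subseteq NC_G(g) = K$, so every class in $\Delta$ has a representative in $K$; and that each $G$-class in $\Delta$ is contained in a single coset $bN \subseteq K$, because centrality of $K/N$ in $G/N$ forces conjugation by any $h \in G$ to fix cosets of $N$ inside $K$. This makes the assertion sensible.

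Next I would decompose the defining condition $K = NC_G(g)$ into (a) $K \subseteq NC_G(g)$, equivalent to $K = NC_K(g)$ and hence to the $K$-class of $g$ being a single $N$-orbit, and (b) $NC_G(g) \subseteq K$, equivalent to $C_G(g) \subseteq K$. By Lemma \ref{weak shintani} applied to $(K,N)$, the number $\alpha$ of $K$-classes of $\pi$-elements in any coset $bN$ satisfying (a) is independent of $b$. To pass from $K$-classes to $G$-classes, I would let $G/K$ act on the set of $N$-orbits in $bN$ satisfying (a); the stabilizer of $[g]_N$ in $G/K$ is $NC_G(g)/K$, which is trivial exactly when (b) also holds. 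Each $G$-class in $\Delta \cap bN$ therefore arises from a free $G/K$-orbit of size $[G:K]$, giving $|\Delta \cap bN| = \beta_b/[G:K]$, where $\beta_b$ is the number of $N$-orbits of $\pi$-elements in $bN$ whose stabilizer in $G/N$ equals $K/N$ on the nose.

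The main obstacle is showing that $\beta_b$ is independent of $b$. My attempt would be M\"obius inversion over the subgroup lattice of $G/N$ above $K/N$: for each intermediate $L/N \supseteq K/N$, the number of $N$-orbits in $bN$ whose stabilizer contains $L/N$ equals the number of $N$-orbits forming a single $L$-orbit. For $L/N$ cyclic, this count is $b$-independent by Lemma \ref{weak shintani} applied to $(L,N)$; for non-cyclic intermediate $L$, the centrality of $K/N$ in $L/N$ should allow a step-by-step reduction through a chain $K = L_0 \triangleleft L_1 \triangleleft \cdots \triangleleft L_k = L$ with each $L_i/L_{i-1}$ cyclic, running the variation-of-Burnside averaging from the proof of Lemma \ref{weak shintani} one cyclic step at a time. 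Once every stratum in the M\"obius sum is known to be $b$-independent, so is $\beta_b$, and division by $[G:K]$ yields the equidistribution of $\Delta$.
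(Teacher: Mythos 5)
Your reduction is correct as far as it goes, and it is essentially the skeleton of the paper's own argument: every class in $\Delta$ lies in a single coset of $N$ inside $K$, the condition $K=NC_G(g)$ splits into ``$g^K=g^N$'' plus ``$C_G(g)\le K$'', Lemma \ref{weak shintani} applied to $(K,N)$ handles the first condition, and $G/K$ acts freely on the surviving $N$-orbits, so everything reduces to showing that $\beta_b$, the number of $N$-orbits of $\pi$-elements in $bN$ whose stabilizer in $G/N$ is \emph{exactly} $K/N$, is independent of $b$. You correctly flag this as the main obstacle. The paper's proof does not: it asserts that $K$ has precisely $\alpha$ orbits on $\Gamma\cap gN$ for every coset, which silently identifies the orbits satisfying only the first condition (counted by $\alpha$) with those satisfying both.

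The obstacle cannot be overcome, because the M\"obius strata are not coset-independent and in fact the corollary as stated fails. Take $G=D_8=\langle r,s\mid r^4=s^2=1,\ srs=r^{-1}\rangle$, $N=Z(G)=\langle r^2\rangle$, $K=\langle r\rangle$, $\pi=\{2\}$. Then $K/N$ is cyclic of order $2$ and central in $G/N\cong \Z/2\times\Z/2$, and $NC_G(g)=K$ holds exactly for $g\in\{r,r^3\}$, so $\Delta$ consists of the single class $\{r,r^3\}=rN$: the coset $N$ contains no class of $\Delta$ while $rN$ contains one. In the language of your proof, the only intermediate stratum is $L=G$ (with $L/N$ non-cyclic), and it counts the central elements of $bN$, namely $2$ for $b=1$ and $0$ for $b=r$; so no chain of cyclic steps or averaging can make that stratum coset-independent, and your step-by-step reduction must break down there. (A secondary issue you would also hit: for cyclic $L/N$ with $L\supsetneq K$, Lemma \ref{weak shintani} needs $\pi$ to contain the prime divisors of $|L/N|$, which is only hypothesized for $L=K$.) What your strategy does establish is the corollary under the stronger hypothesis that $G/N$ itself is cyclic with all prime divisors of $|G/N|$ in $\pi$, since then every stratum is covered by Lemma \ref{weak shintani}; the $D_8$ example shows that some strengthening of the hypotheses of this kind is genuinely necessary, so the gap is in the statement and the paper's proof, not merely in your attempt.
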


\begin{proof}   Let $\Gamma$ be the union of the conjugacy classes
in $\Delta$.  Note that $g \in \Gamma$ implies that $g \in K$.

 Let $\alpha$ be the number of
$K$-stable conjugacy classes of $\pi$-elements of $N$.

By the proof of Lemma \ref{weak shintani},
it follows that $K$ has precisely $\alpha$ orbits on $\Gamma \cap gN$
for each $g \in K$.   Since $G/K$ acts freely on the $K$ orbits on
$\Gamma$, it follows that there are precisely
$\alpha/[G:K]$ elements of $\Delta$ in each coset of $K/N$.
 \end{proof}

In certain cases,  one can describe the conjugacy classes
in a coset very nicely using
the Shintani correspondence.  See \cite[\S2]{K}.
We first need some notation.  Let $G$ be a connected algebraic group.
Let $F$ be a Lang-Steinberg endomorphism of $G$ (i.e. the fixed points
$G^F$ form a finite group).  We first recall the well known result
of Lang-Steinberg.

\begin{lemma} \label{Stein}  Let $G$ be a connected linear algebraic group,
and  let $F$ be a surjective endomorphism of $G$ such  that $G^F$ is finite.
Then the map $f: x \mapsto x^{-F}x$ from $G$ to $G$ is surjective.
\end{lemma}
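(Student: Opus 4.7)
The plan is to use the twisted-conjugation action approach due to Lang. I would introduce the left action of $G$ on itself defined by $y \cdot z := y z F(y)^{-1}$; a short check (using that $F$ is a group homomorphism) confirms this is genuinely a left action. Under this action the stabilizer of the identity $e$ is exactly $G^F$, and the orbit of $e$ is $\{y F(y)^{-1} : y \in G\}$. My strategy is to show this single orbit is all of $G$; once that is done, substituting $y \mapsto y^{-1}$ and using $F(y^{-1}) = F(y)^{-1}$ rearranges to $f(G) = \{F(y)^{-1} y : y \in G\} = G$, which is the desired surjectivity.

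The central computation is the differential of the orbit map $y \mapsto y z F(y)^{-1}$ at $y = e$. After trivializing $T_z G \cong T_e G$ via a translation, one finds this differential is (up to sign) the endomorphism $I - \mathrm{Ad}(z) \circ dF_e$ of $T_e G$. When $z = e$ the differential reduces to $I - dF_e$, and the hypothesis that $G^F$ is finite says $\mathrm{Lie}(G^F) = \ker(I - dF_e) = 0$, so this differential is an isomorphism; the stabilizer of $e$ has trivial Lie algebra and the orbit through $e$ has full dimension $\dim G$.

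I would then upgrade this to the statement that \emph{every} orbit has dimension $\dim G$. Granting that, since orbits of algebraic group actions are always locally closed, a full-dimensional orbit inside the irreducible variety $G$ is automatically open and dense. With every orbit open, the partition of $G$ into orbits is a disjoint open cover, and the connectedness of $G$ forces there to be exactly one orbit, which must be $G$ itself. The rearrangement in the first paragraph then gives $f(G) = G$.

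The main obstacle is confirming $\ker(I - \mathrm{Ad}(z) \circ dF_e) = 0$ for every $z$, not just $z = e$. In the classical Lang case, where $F$ is a Frobenius associated to an $\F_q$-rational structure on $G$, one has $dF_e = 0$ (the $q$-th power map kills all tangent vectors in characteristic $p$) and the claim is automatic. For a general surjective endomorphism with $G^F$ finite, I would instead observe that $\mathrm{Stab}(z)$ is the fixed-point subgroup of the twisted endomorphism $F' := \mathrm{Ad}(z^{-1}) \circ F$, which is again a surjective endomorphism of $G$; Steinberg's structural results on such endomorphisms show $G^{F'}$ is again finite, so $\mathrm{Stab}(z)$ has trivial Lie algebra and the orbit-dimension argument goes through uniformly.
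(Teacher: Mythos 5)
The paper offers no proof of this lemma --- it is recalled as the well-known Lang--Steinberg theorem and cited to Steinberg \cite{St} --- so your proposal must stand on its own, and it has a genuine gap at its central step. The claim that finiteness of $G^F$ forces $\ker(I - dF_e)=0$ is false in positive characteristic. Take $G=\mathbb{G}_m$ over $\overline{\F}_2$ and $F(x)=x^3$: this is a surjective endomorphism with $G^F=\{x: x^2=1\}=\{1\}$ finite, yet $dF_e$ is multiplication by $3=1$, so $I-dF_e=0$ and your orbit map has everywhere-vanishing differential (the Lang map here is $x\mapsto x^{-2}$, which is surjective but purely inseparable). The point is that $\ker(I-dF_e)$ is the tangent space of the \emph{scheme-theoretic} fixed-point locus, which in characteristic $p$ can be non-reduced; finiteness of the reduced group $G^F$ does not control it. The same confusion recurs in your final paragraph: "$G^{F'}$ is finite, so $\mathrm{Stab}(z)$ has trivial Lie algebra" is a non sequitur for exactly this reason. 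Your argument is only automatic when $dF_e$ is nilpotent (e.g.\ genuine Frobenius maps, or Steinberg endomorphisms with $F^n$ a Frobenius, where $(dF)^n=0$), but the lemma is stated for arbitrary surjective $F$ with $G^F$ finite.

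The repair is to abandon separability altogether and argue with fibers rather than differentials. The fibers of the orbit map $y\mapsto yzF(y)^{-1}$ are precisely the cosets $y\,\mathrm{Stab}(z)$ of the set-theoretic stabilizer; if $\mathrm{Stab}(z)=G^{\mathrm{Int}(z)\circ F}$ is finite, then Chevalley's theorem on fiber dimensions gives $\dim(G\cdot z)=\dim G$, so the (locally closed) orbit is open, and your connectedness argument finishes as before. This reduction is sound, but it exposes where the real work lies: one must prove that $\mathrm{Int}(z)\circ F$ again has finite fixed-point group for \emph{every} $z$, and this cannot be deduced from the theorem itself without circularity (a posteriori $\mathrm{Int}(z)\circ F$ is conjugate to $F$ via an inner automorphism, but only once surjectivity of the Lang map is known). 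Your deferral of this step to "Steinberg's structural results" is therefore not a minor citation but an appeal to essentially the entire content of Steinberg's generalization of Lang's theorem.
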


Note that if $F$ is such an endomorphism of a simple connected algebraic group
$G$, then we can attach a prime power $q = q_F$ of the characteristic to $F$.
Then $G^F$ is said to be defined over $q$.  We write $G^F=G(q)$ (of course,
there may be more than one endomorphism associated with the same $q$ -- in particular,
this is the case if $G$ admits a graph automorphism).
The Shintani correspondence is:

\begin{theorem} \label{fieldaut} Let $G$ be a connected linear
algebraic group with Frobenius map $F$. Let $H$ be the fixed points of
$F^m$. We view $F$ as an automorphism of $H$ of order $m$. Then there is a
bijection $\psi$ between conjugacy classes in the coset $FH$ and conjugacy
classes in $H^F$. Moreover, $C_H(Fh) \cong C_{H^F}(k)$ where
$\psi[Fh]=[k]$.
 \end{theorem}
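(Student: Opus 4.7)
The plan is to obtain $\psi$ from the Lang--Steinberg theorem (Lemma~\ref{Stein}) applied to the connected algebraic group $G$, in two incarnations: once with endomorphism $F$ and once with $F^m$.  First, I would interpret the coset $FH$ as sitting inside the semidirect product $H\rtimes\langle F\rangle$, with conjugacy classes taken under $H$-conjugation.  A routine computation with the semidirect product law identifies these classes with the $F$-twisted conjugacy classes on $H$, i.e.\ the orbits of the action $h\mapsto h'hF(h')^{-1}$ for $h'\in H$.

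To construct $\psi$, I would take a representative $Fh$ with $h\in H$.  By Lemma~\ref{Stein} applied to $G$ with endomorphism $F$, there exists $g\in G$ with $g^{-1}F(g)=h$, unique up to right multiplication by an element of $G^F$.  I then set $\psi([Fh]):=[k]_{H^F}$ where $k:=F^m(g)g^{-1}$.  Iterating the defining relation $F(g)=gh$ yields $F^m(g)=g\cdot N_F(h)$ with $N_F(h):=hF(h)F^2(h)\cdots F^{m-1}(h)$, so $k=gN_F(h)g^{-1}$.  The key identity $F(N_F(h))=h^{-1}N_F(h)h$ follows from $F^m(h)=h$ by telescoping, and gives $F(k)=k$, so that $k\in G^F=H^F$.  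Well-definedness is then checked directly: if $h$ is replaced by $\bar g hF(\bar g)^{-1}$ with $\bar g\in H$, one may take $g\mapsto g\bar g^{-1}$, and since $F^m(\bar g)=\bar g$ the element $k$ is invariant; replacing $g$ by $zg$ for $z\in G^F$ replaces $k$ by $zkz^{-1}$, still in the same $H^F$-class.

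For bijectivity, I would construct the inverse symmetrically: given $k\in H^F$, Lemma~\ref{Stein} for the endomorphism $F^m$ supplies $g\in G$ with $F^m(g)g^{-1}=k$, and I set $h:=g^{-1}F(g)$.  A parallel calculation shows $F^m(h)=h$ (so $h\in H$), and one checks that these constructions are mutual inverses on classes.  For the centralizer statement, I would show that conjugation by $g$ realizes the isomorphism $C_H(Fh)\cong C_{H^F}(k)$ via $h'\mapsto gh'g^{-1}$.  The centralizer condition on $h'\in H$ unwinds, using the semidirect product law, to $F(h')=h^{-1}h'h$, and iterating this relation gives $F^j(h')=N_j^{-1}h'N_j$ where $N_j:=hF(h)\cdots F^{j-1}(h)$.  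Specializing $j=m$ and using $F^m(h')=h'$ forces $h'$ to commute with $N_F(h)$, which is exactly what is needed for $gh'g^{-1}$ to commute with $k=gN_F(h)g^{-1}$; the reverse direction is analogous.

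The main obstacle is bookkeeping rather than conceptual difficulty: the Lang--Steinberg element $g$ lives in the ambient algebraic group $G$ rather than in $H$, so the entire construction must be carried out in $G$, and all the content is extracted from the single noncommutative identity $F(N_F(h))=h^{-1}N_F(h)h$ and its iterated counterpart for the centralizer.  Once this identity is in hand, the surjectivities furnished by Lemma~\ref{Stein} for $F$ and $F^m$ produce both directions of the bijection and match centralizers on the nose.
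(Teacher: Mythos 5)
Your construction is exactly the one the paper uses: solve the Lang equation for $F$ (Lemma~\ref{Stein}), form the norm $N_F(h)=hF(h)\cdots F^{m-1}(h)$, and conjugate by the Lang element to land in $H^F$; the paper writes this with the mirror convention $\alpha_x^{-F}\alpha_x=x$ and $\psi(Fx)=\alpha_x N(x)\alpha_x^{-1}$ and declares the remaining verifications straightforward, whereas you carry them out (note only that with your normalization $g^{-1}F(g)=h$ the ambiguity in $g$ is left, not right, multiplication by $G^F$ --- which is in fact the case you check). So the proposal is correct and takes essentially the same route as the paper.
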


We may define  $\psi$ as follows.  Given $x$ in $H$,
let $\alpha_x$ be such that $\alpha_x^{-F} \alpha_x = x$.
Define $N(x)= x^{F^{m-1}} \cdots x^{F^{2}} x^{F} x$.
Set $\psi(Fx)=
 \alpha_x N(x) \alpha_x^{-1} \in H^F$.  This depended upon the choice
of $\alpha_x$, but another choice preserves the conjugacy class, and
$\psi$ defines the desired bijection on classes.   It is straightforward
to see that this bijection has the properties described in the theorem.

Combining this theorem together with Lemma \ref{weak shintani} gives:

\begin{cor} \label{shintani2} Let $G$ be a connected linear
algebraic group with Frobenius map $F$. Let $H$ be the fixed points of
$F^m$. We view $F$ as an automorphism of $H$ of order $m$.
Then $k(H^F)$ is equal to the number of $H$-conjugacy classes in
the coset $FH$ and is also equal to the number of $F$-stable conjugacy
classes in $H$.
\end{cor}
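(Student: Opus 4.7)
The plan is to combine Theorem \ref{fieldaut} with Lemma \ref{weak shintani}. The first equality, that $k(H^F)$ equals the number of $H$-conjugacy classes in the coset $FH$, is immediate from Theorem \ref{fieldaut}: the Shintani map $\psi$ is by construction a bijection between the $H$-conjugacy classes in $FH$ and the conjugacy classes of $H^F$.

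For the second equality, I would form the external semidirect product $\tilde{G} = H \rtimes \langle \sigma \rangle$, where $\sigma$ is the automorphism of $H$ induced by $F$. Since $H = G^{F^m}$, the automorphism $\sigma$ has order dividing $m$, so $\tilde{G}/H$ is cyclic, generated by the coset $\sigma H$, which we identify with $FH$. Taking $\pi$ to be the set of all primes (so $|\tilde{G}/H|$ is trivially a $\pi$-number and $k_{\pi} = k$), Lemma \ref{weak shintani}(2) applied with $N = H$ and $a = \sigma$ gives that the number of $\tilde{G}$-conjugacy classes contained in the coset $\sigma H$ equals the number $\alpha$ of $\tilde{G}$-invariant conjugacy classes of $H$. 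A conjugacy class of $H$ is $\tilde{G}$-invariant precisely when it is stable under $\sigma$, i.e.\ $F$-stable, so this count is exactly the number of $F$-stable conjugacy classes of $H$.

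The remaining point is that $\tilde{G}$-conjugacy and $H$-conjugacy induce the same partition of the coset $FH$. This is the argument at the end of the proof of Lemma \ref{weak shintani}: for any $x \in FH$, the coset $xH = FH$ generates $\tilde{G}/H$, so $\tilde{G} = H \langle x \rangle \subseteq H \cdot C_{\tilde{G}}(x)$, forcing $x^H = x^{\tilde{G}}$. Chaining the three equalities then yields the corollary. I anticipate no serious obstacle; the only subtlety is distinguishing $H$-orbits from $\tilde{G}$-orbits on $FH$, and this is resolved by the standard coset-centralizer observation.
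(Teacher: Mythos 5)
Your proof is correct and follows essentially the same route as the paper: the first equality is read off from Theorem \ref{fieldaut}, and the second from Lemma \ref{weak shintani} applied to $H\langle F\rangle$ with $\pi$ the set of all primes, using the observation from the end of that lemma's proof that $H$-classes and $H\langle F\rangle$-classes coincide on the generating coset $FH$. The paper states this in two sentences; your write-up is simply a careful expansion of the same argument.
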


\begin{proof}  The previous theorem implies that the first two quantities
are equal.  Lemma \ref{weak shintani} implies that the second  and third
quantities are equal.
\end{proof}

\section{Number of conjugacy classes in classical groups} \label{boundnumber}

In this section, we obtain upper bounds for $k(G)$ with $G$ a classical
group. Subsection \ref{prelim} develops some preliminary tools. Type A
groups are treated in Subsections \ref{GL} and \ref{U}; symplectic and
orthogonal groups are treated in Subsections \ref{Sp} and \ref{O}
respectively.

\subsection{Preliminaries} \label{prelim}

The following result of Steinberg \cite[14.8, 14.10]{St} gives lower
bounds for $k(G)$ with $G$ a Chevalley group. See also \cite[p. 102]{C}.
The inequality is not stated explicitly though.

\begin{theorem} \label{ss} Let $G$ be a connected reductive group of semisimple rank
$r > 0$. Let $F$ be a Frobenius endomorphism of $G$ associated to $q$. Let
$Z_0$ denote the connected component of the center of $G$. The number of
semisimple conjugacy classes in $G^F$ is at least $|Z_0^F|q^r$ with equality if
$G'$ is simply connected.   In particular, $k(G^F) > q^r$.
\end{theorem}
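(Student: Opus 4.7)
The plan is to derive this from the classical result of Steinberg that for a simply connected semisimple group of rank $r$ with Frobenius $F$, the number of semisimple conjugacy classes in the finite group of fixed points is exactly $q^r$, and then to account for the contribution of the central torus $Z_0$.

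First I would establish the simply connected semisimple case. The crucial input is Steinberg's theorem that every semisimple element in a simply connected group has connected centralizer; combined with Lang-Steinberg (Lemma \ref{Stein}), this yields a bijection between semisimple conjugacy classes of $G_{sc}^F$ and $F$-stable semisimple classes of $G_{sc}$. Via the standard parametrization of semisimple classes by $W$-orbits on a maximal torus $T$, the count becomes the number of $F$-stable $W$-orbits on $T$. A Burnside-style average over $w \in W$ of $|T^{wF}|$, together with the identity for the reflection representation of $W$ acting on the character lattice, collapses to exactly $q^r$. This is the content of Steinberg 14.8; the formula is displayed explicitly in Carter, p.~102.

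Next I would reduce the general reductive case to this. Let $\pi: Z_0 \times G_{sc} \to G$ be the standard isogeny, where $G_{sc}$ is the simply connected cover of $G' = [G,G]$; its kernel $K$ is finite and central, lying inside $Z_0 \times Z(G_{sc})$. Every element of $Z_0^F$ is central and semisimple, so the semisimple classes of $(Z_0 \times G_{sc})^F$ number exactly $|Z_0^F| \cdot q^r$. Pushing these forward to $G^F$, one checks using the centrality of $K$ (so that $Z_0^F$ acts trivially on conjugacy classes) and Lang-Steinberg applied to the connected centralizers that the number of semisimple classes in $G^F$ is \emph{at least} $|Z_0^F|\, q^r$, with equality precisely when $K$ already projects trivially into $G_{sc}$, i.e., when $G' = G_{sc}$ is simply connected. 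The strict inequality $k(G^F) > q^r$ then follows since $|Z_0^F| \geq 1$ and, because $r > 0$, there exists at least one non-identity unipotent conjugacy class (e.g.\ a regular unipotent), contributing to $k(G^F)$ beyond the semisimple count.

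The hard part is the bare counting in the simply connected case: showing that the sum $\frac{1}{|W|}\sum_{w \in W} |T^{wF}|$ equals $q^r$ is where Steinberg's argument does its real work, relying on the multiplicative structure of the characteristic polynomial of $F$ acting on the character lattice. The reduction via $\pi$ is essentially bookkeeping, but one must verify carefully that the isogeny does not over-identify semisimple classes---this reduces to a finite computation with $H^1(F,K)$ and the centrality of $K$, and determines whether the stated bound is sharp or strict.
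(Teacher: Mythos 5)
Your outline of the simply connected semisimple case and your final step (a nontrivial unipotent class forces $k(G^F)>q^r$) both agree with the paper. The genuine gap is in the reduction via the isogeny $\pi\colon Z_0\times G_{sc}\to G$. Pushing conjugacy classes forward along $\pi$ gives an inequality in the \emph{wrong} direction: on $F$-fixed points $\pi$ is in general not surjective (its cokernel embeds in $H^1(F,K)$), and distinct semisimple classes of $(Z_0\times G_{sc})^F$ that differ by an element of $K$ become identified in $G^F$. Concretely, for $G=PGL(2,q)$ with $q$ odd, the map $SL(2,q)\to PGL(2,q)$ lands in the index-two subgroup $PSL(2,q)$ and identifies the classes of $x$ and $-x$, so the $q$ semisimple classes of $SL(2,q)$ push forward to only about $q/2$ classes of $PGL(2,q)$ --- far short of the $q+1$ semisimple classes that $PGL(2,q)$ actually has. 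The lower bound $|Z_0^F|q^r$ is recovered only because $F$-stable classes of $G$ with disconnected centralizer split into several $G^F$-classes and because there are semisimple elements of $G^F$ outside the image of $\pi$; balancing these gains against the fusion under $\pi$ is not bookkeeping with $H^1(F,K)$, it is essentially the content of Steinberg's theorem for reductive groups.

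The paper sidesteps all of this: it quotes Steinberg's Theorem 14.8, which computes the number of $F$-stable semisimple classes of the \emph{algebraic group} $G$ --- for $G$ connected reductive, not merely simply connected semisimple --- to be exactly $|Z_0^F|q^r$; the Burnside/Weyl-group computation you describe works verbatim for $G$ itself, with the factor $|Z_0^F|$ emerging from the central torus. Then Steinberg's 14.10 (via Lang--Steinberg) shows every $F$-stable class meets $G^F$, and distinct $F$-stable classes give disjoint families of $G^F$-classes, which yields the lower bound with no isogeny needed. When $G'$ is simply connected the centralizer of every semisimple element of $G$ is connected, so each $F$-stable class contributes exactly one $G^F$-class, giving equality. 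If you run your torus computation directly on $G$ rather than on $Z_0\times G_{sc}$, your argument becomes correct and coincides with the paper's.
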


\begin{proof}
By \cite[14.8]{St}, the number of $F$-stable conjugacy classes is
exactly $|Z_0^F|q^r$.  If $G'$ is simply connected, then the centralizer
of any semisimple element is connected, whence there is a bijection
between stable conjugacy classes of semisimple elements and semisimple
conjugacy classes in $G^F$.

On the other hand, every $F$-stable class has a representative in $G^F$
by \cite[14.10]{St}, and so  there are at least $q^r$ semisimple conjugacy
classes in $G^F$ (with equality in the simply connected case).
Since there must be at least $1$ stable class of nontrivial
unipotent elements, the last statement follows.
\end{proof}

The following two asymptotic lemmas will be useful.

\begin{lemma} \label{bign} (Darboux \cite{O}) Suppose that $f(u)$ is
analytic for $|u|<r,r>0$ and has a finite number of simple poles on
$|u|=r$. Let $w_j$ denote the poles, and suppose that $f(u)=\sum_j
\frac{g_j(u)}{1-u/w_j}$ with $g_j(u)$ analytic near $w_j$. Then the
coefficient of $u^n$ in $f(u)$ is
\[ \sum_j \frac{g_j(w_j)}{w_j^n}+o(1/r^n).\] \end{lemma}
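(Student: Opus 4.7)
The approach is the classical subtract-off-the-poles argument underlying Darboux's method. The plan has three steps: (i) form the correction
\[
h(u) := f(u) - \sum_j \frac{g_j(w_j)}{1 - u/w_j};
\]
(ii) show that $h$ extends holomorphically to an open disk $|u| < r'$ with $r' > r$; and (iii) extract coefficients of $u^n$ from both sides.

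For step (ii), the key computation is local at each pole $w_{j_0}$. Since the $w_j$ are distinct, every term of $f$ and of the subtracted sum other than those indexed by $j_0$ is analytic at $w_{j_0}$, so near $w_{j_0}$
\[
h(u) = \frac{g_{j_0}(u) - g_{j_0}(w_{j_0})}{1 - u/w_{j_0}} + (\text{analytic at } w_{j_0}).
\]
The first summand has a removable singularity at $w_{j_0}$ because the numerator vanishes there and $1 - u/w_{j_0}$ has a simple zero, so $h$ extends across every $w_j$. Combined with analyticity on $|u| < r$ and the finiteness of the pole set, this yields analytic extension to some disk of strictly larger radius $r'$.

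For step (iii), Cauchy's estimates applied to $h$ on a disk of radius $r''$ with $r < r'' < r'$ give $[u^n] h(u) = O((r'')^{-n}) = o(r^{-n})$, while the geometric series expansion yields $[u^n]\bigl(g_j(w_j)/(1 - u/w_j)\bigr) = g_j(w_j)/w_j^n$. Adding the contributions produces the claimed asymptotic. The only subtlety — hardly a genuine obstacle — is interpreting the hypothesis that $g_j$ is analytic near $w_j$: the value $g_j(w_j)$ is intrinsic to $f$ and the pole $w_j$, since it equals $\lim_{u \to w_j}(1 - u/w_j)\,f(u)$, so the formula is unambiguous regardless of how the partial-fraction decomposition is presented globally.
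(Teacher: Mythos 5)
The paper does not prove this lemma at all: it is quoted as a classical result of Darboux with a citation to Odlyzko's survey, so there is no in-paper argument to compare against. Your proof is the standard one and is essentially correct: setting $h(u)=f(u)-\sum_j g_j(w_j)/(1-u/w_j)$, the local computation at each $w_{j_0}$ correctly shows the singularity is removable (and your observation that $g_j(w_j)=\lim_{u\to w_j}(1-u/w_j)f(u)$ is exactly the right way to make the subtraction intrinsic to $f$, independent of how the decomposition is written). One point deserves to be made explicit in step (ii): analyticity on $|u|<r$ plus removability at the finitely many $w_j$ does not by itself give continuation to a larger disk --- you also need $f$ to be analytic at every \emph{other} point of the circle $|u|=r$. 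This is the intended reading of ``has a finite number of simple poles on $|u|=r$'' (a pole is an isolated singularity, and the lemma is false if other boundary singularities are permitted), and once that reading is adopted a compactness argument does give $r'>r$ as you claim. Under that reading your route actually proves more than is stated, namely $[u^n]h=O(\rho^{-n})$ for some $\rho>r$, hence a geometric improvement rather than just $o(1/r^n)$. The classical Darboux argument that the $o(1/r^n)$ form reflects is slightly different: one checks only that $h$ extends continuously to the closed disk $|u|\le r$ and then applies the Riemann--Lebesgue lemma to $\theta\mapsto h(re^{i\theta})$, which yields $r^n[u^n]h\to 0$; that version is weaker in conclusion but covers the case of non-polar (e.g.\ algebraic) boundary singularities, which is why Darboux's method is usually stated with the $o$ estimate. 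For the generating functions in this paper, which are meromorphic in larger disks, either argument suffices.
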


\begin{lemma} (\cite{O}) \label{maxmodulus} Suppose that $f(u)$ is
analytic for $|u|<R$. Let $M(r)$ denote the maximum of $|f|$ restricted to
the circle $|u|=r$. Then for any $0<r<R$, the coefficient of $u^n$ in
$f(u)$ has absolute value at most $M(r)/r^n$. \end{lemma}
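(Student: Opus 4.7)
The statement is the standard Cauchy coefficient bound, and the plan is simply to apply Cauchy's integral formula followed by the ML-estimate. Writing $f(u) = \sum_{n \ge 0} a_n u^n$, which converges for $|u| < R$, the first step is to fix any $r$ with $0 < r < R$ and invoke the fact that $f$ is holomorphic on an open disk containing the closed disk of radius $r$, so Cauchy's formula for Taylor coefficients applies:
$$
a_n \;=\; \frac{1}{2\pi i} \oint_{|u|=r} \frac{f(u)}{u^{n+1}}\, du.
$$

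The second step is to bound this contour integral. On the circle $|u| = r$, the modulus of the integrand is at most $M(r)/r^{n+1}$ by the very definition of $M(r)$, and the contour has length $2\pi r$. The ML-inequality therefore yields
$$
|a_n| \;\le\; \frac{1}{2\pi} \cdot 2\pi r \cdot \frac{M(r)}{r^{n+1}} \;=\; \frac{M(r)}{r^n},
$$
which is exactly the claimed bound.

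There is essentially no obstacle here: the argument is a one-line application of Cauchy's estimate. The only thing worth double-checking is that $M(r)$ is finite and is attained, which follows because a function holomorphic on $|u| < R$ is continuous on the compact circle $|u| = r$ for $r < R$. Since the paper cites Odlyzko \cite{O} for the result, in practice one would simply quote the lemma without reproducing this short derivation.
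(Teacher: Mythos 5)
Your proof is correct: it is the standard Cauchy coefficient estimate, obtained by applying Cauchy's integral formula for the $n$th Taylor coefficient over the circle $|u|=r$ and then the ML-inequality. The paper does not prove this lemma at all (it simply cites Odlyzko), so your derivation is exactly the canonical argument the citation stands in for, and nothing further is needed.
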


The following lemma is Euler's pentagonal number theorem (see for instance
page 11 of \cite{A1}).

\begin{lemma} \label{pent} For $q>1$,
\begin{eqnarray*} \prod_{n=1}^{\infty} (1-\frac{1}{q^i}) & = & 1 +
\sum_{n=1}^{\infty} (-1)^n (q^{-\frac{n(3n-1)}{2}}+
q^{-\frac{n(3n+1)}{2}})\\ & = & 1-
q^{-1}-q^{-2}+q^{-5}+q^{-7}-q^{-12}-q^{-15}+ \cdots \end{eqnarray*}
\end{lemma}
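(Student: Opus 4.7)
The plan is to derive this identity via Franklin's classical involution on partitions into distinct parts. After the substitution $x = 1/q$, the statement becomes the formal power series identity
$$\prod_{n=1}^\infty (1 - x^n) \;=\; 1 + \sum_{n=1}^\infty (-1)^n \bigl( x^{n(3n-1)/2} + x^{n(3n+1)/2} \bigr),$$
and this is purely a combinatorial identity in $\mathbb{Z}[[x]]$, so convergence for $q>1$ is automatic.

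First I would expand the left-hand side by choosing, for each $n$, whether to take the $1$ or the $-x^n$ factor. The coefficient of $x^N$ is then $\sum_{\lambda} (-1)^{\ell(\lambda)}$, where the sum ranges over partitions $\lambda = (\lambda_1 > \lambda_2 > \cdots > \lambda_k)$ of $N$ into distinct positive parts and $\ell(\lambda) = k$. So the identity reduces to showing that this signed sum is zero unless $N$ is a pentagonal number $n(3n \pm 1)/2$, in which case it equals $(-1)^n$.

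Next I would exhibit Franklin's sign-reversing involution on the set of strict partitions. For a strict partition $\lambda$, set $s(\lambda) = \lambda_k$ (the smallest part) and let $\sigma(\lambda)$ be the length of the top slope, that is, the largest $j$ with $\lambda_1, \lambda_1 - 1, \ldots, \lambda_1 - j + 1$ all appearing as parts. Roughly speaking: if $s(\lambda) \le \sigma(\lambda)$, remove the bottom row and append $1$ to each of the top $s(\lambda)$ parts; if $s(\lambda) > \sigma(\lambda)$, do the reverse, stripping a $1$ off each of the top $\sigma(\lambda)$ parts and placing them as a new bottom row. Either operation changes $\ell(\lambda)$ by exactly one and preserves $|\lambda|$, so contributions to the coefficient of $x^N$ cancel in pairs, except at partitions where neither move is legal.

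Finally I would analyze the exceptional fixed points. The two moves fail precisely when the bottom row meets the top slope, which forces $\lambda$ to be either $(2k-1, 2k-2, \ldots, k)$ with $s = \sigma = k$ (giving $|\lambda| = k(3k-1)/2$) or $(2k, 2k-1, \ldots, k+1)$ with $s = \sigma + 1 = k+1$ (giving $|\lambda| = k(3k+1)/2$); in both cases $\ell(\lambda) = k$, contributing the announced $(-1)^k$. The main obstacle, and really the only place care is required, is verifying that the two exchange operations are mutually inverse and that the boundary cases are exactly the two listed families; I expect this is a short case analysis comparing $s$ and $\sigma$ and checking whether the bottom row shares indices with the slope.
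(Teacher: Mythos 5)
Your proof is correct, but note that the paper does not actually prove this lemma: it is Euler's pentagonal number theorem, and the authors simply cite it (page 11 of Andrews' \emph{The Theory of Partitions}) and move on. What you have written is the classical self-contained argument via Franklin's involution, and your outline is the standard one: expanding the product gives the signed count $\sum_\lambda (-1)^{\ell(\lambda)}$ over strict partitions of $N$, the exchange between the smallest part and the top slope is sign-reversing and weight-preserving, and the only survivors are the two staircase families $(2k-1,\dots,k)$ and $(2k,\dots,k+1)$ of weights $k(3k-1)/2$ and $k(3k+1)/2$, each with $k$ parts. The one place you should be slightly more careful than ``the bottom row meets the top slope'' is that failure of the involution requires both the overlap \emph{and} the precise numerical coincidence $s=\sigma$ (for the first move) or $s=\sigma+1$ (for the second); when the row and slope overlap but $s\le\sigma-1$ or $s\ge\sigma+2$, one of the two moves still goes through. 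You have flagged this case analysis as the remaining work, and it is indeed short. Compared with the paper, your route buys a complete elementary proof at the cost of a page of combinatorics; the authors' citation is the appropriate choice in context, since the lemma is only used as a numerical estimation device throughout Section 3.
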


Throughout this section quantities which can be easily re-expressed in
terms of the infinite product $\prod_{i=1}^{\infty} (1-\frac{1}{q^i})$
will often arise, and Lemma \ref{pent} gives arbitrarily accurate upper
and lower bounds on these products. Hence we will state bounds like
$\prod_{i=1}^{\infty} (1+\frac{1}{2^i}) = \prod_{i=1}^{\infty}
\frac{(1-\frac{1}{4^i})}{(1-\frac{1}{2^i})} \leq 2.4$ without explicitly
mentioning Euler's pentagonal number theorem on each occasion.

\subsection{$GL(n,q)$ and its relatives}
\label{GL}

To begin we discuss $GL(n,q)$. By a formula of Feit and Fine \cite{FF,M},
the number of conjugacy classes in $GL(n,q)$ is the coefficient of $t^n$
in the generating function \[ \prod_{i=1}^{\infty} \frac{1-t^i}{1-qt^i}.\]
Using clever reasoning and Euler's pentagonal number theorem, it is proved
in \cite{MR} that the number of conjugacy classes of $GL(n,q)$ is less
than $q^n$.

To this we add the following simple proposition.

\begin{prop} \label{asymgl}
\begin{enumerate}
\item For $q$ fixed, $lim_{n \rightarrow \infty} \frac{k(GL(n,q))}{q^n} = 1$.
\item $q^n-q^{n-1} \leq k(GL(n,q)) \leq q^n$. Thus $lim_{q \rightarrow \infty}
\frac{k(GL(n,q))}{q^n} = 1$, and the convergence is uniform in $n$.
\end{enumerate}
\end{prop}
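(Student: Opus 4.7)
The plan is to prove Part 2 first, since both bounds follow immediately from tools already in hand, and then attack Part 1 with Darboux's Lemma. The upper bound $k(GL(n,q)) \le q^n$ in Part 2 is the cited result of \cite{MR}. For the lower bound, I would apply Theorem \ref{ss} to the connected reductive group $G = GL_n$: the connected center is $Z_0 \cong \mathbb{G}_m$, so $|Z_0^F| = q-1$; the derived group $SL_n$ is simply connected; and the semisimple rank is $r = n-1$. Theorem \ref{ss} then gives exactly $(q-1)q^{n-1} = q^n - q^{n-1}$ semisimple conjugacy classes in $GL(n,q)$, which is a lower bound for $k(GL(n,q))$. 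The sandwich $1 - 1/q \le k(GL(n,q))/q^n \le 1$ yields the limit $1$ as $q \to \infty$, and since this bound is independent of $n$, convergence is uniform.

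For Part 1, I would apply Darboux's Lemma (Lemma \ref{bign}) to the Feit--Fine generating function $f(t) = \prod_{i \ge 1}(1-t^i)/(1-qt^i)$ with $q$ fixed. The poles of $f$ lie on the circles $|t| = q^{-1/i}$, so the unique pole on the innermost circle is the simple pole at $t = 1/q$, while every other pole satisfies $|t| \ge q^{-1/2}$. Writing $f(t) = g(t)/(1-qt)$ with
\[ g(t) = (1-t) \prod_{i \ge 2} \frac{1-t^i}{1-qt^i}, \]
the function $g$ is analytic for $|t| < q^{-1/2}$, hence in a neighborhood of $t = 1/q$. Darboux then asserts that the coefficient of $t^n$ in $f(t)$ is $g(1/q)\, q^n + o(q^n)$.

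The crux of Part 1 is the evaluation of $g(1/q)$. The product telescopes:
\[ \prod_{i \ge 2} \frac{1-q^{-i}}{1-q^{-(i-1)}} \;=\; \lim_{N \to \infty} \frac{1-q^{-N}}{1-q^{-1}} \;=\; \frac{1}{1-q^{-1}}, \]
so $g(1/q) = (1-q^{-1}) \cdot (1-q^{-1})^{-1} = 1$. Thus $k(GL(n,q)) = q^n + o(q^n)$, and $k(GL(n,q))/q^n \to 1$ as $n \to \infty$. The only delicate points are verifying the singularity structure of $f$ (so that Darboux's hypotheses are met with a single simple pole on the inner circle) and carrying out the telescoping calculation; both are routine, so I do not anticipate a serious obstacle.
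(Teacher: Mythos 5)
Your proof is correct and takes essentially the same route as the paper: Darboux's Lemma (Lemma \ref{bign}) applied to the Feit--Fine generating function for Part 1 (the paper first rescales $t \mapsto t/q$ so that the dominant simple pole sits at $t=1$ with telescoped value $1$, but this is the identical computation to yours at $t=1/q$), together with the upper bound from \cite{MR} and the count of $q^n - q^{n-1}$ semisimple classes for Part 2. The only cosmetic difference is that you derive the semisimple class count from Theorem \ref{ss} (via $|Z_0^F| = q-1$ and semisimple rank $n-1$), whereas the paper counts monic characteristic polynomials with nonzero constant term directly; both yield the same number.
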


\begin{proof} The generating function for conjugacy classes
of $GL(n,q)$ gives that $\frac{k(GL(n,q))}{q^n}$ is the coefficient of
$t^n$ in \[ \frac{1}{1-t} \prod_{i \geq 1}
\frac{1-t^i/q^i}{1-t^{i+1}/q^i}.\] For the first assertion, use Lemma
\ref{bign}. For the second assertion, the upper bound on $k(GL(n,q))$ was
mentioned earlier, and the lower bound holds since $GL(n,q)$ has
$q^n-q^{n-1}$ semisimple conjugacy classes, corresponding to the possible
characteristic polynomials. \end{proof}

{\it Remark:} In fact $\frac{k(GL(n,q))}{q^n}$ is even closer to 1 than
one might suspect from Proposition \ref{asymgl}. Indeed, \[ \frac{1}{1-t}
\prod_{i \geq 1} \frac{1-t^i/q^i}{1-t^{i+1}/q^i} - \frac{1}{1-t} \] is
analytic for all $|t|<q^{1/2}$ (subtracting the $(1-t)^{-1}$ removed the
pole at $t=1$). Thus Lemma \ref{maxmodulus} gives that for any $0<
\epsilon<1/2$, $\left| \frac{k(G)}{q^n}-1 \right| \leq
\frac{C_{q,\epsilon}}{q^{n(1/2-\epsilon)}}$ where $C_{q,\epsilon}$ is a
constant depending on $q$ and $\epsilon$ (which one could make explicit
with more effort). This is consistent with the fact (\cite{BFH},
\cite{MR}) that $k(GL(n,q))$ is a polynomial in $q$ with lead term $q^n$
and vanishing coefficients of $q^{n-1}, \cdots, q^{\lfloor \frac{n+1}{2}
\rfloor}$.

\vspace{2mm}

    Macdonald \cite{M} derived formulas for the number of conjugacy
classes of $SL(n,q), PGL(n,q)$ and $PSL(n,q)$ in terms of $k(GL(n,q))$. As
these will be used below it is useful to recall them. Let \[ \phi_r(n)=n^r
\prod_{p|n} (1-p^{-r}) \] where the product is over primes dividing $n$.
Thus $\phi_1(n)$ is Euler's $\phi$ function. Macdonald showed that
\[ k(SL(n,q)) = \frac{1}{q-1} \sum_{d|n,q-1} \phi_2(d) k(GL(n/d,q)), \]
\[ k(PGL(n,q)) = \frac{1}{q-1} \sum_{d|n,q-1}
 \phi_1(d) k(GL(n/d,q)), \]  and
\[ k(PSL(n,q)) = \frac{1}{(q-1) \gcd(n,q-1)} \sum_{d_1,d_2} \phi_1(d_1)
\phi_2(d_2) k(GL(\frac{n}{d_1d_2},q)) \] where the sum is over all pairs
of divisors $d_1,d_2$ of $q-1$ such that $d_1d_2$ divides $n$.

\begin{prop} \label{ksl} \begin{enumerate}
\item $q^{n-1} < k(SL(n,q)) \leq  2.5 q^{n-1}$.
\item $k(SL(n,q)) \leq q^{n-1}+3q^{n-2}$.
 Thus $lim_{q \rightarrow \infty} \frac{k(SL(n,q))}{q^{n-1}} = 1$, and
the convergence is uniform in $n$.
\item For $q$ fixed, $lim_{n \rightarrow \infty}
\frac{k(SL(n,q))}{q^{n-1}} = \frac{1}{1-1/q}$.
\end{enumerate}
\end{prop}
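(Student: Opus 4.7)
The plan is to combine Macdonald's formula
\[ k(SL(n,q)) = \frac{1}{q-1}\sum_{d \mid \gcd(n,q-1)} \phi_2(d)\, k(GL(n/d,q)) \]
with the bounds of Proposition \ref{asymgl} and Steinberg's lower bound (Theorem \ref{ss}). For the lower bound in (1), $SL_n$ is a simply connected simple algebraic group of rank $n-1$ defined over $\F_q$, so Theorem \ref{ss} immediately yields $k(SL(n,q)) > q^{n-1}$.

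For the upper bounds, I would isolate the $d=1$ summand of Macdonald's sum. This summand is $k(GL(n,q))/(q-1) \le q^n/(q-1)$ by Proposition \ref{asymgl}; since $q^n/(q-1) = q^{n-1} + q^{n-1}/(q-1)$ and $q/(q-1) \le 2$ for $q \ge 2$, it contributes at most $q^{n-1} + 2q^{n-2}$. For the $d \ge 2$ summands I would use $\phi_2(d) \le d^2$, $k(GL(n/d,q)) \le q^{n/d} \le q^{n/2}$, together with the Jordan totient identity $\sum_{d \mid m} \phi_2(d) = m^2$ applied to $m = \gcd(n,q-1) \le q-1$. This yields a total $d \ge 2$ contribution of at most $(q-1)q^{n/2}$, which for $n$ large enough relative to $q$ is negligible compared to $q^{n-2}$, proving (2); the finite list of remaining small-$n$ pairs can be checked directly from Macdonald's formula. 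Part (1) follows from (2) since $q + 3 \le 2.5q$ for $q \ge 2$, and the uniform convergence $k(SL(n,q))/q^{n-1} \to 1$ as $q \to \infty$ follows from $1 + 3/q \to 1$.

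For part (3), dividing by $q^{n-1}$ and sending $n \to \infty$ with $q$ fixed, the above analysis shows the $d \ge 2$ contribution is $O(q^{1-n/2}) = o(1)$, so
\[ \frac{k(SL(n,q))}{q^{n-1}} = \frac{q}{q-1}\cdot \frac{k(GL(n,q))}{q^n} + o(1) \longrightarrow \frac{q}{q-1} = \frac{1}{1-1/q} \]
by Proposition \ref{asymgl}(1). The main obstacle is securing the clean constant in part (2): the $d=1$ term alone already uses $q^{n-1} + 2q^{n-2}$ of the budget, and when $\gcd(n,q-1)$ is close to $q-1$ the $d \ge 2$ terms can genuinely contribute $\Theta(q^{n-2})$ (for instance $k(SL(3,4)) = 28 = q^{n-1}+3q^{n-2}$ already saturates the bound). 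Refining the crude estimate $\phi_2(d) \le d^2$ and treating small $n$ by hand are thus both essential.
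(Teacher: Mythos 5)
Your proof is correct and follows essentially the same route as the paper's: Steinberg's theorem (Theorem \ref{ss}) for the lower bound, Macdonald's formula with the $d=1$ term bounded by $q^n/(q-1)\le q^{n-1}+2q^{n-2}$ and the $d\ge 2$ terms bounded crudely by roughly $q^{n/2+1}$ (which falls under $q^{n-2}$ once $n\ge 6$), followed by a direct check of the small-$n$ cases from Macdonald's formula. The only cosmetic differences are that you deduce the upper bound in (1) from (2) via $q+3\le 2.5q$, and you handle $q=2$ uniformly within the same estimate, where the paper instead cites a lemma of Maslen and Rockmore.
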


\begin{proof}   The lower bound holds by Theorem \ref{ss}.
From Macdonald's formula for $k(SL(n,q))$ one checks that $k(SL(2,q)) \le
q + 4$ and $k(SL(3,q)) \leq q^2+q+8$ (and that parts 1 and 2 hold for
$q=2,3$ and $n \leq 4$). So assume that $n \ge 4$.

If $q=2$, then the upper bound in part 1 follows by \cite[Lemma A.1]{MR}.
So assume also that $q \geq  3$. Since $k(GL(n/d,q)) \leq q^{n/d}$, it
follows that

\begin{eqnarray*} k(SL(n,q)) & \leq & \frac{1}{q-1} \sum_{d|n,q-1} d^2 q^{n/d}\\
& \leq & \frac{1}{q-1} [q^n + (q-1)^2 (q^{n/2}+q^{n/2-1}+\cdots)]\\ & \leq
& q^n/(q-1) + q^{n/2+1}.
\end{eqnarray*} For $q \geq 3, n \geq 4$, this is easily seen to be at
most $2.5 q^{n-1}$.

The upper bound in part 2 when $q=2$ follows by \cite[Lemma A.1]{MR}. For
$q \geq 3, n \geq 6$ one has that \[ q^n/(q-1) + q^{n/2+1} \leq q^{n-1} +
3 q^{n-2}, \] and one easily checks from Macdonald's formula that
$k(SL(n,q)) \leq q^{n-1}+3q^{n-2}$ for $q \geq 3, n \leq 5$.

For part 3, note from Macdonald's formula that \[
\frac{k(SL(n,q))}{q^{n-1}} = \frac{1}{1-1/q} \sum_{d|n,q-1}
 \phi_2(d) \frac{k(GL(n/d,q))}{q^n}. \] Since $q$ is fixed, it is clear
from Proposition \ref{asymgl} that only the $d=1$ term contributes in the
$n \rightarrow \infty$ limit, yielding the result.
\end{proof}

The following corollary concerns groups between $SL(n,q)$ and $GL(n,q)$ or
between $PSL(n,q)$ and $PGL(n,q)$.

\begin{cor} \label{PGLbetween}
\begin{enumerate}
\item Suppose that $SL(n,q) \subseteq H \subseteq GL(n,q)$, and let
$j$ denote the index of $H$ in $GL(n,q)$. Then \[ \frac{q^{n-1}(q-1)}{j}
\leq k(H) \leq \frac{q-1}{j} k(SL(n,q)) \leq \frac{q^n+3q^{n-1}}{j}. \]

\item Suppose that $PSL(n,q) \subseteq H \subseteq PGL(n,q)$,
and let $j$ denote the index of $H$ in $PGL(n,q)$. Then
\[ \frac{q^{n-1}}{j} \leq k(H) \leq \frac{\gcd(n,q-1)}{j} k(PSL(n,q))
\leq \frac{q^{n-1}+5q^{n-2}}{j}. \]
\end{enumerate}
\end{cor}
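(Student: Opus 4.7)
The plan is to exploit the fact that in each part the quotient $H/N$ (with $N = SL(n,q)$ in part 1 and $N = PSL(n,q)$ in part 2) lies inside a cyclic group --- namely $GL(n,q)/SL(n,q) \cong \F_q^\times$ of order $q-1$ in part 1, and $PGL(n,q)/PSL(n,q)$ of order $\gcd(n,q-1)$ in part 2. Hence $H/N$ is itself cyclic, with $k(H/N) = [H:N]$ equal to $(q-1)/j$ in part 1 and $\gcd(n,q-1)/j$ in part 2. Both Gallagher-type inequalities in Lemma \ref{Boblemma} then supply the middle bounds of the two chains at once: Lemma \ref{Boblemma}(2) yields $k(H) \le k(N) k(H/N) = [H:N]\cdot k(N)$, and Lemma \ref{Boblemma}(1) yields $k(H) \ge k(G)/j$, where $G = GL(n,q)$ or $PGL(n,q)$.

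For the leftmost (lower) bounds I invoke Theorem \ref{ss}. Applied to $GL_n$, whose connected centre is a one-dimensional torus (so $|Z_0^F|=q-1$), whose semisimple rank is $n-1$, and whose derived group $SL_n$ is simply connected, it gives exactly $(q-1)q^{n-1}$ semisimple classes, and in particular $k(GL(n,q)) \ge (q-1)q^{n-1}$. Applied to $PGL_n$, where $Z_0 = 1$ and the semisimple rank is $n-1$, it gives $k(PGL(n,q)) \ge q^{n-1}$. Dividing by $j$ produces the two lower bounds.

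For the rightmost (upper) bounds I combine the middle bound with a direct estimate on $k(N)$. In part 1, Proposition \ref{ksl}(2) gives
$(q-1) k(SL(n,q)) \le (q-1)(q^{n-1}+3q^{n-2}) = q^n + 2q^{n-1} - 3q^{n-2} \le q^n + 3q^{n-1}$, which closes the chain after dividing by $j$. In part 2, the corresponding estimate I need is $\gcd(n,q-1) \cdot k(PSL(n,q)) \le q^{n-1} + 5q^{n-2}$. For this I start from Macdonald's formula in the form
$$ \gcd(n,q-1) \cdot k(PSL(n,q)) = \frac{1}{q-1} \sum_{d_1, d_2} \phi_1(d_1) \phi_2(d_2) k(GL(n/(d_1 d_2), q)), $$
isolate the dominant $(d_1,d_2)=(1,1)$ term, which contributes $k(GL(n,q))/(q-1) \le q^n/(q-1)$, and bound the remaining terms using $\phi_1(d) \le d$, $\phi_2(d) \le d^2$, and $k(GL(n/(d_1 d_2),q)) \le q^{n/(d_1 d_2)}$, in direct analogy with the estimate carried out for $k(SL(n,q))$ in the proof of Proposition \ref{ksl}.

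The main obstacle is making this $PSL$ estimate hold uniformly in small cases: when $n$ and $q-1$ share a divisor close to $\min(n,q-1)$ the crude bounds $\phi_i(d) \le d^i$ and $k(GL(n/d,q)) \le q^{n/d}$ are too loose, and as in the treatment of $SL(2,q)$ and $SL(3,q)$ in Proposition \ref{ksl} one must fall back on a direct evaluation of Macdonald's sum for small $n$ and $q$. Everything else in the argument reduces to the single observation that $H/N$ is cyclic.
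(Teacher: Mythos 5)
Your proposal is correct and follows essentially the same route as the paper: Gallagher's Lemma \ref{Boblemma} supplies both the sandwich $k(G)/j \le k(H) \le [H:N]\,k(N)$, the Steinberg count of semisimple classes gives the lower bounds, and Macdonald's formulas together with the estimates of Proposition \ref{ksl} give the rightmost upper bounds. The only (cosmetic) difference is that you derive the $PGL(n,q)$ lower bound directly from Theorem \ref{ss} applied to the adjoint group, while the paper deduces it from Macdonald's formula and the $GL(n,q)$ count.
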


\begin{proof} Let $H$ be as in part 1 of the corollary. Then $k(H) \geq
\frac{k(GL(n,q))}{j} \geq \frac{q^{n-1}(q-1)}{j}$, where the first
inequality is Lemma \ref{Boblemma} and the second is the fact that
$GL(n,q)$ has $q^{n-1}(q-1)$ semisimple conjugacy classes. The inequality
$k(H) \leq \frac{q-1}{j} k(SL(n,q))$ comes from Lemma \ref{Boblemma}, and
Proposition \ref{ksl} yields the inequality $(q-1) k(SL(n,q)) \leq
q^n+3q^{n-1}$.

Let $H$ be as in part 2 of the corollary. Then $k(H) \geq
\frac{k(PGL(n,q))}{j} \geq \frac{q^{n-1}}{j}$, where the first inequality
is Lemma \ref{Boblemma} and the second is the fact that $PGL(n,q)$ has at
least $q^{n-1}$ conjugacy classes (clear from Macdonald's formula and the
fact that $GL(n,q)$ has at least $q^{n-1}(q-1)$ conjugacy classes). The
inequality $k(H) \leq \frac{\gcd(n,q-1)}{j} k(PSL(n,q))$ comes from Lemma
\ref{Boblemma}, and the inequality $\gcd(n,q-1) k(PSL(n,q)) \leq q^{n-1}+
5q^{n-2}$ follows from Macdonald's formula for $k(PSL(n,q))$ and an
analysis similar to that in Proposition \ref{ksl}.
\end{proof}

We close this section with the following exact formula for the number of
conjugacy classes of a group $H$ between $SL(n,q)$ and $GL(n,q)$. It
involves the quantity $\phi_2$ defined earlier in this section.

\begin{prop} \label{PGLbetween2} Suppose that $SL(n,q) \subseteq H \subseteq GL(n,q)$
and let $j$ denote the index of $H$ in $GL(n,q)$. Then \[ k(H) =
\frac{1}{j} \sum_{d|(j,n)} \phi_2(d) k(GL(\frac{n}{d},q)). \]
\end{prop}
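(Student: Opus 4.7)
The plan is to adapt Macdonald's derivation of his formula $k(SL(n,q)) = \frac{1}{q-1}\sum_{d\mid (n,q-1)} \phi_2(d)\, k(GL(n/d,q))$, which is the special case $j = q-1$ of the claim. The key is to treat $H$ as $\det^{-1}(D)$, where $D\le\F_q^\times$ is the unique subgroup of index $j$ (such a unique subgroup exists since $GL(n,q)/SL(n,q)\cong \F_q^\times$ is cyclic of order $q-1$).

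First, I would decompose each $G$-conjugacy class inside $H$ into $H$-classes using the standard index formula: for $G=GL(n,q)$ and $x\in H$, the $G$-class of $x$ splits into
\[
[G : H\cdot C_G(x)] = [\F_q^\times : D\cdot\det C_G(x)]
\]
$H$-conjugacy classes, since $G/N\cong\F_q^\times$ under $\det$ (with $N=SL(n,q)$). Summing gives
\[
k(H) = \sum_{\substack{[x]_G \\ \det x\in D}} [\F_q^\times : D\cdot\det C_G(x)].
\]

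Next, I would rewrite the index using Pontryagin duality on $\F_q^\times$: for any subgroup $S\le\F_q^\times$, $[\F_q^\times:S] = |\{\chi\in\widehat{\F_q^\times}:\chi|_S=1\}|$. The characters trivial on $D$ form $\widehat{\F_q^\times/D}$, cyclic of order $j$. Combining this with indicator manipulations that handle the constraint $\det x\in D$, one arrives at a double sum of the form
\[
k(H) = \frac{1}{j}\sum_{\chi\in\widehat{\F_q^\times/D}} N(\chi), \qquad N(\chi) = \#\bigl\{[x]_G : \chi\circ\det \text{ trivial on } C_G(x)\bigr\},
\]
where the $\chi\circ\det$ is a linear character of $G$ of order dividing $j$.

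Now the main computation. For $\chi$ of exact order $d\mid j$, the condition that $\chi\circ\det$ is trivial on $C_G(x)$ translates, via the polynomial-partition parametrization of conjugacy classes of $GL(n,q)$ together with Green's and Wall's explicit description of centralizers, into a $d$-divisibility constraint on the data $(f,\lambda_f)$. The standard Macdonald calculation then shows that the sum of $N(\chi)$ over characters $\chi$ of exact order $d$ equals $\phi_2(d)\,k(GL(n/d,q))$ when $d\mid n$, and is zero otherwise (because the constraint cannot be satisfied). Collecting terms by the order $d$ and summing over $d\mid\gcd(n,j)$ yields
\[
k(H) = \frac{1}{j}\sum_{d\mid (j,n)} \phi_2(d)\, k(GL(n/d,q)),
\]
as claimed. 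As a sanity check, setting $j=1$ gives $k(H)=k(GL(n,q))$, while $j=q-1$ recovers Macdonald's formula exactly.

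The main obstacle is the combinatorial step showing that characters of exact order $d$ contribute $\phi_2(d)\,k(GL(n/d,q))$: this requires tracking how $\mu_d$ acts on polynomial-partition data and recognizing the Jordan totient $\phi_2(d)$ as counting primitive orbits, exactly as in Macdonald's original derivation. The only genuinely new feature of the general case is that restricting characters to $\widehat{\F_q^\times/D}$ automatically cuts the sum down to $d\mid j$, which is how the index $j$ enters the final range of summation.
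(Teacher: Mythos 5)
Your overall strategy (reduce to a weighted count of $GL(n,q)$-classes via the splitting index $[G:HC_G(x)]$ and then invoke Macdonald's combinatorics) is reasonable, and your first displayed identity is correct. The argument breaks at the next step: the passage to $k(H)=\frac{1}{j}\sum_{\chi}N(\chi)$ with $N(\chi)=\#\{[x]_G:\chi\circ\det|_{C_G(x)}=1\}$ is false. Write $s_{[x]}=\#\{\chi\in\widehat{\F_q^\times/D}:\chi\circ\det|_{C_G(x)}=1\}=[\F_q^\times:D\cdot\det C_G(x)]$. Your first identity says $k(H)=\sum_{[x]_G\subseteq H}s_{[x]}$, and removing the constraint $\det x\in D$ via $\frac{1}{j}\sum_{\psi}\psi(\det x)$ produces a genuine double sum over pairs $(\psi,\chi)$ of characters; it does not collapse to $\frac{1}{j}\sum_\chi N(\chi)$. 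The correct collapsed form is $k(H)=\frac{1}{j}\sum_{[x]_G}s_{[x]}^2$ --- note the square --- whereas $\frac{1}{j}\sum_\chi N(\chi)=\frac{1}{j}\sum_{[x]_G}s_{[x]}$. Concretely, take $n=2$, $q=3$, $j=2$, so $H=SL(2,3)$ and $k(H)=7$: then $N(\mathbf{1})=8$ while $N(\epsilon)=2$ for the nontrivial character $\epsilon$ (only the two non-semisimple classes $\pm(I+N)$ have $\det C_G(x)=\{1\}$), so your formula yields $\frac{1}{2}(8+2)=5$, which is $k(PGL(2,3))$, not $7$. Your subsequent claim that characters of exact order $d$ contribute $\phi_2(d)\,k(GL(n/d,q))$ to $\sum_\chi N(\chi)$ is likewise false: there are $\phi_1(d)$ such characters and each satisfies $N(\chi)=k(GL(n/d,q))$ when $d\mid n$, so your single sum actually evaluates to $\frac{1}{j}\sum_{d\mid(j,n)}\phi_1(d)k(GL(n/d,q))$, the $PGL$-type formula. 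The Jordan totient $\phi_2(d)=d^2\prod_{p\mid d}(1-p^{-2})$ in the target is the signature of the missing square.

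The paper's proof sidesteps this by working directly with the partition $\nu$ attached to a class: following Macdonald (pp.\ 33--37), the number of $G$-classes of type $\nu$ lying in $H$ is $\gcd(j,\nu_1,\dots,\nu_r)c_\nu/j$ (an equidistribution statement for determinants within a type) and each such class splits into $\gcd(j,\nu_1,\dots,\nu_r)$ $H$-classes, giving $k(H)=\frac{1}{j}\sum_{|\nu|=n}\gcd(j,\nu_1,\dots,\nu_r)^2c_\nu$; here $\gcd(j,\nu_1,\dots,\nu_r)$ is exactly your $s_{[x]}$, and the square is what turns $\phi_1$ into $\phi_2$ in the final resummation. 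Your approach is repairable --- either carry the honest double character sum, or insert the equidistribution fact for the number of type-$\nu$ classes inside $H$ --- but as written it proves a different (and, whenever $\gcd(j,n)>1$, strictly smaller) quantity.
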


\begin{proof} As in \cite{M}, to each conjugacy class of $GL(n,q)$,
there is associated a partition $\nu$ of $n$. To describe this recall that
conjugacy classes of $GL(n,q)$ are parametrized by associating to each
monic irreducible polynomial $p(x)$ over $F_q$ with non-zero constant term
a partition; if the partition corresponding to $p(x)$ has $m_i$ parts of
size $i$, then it contributes $deg(p) m_i$ parts of size $i$ to the
partition $\nu$. Throughout the proof we let $c_{\nu}$ denote the number
of conjugacy classes of $GL(n,q)$ of type $\nu$. We also let
$\nu_1,\cdots,\nu_r$ denote the parts of $\nu$.

    Given the partition $\nu$, we determine the number of conjugacy classes of
$GL(n,q)$ of type $\nu$ in $H$, multiply it by the number of $H$ classes
into which each such class splits (this number depends only on $\nu$) and
then sum over all $\nu$. Arguing as on pages 33-36 of \cite{M} shows that
the number of conjugacy classes of $GL(n,q)$ of type $\nu$ in $H$ is \[
\frac{\gcd(j,\nu_1,\cdots,\nu_r) c_{\nu}}{j}\] and that each such class
splits into $\gcd(j,\nu_1,\cdots,\nu_r)$ many $H$ classes. Thus the total
number of conjugacy classes of $H$ is \[ \frac{1}{j} \sum_{|\nu|=n}
\gcd(j,\nu_1,\cdots,\nu_r)^2 c_{\nu}. \] Arguing as on pages 36-37 of
\cite{M}, this can be rewritten as
\[ \frac{1}{j} \sum_{d|(j,n)} \phi_2(d) k(GL(\frac{n}{d},q)).\]
\end{proof}

\subsection{$GU(n,q)$ and its relatives} \label{U}

The paper \cite{MR} proves that \[ k(GU(n,q)) \leq q^n \prod_{i \geq 1}
\frac{1+1/q^i}{1-1/q^i} \leq 8.26 q^n.\] Proposition \ref{unlimit} gives
an asymptotic result.

\begin{prop} \label{unlimit}
\begin{enumerate}
\item For $q$ fixed, $lim_{n \rightarrow \infty} \frac{k(GU(n,q))}{q^n} =
\prod_{i \geq 1} \frac{1+1/q^i}{1-1/q^i}$.
\item $q^n+q^{n-1} \leq k(GU(n,q)) \leq q^n + Aq^{n-1}$ for a universal
constant $A$; one can take $A=16$ for $q=2$ and $A=7$ for $q \geq 3$. Thus
$lim_{q \rightarrow \infty} \frac{k(GU(n,q))}{q^n} = 1$, and the
convergence is uniform in $n$.
\end{enumerate}
\end{prop}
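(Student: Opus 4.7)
The approach parallels the treatment of $k(GL(n,q))$ in Proposition \ref{asymgl}: I would start from Wall's generating function
$$\sum_{n \geq 0} k(GU(n,q))\, t^n \;=\; \prod_{i \geq 1} \frac{1+t^i}{1-qt^i},$$
which follows either from Wall's parametrization of conjugacy classes or from a direct adaptation of the Feit--Fine cycle-index argument. Substituting $t \mapsto t/q$ converts the series for $k(GU(n,q))/q^n$ into
$$F(t/q) \;=\; \frac{1+t/q}{1-t}\prod_{i \geq 2}\frac{1+t^i/q^i}{1-t^i/q^{i-1}},$$
whose only singularity in $|t|<q^{1/2}$ is the simple pole at $t=1$ (the next poles lie on $|t|=q^{1/2}$, coming from the $i=2$ denominator factor). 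Writing $F(t/q)=g(t)/(1-t)$ with $g$ analytic on $|t|<q^{1/2}$, a reindexing of the denominator product gives
$$g(1) \;=\; \prod_{i \geq 1}\frac{1+1/q^i}{1-1/q^i},$$
and Darboux's Lemma \ref{bign} then delivers part (1).

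For the upper bound in part (2), the key observation is that $g(t)$ has non-negative Taylor coefficients (each factor of the infinite product does), so
$$\frac{k(GU(n,q))}{q^n} \;=\; \sum_{k=0}^{n}[t^k]g(t)\;\leq\; g(1),$$
which recovers the Maslen--Rockmore bound $k(GU(n,q))\leq q^n\prod_{i \geq 1}(1+1/q^i)/(1-1/q^i)$ cited just before the statement. It then remains to verify $g(1)\leq 1+A/q$ with $A=16$ for $q=2$ and $A=7$ for $q\geq 3$. The $q=2$ case is immediate from the numerical bound $g(1)\leq 8.26$ already recorded above. For $q\geq 3$, I would separate off the $i=1$ factor $(q+1)/(q-1) = 1+2/(q-1)$ and bound the tail $\prod_{i\geq 2}(1+1/q^i)/(1-1/q^i)$ using Euler's pentagonal Lemma \ref{pent} (after rewriting it as $\prod_{i\geq 2}(1-1/q^{2i})/(1-1/q^i)^2$), then check the inequality $(g(1)-1)q\leq 7$ directly at $q=3,4$ and invoke the evident monotonicity of $g(1)$ in $q$ for the remaining values.

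The lower bound $k(GU(n,q))\geq q^n+q^{n-1}$ follows from Theorem \ref{ss}: one realizes $GU(n,q)=G^F$ with $G=GL_n$ under the twisted Steinberg--Lang endomorphism, notes that $G'=SL_n$ is simply connected, that the semisimple rank of $G$ equals $n-1$, and that $|Z_0^F|=q+1$; the equality case of Theorem \ref{ss} then gives exactly $(q+1)q^{n-1}=q^n+q^{n-1}$ semisimple conjugacy classes. The uniform convergence $k(GU(n,q))/q^n\to 1$ as $q\to\infty$ is then immediate from the two-sided bound. I expect the main obstacle to be the explicit constants: since $g(1)-1$ is of order $2/q$ as $q$ grows, the bound $g(1)\leq 1+A/q$ leaves essentially no slack at $q=2,3$, so Lemma \ref{pent} must be applied carefully rather than through loose exponential estimates to close those endpoints cleanly.
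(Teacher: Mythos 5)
Your proposal is correct and follows essentially the same route as the paper: Wall's generating function plus Darboux's Lemma \ref{bign} for part (1), the nonnegative-coefficient summation trick to recover the Maslen--Rockmore bound, and Lemma \ref{pent} to extract $g(1)\le 1+7/q$ for $q\ge 3$ (the paper bounds $\prod_{i\ge 1}(1-q^{-i})^{-2}\le (1-1/q-1/q^2)^{-2}$ directly rather than splitting off the $i=1$ factor, but this is a cosmetic difference). The only other deviation is that you derive the lower bound $q^n+q^{n-1}$ from Theorem \ref{ss} applied to the twisted form of $GL_n$, whereas the paper cites Wall's count of semisimple classes directly; both give the identical count, so the argument is sound.
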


\begin{proof} Wall \cite{W} shows that $k(GU(n,q))$ is the coefficient
of $t^n$ in \[ \prod_{i=1}^{\infty} \frac{1+t^i}{1-qt^i}.\] Thus
$\frac{k(GU(n,q))}{q^n}$ is the coefficient of $t^n$ in \[ \frac{1}{1-t}
\prod_{i=1}^{\infty} \frac{1+t^i/q^i}{1-t^{i+1}/q^i}.\] For the first
assertion, use Lemma \ref{bign}.

For the second assertion, the lower bound comes from the easily proved
fact (essentially on page 35 of \cite{W}) that $GU(n,q)$ has $q^n+q^{n-1}$
many semisimple conjugacy classes. For the upper bound, the assertion when
$q=2$ is immediate from the fact that $k(GU(n,q)) \leq 8.26 q^n$. For $q
\geq 3$, recall that
\[ k(GU(n,q)) \leq q^n \prod_{i \geq 1} \frac{1+1/q^i}{1-1/q^i}.\]
Lemma \ref{pent} gives that \[ \prod_{i \geq 1} \frac{1+1/q^i}{1-1/q^i} =
\prod_{i \geq 1} \frac{1-1/q^{2i}}{(1-1/q^i)^2} \leq
\frac{1}{(1-1/q-1/q^2)^2} \leq 1+\frac{7}{q}, \] where the last inequality
is an easy calculus exercise.
\end{proof}

{\it Remarks:}
\begin{enumerate}
\item The value of the limit in part 1 of Proposition \ref{unlimit} is
8.25... when $q=2$.

\item As in the remark after Proposition \ref{asymgl}, the convergence of
$\frac{k(GU(n,q))}{q^n}$ to $\prod_{i \geq 1} \frac{1+1/q^i}{1-1/q^i}$ is
$O(q^{-n(1/2-\epsilon)})$ for any $0<\epsilon<1/2$. Indeed, subtracting
off the simple pole at $t=1$ from the generating function in Proposition
\ref{unlimit} gives that
\[ \frac{1}{1-t} \prod_{i=1}^{\infty} \frac{1+t^i/q^i}{1-t^{i+1}/q^i} -
\frac{1}{1-t} \prod_{i \geq 1} \frac{1+1/q^i}{1-1/q^i} \] is analytic for
all $|t|<q^{1/2}$, so the claim follows from Lemma \ref{maxmodulus}.
\end{enumerate}

Macdonald \cite{M} derived useful formulas for $k(SU(n,q))$, $k(PGU(n,q))$
and $k(PSU(n,q))$. These involve the quantity
\[ \phi_r(n) = n^r \prod_{p|n} (1-p^{-r}) \] where the product is over all
primes dividing $n$. He showed that
\[ k(SU(n,q)) = \frac{1}{q+1} \sum_{d|n,q+1} \phi_2(d) k(GU(n/d,q)),\]
\[ k(PGU(n,q)) = \frac{1}{q+1} \sum_{d|n,q+1} \phi_1(d) k(GU(n/d,q)),\]
and \[ k(PSU(n,q)) = \frac{1}{(q+1)\gcd(n,q+1)} \sum_{d_1,d_2} \phi_1(d_1)
\phi_2(d_2) k(GU(\frac{n}{d_1d_2},q)) \] where the sum is over all pairs
of divisors $d_1,d_2$ of $q+1$ such that $d_1d_2$ divides $n$.

\begin{prop} \label{SU}
\begin{enumerate}
\item $q^{n-1} \leq k(SU(n,q)) \leq 8.26 q^{n-1}$.

\item $k(SU(n,q)) \leq q^{n-1}+Aq^{n-2}$ for a universal constant $A$; one
can take $A=16$ for $q=2$ and $A=7$ for $q \geq 3$. Thus $lim_{q
\rightarrow \infty} \frac{k(SU(n,q))}{q^{n-1}} = 1$, and the convergence
is uniform in $n$.

\item For $q$ fixed, $lim_{n \rightarrow \infty} \frac{k(SU(n,q))}{q^{n-1}} =
\frac{1}{(1+1/q)} \prod_{i \geq 1} \frac{1+1/q^i}{1-1/q^i}$.
\end{enumerate}
\end{prop}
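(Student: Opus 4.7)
My plan is to mirror the proof of Proposition \ref{ksl} for $SL(n,q)$, using Macdonald's formula
\[ k(SU(n,q)) = \frac{1}{q+1} \sum_{d \mid n,\, d \mid q+1} \phi_2(d)\, k(GU(n/d,q)) \]
as the bridge between $SU$ and $GU$, combined with the bounds on $k(GU(m,q))$ already established in Proposition \ref{unlimit}.

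For the lower bound in part (1), Theorem \ref{ss} applied to $SU(n,q)$ (which is simply connected of semisimple rank $n-1$) gives $k(SU(n,q)) > q^{n-1}$ immediately. For the upper bound in part (1), I substitute the global bound $k(GU(m,q)) \le 8.26\, q^m$ of \cite{MR} into Macdonald's formula. The $d=1$ summand contributes at most $8.26\, q^n/(q+1) \le 8.26\, q^{n-1}$. Each term with $d \ge 2$ satisfies $\phi_2(d) \le d^2 \le (q+1)^2$ and $q^{n/d} \le q^{n/2}$; bounding the divisor count crudely shows that the total contribution of these terms is $O(q^{n/2+1})$, which is absorbed in the $8.26\, q^{n-1}$ bound once $n$ is sufficiently large. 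Finitely many small $n$ are checked by hand from Macdonald's formula.

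For part (2), I substitute the sharper Proposition \ref{unlimit}(2) estimate $k(GU(m,q)) \le q^m + A q^{m-1}$ (with $A = 16$ for $q = 2$ and $A = 7$ for $q \ge 3$) into Macdonald's formula. The $d=1$ summand is at most $(q^n + A q^{n-1})/(q+1)$, which the identity $(q^{n-1} + A q^{n-2})(q+1) = q^n + A q^{n-1} + q^{n-1} + A q^{n-2}$ shows is bounded by $q^{n-1} + A q^{n-2}$. The $d \ge 2$ contribution is again $O(q^{n/2+1})$, which fits comfortably inside the $A q^{n-2}$ slack for $n$ large, with small $n$ handled directly. For part (3), dividing Macdonald's formula by $q^{n-1}$ gives
\[ \frac{k(SU(n,q))}{q^{n-1}} = \frac{1}{1+1/q}\sum_{d \mid n,\, d \mid q+1} \phi_2(d)\, \frac{k(GU(n/d,q))}{q^n}. \]
For fixed $q$, each summand with $d \ge 2$ satisfies $k(GU(n/d,q))/q^n \le 8.26\, q^{n/d - n} \to 0$ as $n \to \infty$, while the $d=1$ summand converges to $\prod_{i \ge 1} (1+q^{-i})/(1-q^{-i})$ by Proposition \ref{unlimit}(1). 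Combining yields the asserted limit.

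The main obstacle will be pinning down the explicit constants $8.26$, $16$, and $7$ sharply in parts (1) and (2): the asymptotic $O(q^{n/2+1})$ bound on the $d \ge 2$ terms is too coarse to control what happens at small $n$, so a short direct verification from Macdonald's formula is needed for the low-rank cases (particularly at $q = 2$, where the divisors of $q+1 = 3$ behave specially and the ambient constant from $GU$ is largest).
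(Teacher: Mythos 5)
Your proposal is correct and follows essentially the same route as the paper: the lower bound from Theorem \ref{ss}, the upper bounds by feeding the $GU$ estimates of Proposition \ref{unlimit} into Macdonald's formula with the $d=1$ term dominating and the $d\ge 2$ terms absorbed for large $n$ (small $n$ checked directly, which the paper does for $n\le 7$), and part (3) by the same term-by-term limit argument. The only cosmetic difference is that for $q\ge 3$ the paper factors the bound as $k(GU(m,q))\le(1+7/q)q^m$ uniformly across all summands rather than splitting off the $d=1$ term, but the computation is the same.
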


\begin{proof} The lower bound in part 1 is immediate from Theorem \ref{ss}.
The upper bounds in parts 1 and 2 will be proved together. For $n \leq 7$
the upper bounds are checked directly from Macdonald's formula for
$k(SU(n,q))$. Next suppose that $q=2$ and $n \geq 8$. Then Macdonald's
formula for $k(SU(n,q))$ and the upper bound on $k(GU(n,q))$ give that
\[ k(SU(n,2)) \leq \frac{8.26}{3} \left( 2^n + 8 \cdot 2^{n/3} \right) \leq 8.26
(2^{n-1}).\] For $q \geq 3$, we claim that $k(SU(n,q)) \leq (1+7/q)
q^{n-1}$, which also implies the upper bound in part 1. Suppose that $n
\geq 8$ is even (the case of odd $n$ is similar). Then Macdonald's formula
and part 2 of Proposition \ref{unlimit} give that $k(SU(n,q))$ is at most
\begin{eqnarray*} & & (1+7/q) \frac{\left( q^n +
(q+1)^2(q^{n/2} + \cdots +1) \right)}{q+1} \\
& \leq & (1+7/q) \left(q^{n-1}-q^{n-2}+q^{n-3} \cdots \pm 1 +
q^{n/2+1} + 2q^{n/2} + \cdots 2q+1 \right) \\
& \leq & (1+7/q) \cdot q^{n-1}. \end{eqnarray*}

Part 3 follows from part 1 of Proposition \ref{unlimit} and Macdonald's
formula for $k(SU(n,q))$ (argue as in the case of $SL$).
\end{proof}

Corollary \ref{ucor} gives bounds on $k(H)$ where $H$ is a group between
$SU(n,q)$ and $GU(n,q)$ or between $PSU(n,q)$ and $PGU(n,q)$.

\begin{cor} \label{ucor}
\begin{enumerate}
\item Suppose that $SU(n,q) \subseteq H \subseteq
GU(n,q)$ and that $j$ is the index of $H$ in $GU(n,q)$. \[
\frac{q^{n-1}(q+1)}{j} \leq k(H) \leq \frac{q+1}{j} k(SU(n,q)) \leq
\frac{q^n+Aq^{n-1}}{j}, \] where $A$ is a universal constant. One can take
$A=25$ for $q=2$ and $A=11$ for $q \geq 3$.

\item Suppose that $PSU(n,q) \subseteq H \subseteq
PGU(n,q)$ and that $j$ is the index of $H$ in $PGU(n,q)$.\[
\frac{q^{n-1}}{j} \leq k(H) \leq \frac{\gcd(n,q+1)}{j} k(PSU(n,q)) \leq
\frac{q^{n-1}+8q^{n-2}}{j}.\]
\end{enumerate}
\end{cor}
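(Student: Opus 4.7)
I would prove this in close parallel with Corollary \ref{PGLbetween}, substituting the unitary ingredients (Wall's generating function for $k(GU(n,q))$, Propositions \ref{unlimit} and \ref{SU}, and Macdonald's formulas for $k(PGU(n,q))$ and $k(PSU(n,q))$) for their type-$A$ analogues. The engine is Gallagher's inequality (Lemma \ref{Boblemma}(1)), used in both directions: upward comparison to $GU(n,q)$ or $PGU(n,q)$ for the lower bounds, and downward comparison to $SU(n,q)$ or $PSU(n,q)$ for the upper bounds.

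For part (1), the lower bound is $k(H) \geq k(GU(n,q))/j$ via Lemma \ref{Boblemma}(1), combined with $k(GU(n,q)) \geq q^{n-1}(q+1)$ (the count of semisimple classes, noted following Wall in the proof of Proposition \ref{unlimit}(2)). The middle inequality is Lemma \ref{Boblemma}(1) applied to $SU(n,q) \trianglelefteq H$ of index $(q+1)/j$. The final inequality reduces to the arithmetic statement $(q+1) k(SU(n,q)) \leq q^n + A q^{n-1}$: for $q = 2$, Proposition \ref{SU}(2) yields $3(2^{n-1} + 16 \cdot 2^{n-2}) = 2^n + 25 \cdot 2^{n-1}$, so $A = 25$ works; for $q \geq 3$, $(q+1)(q^{n-1} + 7 q^{n-2}) = q^n + 8 q^{n-1} + 7 q^{n-2} \leq q^n + 11 q^{n-1}$ since $7/q \leq 7/3 < 3$.

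For part (2), the lower bound uses $k(H) \geq k(PGU(n,q))/j$ (Lemma \ref{Boblemma}(1)) together with $k(PGU(n,q)) \geq q^{n-1}$, obtained by isolating the $d=1$ term in Macdonald's formula for $k(PGU(n,q))$ and invoking $k(GU(n,q)) \geq q^{n-1}(q+1)$. The middle inequality uses the standard index computation $[PGU(n,q) : PSU(n,q)] = \gcd(n,q+1)$ (from $|Z(GU)| = q+1$ and $|Z(GU) \cap SU| = \gcd(n,q+1)$), yielding $[H : PSU(n,q)] = \gcd(n,q+1)/j$; Lemma \ref{Boblemma}(1) then gives the bound. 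The final inequality $\gcd(n,q+1) k(PSU(n,q)) \leq q^{n-1} + 8 q^{n-2}$ follows from Macdonald's formula by an analysis parallel to that of Proposition \ref{SU}(2): the $(d_1, d_2) = (1,1)$ term contributes $k(GU(n,q))/(q+1)$, giving the leading $q^{n-1}$ plus an $O(q^{n-2})$ remainder after applying Proposition \ref{unlimit}(2), and the subleading pairs $(d_1, d_2) \neq (1,1)$ each contribute $O(q^{n/(d_1 d_2)}) \leq O(q^{n/2})$, comfortably absorbed in the slack; small $n$ is checked directly.

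The main obstacle is pinning down the universal constant $8$ in part (2), particularly for $q = 2$, where the limit $k(GU(n,2))/2^n \to \prod_{i \geq 1}(1 + 2^{-i})/(1 - 2^{-i}) \approx 8.25$ is much larger than $1$; one needs the sharp bound $k(GU(n,2)) \leq 8.26 \cdot 2^n$ from \cite{MR} in place of the loose $2^n + 16 \cdot 2^{n-1}$, together with careful accounting of divisor pairs $(d_1, d_2) \in \{1,3\}^2$ in Macdonald's sum and direct verification of the small-$n$ cases. Beyond this bookkeeping, the argument is essentially a transcription of the proof of Corollary \ref{PGLbetween} with $q-1$ replaced by $q+1$ throughout.
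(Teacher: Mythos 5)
Your plan coincides with the paper's own proof: Gallagher's inequality (Lemma \ref{Boblemma}) in both directions, the semisimple class counts $q^{n-1}(q+1)$ for $GU(n,q)$ and $q^{n-1}$ for $PGU(n,q)$ for the lower bounds, and Proposition \ref{SU}(2) (resp.\ Macdonald's formula for $k(PSU(n,q))$) for the final upper bounds. Part (1), including your arithmetic giving $A=25$ for $q=2$ and $A=11$ for $q\geq 3$, is exactly the paper's argument and is correct.

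The obstacle you flag in part (2) at $q=2$ is, however, not bookkeeping that a sharper constant can absorb: it is fatal to the stated inequality. Every term of Macdonald's sum is nonnegative, so the $(d_1,d_2)=(1,1)$ term alone gives $\gcd(n,3)\,k(PSU(n,2)) \geq k(GU(n,2))/3$, and the claimed bound $q^{n-1}+8q^{n-2}$ would therefore force $k(GU(n,2)) \leq 3\left(2^{n-1}+8\cdot 2^{n-2}\right) = 7.5\cdot 2^n$. By Proposition \ref{unlimit}(1) and the remark following it, $k(GU(n,2))/2^n \rightarrow 8.25\ldots$, so this fails for all large $n$; in particular substituting the sharp bound $k(GU(n,2)) \leq 8.26\cdot 2^n$ from \cite{MR} cannot rescue the constant $8$, since $8.26/3 \cdot 2^n \approx 11.01\cdot 2^{n-2} > (2+8)\cdot 2^{n-2}$. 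Thus the last inequality of part (2) is simply false for $q=2$ and $n$ large --- consistent with Table 3 of the paper, which lists the $PGU(n,q)$ bound $q^{n-1}+8q^{n-2}$ only for $q>2$. The paper's proof ("an analysis similar to that in Proposition \ref{SU}") glosses over this. The correct repair is to restrict the final inequality of part (2) to $q\geq 3$ (where your outline goes through exactly as in Proposition \ref{SU}, with the subleading divisor pairs contributing $O(q^{n/2})$ and small $n$ checked directly) or to take a larger constant, slightly above $9$, when $q=2$; it is not to sharpen the $GU$ input further.
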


\begin{proof} Let $H$ be as in part 1 of the corollary. Then $k(H) \geq
\frac{k(GU(n,q))}{j} \geq \frac{q^{n-1}(q+1)}{j}$, where the first
inequality is Lemma \ref{Boblemma} and the second is the fact that
$GU(n,q)$ has $q^{n-1}(q+1)$ semisimple conjugacy classes. Lemma
\ref{Boblemma} gives that $k(H) \leq \frac{q+1}{j} k(SU(n,q))$. The
inequality $(q+1)k(SU(n,q)) \leq q^n+Aq^{n-1}$ with the stated $A$ values
follows from part 2 of Proposition \ref{SU}.

Let $H$ be as in part 2 of the corollary. Then $k(H) \geq
\frac{k(PGU(n,q))}{j} \geq \frac{q^{n-1}}{j}$, where the first inequality
is Lemma \ref{Boblemma} and the second is the fact that $PGU(n,q)$ has at
least $q^{n-1}$ conjugacy classes (clear from Macdonald's formula and the
fact that $GU(n,q)$ has $q^{n-1}(q+1)$ semisimple conjugacy classes). The
inequality $k(H) \leq \frac{\gcd(n,q+1)}{j} k(PSU(n,q))$ comes from Lemma
\ref{Boblemma}, and the inequality $\gcd(n,q+1) k(PSU(n,q)) \leq
q^{n-1}+8q^{n-2}$ follows from Macdonald's formula for $k(PSU(n,q))$ and
an analysis similar to that in Proposition \ref{SU}.
\end{proof}

\subsection{Symplectic groups} \label{Sp}

We next consider symplectic groups.   We treat the cases
$q$ odd and even  separately.

\begin{theorem} \label{Spodd} Let $q$ be odd.

\begin{enumerate}

\item $q^n \leq k(Sp(2n,q)) \leq q^n \prod_{i=1}^{\infty}
\frac{(1+\frac{1}{q^i})^4}{(1-\frac{1}{q^i})} \leq 10.8 q^n$.

\item $k(Sp(2n,q)) \leq q^n+Aq^{n-1}$ for a universal constant $A$; one can
take $A=30$ for $q=3$ and $A=12$ for $q \geq 5$. Thus \[ lim_{q
\rightarrow \infty} \frac{k(Sp(2n,q))}{q^n} = 1,\] and the convergence is
uniform in $n$.

\item For $q$ fixed, $lim_{n \rightarrow
\infty} \frac{k(Sp(2n,q))}{q^n} = \prod_{i=1}^{\infty}
\frac{(1+\frac{1}{q^i})^4}{(1-\frac{1}{q^i})}$.

\end{enumerate}

\end{theorem}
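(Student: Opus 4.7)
The plan is to mirror the template developed for $GL(n,q)$ and $GU(n,q)$ in Subsections \ref{GL} and \ref{U}: obtain Wall's generating function for $k(Sp(2n,q))$, normalize by $q^n$, identify the pole structure at $t=1$, then apply the two analytic lemmas (Darboux's Lemma \ref{bign} for the asymptotic and the maximum modulus Lemma \ref{maxmodulus} for coefficient bounds). The lower bound $q^n \le k(Sp(2n,q))$ is essentially free: since $Sp(2n,q)$ arises as $G^F$ for a simply connected simple algebraic group of rank $n$ with connected center, Theorem \ref{ss} yields exactly $q^n$ semisimple classes.

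First, I would extract from Wall's paper the explicit generating function $\sum_n k(Sp(2n,q))t^n$ for $q$ odd. The point is that this generating function has a simple pole at $t=1$ coming from a factor $1/(1-qt)$ after substitution, so $\sum_n (k(Sp(2n,q))/q^n) t^n$ has a simple pole at $t=1$ and (apart from that pole) is analytic in a disc $|t|<q^{1/2}$. By Lemma \ref{bign}, the coefficient of $t^n$ tends as $n\to\infty$ to the residue $\prod_{i\ge 1}(1+1/q^i)^4/(1-1/q^i)$, which is part (3). The numerical bound $10.8$ in part (1) follows from Euler's pentagonal Lemma \ref{pent}: write
\[
\prod_{i\ge 1}\frac{(1+1/q^i)^4}{1-1/q^i}=\prod_{i\ge 1}\frac{(1-1/q^{2i})^4}{(1-1/q^i)^5},
\]
and compute that the worst case is $q=3$, where the product can be bounded by a finite truncation via the pentagonal number theorem; one checks the product is below $10.8$.

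For the upper bound $k(Sp(2n,q)) \le q^n\prod_{i\ge 1}(1+1/q^i)^4/(1-1/q^i)$, I would show that the normalized generating function $\sum_n (k(Sp(2n,q))/q^n)t^n$ has nonnegative coefficients when multiplied by $(1-t)$; concretely, Wall's factorization writes $k(Sp(2n,q))/q^n$ as the coefficient of $t^n$ in $(1-t)^{-1}$ times an infinite product whose expansion has nonnegative coefficients, so the coefficient of $t^n$ is monotonically bounded above by the value of that product at $t=1$. This is the standard trick used for GL/GU, and it gives the uniform bound for all $n$.

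For part (2), I would subtract off the pole at $t=1$: the function
\[
\frac{1}{1-t}\prod_{i\ge 1}\frac{\cdots}{\cdots}-\frac{1}{1-t}\prod_{i\ge 1}\frac{(1+1/q^i)^4}{1-1/q^i}
\]
is analytic on $|t|<q^{1/2}$, so Lemma \ref{maxmodulus} gives that $|k(Sp(2n,q))/q^n - L(q)|=O(q^{-n(1/2-\epsilon)})$, where $L(q)=\prod_{i\ge 1}(1+1/q^i)^4/(1-1/q^i)$. Writing $L(q)=1+c_1/q + c_2/q^2 + \cdots$ (again expanded using the pentagonal identity) gives $L(q) \le 1+A/q$ with $A=30$ for $q=3$ and $A=12$ for $q\ge 5$, and combining with the error estimate yields $k(Sp(2n,q)) \le q^n + Aq^{n-1}$ for large $n$; the remaining small $n$ are handled by direct computation as in Proposition \ref{SU}. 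The main obstacle is the bookkeeping needed to derive Wall's generating function for $Sp$ in odd characteristic in a usable form, and then pinning down the small constants $10.8$, $30$, $12$ from the resulting infinite product; once that is in hand, the analytic machinery is exactly the same as for types A.
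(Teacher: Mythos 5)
Your proposal is correct and follows essentially the same route as the paper: the lower bound from Theorem \ref{ss}, Wall's generating function $\prod_i (1+t^i)^4/(1-qt^i)$, the positivity-of-coefficients trick to bound the coefficient of $t^n$ by the value of the normalized product at $t=1$, Lemma \ref{pent} for the numerical constant $10.8$, and Lemma \ref{bign} for the $n\to\infty$ limit. The only (harmless) divergence is in part (2): since part (1) already gives $k(Sp(2n,q))/q^n \le \prod_i (1+1/q^i)^4/(1-1/q^i)$ uniformly in $n$, the paper just bounds that product by $1+A/q$ via Lemma \ref{pent} and calculus (as in the unitary case), so your detour through Lemma \ref{maxmodulus} and separate small-$n$ verifications is unnecessary.
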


\begin{proof} The lower bound in part 1 is immediate from Theorem
\ref{ss}.

For $q$ odd, Wall \cite{W} shows that $k(Sp(2n,q))$ is the coefficient of
$t^n$ in the generating function \[ \prod_{i=1}^{\infty}
\frac{(1+t^i)^4}{1-qt^i}.\] Rewrite this generating function as \[
\prod_{i=1}^{\infty} \frac{1-t^i}{1-qt^i} \prod_{i=1}^{\infty}
\frac{(1+t^i)^4}{1-t^i}.\] Since all coefficients of powers of $t$ in the
second infinite product are non-negative, it follows that
\[ k(Sp(2n,q)) \leq \sum_{m=0}^n (Coef. \ t^{n-m} \ in \
\prod_{i=1}^{\infty} \frac{1-t^i}{1-qt^i}) (Coef. \ t^{m} \ in \
\prod_{i=1}^{\infty} \frac{(1+t^i)^4}{1-t^i}).\] Now
$\prod_{i=1}^{\infty} \frac{1-t^i}{1-qt^i}$ is the generating function
for the number of conjugacy classes in $GL(n,q)$. Hence the coefficient
of $t^{n-m}$ in it is at most $q^{n-m}$. It follows that \[ k(Sp(2n,q))
\leq q^n \sum_{m=0}^{n} \frac{1}{q^m} (Coef. \ t^{m} \ in \
\prod_{i=1}^{\infty} \frac{(1+t^i)^4}{1-t^i}).\] Since the coefficients
of $t^m$ in $\prod_{i=1}^{\infty} \frac{(1+t^i)^4}{1-t^i}$ are positive,
it follows that \begin{eqnarray*} k(Sp(2n,q)) & \leq & q^n
\sum_{m=0}^{\infty} \frac{1}{q^m} (Coef. \ t^{m} \ in \
\prod_{i=1}^{\infty} \frac{(1+t^i)^4}{1-t^i})\\ & = & q^n
\prod_{i=1}^{\infty} \frac{(1+\frac{1}{q^i})^4}{(1-\frac{1}{q^i})}.
\end{eqnarray*} The term $ \prod_{i=1}^{\infty}
\frac{(1+\frac{1}{q^i})^4}{(1-\frac{1}{q^i})}$ is visibly maximized among
odd prime powers $q$ when $q=3$. Then it becomes
$\frac{\prod_{i=1}^{\infty} (1-\frac{1}{9^{i}})^4}{\prod_{i=1}^{\infty}
(1-\frac{1}{3^i})^5} \leq 10.8 q^n$ by Lemma \ref{pent}.

The upper bound in part 2 follows from the upper bound in part 1, Lemma
\ref{pent}, and basic calculus (argue as in the unitary case).

For part 3, note that $\frac{k(Sp(2n,q))}{q^n}$ is the coefficient of
$t^n$ in
\[ \frac{1}{1-t} \prod_{i \geq 1} \frac{(1+t^i/q^i)^4}{(1-t^{i+1}/q^i)}.\]
Then use Lemma \ref{bign}.
\end{proof}

{\it Remark:} The value of the limit in part 3 of Theorem \ref{Spodd} is
10.7... when $q=3$.

\vspace{2mm}

Next we treat the symplectic group in even characteristic.

\begin{theorem} \label{Spev} Let $q$ be even.

\begin{enumerate}

\item \[ 1 + \sum_{n \geq 1} k(Sp(2n,q)) t^n = \prod_{i=1}^{\infty}
\frac{(1-t^{4i})}{(1-t^{4i-2}) (1-t^i) (1-qt^i)}.\]

\item $q^n \leq k(Sp(2n,q)) \leq q^n \prod_{i=1}^{\infty}
\frac{(1-1/q^{4i})}{(1-1/q^{4i-2}) (1-1/q^i)^2} \leq 15.2 q^n$.

\item $k(Sp(2n,q)) \leq q^n + Aq^{n-1}$ for a universal constant $A$; one can
take $A=29$ for $q=2$ and $A=5$ for $q \geq 4$. Thus
\[ lim_{q \rightarrow \infty} \frac{k(Sp(2n,q))}{q^n} = 1,\] and the
convergence is uniform in $n$.

\item For $q$ fixed, $lim_{n \rightarrow \infty} \frac{k(Sp(2n,q))}{q^n} =
\prod_{i=1}^{\infty} \frac{(1-1/q^{4i})}{(1-1/q^{4i-2}) (1-1/q^i)^2}$.
\end{enumerate}

\end{theorem}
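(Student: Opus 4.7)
The plan is to prove part (1) first, because once the generating function is in hand, parts (2)--(4) come out by manipulations that parallel the odd characteristic treatment in Theorem \ref{Spodd} almost verbatim. For part (1), I would start from Wall's classification of the conjugacy classes of $Sp(2n,q)$ in characteristic $2$. A class is determined by a partition-valued function on the irreducible polynomials over $\F_q$ distinct from $x+1$ (with polynomials that are not self-reciprocal grouped in reciprocal pairs) together with a unipotent datum at $x+1$. Tallying the polynomial contribution degree-by-degree produces essentially $\prod_{i\ge 1}\frac{1}{1-qt^i}$, corrected by a self-reciprocal factor, exactly as in Wall. The genuinely new piece is the unipotent factor at $x+1$: in characteristic $2$, unipotent classes of $Sp(2n,q)$ are parametrized by partitions $\lambda$ of $2n$ in which odd parts have even multiplicity, together with an additional label on each distinct even part (reflecting the extra "orthogonal" data available in characteristic $2$). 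A direct generating-function computation for these enriched partitions, weighted by $t^{|\lambda|/2}$, yields $\prod_{i\ge 1}\frac{1-t^{4i}}{(1-t^{4i-2})(1-t^i)}$. Multiplying the two pieces gives the formula in part~(1).

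For part (2), I would factor the generating function as
$$\prod_{i\ge 1}\frac{1-t^i}{1-qt^i}\cdot\prod_{i\ge 1}\frac{1-t^{4i}}{(1-t^{4i-2})(1-t^i)^2}.$$
The first factor is the Feit--Fine generating function for $k(GL(n,q))$, whose coefficient of $t^{n-m}$ is at most $q^{n-m}$, and the second factor has non-negative coefficients. Running the same coefficient-by-coefficient bound used in the proof of Theorem \ref{Spodd} gives $k(Sp(2n,q)) \le q^n \prod_{i\ge 1}\frac{1-1/q^{4i}}{(1-1/q^{4i-2})(1-1/q^i)^2}$, and the product is visibly maximized at $q=2$; Lemma \ref{pent} then pins the constant down to $15.2$. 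The lower bound $q^n$ is Theorem \ref{ss}. Part (3) is a refinement of part (2): Lemma \ref{pent} gives arbitrarily tight rational bounds on the relevant infinite products, and a short calculus estimate yields the stated constants $29$ and $5$ for $q=2$ and $q\ge 4$ respectively, handling any small cases by direct check. For part (4), write $k(Sp(2n,q))/q^n$ as the coefficient of $t^n$ in $\frac{1}{1-t}\prod_{i\ge 1}\frac{1-t^{4i}/q^{4i}}{(1-t^{4i-2}/q^{4i-2})(1-t^i/q^i)(1-t^{i+1}/q^i)}$, observe that for $q$ fixed the only pole on $|t|=1$ is a simple pole at $t=1$, and apply Darboux's Lemma \ref{bign}; only the residue at $t=1$ survives in the limit, giving the claimed value.

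The main obstacle is part (1). Wall's original generating function for $k(Sp(2n,q))$ covers only odd $q$, and the rewriting used there is blocked in characteristic $2$ because the Jordan-form parametrization no longer separates unipotent classes: one must correctly recognize and enumerate the extra characteristic-$2$ invariants on the unipotent part at $x+1$ to produce the factor $\prod_{i\ge 1}\frac{1-t^{4i}}{(1-t^{4i-2})(1-t^i)}$. Once that combinatorial identity is in place, the rest of the theorem is mechanical.
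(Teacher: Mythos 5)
Your handling of parts (2)--(4) coincides with the paper's: factor out the Feit--Fine generating function for $k(GL(n,q))$, use non-negativity of the coefficients of the remaining product to bound term by term, take the lower bound from Theorem \ref{ss}, pin down the constants with Lemma \ref{pent}, and apply Darboux's Lemma \ref{bign} for the limit. The problem is part (1), where your argument has a genuine gap.

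First, the parametrization you propose for the characteristic-$2$ unipotent classes --- partitions of $2n$ whose odd parts have even multiplicity, with a free label on each distinct even part --- is not correct and does not reproduce the right counts: already for $n=1$ it yields three data of weight $t$ (namely $(1,1)$ and the two labelled copies of $(2)$), while $Sp(2,2)\cong S_3$ has exactly two unipotent classes. Wall's actual characteristic-$2$ data attach quadratic-form invariants subject to nontrivial constraints, not an independent binary choice per even part. Second, and more importantly, the step you describe as ``a direct generating-function computation'' is exactly the Lusztig--Macdonald--Wall conjecture: the identity equating Wall's class count with a closed product form is a nontrivial $q$-series identity that remained open until Andrews proved it \cite{A2}. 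The paper does not re-derive it; it quotes Wall \cite{W} together with Andrews to get $1+\sum_{n\ge 1}k(Sp(2n,q))t^n = \bigl(\sum_{j\ge 0}t^{j(j+1)}\bigr)/\prod_{i\ge 1}(1-t^i)(1-qt^i)$, and then converts the numerator into $\prod_{i\ge 1}(1-t^{4i})/(1-t^{4i-2})$ by Gauss's identity $\sum_{n\ge 0}t^{n(n+1)/2}=\prod_{i\ge 1}(1-t^{2i})/(1-t^{2i-1})$ with $t$ replaced by $t^2$. Unless you either cite Andrews or supply a proof of his identity, your derivation of part (1) is incomplete, and since (2)--(4) all rest on (1), the whole proof hinges on this point.
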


\begin{proof} For the first assertion, one combines work of Wall \cite{W}
and Andrews' solution of the L-M-W conjecture \cite{A2} to obtain that \[
1 + \sum_{n \geq 1} k(Sp(2n,q)) t^n = \frac{\sum_{j=0}^{\infty}
t^{j(j+1)}}{\prod_{i=1}^{\infty} (1-t^i) (1-qt^{i})}.\] An identity of
Gauss (page 23 of \cite{A1}) states that \[ \sum_{n=0}^{\infty}
t^{n(n+1)/2} = \prod_{i=1}^{\infty} \frac{1-t^{2i}}{1-t^{2i-1}}, \] and
the first assertion follows.

For the second assertion, combining part 1 with the same trick as in the
odd characteristic case gives that
\begin{eqnarray*} k(Sp(2n,q)) & \leq & q^n \prod_{i=1}^{\infty}
\frac{(1-1/q^{4i})}{(1-1/q^2)^2(1-1/q^{4i-2})}\\
& \leq & q^n \prod_{i=1}^{\infty}
\frac{(1-1/2^{4i})}{(1-1/2^i)^2(1-1/2^{4i-2})}\\
& \leq & 15.2 q^n.
\end{eqnarray*} The last step used Lemma \ref{pent}. The lower bound in
the second assertion is immediate from Theorem \ref{ss}.

The proofs of parts 3 and 4 are analogous to the proofs of parts 2 and 3
in the odd characteristic case.
\end{proof}

{\it Remark:} The value of the limit in part 4 of Theorem \ref{Spev} is
15.1... when $q=2$.

\subsection{Orthogonal groups} \label{O} This section gives the results
for the orthogonal groups.   We assume that the dimension of
the underlying space is at least $3$ (almost all of the results
are valid for the two dimensional case as well, but the results
are trivial in that case and the lower bounds do not always hold
because the semisimple rank is $0$).

 First we treat the case of even dimension with $q$ odd.

\begin{theorem} \label{oddorthog} Let $q$ be odd.

\begin{enumerate}

\item $\frac{q^n}{2} \leq k(O^{\pm}(2n,q)) \leq 9.5 q^{n}$.

\item $k(O^{\pm}(2n,q)) \leq \frac{q^n}{2}+Aq^{n-1}$ for a universal constant
$A$; one can take $A=27$ for $q=3$ and $A=18$ for $q \geq 5$. Thus \[
lim_{q \rightarrow \infty} \frac{k(O^{\pm}(2n,q))} {q^n}= \frac{1}{2},\]
and the convergence is uniform in $n$.

\item For fixed $q$,
\begin{eqnarray*} & & lim_{n \rightarrow \infty}
\frac{k(O^{\pm}(2n,q))}{q^{n}}\\
& = & \frac{1}{4 \prod_{i=1}^{\infty} (1-1/q^{i})} \left[
\prod_{i=1}^{\infty} (1+1/q^{i-1/2})^4 +\prod_{i=1}^{\infty}
(1-1/q^{i-1/2})^4 \right] \end{eqnarray*}
\end{enumerate} \end{theorem}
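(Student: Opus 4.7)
The approach mirrors the symplectic odd characteristic case (Theorem~\ref{Spodd}). The first step is to obtain generating functions for $\sum_n k(O^{\pm}(2n,q))t^n$ separately for the $+$ and $-$ types. Wall~\cite{W} gives such a formula for the combined total $k(O^+(2n,q))+k(O^-(2n,q))$ whose denominator is $\prod_{i\ge 1}(1-qt^i)$ and whose numerator is a product over $(1\pm t^i)^c$ factors. Splitting by type can be arranged by averaging Wall's formula with a twisted variant (obtained by replacing some $(1+t^i)$ factors by $(1-t^i)$, corresponding to the Frobenius twist that distinguishes the plus from the minus type), producing clean generating functions for $k(O^+(2n,q))$ and $k(O^-(2n,q))$ individually.

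For the upper bounds in parts 1 and 2, I factor each of these generating functions as
\[
\prod_{i\ge 1}\frac{1-t^i}{1-qt^i}\cdot K^{\pm}(t),
\]
where the first factor is the Feit--Fine series for $k(GL(n,q))$ (whose coefficients are bounded by $q^n$, as used repeatedly in Sections~\ref{GL} and~\ref{U}) and $K^{\pm}(t)$ has non-negative coefficients. The bounding trick of Theorem~\ref{Spodd} then gives
\[
k(O^{\pm}(2n,q))\le q^n \, K^{\pm}(1/q),
\]
and $K^{\pm}(1/q)$ is an explicit infinite product in the variables $(1\pm 1/q^j)$. Invoking Lemma~\ref{pent} rewrites this as a ratio of products of the form $\prod(1-1/q^j)^c$ whose value is uniformly bounded for odd prime powers $q$, worst at $q=3$, yielding the $9.5\,q^n$ in part~1; the finer bound $\tfrac{q^n}{2}+Aq^{n-1}$ in part~2 follows by the same calculus argument already used in the unitary and symplectic cases, where the factor of $\tfrac{1}{2}$ emerges because $K^{\pm}(1/q) \to \tfrac{1}{2}$ as $q\to\infty$.

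For part~3, observing that $k(O^{\pm}(2n,q))/q^n$ is the coefficient of $t^n$ in a series of the form $(1-t)^{-1}G^{\pm}(t)$ with $G^{\pm}(t)$ analytic on $|t|<q^{1/2}$, Lemma~\ref{bign} identifies the limit with $G^{\pm}(1)$, which after collecting terms gives the claimed combination with the two half-integer-exponent products (the two summands corresponding precisely to the $\pm$ twist used to separate the types). For the lower bound, since $SO^{\pm}(2n,q)$ has index two in $O^{\pm}(2n,q)$, Lemma~\ref{Boblemma}(1) gives $k(O^{\pm}(2n,q))\ge k(SO^{\pm}(2n,q))/2$, and since $SO(2n)$ is a connected reductive algebraic group of semisimple rank $n$, Theorem~\ref{ss} yields $k(SO^{\pm}(2n,q))\ge q^n$. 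The main obstacle is cleanly executing the split of Wall's combined generating function into separate $O^+$ and $O^-$ series that both share the denominator $\prod(1-qt^i)$ while keeping the numerator of $K^{\pm}(t)$ non-negative after factoring out the $GL$ series; a secondary delicacy is verifying that the $\tfrac{1}{2}$ appearing in the lower bound, in the refined upper bound of part~2, and in the limit of part~3 all arise consistently from the ``one half of $O$ is $SO$'' structure, rather than being a coincidence of three independent computations.
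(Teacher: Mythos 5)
Your overall architecture matches the paper's: Wall's generating functions, the factor-out-the-$GL$-series trick from Theorem~\ref{Spodd}, Lemma~\ref{pent}, Darboux for part~3, and Steinberg (Theorem~\ref{ss}) plus the index-two fusion argument for the lower bound. But there is a genuine gap in the central step. You propose to manufacture \emph{individual} generating functions for $k(O^+(2n,q))$ and $k(O^-(2n,q))$, each of the form $\prod_i\frac{1-t^i}{1-qt^i}\cdot K^{\pm}(t)$ with $K^{\pm}$ having non-negative coefficients, and then evaluate $K^{\pm}(1/q)$. This is unlikely to go through as stated: Wall's data comes as a \emph{sum} series (coefficient of $t^{2n}$ in $\prod_i (1+t^{2i-1})^4/(1-qt^{2i})$) and a \emph{difference} series (coefficient of $t^{n}$ in $\prod_i(1-t^{2i-1})/(1-qt^{2i})$), and these live on different scales ($t^{2n}$ versus $t^n$); moreover the difference series has genuinely negative numerator coefficients, so the half-sum/half-difference combination you would need for $K^{\pm}$ has no reason to be coefficientwise non-negative, and the ``evaluate at $t=1/q$'' trick cannot be applied to it. The paper avoids this entirely: it bounds the sum by the positivity trick (after symmetrizing in $t\mapsto -t$ to extract even coefficients), and bounds the difference \emph{separately} by the maximum-modulus Lemma~\ref{maxmodulus}, getting $|k(O^+)-k(O^-)|\le 2.4\,q^n$; the two bounds are then combined as $k(O^{\pm})=\tfrac12[(\mathrm{sum})\pm(\mathrm{difference})]$. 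For part~2 with $q\ge 5$ the paper also reuses the lower bound via $k(O^{\pm})\le (k(O^+)+k(O^-))-q^n/2$, rather than relying on a limit of $K^{\pm}(1/q)$.

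A second, conceptual error: in part~3 you attribute the two summands $\prod_i(1\pm 1/q^{i-1/2})^4$ to ``the $\pm$ twist used to separate the types.'' In fact the $O^+$ versus $O^-$ distinction contributes nothing to the limit --- the difference generating function is analytic on $|t|<q^{-1/2}$, so $(k(O^+)-k(O^-))/q^n\to 0$ by Lemma~\ref{maxmodulus}. The two summands arise because the sum series is extracted at $t^{2n}$, so the relevant function $\frac{1}{1-t^2}\prod_i\frac{(1+t^{2i-1}/q^{i-1/2})^4}{1-t^{2(i+1)}/q^i}$ has two simple poles, at $t=1$ and $t=-1$, and Darboux (Lemma~\ref{bign}) picks up both residues; this is also why the three occurrences of $\tfrac12$ in the theorem are not all instances of the ``half of $O$ is $SO$'' phenomenon you invoke at the end.
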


\begin{proof} For the lower bound in part 1, Theorem \ref{ss} gives that
$SO^{\pm}(2n,q)$ has at least $q^n$ semisimple classes, and at most two of
these can fuse into one class in $O^{\pm}(2n,q)$. For the upper bound,
clearly $k(O^{\pm}(2n,q))$ is the sum/difference of
$\frac{k(O^+(2n,q))+k(O^-(2n,q))}{2}$ and
$\frac{k(O^+(2n,q))-k(O^-(2n,q))}{2}$. By upper bounding each of these
terms, we will upper bound $k(O^{\pm}(2n,q))$.

Wall \cite{W} shows that $k(O^+(2n,q))+k(O^-(2n,q))$ is the coefficient
of $t^{2n}$ in the generating function \[ \prod_{i=1}^{\infty}
\frac{(1+t^{2i-1})^4}{1-qt^{2i}}.\] Rewrite this generating function as
\[ \prod_{i=1}^{\infty} \frac{1-t^{2i}}{1-qt^{2i}} \prod_{i=1}^{\infty}
\frac{(1+t^{2i-1})^4}{1-t^{2i}} .\]

Arguing as in the proofs for the symplectic cases and using Lemma
\ref{pent}, the coefficient of $t^{2n}$ is at most
\begin{eqnarray*} & & q^n \sum_{m \geq 0} \frac{1}{q^m} \left( Coef. \ t^{2m} \
in \prod_{i \geq 1} \frac{(1+t^{2i-1})^4}{(1-t^{2i})} \right) \\
& = & \frac{q^n}{2} \sum_{m \geq 0} \frac{1}{q^m} \left( Coef. \ t^{2m} \
in \left[ \prod_{i \geq 1} \frac{(1+t^{2i-1})^4}{(1-t^{2i})} +
\prod_{i \geq 1} \frac{(1-t^{2i-1})^4}{(1-t^{2i})}   \right] \right)\\
& \leq & \frac{q^n}{2} \left[ \prod_{i \geq 1}
\frac{(1+t^{2i-1})^4}{(1-t^{2i})} +
\prod_{i \geq 1} \frac{(1-t^{2i-1})^4}{(1-t^{2i})} \right]_{|t=3^{-.5}}\\
& \leq & 16.3 q^n. \end{eqnarray*}

Wall \cite{W} shows that $k(O^+(2n,q))-k(O^-(2n,q))$ is the coefficient of
$t^{n}$ in \[ \prod_{i=1}^{\infty} \frac{(1-t^{2i-1})}{(1-qt^{2i})} .\]
Since this is analytic for $t<q^{-1} + \epsilon$, Lemmas \ref{maxmodulus}
and \ref{pent} imply an upper bound of \[ q^n \prod_{i=1}^{\infty}
\frac{(1+1/q^{2i-1})}{(1-1/q^{2i-1})} \leq q^n \prod_{i=1}^{\infty}
\frac{(1+1/3^{2i-1})}{(1-1/3^{2i-1})} \leq 2.4q^n.\] Combining this with
the previous paragraph gives that $k(O^{\pm}(2n,q)) \leq 9.5 q^n$.

For part 2, the $q=3$ case is immediate from part 1. For $q \geq 5$, the
upper bound on $\frac{k(O^+(2n,q))+k(O^-(2n,q))}{q^n}$ in the proof of
part 1 and the lower bound $k(O^{\pm}(2n,q)) \geq \frac{q^n}{2}$ yield
that $k(O^{\pm}(2n,q)$ is at most
\[ \frac{1}{2} \left[ \prod_{i
\geq 1} \frac{(1+1/q^{i-1/2})^4}{(1-1/q^i)} + \prod_{i \geq 1}
\frac{(1-1/q^{i-1/2})^4}{(1-1/q^i)} \right] - \frac{1}{2}.\] The result
follows from Lemma \ref{pent} (as in the unitary case) and basic calculus.

For the third assertion, $\frac{k(O^{+}(2n,q)) + k(O^-(2n,q))}{q^{n}}$ is
the coefficient of $t^{2n}$ in
\[ \frac{1}{1-t^2} \prod_{i=1}^{\infty}
\frac{(1+t^{2i-1}/q^{i-1/2})^4}{1-t^{2(i+1)}/q^i} .\] By Lemma \ref{bign},
as $n \rightarrow \infty$ this converges to \[ \frac{1}{2
\prod_{i=1}^{\infty} (1-1/q^{i})} \left[ \prod_{i=1}^{\infty}
(1+1/q^{i-1/2})^4 +\prod_{i=1}^{\infty} (1-1/q^{i-1/2})^4 \right]. \]
Recall that $k(O^{+}(2n,q)) - k(O^-(2n,q))$ is the coefficient of $t^{n}$
in \[ \prod_{i=1}^{\infty} \frac{(1-t^{2i-1})}{(1-qt^{2i})} .\] Since this
is analytic for $|t|<q^{-1/2}$, it follows from Lemma \ref{maxmodulus}
that
\[ lim_{n \rightarrow \infty} \frac{k(O^+(2n,q))
-k(O^-(2n,q))}{q^{n}}=0.\] \end{proof}

{\it Remark:} The value of the limit in part 3 of Theorem \ref{oddorthog}
is 8.14... when $q=3$.

\vspace{2mm}

To treat even dimensional special orthogonal groups in odd characteristic,
the following lemma will be helpful.

\begin{lemma} \label{uselem}  Let $q$ be odd and let $G=SO^{\pm}(n,q)$.
Let $C=g^G$.
Set $H=O^{\pm}(n,q)$ containing $G$.  The following are equivalent:
\begin{enumerate}
\item $C = g^H$;
\item $g$ leaves invariant an odd dimensional nondegenerate space $W$.
\item  Some Jordan block of $g$ corresponding to either the polynomial
$z+1$ or the polynomial $z-1$ has odd size.
\end{enumerate}
If all Jordan blocks of $g$ corresponding to both the polynomials $z \pm
1$ have even size, then $g^H$ is the union of two conjugacy classes of
$G$.
 \end{lemma}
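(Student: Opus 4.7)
The plan hinges on the index-$2$ fact $[H:G]=2$: for $g \in G$ the $H$-class $g^H$ is either the single $G$-class $g^G$ or the disjoint union of two $G$-classes, and the former occurs precisely when $C_H(g) \not\subseteq G$, i.e., when some element of determinant $-1$ commutes with $g$. Thus proving $(1) \Leftrightarrow (2) \Leftrightarrow (3)$ automatically yields the final splitting statement. I will close the cycle $(2) \Rightarrow (1) \Rightarrow (3) \Rightarrow (2)$.

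The two easier implications are direct. For $(2) \Rightarrow (1)$, given an odd-dimensional nondegenerate $g$-invariant subspace $W$, the isometry $r = (-1_W) \oplus 1_{W^\perp}$ satisfies $r \in C_H(g)$ and $\det r = (-1)^{\dim W} = -1$, so $C_H(g) \not\subseteq G$. For $(3) \Rightarrow (2)$, the generalized $\epsilon$-eigenspace $V_\epsilon$ (for $\epsilon = \pm 1$) of $g$ is $g$-invariant and nondegenerate, and by the classification of unipotent classes in odd-characteristic orthogonal groups it decomposes as a perpendicular sum of nondegenerate $g$-invariant subspaces attached to the individual Jordan blocks, so any odd-size block furnishes the desired $W$.

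The substantive step is $(1) \Rightarrow (3)$. Assuming that every Jordan block of $g$ for $z \pm 1$ has even size, I will show $C_H(g) \subseteq G$. Decompose $V = V_+ \perp V_- \perp V_0$ into the generalized $(\pm 1)$-eigenspaces together with the remaining primary components; every centralizer element preserves this decomposition, so the determinant on $C_H(g)$ is the product of the three restricted determinants. On $V_0$ the primary components pair as $V_p \perp V_{p^*}$ for $p \neq p^*$ (the orthogonal centralizer consists of pairs $(k, (k^*)^{-1})$ of product determinant $\det(k)\det(k)^{-1} = 1$) or form a self-reciprocal irreducible component of even degree $>1$ (whose centralizer is a unitary group over a quadratic extension, giving orthogonal determinant equal to the norm of the unitary determinant, which is $1$). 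On each $V_\epsilon$, Wall's structure theorem for centralizers of unipotent elements in odd-characteristic orthogonal groups identifies the reductive part of the centralizer as $\prod_j G_j$, with $G_j \cong Sp(m_j, q)$ for even-size Jordan blocks and $G_j \cong O(m_j, q)$ for odd-size blocks; under our hypothesis every $G_j$ is symplectic, and together with the (always determinant-$1$) unipotent radical this forces every centralizer element to have determinant $+1$. The main obstacle is this structural bookkeeping --- the $Sp/O$ dichotomy according to Jordan block parity is precisely the input that converts condition (3) into a determinant statement --- but once it is carried out, the final assertion of the lemma follows immediately from the opening observation.
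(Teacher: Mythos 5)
Your proposal is correct, but your treatment of the substantive implication differs from the paper's. You close the cycle by proving $(1)\Rightarrow(3)$ in contrapositive form, invoking Wall's structure theory for centralizers: the Levi decomposition of $C_H(g)$, the pairing $V_p\perp V_{p^*}$ and the unitary factors on the part of $V$ prime to $z^2-1$, and above all the $Sp(m_j,q)$-versus-$O(m_j,q)$ dichotomy for the reductive part according to Jordan block parity, which (since $\det(1\otimes A)=\det(A)^j$ on the span of the size-$j$ blocks) makes the determinant on $C_H(g)$ identically $1$ when all blocks for $z\pm1$ are even. The paper instead proves $(1)\Rightarrow(2)$ by a short self-contained argument: if $C=g^H$ then $g$ commutes with some $x\in H\setminus G$; replacing $x$ by an odd power one may take $x$ to be a $2$-element, hence semisimple in odd characteristic, and since $\det x=-1$ its $-1$-eigenspace is a nondegenerate, odd-dimensional, $g$-invariant subspace. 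It then gets $(3)\Rightarrow(2)$ by reducing, via induction and a sign change, to an indecomposable unipotent element and citing the classification of orthogonal indecomposables (a single odd block or a pair of even blocks). Your route buys a uniform determinant computation that proves $(1)\Leftrightarrow(3)$ in one stroke, at the cost of importing the full centralizer structure; the paper's route is more elementary, needing only the existence of a commuting reflection-like $2$-element and the fact that $\pm1$-eigenspaces of semisimple isometries are nondegenerate. One small caution in your $(3)\Rightarrow(2)$ step: the perpendicular indecomposable summands are attached to individual Jordan blocks only when the block size is odd (even-size blocks pair up into single indecomposable summands), so you should phrase the decomposition accordingly; since you only extract a summand from an odd-size block, the argument still goes through.
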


\begin{proof}   Since $[H:G]=2$, the last statement follows from
the equivalence of the first three conditions.

Suppose that $C=g^H$.  It follows that $g$ centralizes some element $x \in
H \setminus{G}$.    Raising $x$ to an odd  power, we may assume that the
order of $x$ is a power of $2$ and in particular that $x$ is semisimple.
Since $\det x = -1$, it follows that the $-1$ eigenspace of $x$ is
nondegenerate and odd dimensional.  Thus (2) holds.

Conversely, assume (2).  Taking $x = -1$ and  $W$ on $1$ on $W^{\perp}$
shows that $C_H(g)$ is not contained in $G$, whence (1) holds.   Also, the
subspace of $W$ corresponding to either the $z -1$ or $z + 1$  space is
odd dimensional, whence some Jordan block has odd size.  Thus (2) implies
(3).

Finally assume (3).   By induction, we may assume that $g$
acts indecomposably (i.e. preserves no nontrivial orthogonal decomposition
 on the natural module).  If  $n$ is odd, then clearly (2) holds.
 So we may assume that $n$ is even. Replacing
$g$ by $-g$ (if necessary), we may assume that $g$ is unipotent.
 By \cite[Theorem 2.12]{LS},  it follows that $g$ either is a single
 Jordan block of odd size or has two Jordan blocks of even size.
Since (3) holds, the latter case cannot hold.   Thus, $g$ consists
of a single Jordan block of odd size,  whence (2) holds.
\end{proof}

\begin{theorem} \label{Soeodd} Let $q$ be odd.
\begin{enumerate}
\item $k(SO^+(2n,q)) + k(SO^-(2n,q))$ is the coefficient of $t^{2n}$ in
\[ \frac{3}{2} \prod_{i \geq 1} \frac{(1-t^{2i})^2}{(1-t^{4i})^2(1-qt^{2i})} +
\frac{1}{2} \prod_{i \geq 1} \frac{(1+t^{2i-1})^4}{(1-qt^{2i})} .\]
\item $k(SO^+(2n,q)) - k(SO^-(2n,q))$ is the coefficient of $t^{n}$ in
\[ 2 \prod_{i \geq 1} \frac{1}{(1+t^i)(1-qt^{2i})}.\]
\item $q^n \leq k(SO^{\pm}(2n,q)) \leq 7.5 q^n$.
\item  $k(SO^{\pm}(2n,q)) \leq  q^n + Aq^{n-1}$ for a universal
constant $A$; one can take $A=20$ for $q=3$ and $A=8$ for $q \geq 5$. Thus
\[ lim_{q \rightarrow \infty} \frac{k(SO^{\pm}(2n,q))}{q^n} = 1,\] and the
convergence is uniform in $n$.

\item For fixed $q$, \begin{eqnarray*} & & lim_{n \rightarrow \infty}
\frac{k(SO^{\pm}(2n,q))}{q^n}\\
& = & \left[ \frac{3}{4} \prod_{i \geq 1} \frac{(1-1/q^i)}{(1-1/q^{2i})^2}
+ \frac{1}{8} \prod_{i \geq 1} \frac{(1+1/q^{i-1/2})^4}{(1-1/q^i)} +
\frac{1}{8} \prod_{i \geq 1} \frac{(1-1/q^{i-1/2})^4}{(1-1/q^i)} \right].
\end{eqnarray*} \\
\end{enumerate}
\end{theorem}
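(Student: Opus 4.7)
The plan is to derive parts 1 and 2 by combining Wall's parametrization of orthogonal-group conjugacy classes with Lemma \ref{uselem}, and then to deduce parts 3, 4 and 5 by the generating-function techniques already used in the proof of Theorem \ref{oddorthog}. First I would invoke Lemma \ref{uselem} to write
\[
k(SO^{\pm}(2n,q)) = k(O^{\pm}(2n,q)) + s^{\pm}(2n,q),
\]
where $s^{\pm}(2n,q)$ is the number of $O^{\pm}(2n,q)$-classes all of whose Jordan blocks for $z-1$ and $z+1$ have even size: by Lemma \ref{uselem} these are precisely the classes that split into two distinct $SO^{\pm}$-classes, while the remaining $O^{\pm}$-classes stay irreducible inside $SO^{\pm}$. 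Parts 1 and 2 then reduce to producing generating functions for $s^+\pm s^-$ and combining them with Wall's generating functions $\prod (1+t^{2i-1})^4/(1-qt^{2i})$ for $k(O^+)+k(O^-)$ and $\prod (1-t^{2i-1})/(1-qt^{2i})$ for $k(O^+)-k(O^-)$ that were already used in Theorem \ref{oddorthog}.

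To produce the generating function for $s^{\pm}$ I would revisit Wall's parametrization of $O^{\pm}(2n,q)$-classes. A class is indexed by attaching combinatorial data to each monic irreducible polynomial, with special behaviour at the self-dual polynomials $z\pm 1$, where each Jordan block carries an orthogonal-form type. The factor $(1+t^{2i-1})^4$ in Wall's product for $k(O^+)+k(O^-)$ records odd-sized Jordan blocks at $z=\pm 1$ together with their type invariants (two sign choices for each of $z=1$ and $z=-1$, giving the fourth power). Restricting to partitions whose parts at $z\pm 1$ are all even removes this factor and replaces it with one recording only even-sized Jordan blocks at $z\pm 1$; a short computation, separating the two orthogonal types by a discriminant twist as in Theorem \ref{oddorthog}, should put the generating function for $s^++s^-$ into the shape $\tfrac{3}{2}\prod (1-t^{2i})^2/((1-t^{4i})^2(1-qt^{2i})) - \tfrac{1}{2}\prod (1+t^{2i-1})^4/(1-qt^{2i})$ and that for $s^+-s^-$ into the shape $2\prod 1/((1+t^i)(1-qt^{2i})) - \prod (1-t^{2i-1})/(1-qt^{2i})$. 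Adding Wall's formulas for $k(O^+)\pm k(O^-)$ produces exactly the identities stated in parts 1 and 2.

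For parts 3 and 4 I would follow the strategy of Theorem \ref{oddorthog}: rewrite each generating function as $\prod (1-t^{2i})/(1-qt^{2i})$ times a remainder which is analytic in a disc of radius slightly larger than $q^{-1/2}$. Since the coefficient of $t^{2n}$ in $\prod (1-t^{2i})/(1-qt^{2i})$ is a $GL(n,q)$-type class count bounded by $q^n$, upper bounds on $k(SO^{\pm}(2n,q))$ reduce to evaluating the remainder at $t=q^{-1/2}$ and applying Lemma \ref{pent}; a short calculus estimate then gives the constant $7.5$ in part 3 and the universal constants $A=20$ for $q=3$ and $A=8$ for $q\ge 5$ in part 4. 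The lower bound $q^n\le k(SO^{\pm}(2n,q))$ is immediate from Theorem \ref{ss}, since $SO^{\pm}(2n,q)$ is connected reductive of semisimple rank $n$.

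Finally, part 5 follows by applying Darboux's lemma (Lemma \ref{bign}) to the generating function in part 1, whose simple pole of smallest modulus is at $t=q^{-1/2}$; the generating function in part 2 is analytic in $|t|<q^{-1/2}$, so by Lemma \ref{maxmodulus} it contributes $o(q^n)$ and does not affect the limit of $k(SO^{\pm}(2n,q))/q^n$. The main obstacle will be the generating-function derivation in parts 1 and 2, namely the bookkeeping needed to restrict Wall's parametrization to even-sized Jordan blocks at $z\pm 1$ while correctly separating the $+$ and $-$ orthogonal types; once those closed-form identities are in hand, the analytic estimates that give parts 3--5 are essentially the same as in the proof of Theorem \ref{oddorthog}.
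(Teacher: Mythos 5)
Your reduction of parts 1 and 2 rests on the identity $k(SO^{\pm}(2n,q)) = k(O^{\pm}(2n,q)) + s^{\pm}(2n,q)$, and that is where the argument breaks. The conjugacy classes of $O^{\pm}(2n,q)$ are of three kinds: determinant-$1$ classes that split into two $SO^{\pm}$-classes, determinant-$1$ classes that remain a single $SO^{\pm}$-class, and determinant-$(-1)$ classes, which do not meet $SO^{\pm}$ at all and hence contribute to $k(O^{\pm})$ but not to $k(SO^{\pm})$. Writing $A$ for the number of splitting classes and $B$ for the number of non-splitting determinant-$1$ classes (each summed over both types), the correct identity is $k(SO^+)+k(SO^-)=2A+B=A+(A+B)$, whereas your identity yields $2A+B+D$ with $D$ the number of determinant-$(-1)$ classes; $D$ is of the same order $q^n$ as the main term, so this is not a negligible slip. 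Correspondingly, your proposed generating function for $s^++s^-$ cannot be what the "short computation" produces: by Lemma \ref{uselem} and Wall's parametrization the splitting classes are exactly those whose $z\pm1$ partitions have only even parts (necessarily with even multiplicity), and their generating function is $\prod_{i\ge1}(1-t^{2i})^2/\bigl((1-t^{4i})^2(1-qt^{2i})\bigr)$ with coefficient $1$ — not the combination $\tfrac32(\cdots)-\tfrac12(\cdots)$ you wrote, which has been reverse-engineered from the answer and is genuinely different, since $(1-t^{4i-2})^2\neq(1+t^{2i-1})^4$. With the true $s^++s^-$ your plan would compute $k(O^+)+k(O^-)+A$, which is not the stated formula.

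The missing ingredient is a generating function for $A+B$, the number of determinant-$1$ classes of $O^{\pm}$. The paper obtains it by noting that determinant $1$ is equivalent to the $z+1$ piece having even total size, and extracting that condition from Wall's factorization by the averaging trick: $A+B$ is the coefficient of $t^{2n}$ in $\tfrac12[F^+_+(t)+F^+_+(-t)]\,F^+_-(t)\,F_0^+(t)$, which combines with the formula for $A$ (via $(1-t^{2i-1})^2(1+t^{2i-1})^2=(1-t^{4i-2})^2$) to give part 1; for part 2 one checks in addition that the $O^+$-minus-$O^-$ difference of the non-splitting determinant-$1$ classes vanishes, so only the splitting classes contribute, each counted twice. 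Your outline for parts 3 through 5 does match the paper's strategy (factor out $\prod(1-t^{2i})/(1-qt^{2i})$, evaluate the remainder at $t=q^{-1/2}$, and use Lemmas \ref{bign} and \ref{maxmodulus}), but it presupposes the identities of parts 1 and 2 and also glosses over the verification that the bracketed remainder has non-negative coefficients, which the paper checks explicitly.
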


\begin{proof} Clearly $k(SO^{+}(2n,q))+k(SO^-(2n,q))=2A+B$, where $A$ is
the sum over $O^{+}(2n,q)$ and $O^-(2n,q)$ of the number of classes which
have determinant 1 and split into two $SO$ classes, and $B$ is the sum
over $O^+(2n,q)$ and $O^-(2n,q)$ of the number of classes which have
determinant 1 and do not split into two $SO$ classes. Applying Lemma
\ref{uselem} and arguing as on pages 41-2 of \cite{W} gives that $A$ is
the coefficient of $t^{2n}$ in \[ \prod_{i \geq 1}
\frac{(1-t^{2i})^2}{(1-t^{4i})^2 (1-qt^{2i})}.\] (The factor of
$(1-t^{4i})^{-2}$ comes from the fact that the $z \pm 1$ partitions have
only even parts which must occur with even multiplicity, and the other
factor is precisely Wall's $F_0^+(t)$). To solve for $B$, note that $A+B$
is the sum of $O^+(2n,q)$ and $O^-(2n,q)$ of the number of classes which
have determinant 1. Such classes correspond to elements where the $z+1$
piece has even size, so arguing as on pages 41-2 of \cite{W} (using his
notation) gives that $A+B$ is the coefficient of $t^{2n}$ in
\begin{eqnarray*} & & \frac{1}{2} [ F^+_+(t) + F^+_+(-t)] F^+_-(t)
F_0^+(t) \\
& = & \frac{1}{2} \left[ \prod_{i \geq 1}
\frac{(1+t^{2i-1})^4}{(1-qt^{2i})} + \prod_{i \geq 1}
\frac{(1-t^{2i-1})^2(1+t^{2i-1})^2}{(1-qt^{2i})} \right].\end{eqnarray*}
Calculating $2A+B$ completes the proof of the first part of the theorem.

For the second assertion, apply Lemma \ref{uselem} and argue as on pages
41-2 of \cite{W} (using his notation) to conclude that the $O^+(2n,q)$
number - the $O^-(2n,q)$ number of conjugacy classes which have
determinant 1 and split is the coefficient of $t^{2n}$ in
\[ \prod_{i \geq 1} \frac{1}{(1-t^{4i})^2} F_0^-(t) =
\prod_{i \geq 1} \frac{1}{(1+t^{2i})(1-qt^{4i})}. \] Again applying Lemma
\ref{uselem} and arguing as on pages 41-2 of \cite{W}, one sees that the
$O^+(2n,q)$ number - the $O^-(2n,q)$ number of conjugacy classes which
have determinant 1 and do not split is $0$. The second assertion follows.

The lower bound in part 3 is immediate from Theorem \ref{ss}. For the
upper bound, it follows from part 1 and elementary manipulations that
$k(SO^{+}(2n,q))+k(SO^-(2n,q))$ is the coefficient of $t^{2n}$ in \[
\left[ \frac{3}{2} \prod_{i \geq 1} \frac{1-t^{4i-2}}{1-t^{4i}} +
\frac{1}{2} \prod_{i \geq 1} \frac{(1+t^{2i})(1+t^{2i-1})^4}{(1-t^{4i})}
\right] \prod_{i \geq 1} \frac{1-t^{2i}}{1-qt^{2i}}.\] It is not difficult
to see that the expression in square brackets in the previous equation has
all coefficients non-negative when expanded as a power series in $t$ (use
the fact that the coefficient of $t^{4i-2}$ in $(1+t^{2i-1})^4$ is 6).
Hence one can argue as in the Theorem \ref{oddorthog} to conclude that
$k(SO^{+}(2n,q))+k(SO^-(2n,q))$ is at most $q^n$ multiplied by \[ \left[
\frac{3}{2} \prod_{i \geq 1} \frac{1-t^{4i-2}}{1-t^{4i}} + \frac{1}{4}
\prod_{i \geq 1} \frac{(1+t^{2i})(1+t^{2i-1})^4}{(1-t^{4i})} + \frac{1}{4}
\prod_{i \geq 1} \frac{(1+t^{2i})(1-t^{2i-1})^4}{(1-t^{4i})} \right]
\] evaluated at $t=3^{-.5}$. This at most $9.3 q^n$.

By part 2 and the fact that $2 \prod_{i \geq 1}
\frac{1}{(1+t^i)(1-qt^{2i})}$ is analytic for $|t| < q^{-1} + \epsilon$,
it follows from Lemma \ref{maxmodulus} that
$k(SO^{+}(2n,q))-k(SO^-(2n,q))$ is at most
\[ 2q^n \prod_{i \geq 1} \frac{1}{(1-1/q^i)(1-1/q^{2i-1})} \leq
2q^n \prod_{i \geq 1} \frac{1}{(1-1/3^i)(1-1/3^{2i-1})} \leq 5.6 q^n.\]
This, together with the previous paragraph, completes the proof of the
third assertion.

For part 4, the $q=3$ case is immediate from part 1. For $q \geq 5$, the
proof of part 3 showed that $\frac{k(SO^{\pm}(2n,q))}{q^n}$ is at most
\begin{eqnarray*}
& & \frac{3}{2} \prod_{i \geq 1} \frac{1-1/q^{2i-1}}{1-1/q^{2i}} +
\frac{1}{4} \prod_{i \geq 1}
\frac{(1+1/q^{i})(1+1/q^{i-1/2})^4}{(1-1/q^{2i})}\\ & & + \frac{1}{4}
\prod_{i \geq 1} \frac{(1+1/q^{i})(1-1/q^{i-1/2})^4}{(1-1/q^{2i})} - 1.
\end{eqnarray*} Using Lemma \ref{pent} (as in the unitary case), the
result follows from basic calculus.

The proof of part 5 is nearly identical to the proof of part 3 in Theorem
\ref{oddorthog}.
\end{proof}

{\it Remark:} The value of the limit in part 5 of Theorem \ref{Soeodd} is
4.6... when $q=3$.

\vspace{2mm}

We next consider the odd dimensional case.

\begin{theorem} \label{Sooodd} Let $q$ be odd.

\begin{enumerate}

\item $q^n \leq k(SO(2n+1,q)) \leq q^n \prod_{i=1}^{\infty}
\frac{(1-1/q^{4i})^2}{(1-1/q^i)^3 (1-1/q^{4i-2})^2} \leq 7.1 q^n$.

\item $k(SO(2n+1,q)) \leq q^n + Aq^{n-1}$ for a universal
constant $A$; one can take $A=19$ for $q=3$ and $A=8$ for $q \geq 5$. Thus
\[ lim_{q \rightarrow \infty} \frac{k(SO(2n+1,q))} {q^n}= 1,\] and the
convergence is uniform in $n$.

\item For fixed $q$, $lim_{n \rightarrow
\infty} \frac{k(SO(2n+1,q))}{q^n} = \prod_{i=1}^{\infty}
\frac{(1-1/q^{4i})^2}{(1-1/q^i)^3 (1-1/q^{4i-2})^2}$.

\item $k(O(2n+1,q)) = 2 k(SO(2n+1,q))$. \end{enumerate} \end{theorem}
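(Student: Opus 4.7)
The plan is to reduce everything to a generating-function identity for $k(SO(2n+1,q))$, then apply the same template used for the symplectic and even-dimensional orthogonal cases earlier in this section.

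First I would prove part (4), which is also the key structural input for parts (1)--(3). The matrix $-I$ lies in $O(2n+1,q)$, is central, and has determinant $(-1)^{2n+1} = -1$; hence $O(2n+1,q) = SO(2n+1,q) \times \langle -I \rangle$ as an internal direct product (since $SO \cap \langle -I \rangle$ is trivial), so $k(O(2n+1,q)) = 2 k(SO(2n+1,q))$. I would then extract from Wall \cite{W} the generating function for $k(O(2n+1,q))$ with $q$ odd, tracking the $z \pm 1$ blocks via Lemma \ref{uselem}, and halve it to get
\[
1 + \sum_{n \geq 1} k(SO(2n+1,q)) t^n = \prod_{i \geq 1} \frac{(1-t^{4i})^2}{(1-t^i)^2 (1-t^{4i-2})^2 (1-qt^i)}.
\]
This is analogous to the derivation of the $SO^\pm(2n,q)$ generating function in Theorem \ref{Soeodd}, but simpler, because in odd dimension there is no $O^+$ vs.\ $O^-$ split and the $O$-to-$SO$ passage is governed directly by part (4).

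For part (1), the lower bound $k(SO(2n+1,q)) \geq q^n$ is immediate from Theorem \ref{ss}. For the upper bound, I would rewrite the generating function as
\[
\prod_{i \geq 1} \frac{1-t^i}{1-qt^i} \cdot \prod_{i \geq 1} \frac{(1-t^{4i})^2}{(1-t^i)^3 (1-t^{4i-2})^2}.
\]
By Gauss's identity (as used in Theorem \ref{Spev}) the second factor equals $\bigl(\sum_{j \geq 0} t^{j(j+1)}\bigr)^2 \big/ \prod_i (1-t^i)^3$, and hence has non-negative coefficients. The first factor is the Feit--Fine generating function for $k(GL(n,q))$, whose $t^{n-m}$ coefficient is at most $q^{n-m}$. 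The same telescoping bound used in Theorems \ref{Spodd}, \ref{Spev}, and \ref{oddorthog} then yields
\[
k(SO(2n+1,q)) \leq q^n \prod_{i \geq 1} \frac{(1-1/q^{4i})^2}{(1-1/q^i)^3 (1-1/q^{4i-2})^2}.
\]
Lemma \ref{pent} shows this product is maximized at $q = 3$; a direct numerical evaluation there should give the $7.1 q^n$ bound.

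Parts (2) and (3) follow the pattern of the corresponding parts of Theorems \ref{Spodd}, \ref{Spev}, and \ref{oddorthog}. For part (3), $k(SO(2n+1,q))/q^n$ is the coefficient of $t^n$ in the generating function obtained by the substitution $t^i \mapsto t^i/q^i$, and applying Lemma \ref{bign} to the simple pole at $t = 1$ yields the stated limiting product. For part (2), one subtracts the $(1-t)^{-1}$ principal part of this rescaled series and applies Lemma \ref{maxmodulus} on a circle of radius slightly less than $q^{1/2}$, treating the case $q = 3$ separately using part (1). The main obstacle is establishing the generating function with the correct exponents on the $(1-t^{4i})$ and $(1-t^{4i-2})$ factors: one must trace carefully through Wall's parametrization for odd-dimensional orthogonal groups and see how the constraint ``determinant $=1$'' (automatically true here once one accounts for $-I$ via part (4)) interacts with the $z \pm 1$ Jordan structure through Lemma \ref{uselem}. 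Once this generating function is in hand, parts (1)--(3) reduce to bookkeeping parallel to the earlier cases.
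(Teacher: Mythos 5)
Your proposal matches the paper's proof in all essentials: the same generating function, the same rewriting against the Feit--Fine series $\prod_i (1-t^i)/(1-qt^i)$ followed by the non-negativity/telescoping bound, the same appeal to Theorem \ref{ss} for the lower bound, Lemma \ref{bign} for part (3), and the direct product $O(2n+1,q) = \Z/2\Z \times SO(2n+1,q)$ for part (4). The one point worth noting is that the paper sidesteps what you identify as your main obstacle --- deriving the generating function from Wall's parametrization via Lemma \ref{uselem} --- by simply quoting it from Lusztig \cite{Lz} in the form $\bigl(\sum_{j \geq 0} t^{j(j+1)}\bigr)^2 \prod_i (1-t^i)^{-2}(1-qt^i)^{-1}$ and applying Gauss's identity (and note that in odd dimension the splitting question is trivial anyway, since $-I$ is a central element of determinant $-1$); also, for part (2) the paper does not use a max-modulus argument but just combines the bound of part (1) with Lemma \ref{pent} and calculus to show the infinite product is at most $1 + A/q$, which is the more direct way to extract the explicit constants $A=19$ and $A=8$ uniformly in $n$.
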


\begin{proof} Lusztig \cite{Lz} proves that $k(SO(2n+1,q))$ is the
coefficient of $t^n$ in the generating function \[ \left(
\sum_{j=0}^{\infty} t^{j(j+1)} \right)^2 \prod_{i=1}^{\infty}
\frac{1}{(1-t^i)^2 (1-qt^i)}.\] By a result of Gauss (page 23 of
\cite{A1}), this is equal to
\[ \prod_{i=1}^{\infty} \frac{(1-t^{4i})^2}{(1-t^{4i-2})^2 (1-t^i)^2
(1-qt^i)}.\] Using the same trick as in the unitary and symplectic cases
one sees that \[ k(SO(2n+1,q)) \leq q^n \prod_{i=1}^{\infty}
\frac{(1-1/q^{4i})^2}{(1-1/q^i)^3 (1-1/q^{4i-2})^2}. \]This is maximized
for $q=3$ for which Lemma \ref{pent} yields an upper bound of $7.1 q^n$.

The lower bound follows by Steinberg's result on the number of
semisimple classes -- see Theorem \ref{ss}.

The second part follows from part 1 (use Lemma \ref{pent} as in the
unitary case and basic calculus), and the third part is proved using the
same method used for the symplectic groups.

Since $O(2n+1,q) = \Z/2\Z  \times SO(2n+1,q)$, the fourth result is clear.
\end{proof}

{\it Remark:} The value of the limit in part 3 of Theorem \ref{Sooodd} is
7.0.. when $q=3$.

\vspace{2mm}

We now state similar results for the groups $\Omega^{\pm}(n,q)$.
The proofs of these results are somewhat long and are in \cite{FG5}.

Theorem \ref{omega1} treats the  even dimensional groups,
while Theorem \ref{omega2} treats the odd dimensional case.

It is convenient to define $\Omega^*(2n,q) = \Omega^+(2n,q)$
if $q \equiv 1 \mod 4$ or $n$ is even and $\Omega^-(2n,q)$
otherwise, and similarly for $SO$.

Note that for the even dimensional special orthogonal groups,
one will be a direct product of its center and $\Omega$, and so
the answer for $\Omega$ is precisely $1/2$ the answer for $SO$
(these are precisely the cases not mentioned in the next result).

\begin{theorem} \label{omega1}    Let $q$ be odd.
\begin{enumerate}
\item Set $j=2$ if $*=+$ and $j=1$ if $*=-$.
Then  $k(\Omega^*(2n,q))$ is the coefficient of $t^{2n}$ in
\begin{eqnarray*} \frac{3}{8} \prod_{i \geq 1}
\frac{1}{(1+t^{2i})^2(1-qt^{2i})} + \frac{1}{8} \prod_{i \geq 1}
\frac{(1+t^{2i-1})^4}{(1-qt^{2i})}\\ + \frac{3}{2} \frac{\prod_{i \ odd}
(1+t^i)^2}{\prod_{i \geq 1} (1-qt^{4i})} + j \frac{\prod_{i \ odd}
(1-t^{2i})} {\prod_{i \geq 1} (1-qt^{4i})}. \end{eqnarray*}

\item For $n \geq 2$, $\frac{q^n}{2} \leq k(\Omega^*(2n,q)) \leq 6.8 q^n$.

\item $k(\Omega^*(2n,q)) \leq \frac{q^n}{2} + A q^{n-1}$. One can
take $A=16$ for $q=3$ and $A=8.5$ for $q \geq 5$. Thus
\[ lim_{q \rightarrow \infty} \frac{k(\Omega^+(2n,q))}{q^n} =
\frac{1}{2},\] and the convergence is uniform in $n$.

\item For fixed $q$, \begin{eqnarray*} & & lim_{n \rightarrow \infty}
\frac{k(\Omega^*(2n,q))}{q^n} = \frac{1}{2} \cdot lim_{n \rightarrow
\infty} \frac{k(SO^*(2n,q))}{q^n} \\ & = & \frac{3}{8} \prod_{i \geq 1}
\frac{1}{(1+1/q^i)^2(1-1/q^i)} + \frac{1}{16} \prod_{i \geq 1}
\frac{(1+1/q^{i-1/2})^4}{(1-1/q^i)} \\ & & + \frac{1}{16} \prod_{i \geq 1}
\frac{(1-1/q^{i-1/2})^4}{(1-1/q^i)}.\end{eqnarray*}
\end{enumerate}
\end{theorem}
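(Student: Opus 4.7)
The plan is to mimic the approach of Theorem \ref{Soeodd}, with the role of $O^{\pm}/SO^{\pm}$ now played by $SO^{\pm}/\Omega^{\pm}$. Since $[SO^*(2n,q):\Omega^*(2n,q)]=2$ for $q$ odd, we can write $k(\Omega^*(2n,q)) = 2A + B$, where $A$ counts the $SO^*(2n,q)$-classes contained in $\Omega^*(2n,q)$ that split into two $\Omega$-classes, and $B$ counts those contained classes that remain single $\Omega$-classes. Equivalently, $k(\Omega^*(2n,q)) = A + (A+B)$, where $A+B$ is the number of $SO$-classes with trivial spinor norm.

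The first step is to prove a spinor-norm analogue of Lemma \ref{uselem}: an $SO^*(2n,q)$-class $g^{SO}$ contained in $\Omega^*(2n,q)$ splits in $\Omega^*(2n,q)$ if and only if $C_{SO}(g) \leq \Omega^*(2n,q)$, and this in turn is equivalent to a combinatorial condition on the Jordan-block structure of $g$ on the $z\pm 1$ eigenspaces (as in \cite[Thm.~2.12]{LS}) together with a spinor norm condition on the other primary invariant subspaces. The key observation is that the spinor norm of a centralizer element is determined primary-piece by primary-piece, so the splitting condition factorizes neatly, which is what will produce a tractable generating function.

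Given this characterization, I would adapt Wall's parametrization (pp.~41--42 of \cite{W}, and as used in the proof of Theorem \ref{Soeodd}) to write generating functions for $A$ and $A+B$ separately. The four-term expression in part (1) is exactly what one expects: the first two terms (coefficients $3/8$ and $1/8$) come from the count of $SO$-classes in $\Omega$, split by whether the $z\pm 1$ Jordan partitions satisfy the parity conditions; the two terms with $\prod_{i\ge 1}(1-qt^{4i})^{-1}$ come from $A$, where the $z\pm 1$ parts must consist of even-sized blocks with restricted multiplicities (hence the $(1-qt^{4i})$). The factor $j$ and its dependence on $* \in \{+,-\}$ reflects tracking the type of the nondegenerate form on the $z\pm 1$ subspaces, analogous to Wall's $F_0^{\pm}$.

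Given (1), parts (2)--(4) follow from the standard toolkit developed earlier in the section. The lower bound $q^n/2$ in (2) follows from Theorem \ref{ss} together with the fact that at most two semisimple $SO$-classes can fuse when restricting to $\Omega$. For the upper bounds in (2) and (3), I would factor each of the four series in (1) as $\prod_i (1-t^{2i})/(1-qt^{2i})$ times a power series with non-negative coefficients (exactly as in Theorems \ref{Spodd} and \ref{oddorthog}), bound the coefficient of $t^{2n}$ in the first factor by $q^n$, sum the remaining series at $t = q^{-1/2}$ (using a symmetrization $t \mapsto -t$ to pick out even powers), and apply Lemma \ref{pent} at $q=3$ together with elementary calculus for $q \ge 5$. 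For the asymptotic in (4), the generating function in (1) divided by $q^n$ has a simple pole at $t=\pm 1$, so Lemma \ref{bign} immediately yields the stated limit. The main obstacle is identifying the splitting generating function for $A$ with the correct coefficients and the correct $j$-dependence on the type; once the spinor-norm analogue of Lemma \ref{uselem} is in place, everything else reduces to manipulation of Wall's generating functions and routine analytic bookkeeping.
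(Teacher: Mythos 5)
First, a point of comparison: the paper does not prove Theorem \ref{omega1} at all --- immediately before stating it the authors write that ``the proofs of these results are somewhat long and are in \cite{FG5}.'' So there is no in-paper argument to measure your proposal against, only the method of the neighboring results (Theorems \ref{oddorthog} and \ref{Soeodd}) that you imitate. Your high-level strategy --- write $k(\Omega^*)=A+(A+B)$ with $A$ the number of $SO^*$-classes in $\Omega^*$ that split and $A+B$ the number with trivial spinor norm, prove a splitting criterion, and push everything through Wall's generating functions, with parts (2)--(4) following from part (1) by the standard toolkit --- is surely the right one. But the central ingredient, your ``spinor-norm analogue of Lemma \ref{uselem},'' is asserted rather than proved, and it is not a routine transcription. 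In Lemma \ref{uselem} the determinant criterion collapses to a parity condition on Jordan blocks because $\det$ is read off blockwise trivially. The spinor norm is genuinely harder: for primary components attached to self-conjugate polynomials $\phi\ne z\pm 1$ the centralizer factors are unitary or general linear groups over extension fields and one must compute the image of each under the spinor norm (which depends on $q \bmod 4$ and the type of the form on that component); for the $z\pm 1$ components the criterion for $C_{SO}(g)\le\Omega$ depends on the discriminants of the forms on the multiplicity spaces, not just on block sizes. One must also compute the spinor norm of the class representative itself in terms of Wall's invariants merely to decide which $SO$-classes lie in $\Omega$. This is exactly the ``somewhat long'' content deferred to \cite{FG5}; without it the coefficients $3/8$, $1/8$, $3/2$ and $j$ cannot be derived, and your claim that the splitting condition ``factorizes neatly'' is doing all the work unexamined.

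A second, concrete discrepancy: your attribution of the four terms of part (1) (first two $=A+B$, last two $=A$) does not match the structure of the stated formula. Since $(1-t^{2i})^2/(1-t^{4i})^2=(1+t^{2i})^{-2}$, the first two terms are exactly one quarter of the generating function for $k(SO^+(2n,q))+k(SO^-(2n,q))$ from Theorem \ref{Soeodd}(1), i.e. they encode $\tfrac12\cdot\tfrac12\left[k(SO^+)+k(SO^-)\right]$, the sum-over-types half of $\tfrac12 k(SO^*)$; the two terms with $\prod_{i\ge 1}(1-qt^{4i})$ in the denominator are difference-type generating functions (compare Wall's $F_0^-$ and Theorem \ref{Soeodd}(2)) and must simultaneously absorb the $\pm\tfrac14\left[k(SO^+)-k(SO^-)\right]$ correction needed to isolate a single type $*$, the imbalance between classes of trivial and nontrivial spinor norm, and the split-class count $A$. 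Your sketch never addresses how a single type $*$ is extracted from sum and difference generating functions, which is precisely where the $j$-dependence on $*$ must arise. So while the route is the natural one, the proposal as written does not establish part (1), and parts (2)--(4) all rest on it.
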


{\it Remark:} The value of the limits in part 4 of Theorem \ref{omega1}
 is 2.3... when $q=3$.

The following result is for odd dimensional groups.

\begin{theorem} \label{omega2} Suppose that $q$ is odd.
\begin{enumerate}
\item $k(\Omega(2n+1,q))$ is the coefficient of $t^{2n}$ in \[
\frac{3}{4t} \frac{\prod_{i \ odd} (1+t^i)^2}{\prod_{i \geq 1}
(1-qt^{4i})}  + \frac{1}{2} \prod_{i \geq 1}
\frac{(1-t^{8i})^2}{(1-t^{8i-4})^2 (1-t^{2i})^2 (1-qt^{2i})}.\]
\item For $n \geq 2$, $\frac{q^n}{2} \leq k(\Omega(2n+1,q)) \leq 7.3 q^n$.
\item $k(\Omega(2n+1,q)) \leq \frac{q^n}{2} + A q^{n-1}$. One can
take $A=11$ for $q=3$ and $A=5.5$ for $q \geq 5$. Thus
\[ lim_{q \rightarrow \infty} \frac{k(\Omega(2n+1,q))}{q^n} =
\frac{1}{2},\] and the convergence is uniform in $n$.
\item For fixed $q$, \begin{eqnarray*} & & lim_{n \rightarrow \infty}
\frac{k(\Omega(2n+1,q))}{q^n} = \frac{1}{2} \cdot lim_{n \rightarrow
\infty} \frac{k(SO(2n+1,q))}{q^n} \\ & = & \frac{1}{2} \prod_{i \geq 1}
\frac{(1-1/q^{4i})^2}{(1-1/q^{4i-2})^2 (1-1/q^i)^3}. \end{eqnarray*}
\end{enumerate}
\end{theorem}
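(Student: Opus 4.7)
The plan is to parallel the strategy used for the even-dimensional special orthogonal and $\Omega$ groups in Theorems \ref{Soeodd} and \ref{omega1}, applied here to the odd-dimensional case. Write $k(\Omega(2n+1,q)) = N + S$, where $N$ is the number of $SO(2n+1,q)$-conjugacy classes with trivial spinor norm (hence contained in $\Omega$) and $S$ is the number of those classes that split into two $\Omega$-classes. Equivalently, if $A$ counts the splitters and $B$ the non-splitters among $N$, then $N = A + B$ and $k(\Omega(2n+1,q)) = 2A + B = N + A$.

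First I would establish a splitting criterion: an $SO$-class $g^{SO}$ with trivial spinor norm splits in $\Omega$ if and only if $C_{SO}(g) \subseteq \Omega$. Using Wall's parametrization of orthogonal classes by monic polynomials equipped with partition-valued invariants (as in \cite{W}), this can be read off the Jordan block data of $g$, in the spirit of Lemma \ref{uselem}: non-splitting corresponds to the existence of an orthogonal decomposition of the underlying space on which $C_{SO}(g)$ contains an element of nontrivial spinor norm, which in turn is governed by the partitions attached to the polynomials $z \pm 1$.

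Second, I would assemble generating functions for $N$ and $A$ separately. The $N$-generating function is essentially half of the $SO(2n+1,q)$-class generating function of Theorem \ref{Sooodd} (with $t$ replaced by $t^2$), reflecting that on average half the $SO$-classes have trivial spinor norm; this produces the second summand
\[
\tfrac{1}{2} \prod_{i \geq 1} \frac{(1-t^{8i})^2}{(1-t^{8i-4})^2 (1-t^{2i})^2 (1-qt^{2i})}.
\]
The $A$-generating function counts classes whose $z \pm 1$ blocks satisfy the centralizer-containment condition; Wall's formalism packages this as $\prod_{i \text{ odd}}(1+t^i)^2$ in the numerator together with the $(1-qt^{4i})^{-1}$ factor coming from the "other" polynomials, which after an index shift gives the first summand $\tfrac{3}{4t} \prod_{i \text{ odd}}(1+t^i)^2 / \prod_{i \geq 1}(1-qt^{4i})$. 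Along the way, Gauss's identity $\sum_{j \geq 0} t^{j(j+1)} = \prod_{i \geq 1}(1-t^{2i})/(1-t^{2i-1})$ is used to collapse sums into products. Adding the two generating functions yields part 1.

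Parts 2--4 are analytic consequences of part 1. The lower bound in part 2 follows from Lemma \ref{Boblemma}(1) and Theorem \ref{Sooodd} via $k(\Omega(2n+1,q)) \geq k(SO(2n+1,q))/[SO{:}\Omega] \geq q^n/2$. For the upper bound I would factor the generating function of part 1 as $\prod_i (1-t^{2i})/(1-qt^{2i})$ times a series whose coefficients are non-negative after symmetrizing $t \mapsto -t$ where needed, then bound the coefficient of $t^{2n}$ by $q^n$ times the value of the residual factor at $t = q^{-1/2}$ and simplify with Lemma \ref{pent}; the worst case is $q = 3$ and gives the constant $7.3$. Part 3 is obtained by subtracting off the leading $q^n/2$ before rerunning the majorization, and part 4 follows from Darboux's Lemma \ref{bign}: only the pole at $t^2 = 1/q$ survives, and the residue computation reduces the limit to $\tfrac{1}{2}$ of the corresponding $SO$-limit from Theorem \ref{Sooodd}, which matches the stated product. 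The main obstacle will be step one: correctly identifying, across all Wall-type invariants, both the spinor norm of $g$ and whether $C_{SO}(g) \leq \Omega$, and then manipulating the resulting bivariate sums into the two clean products displayed in part 1; once that is done, parts 2--4 are essentially mechanical adaptations of the proofs of Theorems \ref{Sooodd} and \ref{Soeodd}.
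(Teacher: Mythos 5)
First, a point of comparison: the paper does not prove Theorem \ref{omega2} at all. Just before Theorem \ref{omega1} the authors state that the proofs of these results ``are somewhat long and are in \cite{FG5}'', so there is no in-paper argument to measure your proposal against; any proof here is necessarily your own.

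On the merits, your outline has the right overall shape --- it mirrors the determinant bookkeeping of Theorem \ref{Soeodd} with the spinor norm in place of the determinant, and parts 2--4 would indeed be mechanical once part 1 is in hand (the lower bound via Lemma \ref{Boblemma}, the majorization at $t=q^{-1/2}$ with Lemma \ref{pent}, and Darboux's Lemma \ref{bign} are exactly the paper's standard moves). But there is a genuine gap in your derivation of part 1. You identify the second summand with $N$, the number of $SO(2n+1,q)$-classes of trivial spinor norm, on the grounds that ``on average half the $SO$-classes have trivial spinor norm,'' and the first summand with the splitter count $A$. For this accounting to produce the displayed formula you would need $N$ to equal $\tfrac12 k(SO(2n+1,q))$ \emph{exactly}, and there is no reason for that: unlike the even-dimensional determinant computation, where Wall's factorization isolates the determinant-one classes by symmetrizing $F^+_+(t)$ under $t\mapsto -t$, the group $SO(2n+1,q)$ has trivial center, so there is no class-preserving involution exchanging the two spinor-norm values. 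The correct decomposition is $k(\Omega)=N+A$ with $N=\tfrac12\bigl(k(SO)+D\bigr)$, where $D$ is the signed difference between the numbers of classes inside and outside $\Omega$; this $D$ needs its own generating function (it is where the $\prod_{i \ odd}(1+t^i)^2/\prod_{i\geq 1}(1-qt^{4i})$ shape and the peculiar $\tfrac{3}{4t}$ prefactor must come from, in combination with $A$), and computing it forces you to evaluate the spinor norm on Wall's class invariants for both $z\pm 1$ pieces --- precisely the step your sketch defers as ``the main obstacle.'' Until that computation is carried out, part 1, and with it the specific constants $7.3$, $11$ and $5.5$ in parts 2 and 3, remain unestablished.
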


In particular, we have:

\begin{cor}  Fix an odd prime power $q$.  Then
$$
\lim_{m \rightarrow \infty} \frac{k(\Omega^{\pm}(m,q))}{k(SO^{\pm}(m,q))} =
\frac{1}{2}.
$$
\end{cor}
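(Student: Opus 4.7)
The plan is to deduce this corollary directly from the explicit limit formulas established in parts 4--5 of the preceding theorems, splitting the argument according to the parity of $m$.

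For odd $m = 2n+1$, part 4 of Theorem \ref{omega2} gives
$$
\lim_{n \to \infty} \frac{k(\Omega(2n+1,q))}{q^n} = \frac{1}{2} \lim_{n \to \infty} \frac{k(SO(2n+1,q))}{q^n}.
$$
Writing the ratio $k(\Omega)/k(SO)$ as the quotient of the two normalized sequences $k(\Omega)/q^n$ and $k(SO)/q^n$ and passing to the limit yields $1/2$, provided the denominator limit is nonzero. But part 3 of Theorem \ref{Sooodd} expresses that denominator limit as an absolutely convergent infinite product of strictly positive factors, so positivity is immediate.

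For even $m = 2n$, I would split further according to whether the chosen sign $\pm$ coincides with the ``star'' convention introduced just before Theorem \ref{omega1}. In the $*$ case, part 4 of Theorem \ref{omega1} plays the same role as Theorem \ref{omega2} part 4 did above, and the corresponding $SO$ limit given in part 5 of Theorem \ref{Soeodd} is positive by inspection of its product formula (the first summand alone is a convergent product of positive factors). In the complementary non-$*$ case, the parenthetical remark preceding Theorem \ref{omega1} observes that $SO^\pm(2n,q)$ is the direct product of its order-two center with $\Omega^\pm(2n,q)$, so $k(SO^\pm(2n,q)) = 2\, k(\Omega^\pm(2n,q))$ identically in $n$, and the ratio equals $1/2$ exactly with no limiting argument needed.

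There is essentially no obstacle: the substantive work has all been done in the cited theorems and the remark preceding Theorem \ref{omega1}. The only verification required is the strict positivity of the relevant $SO$ limits, which is a routine inspection of absolutely convergent positive infinite products; once this is granted, the corollary is immediate from cancellation.
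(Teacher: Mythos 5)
Your proposal is correct and matches the paper's intended derivation: the corollary is stated there as an immediate consequence of part 4 of Theorems \ref{omega1} and \ref{omega2} together with the remark preceding Theorem \ref{omega1} that in the non-$*$ cases $SO^{\pm}(2n,q)$ is the direct product of its center with $\Omega^{\pm}(2n,q)$, so $k(SO)=2k(\Omega)$ exactly. Your added observations — that for a fixed sign the $*$ and non-$*$ cases interleave as subsequences in $n$, and that the relevant $SO$ limits are strictly positive (by inspection of the product formulas in Theorem \ref{Sooodd} part 3 and Theorem \ref{Soeodd} part 5) — are exactly the details needed to make the deduction rigorous.
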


We now turn  to orthogonal groups in  characteristic $2$.  Since
the odd dimensional orthogonal groups are isomorphic
to  symplectic groups, we need only consider the even dimensional
case.

\begin{theorem} \label{Oee} Let $q$ be even.

\begin{enumerate}

\item \[ 1 + \sum_{n \geq 1} t^n \left[ k(O^+(2n,q))+k(O^-(2n,q)) \right]
= \prod_{i=1}^{\infty} \frac{(1+t^i)(1+t^{2i-1})^2}{(1-qt^i)} .\]

\item \[ \frac{q^n}{2} \leq k(O^{\pm}(2n,q)) \leq 15q^n.\]

\item $k(O^{\pm}(2n,q)) \leq \frac{q^n}{2}+ A q^{n-1}$ for a universal
constant $A$; one can take $A=29$ for $q=2$ and $A=9$ for $q \geq 4$. Thus
\[ lim_{q \rightarrow \infty} \frac{k(O^{\pm}(2n,q))} {q^n}=
\frac{1}{2},\] and the convergence is uniform in $n$.

\item For fixed $q$,
$lim_{n \rightarrow \infty} \frac{k(O^{\pm}(2n,q))}{q^n} = \frac{1}{2}
\prod_{i \geq 1} \frac{(1+1/q^i)(1+1/q^{2i-1})^2}{(1-1/q^i)}$.
\end{enumerate} \end{theorem}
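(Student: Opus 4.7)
The plan is to follow almost exactly the template established for the symplectic characteristic-$2$ case (Theorem \ref{Spev}) and the odd-characteristic orthogonal case (Theorem \ref{oddorthog}), since characteristic $2$ orthogonal groups in even dimension admit Wall-type generating functions similar to those for $Sp(2n,q)$.

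For part (1), I would start from Wall's parametrization of conjugacy classes in $O^\pm(2n,q)$ in characteristic $2$: each class is encoded by combinatorial data (partition-valued functions on irreducible polynomials, together with decorations for $z+1$ blocks reflecting the quadratic form). Summing over $+$ and $-$ types removes the parity restriction that separates the two groups. As in the symplectic even case, one combines Wall's product formula with an Andrews/Gauss-type identity (the same $\sum t^{j(j+1)} = \prod (1-t^{2i})/(1-t^{2i-1})$ used in Theorem \ref{Spev}) to collapse the sum into the compact product
\[
\prod_{i=1}^{\infty} \frac{(1+t^i)(1+t^{2i-1})^2}{1-qt^i}.
\]
The factor $(1+t^i)$ comes from the non-$z\pm 1$ polynomials (as in $GL$), and $(1+t^{2i-1})^2$ accounts for the decorated $z+1$ partitions after the characteristic-$2$ parity condition is removed.

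For the upper bounds in parts (2) and (3), I would use the same factorization trick as in Theorem \ref{oddorthog}. Write
\[
\prod_{i \ge 1} \frac{(1+t^i)(1+t^{2i-1})^2}{1-qt^i} = \prod_{i \ge 1} \frac{1-t^i}{1-qt^i} \cdot \prod_{i \ge 1} \frac{(1+t^i)(1+t^{2i-1})^2}{1-t^i}.
\]
The first product is the generating function for $k(GL(n,q))$, whose coefficients are bounded by $q^n$; the second has non-negative coefficients. Convolving and summing the geometric tails gives
\[
k(O^+(2n,q)) + k(O^-(2n,q)) \le q^n \prod_{i \ge 1} \frac{(1+1/q^i)(1+1/q^{2i-1})^2}{1-1/q^i},
\]
which is maximized at $q = 2$ and bounded via Lemma \ref{pent}. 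To separate $O^+$ from $O^-$, I would also bound the difference $k(O^+(2n,q)) - k(O^-(2n,q))$ using Wall's generating function for this difference (an analogue of his $\prod(1-t^{2i-1})/(1-qt^{2i})$ in the odd case, presumably of the form where only classes whose $z+1$-block partition has parts that fail to balance contribute) and apply Lemma \ref{maxmodulus} to show it is $O(q^n)$ with a smaller constant. Halving the sum and adding the halved difference yields the bound $15 q^n$, and the sharper constants in (3) come from Lemma \ref{pent} combined with elementary calculus, exactly as in the odd case. The lower bound $q^n/2$ is immediate from Theorem \ref{ss}: $SO^\pm(2n,q)$ has at least $q^n$ semisimple classes (in characteristic $2$ this group has index $2$ in $O^\pm(2n,q)$), and at most two can fuse.

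For part (4), I would substitute $t \mapsto t/q$ in the generating function of part (1), multiply by $1/(1-t)$ to isolate the dominant singularity at $t = 1$, and apply Darboux's asymptotic (Lemma \ref{bign}). All other poles sit on $|t| = q^{1/2} > 1$, so the residue at $t=1$ gives the stated limit $\tfrac{1}{2} \prod (1+1/q^i)(1+1/q^{2i-1})^2/(1-1/q^i)$; the difference $k(O^+) - k(O^-)$ is $o(q^n)$ by Lemma \ref{maxmodulus} since its generating function is analytic in a disk of radius strictly larger than $q^{-1/2}$.

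The main obstacle is the bookkeeping for characteristic $2$ orthogonal conjugacy classes, which is notoriously more delicate than the odd case because one must track the quadratic-form decoration on unipotent Jordan blocks (rather than just a bilinear form), and because the distinction between $O^+$ and $O^-$ shows up in the parity conditions on these decorations. Verifying Wall's formula collapses to the clean product in part (1), and in particular getting the correct generating function for $k(O^+) - k(O^-)$, is where one has to be careful; once that is done the remaining steps are essentially manipulations of infinite products identical to those in Theorems \ref{Spev} and \ref{oddorthog}.
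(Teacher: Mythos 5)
Your proposal follows essentially the same route as the paper: Wall's data combined with Andrews' resolution of the Lusztig--Macdonald--Wall conjecture for part (1), the $GL$-factorization trick for the sum $k(O^+)+k(O^-)$ together with Lemma \ref{maxmodulus} applied to Wall's generating function $\prod_{i\ge 1}(1-t^{2i-1})/(1-qt^{2i})$ for the difference in parts (2)--(3), simple connectedness of $SO^{\pm}(2n,q)$ for the lower bound, and Darboux's lemma for part (4). One concrete correction in part (1): the Wall--Andrews input for the orthogonal sum has numerator $\sum_{j=-\infty}^{\infty} t^{j^2}$ over $\prod_{i\ge 1}(1-t^i)(1-qt^i)$, not the one-sided sum $\sum_{j\ge 0} t^{j(j+1)}$ appearing in the symplectic case, so the identity you need is Jacobi's triple product $\sum_{n=-\infty}^{\infty} t^{n^2} = \prod_{i\ge 1}(1-t^{2i})(1+t^{2i-1})^2$ rather than the Gauss identity from Theorem \ref{Spev}; with that substitution the product $\prod_{i\ge 1}(1+t^i)(1+t^{2i-1})^2/(1-qt^i)$ falls out exactly as you describe, and the rest of your argument matches the paper's.
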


\begin{proof} Combining \cite{W} and \cite{A2} shows that
$k(O^+(2n,q))+k(O^-(2n,q))$ is the coefficient of $t^n$ in the generating
function \[ \frac{\sum_{j=-\infty}^{\infty} t^{j^2}}{\prod_{i=1}^{\infty}
(1-t^i) (1-qt^i)}.\] The first assertion now follows from the following
special case of Jacobi's triple product identity (page 21 of \cite{A1}):
\[ \sum_{n=-\infty}^{\infty} t^{n^2} = \prod_{i=1}^{\infty}
(1-t^{2i})(1+t^{2i-1})^2 \]

Note that when the numerator of the generating function of part 1 is
expanded as a series in $t$, all coefficients are positive. Arguing as for
the unitary and symplectic groups gives that

\begin{eqnarray*} k(O^+(2n,q))+k(O^-(2n,q)) & \leq & q^n
\prod_{i \geq 1} \frac{(1+1/q^i)(1+1/q^{2i-1})^2}{(1-1/q^i)} \\
& \leq & q^n
\prod_{i \geq 1} \frac{(1+1/2^i)(1+1/2^{2i-1})^2}{(1-1/2^i)} \\
& \leq & 25.6 q^n. \end{eqnarray*}

From \cite{W}, $k(O^+(2n,q))- k(O^-(2n,q))$ is the coefficient of $t^n$ in
\[ \prod_{i=1}^{\infty} \frac{1-t^{2i-1}}{1-qt^{2i}}.\] Since this is
analytic for $|t|<q^{-1}+\epsilon$, Lemma \ref{maxmodulus} gives that
$k(O^+(2n,q))- k(O^-(2n,q))$ is at most
\[ q^n \prod_{i=1}^{\infty} \frac{(1+1/q^{2i-1})}{(1-1/q^{2i-1})} \leq q^n
\prod_{i=1}^{\infty} \frac{(1+1/2^{2i-1})}{(1-1/2^{2i-1})} \leq 4.2q^n .\]
Combining this with the the previous paragraph yields the upper bound in
part 2.

For the lower bound in part 2, $SO^{\pm}(2n,q)$ is simply connected. Thus
by Steinberg's theorem, the number of semisimple classes is exactly $q^n$
and so there are at least $q^n/2$ in $O^{\pm}(2n,q)$ (because the index
is 2, at most two classes fuse into one).

The third and fourth parts are proved by the same method as in Theorem
\ref{oddorthog}.
\end{proof}

{\it Remark:} The value of the limit in part 4 of Theorem \ref{Oee} is
12.7.. when $q=2$.

\vspace{2mm}

Finally, we treat even characteristic special orthogonal groups.

\begin{theorem} \label{SOeven} Let $q$ be even.
\begin{enumerate}
\item \begin{eqnarray*} & & 2+\sum_{n \geq 1} t^n \left[ k(SO^+(2n,q)) +
k(SO^-(2n,q)) \right]\\ & = & \left[ \frac{1}{2} \prod_{i \ odd}
\frac{(1+t^i)^2}{(1-t^i)} + \frac{3}{2} \prod_{i=1}^{\infty}
\frac{1}{(1+t^i)} \right] \prod_{i=1}^{\infty} \frac{1}{(1-qt^i)}.
\end{eqnarray*}
\item \[ 2+\sum_{n \geq 1} t^n \left[ k(SO^+(2n,q)) - k(SO^-(2n,q)) \right] =
2 \prod_{i=1}^{\infty} \frac{1}{(1+t^i)(1-qt^{2i})}.\]
\item \[ q^n \leq k(SO^{\pm}(2n,q)) \leq 14 q^n.\]
\item $k(SO^{\pm}(2n,q)) \leq q^n + Aq^{n-1}$ for a universal constant $A$;
one can take $A=26$ for $q=2$ and $A=5$ for $q \geq 4$. Thus \[ lim_{q
\rightarrow \infty} \frac{k(SO^{\pm}(2n,q))}{q^n}=1,\] and the convergence
is uniform in $n$.
\item For fixed $q$, $lim_{n \rightarrow \infty}
\frac{k(SO^{\pm}(2n,q))}{q^n}$ is equal to  \[ \frac{1}{4} \prod_{i \ odd}
\frac{(1+1/q^i)^2}{(1-1/q^i)} \prod_{i=1}^{\infty} \frac{1}{(1-1/q^i)} +
\frac{3}{4} \prod_{i=1}^{\infty} \frac{1}{(1-1/q^{2i})}.\]
\end{enumerate}
\end{theorem}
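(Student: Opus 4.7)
The plan is to mirror the argument for the odd-characteristic case in Theorem \ref{Soeodd}, with the modifications needed in characteristic $2$. Since every element of $O^{\pm}(2n,q)$ has determinant $1$ in even characteristic, $SO^{\pm}(2n,q)$ is the kernel of the Dickson invariant rather than of the determinant. The first step is therefore an analog of Lemma \ref{uselem}: one characterizes the $O^{\pm}$-conjugacy classes that have trivial Dickson invariant (and so lie in $SO^{\pm}$), and among those, the ones that split into two $SO^{\pm}$-classes. Using the parametrization of orthogonal classes in characteristic $2$ from Wall's paper, this condition becomes a combinatorial constraint on the Jordan block data attached to the polynomial $z+1$.

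With this analog in hand, write $k(SO^+(2n,q))+k(SO^-(2n,q)) = 2A+B$, where $A$ is the sum over $O^+(2n,q)$ and $O^-(2n,q)$ of the number of classes with trivial Dickson invariant that split into two $SO^{\pm}$-classes, and $B$ is the corresponding number that do not split; analogously write $k(SO^+(2n,q))-k(SO^-(2n,q)) = 2A' + B'$. Wall's generating function bookkeeping, combined with Andrews' identity (already used in Theorems \ref{Spev} and \ref{Oee}) to simplify the $z+1$ contribution, yields the closed forms claimed in parts (1) and (2) after straightforward algebra.

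For parts (3)--(5), proceed by the now-standard technique of Section \ref{boundnumber}. Each generating function is factored as $\prod_{i \geq 1} (1-t^i)/(1-qt^i)$ (the $k(GL(n,q))$ generating function, whose coefficients are bounded by $q^n$) times a residual factor whose Taylor coefficients are nonnegative. Bounding the coefficient of $t^n$ by $q^n$ times the value of that residual at $t=q^{-1}$, with Lemma \ref{pent} for explicit numerical control and basic calculus to extract the $Aq^{n-1}$ correction, gives parts (3) and (4); the lower bound in (3) is Theorem \ref{ss}. The generating function for $k(SO^+)-k(SO^-)$ in part (2) is analytic well inside $|t|<q^{-1}$, so Lemma \ref{maxmodulus} makes its $n$th coefficient a lower-order term that affects the asymptotics negligibly. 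The limit in part (5) then comes from applying Lemma \ref{bign} to the simple pole at $t=q^{-1}$ in the generating function of part (1), after dividing by $q^n$.

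The main technical obstacle is the combinatorial analog of Lemma \ref{uselem}: since the structure of indecomposable unipotent orthogonal blocks in characteristic $2$ differs significantly from the odd-characteristic case, one must carefully track which classes centralize an element outside $SO^{\pm}$ and which do not. Getting the precise coefficients $\tfrac{1}{2}$ and $\tfrac{3}{2}$ in part (1), as well as the leading constant $2$ in part (2), hinges on this bookkeeping; the subsequent numerical bounds in parts (3) and (4) are then routine, following exactly the pattern already established in Theorems \ref{Oee} and \ref{Soeodd}.
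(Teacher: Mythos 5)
Your treatment of parts (3)--(5) matches the paper's: factor out $\prod_{i\geq 1}(1-t^i)/(1-qt^i)$ from the generating function of part (1), check the residual factor has nonnegative coefficients, evaluate at $t=1/q$ with Lemma \ref{pent}, use Theorem \ref{ss} for the lower bound, handle the difference generating function with Lemma \ref{maxmodulus}, and get the limit from Lemma \ref{bign}. (One small imprecision: the difference series in part (2) has poles where $qt^{2i}=1$, so it is analytic for $|t|<q^{-1/2}$, not merely ``well inside $|t|<q^{-1}$''; that larger radius is what makes the difference term $o(q^n)$ in part (5).) The real issue is parts (1) and (2). You correctly identify that everything hinges on a characteristic-$2$ analog of Lemma \ref{uselem} --- deciding which $O^{\pm}$-classes lie in $SO^{\pm}$ and which of those split --- and then you explicitly defer it as ``the main technical obstacle.'' Since the coefficients $\tfrac12$, $\tfrac32$ and the leading $2$ in part (2) are exactly what that bookkeeping must produce, the proposal as written does not prove the theorem: the one step that is genuinely hard is the one left undone, and Wall's characteristic-$2$ parametrization of unipotent orthogonal classes (partitions decorated with quadratic-form data) makes an explicit splitting criterion substantially messier than the odd-size-Jordan-block condition of Lemma \ref{uselem}.

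It is worth noting that the paper avoids this obstacle rather than overcoming it, by a different device. It first proves a reduction: writing $V=V_1\perp V_2$ with $V_1=\ker(x-1)^{2n}$, the centralizer of the restriction $x_2$ to $V_2$ is isomorphic to a centralizer in a general linear group, hence connected and contained in $SO(V_2)$; therefore a class of $O^{\pm}(2n,q)$ splits in $SO^{\pm}(2n,q)$ if and only if its $z-1$ part splits in the (possibly smaller) $SO$ containing it, and classes with empty $z-1$ part always split. This reduces everything to counting unipotent classes of $SO^{\pm}$, and those counts --- both the sum $k_1(SO^+)+k_1(SO^-)$ and the difference --- are imported wholesale from Andrews' solution of the Lusztig--Macdonald--Wall conjecture \cite{A2} and Lusztig \cite{Lz} (p.~153 for the difference), so no explicit combinatorial splitting criterion is ever needed. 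The non-unipotent part is then accounted for by the even-characteristic analogs $\prod_{i\geq 1}(1-t^i)/(1-qt^i)$ and $\prod_{i\geq 1}(1-t^i)/(1-qt^{2i})$ of Wall's $F_0^{\pm}(t)$, and Gauss's identity $\tfrac12+\sum_{n\geq 1}t^{n^2}=\tfrac12\prod_{i\ even}(1-t^i)\prod_{i\ odd}(1+t^i)^2$ converts the theta-type numerator into the stated product form. If you want to complete your route instead, you must actually extract the splitting condition from Wall's data and verify it reproduces the Andrews--Lusztig generating functions; as it stands, that verification is assumed rather than performed.
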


\begin{proof} For part 1, it follows from \cite{A2} and
\cite{Lz} that if $k_1(SO^{\pm}(2n,q))$ is the number of unipotent
conjugacy classes of $SO^{\pm}(2n,q)$, then \begin{eqnarray*} & & 1 +
\sum_{n \geq 1} t^n \left[ k_1(SO^+(2n,q)) + k_1(SO^-(2n,q)) \right]\\
& = & \frac{ \frac{1}{2} + \sum_{n \geq 1} t^{n^2}}{\prod_{i \geq 1}
(1-t^i)^2} + \frac{3}{2} \prod_{i \geq 1} \frac{1}{(1-t^{2i})} - 1.
\end{eqnarray*}

We claim that a conjugacy class of $O^{\pm}(2n,q)$ with empty $z-1$ piece
splits in $SO^{\pm}(2n,q)$, and that a conjugacy class of $O^{\pm}(2n,q)$
with non-empty $z-1$ piece splits in $SO^{\pm}(2n,q)$ if and only if a
unipotent element with that $z-1$ piece splits in the $SO$ (possibly of
lower dimension) which contains it.    Let $x \in O^{\pm}(2n,q)$. Write
$V=V_1 \perp V_2$ where $V_1$ is the kernel of $(x-1)^{2n}$. Let $x_i$
denote the element of $O(V_i)$ that is the restriction  of $x$ to $V_i$.
Thus, the centralizer of $x$ is the direct product of the centralizers of
$x_i$ in $O(V_i)$. Working  over the algebraic closure  we see that the
centralizer of $x_2$ in $O(V_2)$ is isomorphic to the centralizer of some
element of $GL(d)$ where $2d = \dim V_2$.   In particular,  the
centralizer of  $x_2$ is connected and  so is contained in $SO(V_2)$.
Thus,  if $V_1=0$, the class of $x$ splits.  If $V_1 \ne 0$, then the
class of  $x$ splits if and only  if the class of $x_1$ splits in
$O(V_1)$.   This proves the claim.

  Thus \begin{eqnarray*} & & 2 + \sum_{n \geq 1} t^n \left[
k(SO^+(2n,q)) + k(SO^-(2n,q)) \right] \\ & = &\left( 2 + \sum_{n \geq 1}
t^n \left[ k_1(SO^+(2n,q)) + k_1(SO^-(2n,q)) \right] \right)
\prod_{i=1}^{\infty} \frac{(1-t^i)}{(1-qt^i)}, \end{eqnarray*} where the
term $\prod_{i \geq 1} \frac{(1-t^i)}{(1-qt^i)}$ is the even
characteristic analog of $F_0^+(t)$ from page 41 of Wall \cite{W} (and is
derived the same way). Plugging in the generating function for
$k_1^{\pm}(SO(2n,q))$ and using the an identity of Gauss (page 21 of
\cite{A1}) that \[ \frac{1}{2} + \sum_{n \geq 1} t^{n^2} = \frac{1}{2}
\prod_{i \ even} (1-t^i) \prod_{i \ odd} (1+t^i)^2,\] part 1 follows by
elementary simplifications.

For part 2, arguing as in part 1 gives that \begin{eqnarray*} & & 2 +
\sum_{n \geq 1} t^n \left[ k(SO^+(2n,q)) - k(SO^-(2n,q)) \right]\\ & = &
\left( 2 + \sum_{n \geq 1} t^n \left[ k_1(SO^+(2n,q)) - k_1(SO^-(2n,q))
\right] \right) \prod_{i=1}^{\infty}
\frac{(1-t^i)}{(1-qt^{2i})},\end{eqnarray*} where the term $\prod_{i \geq
1} \frac{(1-t^i)}{(1-qt^{2i})}$ is the even characteristic analog of
$F_0^-(t)$ from page 42 of Wall \cite{W} (and is derived the same way).
Page 153 of \cite{Lz} gives that
\[ 2 + \sum_{n \geq 1} t^n \left[ k_1(SO^+(2n,q)) - k_1(SO^-(2n,q)) \right] =
2 \prod_{i=1}^{\infty} \frac{1}{(1-t^{2i})},\] so part 2 follows.

The lower bound in part 3 is immediate from Theorem \ref{ss}.
For the upper bound,
first note from part 1 that $k(SO^+(2n,q)) + k(SO^-(2n,q))$ is the
coefficient of $t^n$ in \[ \left[ \frac{1}{2} \prod_{i \ odd}
\frac{(1+t^i)^2}{(1-t^i)} \prod_{i \geq 1} \frac{1}{(1-t^i)} + \frac{3}{2}
\prod_{i \geq 1} \frac{1}{(1-t^{2i})} \right] \prod_{i=1}^{\infty}
\frac{(1-t^i)}{(1-qt^i)}.
\] Arguing as in Theorem \ref{Spodd}, shows that $k(SO^+(2n,q)) + k(SO^-(2n,q))$
is at most $q^n$ multiplied by \[ \frac{1}{2} \prod_{i \ odd}
\frac{(1+t^i)^2}{(1-t^i)} \prod_{i \geq 1} \frac{1}{(1-t^i)} + \frac{3}{2}
\prod_{i \geq 1} \frac{1}{(1-t^{2i})}
\] evaluated at $t=1/q$. This is maximized at $q=2$, and is at most
$15$. Using the fact that $k(SO^{\pm}(2n,q)) \geq q^n$, it follows that
$k(SO^{\pm}(2n,q)) \leq 14q^n$.

By part 2 and the fact that $2 \prod_{i \geq 1}
\frac{1}{(1+t^i)(1-qt^{2i})}$ is analytic for $|t|<q^{-1}+\epsilon$, it
follows from Lemma \ref{maxmodulus} that $k(SO^+(2n,q)) - k(SO^-(2n,q))$
is at most \[ 2q^n \prod_{i \geq 1} \frac{1}{(1-1/q^i)(1-1/q^{2i-1})} \leq
2q^n \prod_{i \geq 1} \frac{1}{(1-1/2^i)(1-1/2^{2i-1})} \leq 17 q^n.\]
This, together with the previous paragraph, completes the proof of the
third assertion.

For part 4, the proof of part 3 yields that
$\frac{k(SO^{\pm}(2n,q))}{q^n}$ is at most \[ \frac{1}{2} \prod_{i \ odd}
\frac{(1+1/q^i)^2}{(1-1/q^i)} \prod_{i \geq 1} \frac{1}{(1-1/q^i)}+
\frac{3}{2} \prod_{i \geq 1} \frac{1}{(1-1/q^{2i})} - 1.
\] Using Lemma \ref{pent} (as in the unitary case), this upper bound is at
most $2+\frac{A}{q}$ for a universal constant $A$. Since
$k(SO^{\pm}(2n,q)) \geq q^n$ by part 3, the result follows.

The proof of part 5 is nearly identical to the proof of part 3 in Theorem
\ref{oddorthog}. \end{proof}

{\it Remark}: The limit in part 5 of Theorem \ref{SOeven} is 7.4.. when
$q=2$.

\section{Tables of Conjugacy Class Bounds} \label{exceptional}

We tabulate some of the results in the previous section and summarize the
corresponding results for exceptional groups. There are exact formulas for
these class numbers and we refer the reader to \cite{Lu}. See also \S8.18
of \cite{H} and the references therein.

Here are the results for the exceptional groups.  We give a polynomial
upper bound for each type of exceptional group (this upper bound is valid
for both the adjoint and simply connected forms of the group). \\

\centerline
{{\sc Table 1} \quad Class Numbers for Exceptional Groups}
\begin{center}
\begin{tabular}{|c||c|c|} \hline
$G$ & $k(G) \le $ & Comments   \\ \skipa \hline \hline
${^2}B_2(q)$ & $q + 3$ & $q=2^{2m + 1}$ \\ \hline
${^2}G_2(q)$ & $q + 8$ & $q=3^{2m+1}$ \\ \hline
$G_2(q)$  &   $q^2 + 2q + 9$  & \\ \hline
${^2}F_4(q)$ & $q^2 + 4q + 17$ &  $q=2^{2m + 1}$ \\ \hline
${^3}D_4(q)$ & $q^4+q^3+q^2+q+6$ &   \\ \hline
$F_4(q) $ &  $q^4 + 2q^3 + 7q^2 + 15q + 31$ & \\ \hline
$E_6(q)$ & $ q^6 + q^5 + 2q^4 + 2q^3 + 15q^2 + 21q + 60$ & \\ \hline
${^2}E_6(q)$ & $    q^6 + q^5 + 2q^4 + 4q^3 + 18q^2 + 26q + 62$  &  \\ \hline
$E_7(q) $ &  $q^7 + q^6 + 2q^5 + 7q^4 + 17q^3 + 35q^2 + 71q + 103$ & \\ \hline
$E_8(q)$ &  $ q^8 + q^7 + 2q^6 + 3q^5 + 10q^4 + 16q^3 + 40q^2 + 67q + 112 $ & \\ \hline
\end{tabular}
\end{center}

\bigskip

In particular, we see that $1 \le k(G)/q^r \rightarrow 1$ as $q
\rightarrow \infty$ in all cases.   Also,  $q^r  < k(G) \le q^r  +
14q^{r-1}$ and $k(G) \le 8q^r$ in all cases.

Below we summarize some of the results of the previous section on various
(but not all) forms of the classical groups.  These results all follow
from the previous section.   \\

\centerline
{{\sc Table 2} \quad  Class Numbers for Classical Groups}
\vspace{0.3cm}
\begin{center}
\begin{tabular}{|c||c|c|} \hline
 $G$ & $k(G)  \le $ &   Comments   \\ \skipa \hline \hline
$SL(n,q)$  & $ 2.5 q^{n-1}$ &  \\ \hline $SU(n,q)$  & $8.26 q^{n-1}$  &
\\  \hline $Sp(2n,q)$ &  $10.8q^n$ &   $q$ odd   \\ \hline
$Sp(2n,q)$  &  $15.2q^n$ &   $q$ even   \\ \hline
 $SO(2n+1,q)$ & $7.1q^n$    &   $q$ odd \\ \hline
 $\Omega(2n+1,q)$ & $7.3q^n$ &   $q$ odd \\ \hline
 $SO^{\pm}(2n,q)$ &  $7.5q^n$ &   $q$ odd \\ \hline
$\Omega^{\pm}(2n,q)$ &  $6.8q^n$         &   $ q$ odd \\ \hline
$O^{\pm}(2n,q)$ &  $9.5q^n$         &   $q$ odd \\ \hline
$SO^{\pm}(2n,q)$ &  $14q^n$         &   $q$ even \\ \hline
$O^{\pm}(2n,q)$ &  $15q^n$         &   $q$ even \\ \hline
\end{tabular}
\end{center}

\bigskip
In the next table,  we give bounds of the form $Aq^r + Bq^{r-1}$.  We
exclude $q=2$ or  $3$ in some cases -- a similar bound follows from the
previous table in those cases. \\

\centerline
{{\sc Table 3} \quad  Class Numbers for Classical Groups  II}~
\vspace{0.3cm}
\begin{center}
\begin{tabular}{|c||c|c|} \hline
 $G$ & $k(G)  \le $ &   Comments   \\ \skipa \hline \hline
$SL(n,q)$  & $ q^{n-1} + 3q^{n-2}$ &  \\ \hline
 $PGL(n,q)$ & $q^{n-1} +
5q^{n-2}$ &  \\  \hline $SU(n,q)$  & $q^{n-1} + 7q^{n-2}$  &  $q > 2$
\\  \hline
$PGU(n,q)$ & $q^{n-1} + 8q^{n-2}$   &  $q > 2$  \\  \hline
$Sp(2n,q)$ &  $q^{n} + 12q^{n-1}$ &   $q > 3$ odd   \\ \hline
$Sp(2n,q)$ & $q^{n} + 5q^{n-1}$ &   $q>2$ even   \\ \hline
$SO(2n+1,q)$ & $q^{n} + 8q^{n-1}$    &   $q > 3$ odd \\ \hline
$\Omega (2n+1,q)$ & $(1/2)q^{n} + 5.5q^{n-1}$    &   $q > 3$ odd
\\ \hline $SO^{\pm}(2n,q)$ &  $q^{n} + 8q^{n-1}$         &   $q > 3$ odd \\ \hline
 $\Omega^{\pm}(2n,q)$ &  $(1/2)q^n  + 8.5q^{n-1}$         &   $ q > 3$ odd \\ \hline
$O^{\pm}(2n,q)$ &  $(1/2)q^n + 18 q^{n-1}$           &   $q > 3$ odd \\ \hline
$SO^{\pm}(2n,q)$ &  $q^{n} + 5q^{n-1}$         &   $q > 2$ even \\ \hline
$O^{\pm}(2n,q)$ &  $(1/2)q^n +  9 q^{n-1}$         &   $q > 2$ even\\ \hline
\end{tabular}
\end{center}

\section{Conjugacy Classes in Almost Simple Groups} \label{class results}

We use the results of the previous sections to obtains bounds
on the class numbers for Chevalley groups.  These bounds are
close to best possible, but we improve these a bit in \cite{FG5}.
These bounds are used in \cite{LOST} to finish the proof of the
Ore conjecture and are more than sufficient for our proof of
the Boston-Shalev conjecture on derangements.

We first prove Theorem \ref{A}.

\begin{proof}   Since the number of semisimple classes in $G^F$
is at least $q^r$, (2) implies (3).  Since any $F$-stable class
intersects $G^F$, the number of $F$-stable classes is at most
$k(G^F)$ and so (1) implies (4).  Thus, it suffices to prove
(1) and (2).

First assume that $G$ has type A. Apply  Theorems \ref{ksl} and \ref{SU}
and Corollaries \ref{PGLbetween} and \ref{ucor} to conclude that (1) and
(2) hold in this case.

If $G$ is exceptional, then the results follow by the Table
in the previous section.

Now assume that $G$ has type B, C or D.  Since the center has order
at most $4$ in all cases, to prove (1), it suffices to consider
any form of the group (this may alter the constant but only by a bounded
amount, and in fact it changes only very little).  Moreover,
in characteristic $2$, the adjoint and simply connected groups
are the same and so there is nothing to prove.  So we may
assume that the characteristic of the field is odd.

Suppose that $G$ has type B.  The results have been
proved for the group of adjoint type with constants $7.1$
and $19$ respectively.  Suppose that $G$ is simply connected.
Then $k(G) \le 2 k(\Omega)$, whence the results hold by
Theorem \ref{omega2}.

Next suppose that $G$ has type C.  We have already proved the result for
the simply connected group.  So assume that $G$ is the adjoint form. Let
$H = Sp(2r,q)$.   Then $k(H/Z(H)) < k(H) \le 10.8q^r$, whence
$k(G) \le 21.6q^r$, and so (1) holds.

The number of semisimple classes in $H/Z(H)$ is at most
$(q^r + t)/2$, where $t$ is the number of $H$-semisimple classes invariant
under multiplication by $Z(H)$.   These correspond to monic polynomials of
degree $r$ in $x^2$ with the set of roots invariants under inversion. The
number of such is  $q^{(r-1)/2}$ if $r$ is odd and $q^{r/2}$ if $r$ is
even.  Thus, $k(H/Z(H)) \le (1/2)q^r  +  31q^{r-1} $.  Since $G/H$
has order $2$, this implies that $k(G) \le q^r + 62 q^{r-1}$, and so
(2) holds.

Finally, consider the case that $G$ has type D (and we may
assume that $r \ge 4$).    Let
$H= P\Omega^{\pm}(2r,q)$ be the simple group corresponding to
$G$.  Using
Theorem \ref{omega1}, we see that $k(P\Omega^{\pm}(2r,q)) \le 6.8 q^r$,
whence a straightforward argument shows that $k(G) \le 4 k(H)  \le 27.2 q^r$.

Arguing as in the case of type C, we see that
$k(P\Omega^{\pm}(2r,q)) \le (1/4)q^r  +16q^{r-1} + q^{r/2}$.
Thus, $k(G) \le 4k(P\Omega^{\pm}(2r,q)) \le q^r + 68q^{r-1}$.
 \end{proof}

 Note that for simply connected groups or groups of adjoint
 type, we can do a bit better (and in particular for the simple
 groups).   This follows by the proof  above.

 \begin{cor} \label{simple1}  Let $G$ be a simply  connected
simple algebraic group of rank $r$ over a field of positive
characteristic. Let $F$ be a Steinberg-Lang endomorphism of $G$ with $G^F$
a finite Chevalley group over the field  $\F_q$.
   Then
 \begin{enumerate}
 \item   $k(G^F)  \le   15.2q^r$.
 \item   $k(G^F ) \le q^r + 40q^{r-1}$.
 \end{enumerate}
 \end{cor}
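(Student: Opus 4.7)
The plan is to refine the proof of Theorem \ref{A} by exploiting the fact that for the simply connected group, the isogeny to $\Omega$ in the orthogonal cases factors through a kernel of order at most $2$, rather than the full center of order up to $4$ that was used in the general proof. This tightens the constant in each type and is essentially all that is needed.

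For type A, Proposition \ref{ksl} gives directly $k(SL(n,q)) \le 2.5 q^{n-1} \le 15.2 q^r$ and $k(SL(n,q)) \le q^{n-1} + 3 q^{n-2} = q^r + 3 q^{r-1}$, while Proposition \ref{SU} yields $k(SU(n,q)) \le 8.26 q^r$ and $k(SU(n,q)) \le q^r + 16 q^{r-1}$ (with $16$ replaced by $7$ for $q \ge 3$). For $G$ of exceptional type, Table 1 in Section \ref{exceptional} shows $k(G^F) \le 8 q^r$ and $k(G^F) \le q^r + 14 q^{r-1}$ in every case, and both bounds hold uniformly in the isogeny class. Thus both statements hold for types A and exceptional.

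The remaining work is types B, C, D. In characteristic $2$ the simply connected and adjoint forms coincide in types B, C, so nothing is new, and for type D the center has order at most $2$, so one applies Lemma \ref{Boblemma} to Theorem \ref{SOeven} and verifies that the constants stay under $15.2$ and $40$. In odd characteristic the key point is the surjection $\mathrm{Spin}(V) \twoheadrightarrow \Omega(V)$ with kernel $\langle -1 \rangle$ of order $2$, giving $k(\mathrm{Spin}) \le 2 k(\Omega)$ by Lemma \ref{Boblemma}. For type C, the simply connected form is $Sp(2r,q)$, so Theorems \ref{Spodd} and \ref{Spev} apply directly and already yield $k \le 10.8 q^r$ (resp.\ $15.2 q^r$) together with the linear bound $q^r + A q^{r-1}$ from Table 3. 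For type B in odd characteristic, Theorem \ref{omega2} gives $k(\Omega(2r+1,q)) \le 7.3 q^r$ and $k(\Omega(2r+1,q)) \le \tfrac{1}{2} q^r + 5.5 q^{r-1}$, so doubling produces $k(\mathrm{Spin}) \le 14.6 q^r$ and $k(\mathrm{Spin}) \le q^r + 11 q^{r-1}$. For type D in odd characteristic, Theorem \ref{omega1} gives $k(\Omega^\pm(2r,q)) \le 6.8 q^r$ and $k(\Omega^\pm(2r,q)) \le \tfrac{1}{2} q^r + 8.5 q^{r-1}$, so doubling produces $k(\mathrm{Spin}^\pm) \le 13.6 q^r$ and $k(\mathrm{Spin}^\pm) \le q^r + 17 q^{r-1}$. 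In every case the constants fall under $15.2$ and $40$.

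The main obstacle, such as it is, lies in the excluded small-parameter cases of the $A q^r + B q^{r-1}$ tables (typically $q = 2$ for unitary and even-characteristic symplectic/orthogonal groups, and $q = 3$ in several odd-characteristic orthogonal bounds): for each of these one falls back on the weaker Table 2 estimates and verifies by hand that the slack in the constants $15.2$ and $40$ absorbs the loss. No new generating-function analysis is required; the corollary is essentially a bookkeeping consequence of the bounds already assembled in Sections \ref{boundnumber} and \ref{exceptional} together with the observation about the $\mathrm{Spin} \to \Omega$ kernel.
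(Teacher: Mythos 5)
Your proposal is correct and follows essentially the same route as the paper, which proves this corollary simply by rerunning the proof of Theorem \ref{A} with the sharper simply connected inputs: $Sp$ directly from Theorems \ref{Spodd}, \ref{Spev}, $k(\mathrm{Spin}) \le 2k(\Omega)$ via Lemma \ref{Boblemma} in types B and D in odd characteristic, type A and the exceptional groups from the earlier bounds, giving maxima of $15.2$ (attained by $Sp(2n,2)$) and $A=30$ (from $Sp(2n,3)$) $\le 40$. One caution on your even-characteristic type D sentence: you cannot literally double the $14q^n$ of Theorem \ref{SOeven} (that would exceed $15.2q^n$); the point, as the paper notes, is that in characteristic $2$ all isogeny forms of types B, C, D give the same finite group, so Theorem \ref{SOeven} applies to $G^F$ directly with no extra factor.
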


 Similarly, the proof of Theorem \ref{A} also shows:

\begin{cor} \label{simple2} Let $G$ be a
simple algebraic group of adjoint type  of rank $r$ over a field of positive
characteristic. Let $F$ be a Steinberg-Lang endomorphism of $G$ with $G^F$
a finite Chevalley group over the field  $\F_q$.   Let $S$ be the socle
of $G^F$ and assume that $S \le H \le G$.
   Then
 \begin{enumerate}
 \item   $k(H)  \le   27.2q^r$.
 \item   $k(H ) \le q^r + 68q^{r-1}$.
 \item  The number of non-semisimple classes of $H$
 is at most $68q^{r-1}$.
 \end{enumerate}
 \end{cor}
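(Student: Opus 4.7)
The plan is to deduce the corollary from the proof of Theorem~\ref{A} by observing that, in each case, the bound on $k(G^F)$ is derived from a bound on $k(S)$ together with Lemma~\ref{Boblemma}(1), which also yields $k(H) \le |H:S|\, k(S) \le |G^F:S|\, k(S)$ for every $H$ in the stated range. For type~A, parts~(1) and (2) are already contained in Corollaries~\ref{PGLbetween} and \ref{ucor}, which apply to every $H$ between $PSL(n,q)$ and $PGL(n,q)$ (and analogously in the unitary case). For the exceptional types, $|G^F:S| \le 2$ in every family, so Table~1 together with Lemma~\ref{Boblemma}(1) gives $k(H) \le 2k(S)$, fitting comfortably inside $27.2 q^r$ and $q^r + 68 q^{r-1}$.

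For classical types~B, C, D in odd characteristic I would mirror the final three paragraphs of the proof of Theorem~\ref{A}: Theorems~\ref{omega1} and \ref{omega2} provide bounds of the shape $k(S) \le 6.8 q^r$ and $k(S) \le (1/|G^F:S|) q^r + c\, q^{r-1} + q^{r/2}$ with $|G^F:S| \le 4$ and $c \le 16$, so Lemma~\ref{Boblemma}(1) yields $k(H) \le |G^F:S|\, k(S) \le 27.2 q^r$, and (absorbing the $q^{r/2}$ term via $r \ge 4$) $k(H) \le q^r + 68 q^{r-1}$. In even characteristic the adjoint and simply connected finite Chevalley groups coincide, so (1) and (2) reduce immediately to Theorem~\ref{A}.

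For part~(3), I would invoke Corollary~\ref{same}: every coset of $S$ in an inner-diagonal overgroup contains the same number of semisimple classes, namely the number of semisimple classes of $S$. Applying this in both $H$ and $G^F$, the number of semisimple classes of $H$ equals $[H:S]$ times the number of semisimple classes of $S$; combined with $k(H) \le [H:S]\, k(S)$, this gives that the number of non-semisimple classes of $H$ is at most $[H:S]$ times the number of non-semisimple classes of $S$. It then suffices to bound the non-semisimple count for $S$ by $68 q^{r-1}/|G^F:S|$, and for this I would use Steinberg's count of exactly $q^r$ $F$-stable semisimple classes of the adjoint algebraic group $G$ (Theorem~\ref{ss}) together with the observation that each such class splits into at most $|Z(G_{\mathrm{sc}})^F| \le |G^F:S|$ classes in $G^F$, which forces the number of semisimple classes of $S$ to be at least $q^r/|G^F:S|$ up to a lower-order error.

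The main obstacle is the final step: one must verify \emph{type by type} that the $O(q^{r-1})$ error in the lower bound for the semisimple $S$-class count is small enough that, after multiplication by $|G^F:S|$, the constant~$68$ in part~(3) is preserved. This is a careful accounting of those semisimple $G^F$-classes whose centralizer in the simply connected cover is disconnected, and is the only ingredient beyond the verbatim transport of the proof of Theorem~\ref{A}.
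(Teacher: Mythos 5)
Your overall strategy matches the paper's, which offers no separate argument for this corollary beyond the remark that the proof of Theorem~\ref{A} also establishes it. The ingredients you assemble are exactly the intended ones: $k(H)\le [H:S]\,k(S)$ from Lemma~\ref{Boblemma}(1); the equidistribution of semisimple classes over cosets of $S$ from Corollary~\ref{same}, which gives that the number of semisimple classes of $H$ is $[H:S]$ times that of $S$, hence (with Theorem~\ref{ss}) at least $[H:S]\,q^r/[G^F:S]$ \emph{exactly} --- so the ``lower-order error'' you worry about in your final paragraph does not arise; and the type-by-type bounds $k(S)\le q^r/[G^F:S]+c\,q^{r-1}$ with $[G^F:S]\,c\le 68$ that the proof of Theorem~\ref{A} already extracts (constants $31$, $17$, $11$ for types C, D, B, and Corollaries~\ref{PGLbetween} and \ref{ucor} for type A). Your observation that part (3) must be obtained from $k(H)-[H:S]k_{ss}(S)\le [H:S]$ times the non-semisimple count of $S$, rather than from part (2) alone, is the right one: for proper intermediate $H$ the semisimple count of $H$ falls short of $q^r$, so (2) by itself does not yield (3).

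One step fails as written: the exceptional case. First, $[G^F:S]$ can be $3$ (for $E_6$ and ${}^2E_6$ when $3$ divides $q\mp 1$), not just $2$. More seriously, $2k(S)\le 2q^r+28q^{r-1}$ does not ``fit comfortably inside'' $q^r+68q^{r-1}$ once $q>28$, so multiplying the Table~1 bound by the index destroys part (2). The repair is immediate and makes Lemma~\ref{Boblemma} unnecessary here: the paper states that the polynomial bounds of Table~1 hold for the adjoint form as well as the simply connected form, and since the index $[G^F:S]$ is $1$, $2$ or $3$ the only groups $H$ occurring are $S$ and $G^F$ themselves, both covered directly; for (3) one then bounds the non-semisimple count of $S$ by $[G^F:S]$ times that of $G^F$, i.e.\ by $3\cdot 14\,q^{r-1}\le 68q^{r-1}$, rather than by $68q^{r-1}/[G^F:S]$ as your uniform scheme would require. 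With that local correction the proposal is a faithful expansion of the paper's intended argument.
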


 Corollary  \ref{simple2} also leads to the following result which was used
 in \cite{GR}.

 \begin{prop}  Let $G$ be a finite almost simple group. Then $k(G) \le |G|^{.41}$.
 \end{prop}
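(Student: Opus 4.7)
The plan is to prove the bound $k(G) \le |G|^{0.41}$ by case analysis based on the classification of finite simple groups. Throughout, let $S$ denote the socle of $G$, so $S$ is a nonabelian finite simple group and $S \trianglelefteq G \le \aut(S)$. The overall strategy is to combine the class-number bounds proved earlier in the paper with known lower bounds on $|S|$, noting that the ratio $[G:S]$ is controlled by $|\mathrm{Out}(S)|$ and, by Lemma \ref{Boblemma}, contributes at most the same factor to both $k(G)$ and $|G|^{0.41}$.

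For the sporadic simple groups there are only finitely many almost simple extensions $G$, and each can be verified directly from the ATLAS: in every case $k(G)$ is very small compared to $|G|^{0.41}$. When $S = A_n$ with $n \ge 5$, we have $G = A_n$ or $S_n$, and $k(G) \le k(S_n) = p(n)$, the partition function. By Hardy--Ramanujan, $p(n) = \exp(O(\sqrt{n}))$, whereas Stirling gives $|G|^{0.41} \ge (n!/2)^{0.41}$, which grows superpolynomially; so the inequality holds for all $n$ beyond a moderate threshold, and the few small $n$ (most notably the near-tight case $G = S_5$, where $k(S_5) = 7$ and $120^{0.41} \approx 7.3$) are checked individually.

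When $S$ is a finite simple Chevalley group of rank $r$ over $\F_q$, Corollary \ref{B} yields $k(G) \le 100 q^r$. On the other side, $|S|$ is a polynomial in $q$ of degree $N$, where $N$ is the rank plus the number of positive roots of the associated root system; in particular $N$ is of order $r^2$ for classical groups, and $N$ is bounded but comfortably larger than $r$ for exceptional groups. Hence $|S|^{0.41}$ dominates $100 q^r$ outside a finite list of small $(\text{type}, r, q)$ combinations. The remaining small cases --- the low-rank classical groups $PSL_2, PSL_3, PSU_3$ over small $q$, together with exceptional groups of small order --- are verified using the sharper class-number bounds from Section \ref{boundnumber} and the exceptional-group tables, combined with the explicit orders of these groups.

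The main obstacle is simply that the exponent $0.41$ leaves almost no slack: $G = S_5$ attains a ratio $\log|G| k(G) \approx 0.406$, so no uniform estimate can succeed, and the low-rank Chevalley groups over small fields have to be handled by individual computation rather than absorbed into a single inequality. The bookkeeping of outer automorphism extensions (diagonal, field, and graph) is routine because Lemma \ref{Boblemma} shows $k(G) \le [G:S] k(S)$ and $[G:S] \le |\mathrm{Out}(S)|$ is small, but care is required in the handful of very small cases where $|\mathrm{Out}(S)|$ is a nontrivial fraction of $\log|S|$.
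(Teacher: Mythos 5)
Your proposal is correct and follows essentially the same route as the paper: the class-number bounds of the form $cq^r$ (here the paper invokes Corollary \ref{simple1}) against $|S|^{0.41}$ with $|S|$ of degree far exceeding $r$ in $q$, explicit partition-function and Stirling estimates for the alternating and symmetric groups, and direct verification of the finitely many tight small cases including $S_5$. The only quibble is that for $S=A_6$ the almost simple overgroups are not just $A_6$ and $S_6$, but since $A_6\cong PSL(2,9)$ these extra groups fall under the Chevalley case you already handle.
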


\begin{proof} For all cases other than the alternating and symmetric
groups, this follows from Theorem \ref{simple1} together with bounds on
the size of the outer automorphism groups and a computer computation for
small cases. For the symmetric groups, the result follows without
difficulty from the two bounds $k(S_n) \leq \frac{\pi}{\sqrt{6(n-1)}}
e^{\pi \sqrt{\frac{2n}{3}}}$ (\cite{VW}, p. 140) and $n! \geq (2
\pi)^{1/2} n^{n+1/2} e^{-n+1/(12n+1)}$ (\cite{Fe}, p.52). By Corollary
\ref{altsym}, $k(A_n) \leq k(S_n)$, and the two bounds also imply that
$k(S_n) \leq \left( \frac{n!}{2} \right)^{.41}$ for $n \geq 6$. For $n=5$,
$k(A_5)=5 < (60)^{.41}$ and $k(S_5)=7 < (120)^{.41}$.
 \end{proof}

Typically, the $.41$ can be replaced by a much smaller number but the
example of $G=S_5$ shows that one cannot do much better.

We now consider general almost simple Chevalley groups $G$. Now we have to
deal with all types of outer automorphisms. The proof we give below shows
that in fact for large $q$, $k(G)$ is close to $q^r/e$ where
$e=[\Inndiag(S):G \cap \Inndiag(S)]$.

We first need a lemma.   See \cite{GLS3} for basic results
about automorphisms of Chevalley groups.

\begin{lemma}  Let $S$ be a simple Chevalley group over the field
of $q$ elements of rank $r \ge 2$.  Let $x \in \aut(S)$ with $x$ not an
inner diagonal automorphism of $S$.    Let $S \le H \le \Inndiag(S)$.
 Then the number of $x$
stable classes in $H$ is at most $Dq^{r-1}$ for some universal constant $D$.
\end{lemma}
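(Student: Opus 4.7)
The plan is to split the count of $x$-stable classes in $H$ into semisimple and non-semisimple contributions. The non-semisimple part is handled immediately by Theorem \ref{A}(3) applied to $H$ (or Corollary \ref{simple2}(3)): the total number of non-semisimple classes is at most $68 q^{r-1}$, which a fortiori bounds the $x$-stable non-semisimple classes.

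For the semisimple part I exploit the fact that $x$-stable semisimple classes are governed by $x$-invariant data. Fix the simple algebraic group $G$ of adjoint type with $G^{F_0} = \Inndiag(S)$ and lift $x$ to a bijective endomorphism $\tau$ of $G$ commuting with $F_0$. If $x$ is a pure field automorphism of order $m \geq 2$, then $\tau^m$ agrees with $F_0$ up to inner action, so Corollary \ref{shintani2} identifies the $x$-stable semisimple classes in $\Inndiag(S)$ with semisimple classes in $G^\tau$, a Chevalley group of the same type and rank $r$ but over $\F_{q^{1/m}}$; by Theorem \ref{ss} this count is at most $q^{r/m}+q^{r/m - 1}$, which is $O(q^{r/2})$. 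If $x$ is a pure graph automorphism, then $x$-stable semisimple classes correspond (via Lang-Steinberg applied to the twisted Steinberg-Lang endomorphism $x F_0$) to semisimple classes of a fixed-point subgroup of strictly smaller rank $r' \leq r-1$, the reductions being $A_n \to B_{n/2}$ or $C_{(n-1)/2}$, $D_n \to B_{n-1}$, $E_6 \to F_4$, and $D_4 \to G_2$ under triality; Theorem \ref{ss} then gives $O(q^{r-1})$ such classes. The graph-field case combines both reductions. In every case, since $r \geq 2$ forces $r/2 \leq r-1$, the bound is $O(q^{r-1})$.

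Combining, the total number of $x$-stable classes in $\Inndiag(S)$ is $O(q^{r-1})$. To transfer to $H$, I observe that each $\Inndiag(S)$-class splits into at most $[\Inndiag(S):H] \leq [\Inndiag(S):S] \leq r+1$ many $H$-classes (using that $\Inndiag(S)/S$ is abelian, so $H$ is normal in $\Inndiag(S)$), and the factor $r+1$ is absorbed into the universal constant $D$ since $(r+1)q^{r/2} \leq Dq^{r-1}$ for $r \geq 2$ and $q \geq 2$, with the binding case being type $A_2$. The main obstacle is justifying the graph-automorphism correspondence between $x$-stable semisimple classes of $G^{F_0}$ and semisimple classes of the connected fixed-point subgroup; this follows from standard Deligne-Lusztig or Digne-Michel theory, or alternatively by case-by-case analysis using the parameterization of semisimple classes by characteristic polynomials on the natural module, which must be invariant under the appropriate involution or triality induced by $x$.
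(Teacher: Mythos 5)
Your proposal is correct and its skeleton coincides with the paper's proof: both split off the non-semisimple classes using the $O(q^{r-1})$ bound of Corollary \ref{simple2}, both dispose of (the coset of) a field automorphism by Shintani descent to get $O(q^{r/2})$, and both bound the graph-automorphism case by $q^{r'}$ where $r'\le r-1$ is the number of orbits of $x$ on the Dynkin diagram. The execution differs in two places worth noting. To handle an arbitrary $H$ with $S\le H\le \Inndiag(S)$, the paper lifts to the simply connected cover $T$ and counts stable semisimple classes coset-by-coset in $H_0/T$ (at most $4$, resp.\ $2$, invariant cosets), getting the exact count $q^{r'}$ per coset ``by considering irreducible representations of $T$,'' i.e.\ the simultaneous fixed points of $x$ and $F_0$ on $T_{\max}/W\cong \mathbb{A}^r$; you instead work downstairs in $\Inndiag(S)$ and pay a splitting factor $[\Inndiag(S):H]\le r+1$, which you correctly absorb into $D$. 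The one point where your stated mechanism needs repair is the phrase ``via Lang--Steinberg applied to the twisted endomorphism $xF_0$'': the $xF_0$-fixed points of $T_{\max}/W$ number $q^r$, not $q^{r'}$ (for instance $\SU(n+1,q)$ has $q^{n}$ semisimple classes, the \emph{full} rank), so descent to the twisted group alone gives no saving. The saving comes from requiring invariance under $x$ \emph{and} $F_0$ simultaneously, which is the fixed-point count for the folded diagram, i.e.\ for $C_G(x)^{\circ}$ over $\F_q$; your fallback arguments (characteristic polynomials, or the paper's weight-lattice count) do establish exactly this, so the proof goes through.
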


\begin{proof}
We may assume that $x$ has prime order $p$
modulo the group of inner diagonal automorphisms.

By Corollary \ref{simple2},  it suffices to consider semisimple classes
of $S$ (since the number of  nonsemisimple classes is at most $Aq^{r-1}$
for a universal constant $A$).
  If $x$ is in the coset of a Lang-Steinberg
automorphism, then Shintani descent gives a much better bound.
In any case, the stable classes will be in bijection with those
in the centralizer and so there are at most $Cq^{r/2}$ invariant
classes, whence the bound holds in this case.

The remaining cases are where $x$ induces a graph automorphism. We lift to
the central cover $T$ of $S$ and let $H_0$ be the lift of $H$. It suffices
to prove the result for $H_0$.  By considering irreducible representations
of $T$, we see that  the number of stable semisimple classes in $T$ is
$q^{r'}$ where $r-1 \ge r'$ is the number of orbits of $x$ on the Dynkin
diagram of $S$.    Similarly, for each $x$-invariant coset of $H_0/T$,
there are are most $q^{r'}$ invariant classes in each of those cosets.  If
$S$ is not of type A, there are at most $4$ cosets.  If $S$ is of type A,
there are at most $2$ invariant cosets.    Thus, there are most $4q^{r-1}$
invariant semisimple classes.
\end{proof}

We can now prove:

\begin{theorem}  Let $G$ be almost simple with socle
$S$ that is a Chevalley group defined over the field of
$q$ elements and has rank $r$.   There is an absolute constant
$D$ such that $k(G) \le  q^r + D (\log q) q^{r-1} \le D'q^r$.
\end{theorem}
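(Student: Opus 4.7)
The plan is to combine three ingredients: Corollary \ref{simple2}, the Lemma immediately preceding the theorem, and the standard bound on the order of $\mathrm{Out}(S)$ modulo inner-diagonal automorphisms. Set $K := G \cap \mathrm{Inndiag}(S)$, so $S \trianglelefteq K \trianglelefteq G$. Since $S \le K \le \mathrm{Inndiag}(S)$, Corollary \ref{simple2} gives $k(K) \le q^r + 68 q^{r-1}$. The quotient $G/K$ embeds into the field-graph part of $\mathrm{Out}(S)$; the field automorphisms contribute a cyclic group of order $\log_p q$ and the graph automorphisms contribute a bounded factor, so $|G/K| \le c \log q$ for a universal constant $c$.

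Next, I would apply Lemma \ref{Boblemma2} with $N = K$ and $\pi$ the set of all primes. Letting $x_i$ range over representatives of the conjugacy classes of $G/K$ and writing $f(x_i)$ for the number of $K$-conjugacy classes that are $x_i$-invariant,
\begin{equation*}
k(G) \le \sum_i f(x_i).
\end{equation*}
The identity coset contributes $f(1) = k(K) \le q^r + 68 q^{r-1}$. For each non-identity coset, $x_i$ induces an automorphism of $S$ that is not inner-diagonal, so the preceding lemma yields $f(x_i) \le D_0 q^{r-1}$ for a universal constant $D_0$. Since there are at most $c \log q$ cosets, summing gives
\begin{equation*}
k(G) \le q^r + 68 q^{r-1} + c D_0 (\log q) q^{r-1} \le q^r + D (\log q) q^{r-1}
\end{equation*}
for a universal $D$. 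The bound $k(G) \le D' q^r$ then follows since $\log q \le q$ for $q \ge 2$, with $D' = 1 + D$.

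The main obstacles are two pieces of bookkeeping. The first is verifying $|G/K| = O(\log q)$ uniformly over all types: this is a direct inspection of $\mathrm{Out}(S)/(\mathrm{Inndiag}(S)/S)$ using \cite{GLS3}. The second is that the preceding lemma is stated for rank $r \ge 2$, so the rank-one families $A_1(q)$, ${}^2A_2(q)$, ${}^2B_2(q)$, ${}^2G_2(q)$ need a direct treatment; here one uses the known exact formulas for $k(S)$ together with Shintani descent (Theorem \ref{fieldaut}) applied to each cyclic subgroup of field automorphisms, which bijects the $x_i$-invariant classes with classes of a smaller Chevalley group of the same type. Collecting the rank-one contributions with the bound from Corollary \ref{simple2} gives the required estimate in these finitely many small-rank families, completing the theorem in full generality.
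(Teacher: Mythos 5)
Your proposal is correct and follows essentially the same route as the paper: bound $k$ on the inner-diagonal part $K=G\cap\Inndiag(S)$ (the paper derives this via Corollary \ref{same} and Theorem \ref{A}, which is exactly the content of Corollary \ref{simple2}), then control each of the $O(\log q)$ outer cosets by the count of stable classes via Lemma \ref{weak shintani} (of which your Lemma \ref{Boblemma2} is the packaged form) together with the preceding lemma's $O(q^{r-1})$ bound, treating the rank-one case separately just as the paper does for $PSL(2,q)$.
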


\begin{proof}   Let $H$ be the subgroup of $G$ consisting
of inner diagonal automorphisms.     Let $X$ be the full group of inner
diagonal automorphisms of $S$. Then $X = Y^F$ where $Y$ is the
corresponding simple algebraic group of adjoint type, and so by Corollary \ref{same},
the number of
semisimple classes in $H$ is precisely $[X:H]^{-1}$ times the number of
semisimple classes in $X$.   The number of non-semisimple classes in $H$
is at most $[X:H]$ times the number of non-semisimple classes of $X$.
 If $G$ is not of type A, then $[X:H] \le 4$ and so by Theorem \ref{A},
$k(H) \le q^r + Eq^{r-1}$ for some universal constant $E$.
If $G$ is of type A, we apply Corollaries \ref{PGLbetween} and
\ref{ucor}
to conclude this as well.

  First consider the case that $S=PSL(2,q)$.   So $H=PSL(2,q)$
  or $PGL(2,q)$.   If $x \in G \setminus{H}$, then $x$ can be taken
  to be a field automorphism of order $e \ge 2$.

  If $H=PGL(2,q)$, the number of stable  semisimple classes
  is $q^{1/e} + 1$ and there is one (stable) unipotent class
  (the stable classes are precisely those of $PGL(2,q^{1/e})$).
  If $H=PSL(2,q)$, there are even fewer stable classes.
  Thus, the number of conjugacy classes in the coset $xH$
  is at most $q^{1/e} + 2$, whence the result holds.

So we may assume that $r > 1$.

Let $x \in G \setminus{H}$.  By the previous result,
the number of $x$-stable classes in $H$ is at most $Eq^{r-1}$
for some universal constant $E$.   Thus by  Lemma \ref{weak shintani},
the number of conjugacy classes in $xH$ is at most
$Eq^{r-1}$.    Since $[G:H] \le  6 \log q$,  it follows that
$$
k(G) \le   q^r  + 6E (\log q)  q^{r-1},
$$
and the result follows.
\end{proof}

With a bit more effort, one can remove the $\log q$ factor in the previous
result. Keeping track of the constants in the proof above gives Corollary
\ref{B}.

\section{Minimum Centralizer Sizes for the Finite Classical Groups}
\label{mincent}

This section gives lower bounds on centralizer sizes in finite classical
groups, and hence upper bounds on the size of the largest conjugacy class
in a finite classical group. Formulas for the conjugacy classes sizes go
back to Wall \cite{W}, but being quite complicated polynomials in $q$
effort is required to give explicit bounds. The bounds presented here hold
for all values of $n$ and $q$ and are also applied in \cite{FG2} and
\cite{Sh}.

The following standard notation about partitions will be used. Let
$\lambda$ be a partition of some non-negative integer $|\lambda|$ into
parts $\lambda_1 \geq \lambda_2 \geq \cdots$. Let $m_i(\lambda)$ be the
number of parts of $\lambda$ of size $i$, and let $\lambda'$ be the
partition dual to $\lambda$ in the sense that $\lambda_i' = m_i(\lambda) +
m_{i+1}(\lambda) + \cdots$. It is also useful to define the diagram
associated to $\lambda$ by placing $\lambda_i$ boxes in the $i$th row. We
use the convention that the row index $i$ increases as one goes downward.
So the diagram of the partition $(5441)$ is

\[ \begin{array}{c c c c c} \framebox{} & \framebox{} & \framebox{}
& \framebox{} &  \framebox{} \\ \framebox{} &  \framebox{}& \framebox{} &
\framebox{} & \\
\framebox{} & \framebox{} & \framebox{} & \framebox{}&    \\ \framebox{} &
& & &
\end{array}
\] and $\lambda_i'$ can be interpreted as the size of the $i$th column.
The notation $(u)_m$ will denote $(1-u)(1-u/q) \cdots (1-u/q^{m-1})$.
This section freely uses Lemma \ref{pent} from Section \ref{boundnumber}.

\subsection{The general linear groups}

The following result about partitions will be helpful.

\begin{lemma} \label{monpartition} Let $\lambda$ be any partition. Then
for $q \geq 2$, \[ q^{\sum_i (\lambda_i')^2} \prod_i
(1/q)_{m_i(\lambda)} \geq q^{|\lambda|}(1-1/q).\] \end{lemma}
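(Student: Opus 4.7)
The plan is to interpret $c(\lambda) := q^{\sum_i (\lambda_i')^2} \prod_i (1/q)_{m_i(\lambda)}$ as the centralizer order, in $\GL(n,q)$ with $n = |\lambda|$, of a unipotent element of Jordan type $\lambda$. Indeed $\sum_i(\lambda_i')^2 = |\lambda| + 2n(\lambda)$ with $n(\lambda) = \sum_i\binom{\lambda_i'}{2}$, which recovers the familiar centralizer formula. Under this reading, the right-hand side $q^{|\lambda|}(1-1/q) = q^{n-1}(q-1)$ is exactly the centralizer order of the regular unipotent class (Jordan type $(n)$), so the lemma amounts to the statement that the regular unipotent minimizes the unipotent centralizer order in $\GL(n,q)$.

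I would prove this by induction on $|\lambda|$ via a one-column peeling recursion. Let $k := \lambda_1'$ be the length of the first column and let $\mu$ be obtained from $\lambda$ by removing that column, so $|\mu| = |\lambda| - k$. Using $\sum_i(\lambda_i')^2 = k^2 + \sum_i(\mu_i')^2$ together with $m_i(\mu) = m_{i+1}(\lambda)$ for $i\geq 1$ (each part shrinks by one, parts of size one disappear), one obtains the key identity
\[ c(\lambda) = q^{k^2}\,(1/q)_{m_1(\lambda)}\,c(\mu). \]
The base case $\lambda = \emptyset$ is trivial. The case $|\mu| = 0$, i.e.\ $\lambda = (1^k)$, gives $c(\lambda) = |\GL(k,q)| = q^{k(k-1)/2}\prod_{j=1}^k(q^j - 1)$, which is at least $q^{k-1}(q-1)$ by inspection. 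Otherwise $|\mu| \geq 1$, and the inductive hypothesis $c(\mu) \geq q^{|\mu|}(1-1/q)$ reduces the target $c(\lambda) \geq q^{|\lambda|}(1-1/q)$ to the single inequality
\[ q^{k^2 - k}\,(1/q)_{m_1(\lambda)} \geq 1. \]

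To close this last inequality I would split on $k$. If $k = 1$, then $\lambda$ is a single part $(a)$ with $a \geq 2$ (since $|\mu| \geq 1$), so $m_1(\lambda) = 0$ and the factor equals $1$ — this is precisely the sharp case of the lemma, corresponding to the regular unipotent. If $k \geq 2$, use the trivial bound $m_1(\lambda) \leq k$ (parts of size one are at most the total number of parts) to get $(1/q)_{m_1(\lambda)} \geq (1/q)_k$, and then verify $q^{k^2-k}(1/q)_k \geq 1$ for all $k\geq 2$, $q\geq 2$; the worst case $k=q=2$ gives $4\cdot(1/2)(3/4) = 3/2$, and the left side is monotone increasing in $k$ and $q$. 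I do not expect a real obstacle here: the only non-obvious step is spotting the column-peeling identity, after which the argument is mechanical and transparently explains why the bound is sharp (equality holds exactly at $\lambda = (n)$).
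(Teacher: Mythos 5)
Your proof is correct, but it takes a genuinely different route from the paper's. The paper defines $f(\lambda)= q^{\sum_i (\lambda_i')^2} \prod_i (1/q)_{m_i(\lambda)}$ and shows it does not increase when a box is moved from a row of length $i$ to a row of length $j \geq i+1$; since a sequence of such moves turns any partition into the one-row partition $(n)$, on which $f$ equals $q^{n}(1-1/q)$, the bound follows from a single ratio estimate $f(\tau)/f(\lambda) \leq 1/\bigl(q^2(1-1/q)^2\bigr) \leq 1$ (using $\sum_i(\lambda_i')^2 - \sum_i(\tau_i')^2 \geq 2$). Your argument instead peels the first column and runs an induction on $|\lambda|$ via the exact recursion $c(\lambda)=q^{k^2}(1/q)_{m_1(\lambda)}c(\mu)$, reducing everything to $q^{k^2-k}(1/q)_{m_1(\lambda)}\geq 1$; I checked the recursion, the separate treatment of $\lambda=(1^k)$ (which is genuinely needed, since feeding $c(\emptyset)\geq 1-1/q$ into the reduction would fail at $k=1$), and the worst case $k=q=2$ giving $3/2$, and all of it holds up. The trade-off: the paper's exchange argument is shorter and transfers verbatim to the unitary analogue (Lemma \ref{monpartition2}) by replacing $1/q$ with $-1/q$ in the same ratio computation, whereas your recursion is exact at every step, makes the equality case $\lambda=(n)$ completely transparent, and foregrounds the centralizer-order interpretation that the paper only exploits later in Theorem \ref{GLsmallcent}. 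Your version would also adapt to the unitary case with minor changes to the final numerical estimate, so nothing essential is lost by your route.
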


\begin{proof} Define a function $f$ on partitions by $f(\lambda)=
q^{\sum_i (\lambda_i')^2} \prod_i (1/q)_{m_i(\lambda)}$. Let $\tau$ be a
partition obtained from $\lambda$ by moving a box from a row of length $i$
to a row of length $j \geq i+1$. The idea is to show that $f(\lambda) \geq
f(\tau)$. The result then follows because a sequence of such moves
transforms any partition into the one-row partition and $f$ evaluated on
this partition is $q^{|\lambda|}(1-1/q)$.

One checks that $\sum_i (\lambda_i')^2 - \sum_i (\tau_i')^2 \geq 2$, so if
$i>1$, then \begin{eqnarray*} \frac{f(\tau)}{f(\lambda)} & \leq &
\frac{(1/q)_{m_{i-1}(\lambda)+1} (1/q)_{m_i(\lambda)-1}
(1/q)_{m_j(\lambda)-1} (1/q)_{m_{j+1}(\lambda)+1}} {q^2
(1/q)_{m_{i-1}(\lambda)} (1/q)_{m_i(\lambda)} (1/q)_{m_j(\lambda)}
(1/q)_{m_{j+1}(\lambda)}} \\
& = & \frac{(1-1/q^{m_{i-1}(\lambda)+1})(1-1/q^{m_{j+1}(\lambda)+1})}{q^2
(1-1/q^{m_{i}(\lambda)})(1-1/q^{m_{j}(\lambda)})}\\
& \leq & \frac{1}{q^2(1-1/q)^2} \leq 1,\end{eqnarray*} as desired. For the
$i=1$ case, the only difference in the above argument is that the term
$(1-1/q^{m_{i-1}(\lambda)+1})$ does not appear.
\end{proof}

Lemma \ref{countirr} is well-known and is proved by counting the non-zero
elements in a degree $r$ extension of $\F_q$ by the degrees of their
minimal polynomials.

\begin{lemma} \label{countirr} Let $N(q;d)$ be the number of monic
degree $d$ irreducible polynomials over the finite field $\F_q$,
disregarding the polynomial $z$. Then \[\sum_{d|r} dN(q;d) = q^r-1.\]
\end{lemma}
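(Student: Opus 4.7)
The plan is to count the nonzero elements of the finite field $\F_{q^r}$ in two different ways. The left-hand side $\sum_{d \mid r} d N(q;d)$ should arise from grouping elements by the degree of their minimal polynomial over $\F_q$, while the right-hand side $q^r - 1$ is simply the total count.

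First I would recall the standard fact that $\F_{q^r}$ is the splitting field over $\F_q$ of $z^{q^r} - z$, and that an element $\alpha \in \overline{\F_q}$ lies in $\F_{q^r}$ if and only if its minimal polynomial over $\F_q$ has degree dividing $r$. This uses only that $\F_{q^d} \subseteq \F_{q^r}$ iff $d \mid r$, together with the fact that $\F_q(\alpha) = \F_{q^{\deg m_\alpha}}$ where $m_\alpha$ is the minimal polynomial of $\alpha$.

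Next, I would set up the bijection between nonzero elements of $\F_{q^r}$ and pairs (monic irreducible $f \in \F_q[z]$ with $f \ne z$ and $\deg f \mid r$, root of $f$ in $\F_{q^r}$). Each nonzero $\alpha \in \F_{q^r}$ corresponds to its minimal polynomial $m_\alpha$, which is monic, irreducible, not equal to $z$ (since $\alpha \ne 0$), and has degree dividing $r$. Conversely, each monic irreducible $f \ne z$ of degree $d \mid r$ contributes exactly $d$ roots in $\F_{q^r}$: the roots are distinct because finite fields are perfect so irreducible polynomials are separable, and all $d$ roots lie in $\F_{q^r}$ because they are obtained from any one root by applying powers of the Frobenius, which stabilizes $\F_{q^r}$. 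Summing the contributions yields $\sum_{d \mid r} d N(q;d) = q^r - 1$.

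There is no real obstacle here; the only points requiring minor care are separability (to be sure each irreducible polynomial contributes exactly $d$ distinct roots, not fewer) and the exclusion of $z$ (to match the ``disregarding the polynomial $z$'' convention, which corresponds to excluding $\alpha = 0$ on the element side). Both are standard in characteristic $p$ finite-field theory, so the proof is essentially a one-paragraph double count.
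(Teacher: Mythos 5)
Your proof is correct and is exactly the argument the paper has in mind: it states that the lemma "is proved by counting the non-zero elements in a degree $r$ extension of $\F_q$ by the degrees of their minimal polynomials," which is precisely your double count. The details you add (separability, the exclusion of $z$ matching $\alpha \ne 0$) are the right ones and fill in the standard verification the paper leaves implicit.
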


Lemma \ref{crudebound} will also be needed.

\begin{lemma} \label{crudebound} For $s \geq 2$, $(1-1/s)^s \geq
e^{-(1+1/s)}$ \end{lemma}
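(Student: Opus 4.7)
The plan is to take logarithms and reduce the inequality to a comparison of Taylor series. Since both sides of $(1-1/s)^s \geq e^{-(1+1/s)}$ are positive, this is equivalent to
\[
-s \log(1 - 1/s) \leq 1 + \frac{1}{s}.
\]

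First I would expand the left side using the standard series $-\log(1-x) = \sum_{k \geq 1} x^k/k$, valid for $x = 1/s$ since $s \geq 2$. This gives
\[
-s \log(1 - 1/s) = \sum_{k \geq 1} \frac{1}{k s^{k-1}} = 1 + \frac{1}{2s} + \frac{1}{3s^2} + \frac{1}{4s^3} + \cdots.
\]
Subtracting $1$ from both sides, it remains to verify
\[
\frac{1}{2s} + \frac{1}{3s^2} + \frac{1}{4s^3} + \cdots \leq \frac{1}{s},
\]
or equivalently, after multiplying by $s$,
\[
\frac{1}{2} + \frac{1}{3s} + \frac{1}{4s^2} + \cdots \leq 1.
\]

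The last inequality follows by a crude geometric majorization: for $s \geq 2$, each term $1/(ks^{k-2})$ with $k \geq 3$ is at most $1/(3 \cdot 2^{k-3}) \cdot 2^{-(s-2)} \cdot \text{(simpler bound)}$; more cleanly, replace $1/k$ by $1/3$ for $k \geq 3$ to get
\[
\frac{1}{2} + \frac{1}{3}\sum_{k \geq 3} \frac{1}{s^{k-2}} = \frac{1}{2} + \frac{1}{3(s-1)} \leq \frac{1}{2} + \frac{1}{3} = \frac{5}{6} < 1,
\]
valid for $s \geq 2$. This chain of inequalities establishes the lemma.

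The main obstacle is essentially trivial here: the only mild subtlety is that at $s=2$ the naive bound $1/(k \cdot s^{k-1}) \leq 1/(2 s^{k-1})$ gives only equality at the sum, so one must be slightly more careful and use the improvement $1/k \leq 1/3$ for $k \geq 3$ (or else verify the $s=2$ case separately by computing $(1/2)^2 = 0.25$ versus $e^{-3/2} \approx 0.223$, and then handle $s \geq 3$ with the cruder bound). No additional machinery from the paper is needed.
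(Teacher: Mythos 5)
Your proof is correct and follows essentially the same route as the paper: take logarithms, expand $-\log(1-1/s)$ as a power series, and majorize the tail by a geometric series. The only (immaterial) difference is that the paper bounds $1/k \leq 1/2$ for all $k \geq 2$, obtaining $\frac{1}{2(s-1)} \leq \frac{1}{s}$ with equality at $s=2$ --- which is still sufficient since the lemma asserts a non-strict inequality, so the extra care you take with the $k \geq 3$ terms is not actually needed.
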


\begin{proof} \begin{eqnarray*} \log(1-1/s)^s & = &
-1-\frac{1}{2s}-\frac{1}{3s^2}-\cdots\\ & \geq &
-1-\frac{1}{2s}-\frac{1}{2s^2}-\cdots\\ & = & -1-\frac{1}{2s(1-1/s)}\\ &
\geq & -1-1/s. \end{eqnarray*} where the final inequality uses that $s
\geq 2$. \end{proof}

\begin{theorem} \label{GLsmallcent} The smallest centralizer size of an
element of $GL(n,q)$ is at least $\frac{q^n(1-1/q)}{e(1+\log_q(n+1))}$.
\end{theorem}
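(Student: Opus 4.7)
The plan is to combine the rational canonical form parametrization of conjugacy classes of $GL(n,q)$ with Wall's centralizer formula, then reduce to a product inequality over monic irreducibles to which the lemmas already set up apply. Each conjugacy class of $GL(n,q)$ corresponds to a function $p \mapsto \lambda_p$ from monic irreducibles $p \neq z$ over $\F_q$ to partitions, satisfying $\sum_p \deg(p)\,|\lambda_p| = n$, with $|C_{GL(n,q)}(g)| = \prod_p c_{\lambda_p}(q^{\deg p})$ where $c_\lambda(Q) = Q^{\sum_i (\lambda_i')^2}\prod_i (1/Q)_{m_i(\lambda)}$. Applying Lemma \ref{monpartition} with $q$ replaced by $Q = q^{\deg p} \geq 2$ to each factor yields
$$|C_{GL(n,q)}(g)| \;\geq\; q^n \prod_{p:\,\lambda_p\neq\emptyset}\!\left(1-\frac{1}{q^{\deg p}}\right),$$
so the theorem reduces to showing that, for every finite set $S$ of distinct monic irreducibles other than $z$ with $\sum_{p\in S}\deg p \leq n$, one has $\prod_{p\in S}(1-1/q^{\deg p}) \geq (1-1/q)/(e(1+\log_q(n+1)))$.

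To isolate the worst-case $S$, I use a two-step monotonicity argument: adding a new polynomial to $S$ multiplies the product by a factor strictly less than $1$, and swapping a polynomial of degree $d_1 \in S$ for an unused polynomial of smaller degree $d_2 < d_1$ replaces a factor $1-1/q^{d_1}$ by the strictly smaller $1-1/q^{d_2}$. Hence the infimum is realised by filling polynomials in order of increasing degree until the budget is exhausted, and the worst-case $S$ is therefore contained in $\{p:\deg p\leq r\}$ for any $r$ with $\sum_{d=1}^r d\,N(q;d) \geq n$. Lemma \ref{countirr} gives $\sum_{d=1}^r d\,N(q;d)\geq \sum_{d\mid r}d\,N(q;d) = q^r-1$, so choosing $r = \lceil\log_q(n+1)\rceil$ works and satisfies $r\leq 1+\log_q(n+1)$; it thus suffices to bound $\prod_{d=1}^r (1-1/q^d)^{N(q;d)}$ from below.

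For the estimate itself, $N(q;d) \leq (q^d-1)/d$ (also from Lemma \ref{countirr}), and Lemma \ref{crudebound} implies $(1-1/s)^{s-1} \geq 1/e$ for $s \geq 2$ by writing $(1-1/s)^{s-1} = (1-1/s)^s/(1-1/s)$ and using the elementary $e^{1/s} \leq s/(s-1)$. Applied with $s = q^d$ this gives $(1-1/q^d)^{N(q;d)} \geq [(1-1/q^d)^{q^d-1}]^{1/d} \geq e^{-1/d}$ for each $d \geq 1$, and at $d=1$ a slightly sharper bookkeeping yields $(1-1/q)^{q-1} \geq (1-1/q)/e$, preserving the $(1-1/q)$ factor in the final bound. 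Multiplying and using $H_r := \sum_{d=1}^r 1/d \leq 1+\log r$,
$$\prod_{d=1}^r \!\left(1-\tfrac{1}{q^d}\right)^{N(q;d)} \;\geq\; \frac{1-1/q}{e}\cdot e^{-(H_r-1)} \;=\; (1-1/q)\,e^{-H_r}\;\geq\;\frac{1-1/q}{er},$$
after which $r\leq 1+\log_q(n+1)$ finishes the proof.

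The main obstacle I anticipate is making the greedy/monotonicity step fully rigorous: one must verify cleanly that the infimum over admissible $S$ really is attained by filling degrees from the bottom up, so that restricting to the block $\{p:\deg p\leq r\}$ is justified. Once that is in place, the remaining work is essentially an application of Lemmas \ref{monpartition}, \ref{countirr}, and \ref{crudebound} combined with the standard harmonic estimate $H_r\leq 1+\log r$.
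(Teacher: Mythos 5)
Your proposal is correct and takes essentially the same route as the paper's proof: the same reduction to one-row partitions via Lemma \ref{monpartition}, the same choice of $r$ via Lemma \ref{countirr}, and the same harmonic-sum estimate $1+\tfrac12+\cdots+\tfrac1r \le 1+\log r$. The only differences are cosmetic --- you make explicit the greedy/monotonicity step that the paper leaves implicit in the phrase ``letting $r$ be such that $\sum_{i=1}^r iN(q;i)\ge n$,'' and you use $(1-1/s)^{s-1}\ge e^{-1}$ together with $N(q;d)\le (q^d-1)/d$ where the paper applies Lemma \ref{crudebound} directly with $N(q;d)\le q^d/d$, both yielding the same factor $(1-1/q)e^{-H_r}$.
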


\begin{proof} It is well known that conjugacy classes of $GL(n,q)$
are parametrized by Jordan canonical form. That is for each monic
irreducible polynomial $\phi \neq z$, one picks a partition
$\lambda(\phi)$ subject to the constraint $\sum_{\phi} deg(\phi)
|\lambda(\phi)| = n$. The corresponding centralizers sizes are well known
(\cite{Mac}, page 181) and can be rewritten as \[ \prod_{\phi}
q^{deg(\phi) \sum_i (\lambda(\phi)_i')^2} \prod_i
(1/q^{deg(\phi)})_{m_i(\lambda(\phi))}.\] To minimize this expression,
suppose that for each polynomial $\phi$ one knows the size
$|\lambda(\phi)|$ of $\lambda(\phi)$. Lemma \ref{monpartition} shows that
$\lambda(\phi)$ should be taken to be a one-row partition which would
contribute $q^{|\lambda(\phi)| \cdot deg(\phi)} (1-1/q^{deg(\phi)})$.
Letting $r$ be such that $\sum_{i=1}^r i N(q;i) \geq n$, it follows that
the minimal centralizer size is at least $q^n \prod_{i=1}^r
(1-1/q^i)^{N(q;i)}$.

Observe that $r$ can be taken to be the smallest integer such that
$q^r-1 \geq n$, because by Lemma \ref{countirr}

\[ \sum_{i=1}^r i N(q;i) \geq \sum_{d|r} d N(q;d) = q^r-1.\] Since
$N(q;i) \leq q^i/i$, the minimum centralizer size is at least $q^n
\prod_{i=1}^r (1-1/q^i)^{q^i/i}$. Lemma \ref{crudebound} gives that the
minimum centralizer size is at least $\frac{q^n (1-1/q)}{e^{
(1+1/2+\cdots+1/r)}}$. To finish the proof use the bounds
$1+1/2+\cdots+1/r \leq 1+ \log(r)$ and take $r=1+\log_q(n+1)$. \end{proof}

{\it Remark:} Since the number of conjugacy classes of $GL(n,q)$ is less
than $q^n$, one might hope that the largest conjugacy class size is at
most $\frac{|GL(n,q)|}{c q^n}$ where $c$ is a constant. The proof of
Theorem \ref{GLsmallcent} shows this to be untrue. Indeed, the infinite
product $\prod_i (1-1/q^i)^{N(q;i)}$ vanishes. This can be seen by setting
$u=1/q$ in the identity \[ \prod_{i=1}^{\infty} (1-u^i)^{-N(q;i)} = 1+
\frac{q-1}{q} \sum_{n \geq 1} u^nq^n \] (which holds since the coefficient
of $u^n$ on both sides counts the number of monic degree $n$ polynomials
with non-vanishing constant term).

\subsection{The unitary groups}

The method for the finite unitary groups is similar to that for $GL(n,q)$.
As usual, we view $U(n,q)$ as a subgroup of $GL(n,q^2)$.

\begin{lemma} \label{monpartition2} Let $\lambda$ be any partition. Then
for $q \geq 2$, \[ q^{\sum_i (\lambda_i')^2} \prod_i
(-1/q)_{m_i(\lambda)} \geq q^{|\lambda|}(1+1/q).\] \end{lemma}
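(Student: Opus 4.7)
The plan is to mimic exactly the argument used for Lemma \ref{monpartition}, replacing the $q$-Pochhammer factor $(1/q)_m$ by the shifted factor $(-1/q)_m = \prod_{k=1}^{m}(1+1/q^k)$.

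Define $g(\lambda) = q^{\sum_i (\lambda_i')^2} \prod_i (-1/q)_{m_i(\lambda)}$. For the one-row partition $(n)$ we have $\lambda_i' = 1$ for $i=1,\ldots,n$, so $\sum_i (\lambda_i')^2 = n$, and the product collapses to $(-1/q)_1 = 1+1/q$, giving $g((n)) = q^n(1+1/q) = q^{|\lambda|}(1+1/q)$, which is the desired lower bound. Since any partition $\lambda$ can be reduced to the one-row partition $(|\lambda|)$ by a finite sequence of elementary moves that take a box from a row of length $i$ and append it to a row of length $j \geq i+1$, it suffices to show that each such move does not increase $g$.

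Fix such a move producing $\tau$ from $\lambda$. The combinatorial identity $\sum_i (\lambda_i')^2 - \sum_i (\tau_i')^2 \geq 2$ is independent of the sign in the Pochhammer symbol and is exactly the one invoked in Lemma \ref{monpartition}. The multiplicities change according to $m_i(\tau)=m_i(\lambda)-1$, $m_{i-1}(\tau)=m_{i-1}(\lambda)+1$, $m_j(\tau)=m_j(\lambda)-1$, $m_{j+1}(\tau)=m_{j+1}(\lambda)+1$. Using $(-1/q)_{m+1}/(-1/q)_m = 1+1/q^{m+1}$, for $i>1$ we obtain
\[
\frac{g(\tau)}{g(\lambda)} \leq \frac{(1+1/q^{m_{i-1}(\lambda)+1})(1+1/q^{m_{j+1}(\lambda)+1})}{q^2(1+1/q^{m_i(\lambda)})(1+1/q^{m_j(\lambda)})}.
\]
Bounding the numerator by $(1+1/q)^2$ and the denominator by $q^2$ yields the ratio $\leq (1+1/q)^2/q^2$, which is $\leq 1$ for $q \geq 2$ since $1+1/q \leq q$ whenever $q^2 \geq q+1$. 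In the $i=1$ case the factor $(1+1/q^{m_{i-1}(\lambda)+1})$ is absent, giving an even sharper bound $(1+1/q)/q^2 \leq 1$.

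The only substantive difference from the proof of Lemma \ref{monpartition} is the direction of the individual factor inequalities — whereas there each $(1-1/q^{m+1})$ factor was automatically $\leq 1$, here each $(1+1/q^{m+1})$ factor exceeds $1$ and must be controlled by the uniform upper bound $1+1/q$. This is the only mild subtlety, and it is absorbed by the same $q^{-2}$ factor that did the work in the original lemma. Iterating the monotonicity down to the one-row partition then yields $g(\lambda) \geq g((|\lambda|)) = q^{|\lambda|}(1+1/q)$, completing the proof.
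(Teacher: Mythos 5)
Your proposal is correct and follows essentially the same route as the paper: the paper's proof of Lemma \ref{monpartition2} likewise just reruns the box-moving argument of Lemma \ref{monpartition} with $f(\lambda)=q^{\sum_i(\lambda_i')^2}\prod_i(-1/q)_{m_i(\lambda)}$, using the same inequality $\sum_i(\lambda_i')^2-\sum_i(\tau_i')^2\ge 2$ and bounding the resulting ratio by $(1+1/q)^2/\bigl(q^2(1-1/q^2)^2\bigr)\le 1$. Your slightly different final estimate $(1+1/q)^2/q^2\le 1$ (dropping the denominator factors, which are $\ge 1$) is equally valid and, if anything, cleaner.
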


\begin{proof} The argument is the same as for Lemma \ref{monpartition}. Using
the same notation as in that proof, except that now $f(\lambda)= q^{\sum_i
(\lambda_i')^2} \prod_i (-1/q)_{m_i(\lambda)}$, one obtains that
\begin{eqnarray*} \frac{f(\tau)}{f(\lambda)} & \leq &
\frac{(1-(-1/q)^{m_{i-1}(\lambda)+1})(1-(-1/q)^{m_{j+1}(\lambda)+1})}{q^2
(1-(-1/q)^{m_{i}(\lambda)})(1-(-1/q)^{m_{j}(\lambda)})}\\
& \leq & \frac{(1+1/q)^2}{q^2 (1-1/q^2)^2} \leq 1.\end{eqnarray*}
\end{proof}

Given a polynomial $\phi$ with coefficients in $\F_{q^2}$ and
non-vanishing constant term, define a polynomial $\tilde{\phi}$ by \[
\tilde{\phi} = \frac{z^{deg(\phi)} \phi^q(\frac{1}{z})}{[\phi(0)]^q} \]
where $\phi^q$ raises each coefficient of $\phi$ to the $q$th power. A
polynomial $\phi$ is called self-conjugate if $\tilde{\phi}=\phi$ and an
element in an extension field of $\F_{q^2}$ is called self-conjugate if
its minimal polynomial over $\F_{q^2}$ is self-conjugate.

\begin{lemma} \label{countemunit} Suppose that $r$ is odd. Then the
number of nonzero non-self-conjugate elements in $\F_{q^{2r}}$ viewed as
an extension of $\F_{q^2}$ is $q^{2r}-q^r-2$.
\end{lemma}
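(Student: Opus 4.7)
The plan is to show that the self-conjugate nonzero elements of $\F_{q^{2r}}$ are exactly the group $\mu_{q^r+1}$ of $(q^r+1)$-st roots of unity, and then subtract.

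First I will reformulate the self-conjugacy condition. Chasing the definition of $\tilde\phi$, one sees that if $\phi$ has roots $\alpha_1,\dots,\alpha_d$ in the algebraic closure, then $\tilde\phi$ has roots $\alpha_1^{-q},\dots,\alpha_d^{-q}$. Hence $\alpha$ is self-conjugate iff the $\Gal(\overline{\F_q}/\F_{q^2})$-orbit of $\alpha^{-q}$ equals that of $\alpha$, that is, iff $\alpha^{-q}=\alpha^{q^{2k}}$ for some integer $k\ge 0$, or equivalently $\alpha^{q^{2k}+q}=1$. Since any $\alpha\in\F_{q^{2r}}^*$ has order coprime to $q$, this is equivalent to $\alpha^{q^j+1}=1$ for some odd $j\ge 1$.

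Next I will identify the self-conjugate set with $\mu_{q^r+1}$. The inclusion $\mu_{q^r+1}\subseteq\F_{q^{2r}}^*$ holds because $q^r+1\mid q^{2r}-1$, and since $r$ is odd the choice $j=r$ shows every $\alpha\in\mu_{q^r+1}$ is self-conjugate. For the reverse inclusion, suppose $\alpha\in\F_{q^{2r}}^*$ satisfies $\alpha^{q^j+1}=1$ for some odd $j\ge 1$, and let $g=\gcd(q^j+1,q^{2r}-1)$; I must show $g\mid q^r+1$. Modulo $g$ one has $q^j\equiv -1$ and $q^{2r}\equiv 1$, so $\mathrm{ord}_g(q)\mid 2\gcd(j,r)=2d$. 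The cases $g\le 2$ are handled directly (both $q^r+1$ and $g$ cases are covered, using parity of $q$). Otherwise $-1\not\equiv 1\pmod g$, so $\mathrm{ord}_g(q)\nmid j$, forcing $\mathrm{ord}_g(q)=2e$ for some divisor $e$ of $d$. Since $r$ is odd so is $d$, whence $e$ is odd; and $q^e\equiv -1\pmod g$ gives $g\mid q^e+1$. Because $e\mid r$ with $r/e$ odd (as both are odd), $q^e+1\mid q^r+1$, so $g\mid q^r+1$ as required.

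Having established that the self-conjugate nonzero elements form the cyclic subgroup $\mu_{q^r+1}$ of order $q^r+1$, the count of nonzero non-self-conjugate elements is
\[
(q^{2r}-1)-(q^r+1)=q^{2r}-q^r-2.
\]
The main obstacle is the number-theoretic step showing $\gcd(q^j+1,q^{2r}-1)$ divides $q^r+1$ for odd $j$ and odd $r$; everything else is bookkeeping around the definition of $\tilde\phi$. Care is needed to handle the degenerate gcd values $1$ and $2$ separately, distinguishing even and odd characteristic.
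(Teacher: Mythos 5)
Your proof is correct, but it takes a genuinely different route from the paper's. The paper quotes Theorem 9 of Fulman's cycle index paper \cite{F2} for the number of self-conjugate elements of each degree $i$ over $\F_{q^2}$ (namely $0$ for $i$ even and $\sum_{d\mid i}\mu(d)(q^{i/d}+1)$ for $i$ odd) and then collapses the double sum $\sum_{i\mid r}\sum_{d\mid i}\mu(d)(q^{i/d}+1)$ to $q^r+1$ by M\"obius inversion; your argument instead identifies the set of nonzero self-conjugate elements structurally as the subgroup $\mu_{q^r+1}$ (the kernel of the norm map $\F_{q^{2r}}^*\to\F_{q^r}^*$), which of course has exactly $q^r+1$ elements, and both routes then finish with the same subtraction. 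Your approach is self-contained, makes transparent why the answer is exactly $q^r+1$ and why oddness of $r$ matters, and the reduction of self-conjugacy to ``$\alpha^{q^j+1}=1$ for some odd $j$'' is carried out correctly; the paper's approach is shorter given the external reference and also yields the finer degree-by-degree counts. One step you should spell out: from $\mathrm{ord}_g(q)=2e$ alone it does not follow that $q^e\equiv -1\pmod g$ (a cyclic-order-$2$ power need not be $-1$ in a general $(\Z/g\Z)^*$); you need to observe that $e\mid d\mid j$ with $j/e$ odd, so that $q^j=(q^e)^{j/e}\equiv q^e$ because $(q^e)^2\equiv 1$, whence $q^e\equiv q^j\equiv -1$. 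With that one line added the argument is complete.
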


\begin{proof} Theorem 9 of \cite{F2} shows that the number of
self-conjugate elements of degree $i$ over $\F_{q^2}$ is $0$ if $i$ is
even and is $\sum_{d|i} \mu(d)(q^{i/d}+1)$ if $i$ is odd, where $\mu$ is
the Moebius function. Thus Moebius inversion implies that the total number
of self-conjugate elements of $\F_{q^{2r}}$ is \[ \sum_{i|r} \sum_{d|i}
\mu(d) (q^{i/d}+1) = q^r+1,\] which implies the result.
\end{proof}

\begin{theorem} \label{Usmallcent} The smallest centralizer size of an
element of $U(n,q)$ is at least $q^n
\left(\frac{1-1/q^2}{e(2+\log_q(n+1))} \right)^{1/2}$. \end{theorem}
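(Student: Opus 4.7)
The plan is to adapt the proof of Theorem \ref{GLsmallcent} to the unitary setting, with the extra bookkeeping forced by self-conjugate versus non-self-conjugate polynomials over $\F_{q^2}$. Recall Wall's parametrization: a conjugacy class of $U(n,q)$ is a function $\phi \mapsto \lambda(\phi)$ from monic irreducibles $\phi \in \F_{q^2}[z]$ with $\phi(0) \ne 0$ to partitions, with one partition per self-conjugate $\phi$ and one per pair $\{\phi,\tilde\phi\}$, subject to $\sum_{\phi \text{ s.c.}} \deg(\phi)|\lambda(\phi)| + \sum_{\{\phi,\tilde\phi\}} 2\deg(\phi)|\lambda(\{\phi,\tilde\phi\})| = n$. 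The centralizer order is a product over polynomials of local factors: for a self-conjugate $\phi$ of degree $d$, a factor of the form $q^{d\sum_i(\lambda_i')^2}\prod_i(-1/q^d)_{m_i(\lambda)}$; for a pair $\{\phi,\tilde\phi\}$ of non-self-conjugate polynomials of degree $d$, a factor of the form $q^{2d\sum_i(\lambda_i')^2}\prod_i(1/q^{2d})_{m_i(\lambda)}$.

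Next I apply Lemma \ref{monpartition2} to each self-conjugate factor and Lemma \ref{monpartition} to each paired factor (with $q$ replaced by $q^{d}$ and $q^{2d}$ respectively). In each case the one-row partition is the minimizer, and the resulting local lower bound is $q^{d|\lambda|}(1+1/q^{d}) \ge q^{d|\lambda|}$ for self-conjugate $\phi$ and $q^{2d|\lambda|}(1-1/q^{2d})$ for pairs. Multiplying and using the degree constraint, I get
\[
|C_{U(n,q)}(x)| \ge q^{n}\prod_{\{\phi,\tilde\phi\}\,\text{used}} \bigl(1-1/q^{2\deg(\phi)}\bigr),
\]
where the product is only over those non-self-conjugate pairs with $|\lambda|>0$; the self-conjugate contributions are $\ge 1$ and can be dropped.

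Now I bound the product from below by taking the worst case, in which every pair of every available degree up to some threshold $r$ is used. Let $M(q;d)$ denote the number of pairs $\{\phi,\tilde\phi\}$ of non-self-conjugate monic irreducibles of degree $d$ over $\F_{q^2}$. Using Lemma \ref{countemunit} (for odd $d$) together with the elementary fact that every irreducible of even degree over $\F_{q^2}$ is non-self-conjugate, one gets $M(q;d) \le q^{2d}/(2d)$. Choose $r$ to be the smallest integer with $\sum_{d\le r}2d\cdot M(q;d)\ge n$, which (as in the GL case via Lemma \ref{countirr} applied to $\F_{q^2}$) can be taken to satisfy $q^{2r}-1\ge n$, i.e.\ $r = \lceil \tfrac12\log_q(n+1)\rceil$. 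Then
\[
\prod_{d\le r}\bigl(1-1/q^{2d}\bigr)^{M(q;d)} \ge \prod_{d\le r}\bigl(1-1/q^{2d}\bigr)^{q^{2d}/(2d)},
\]
and applying Lemma \ref{crudebound} (with $s=q^{2d}$) yields a lower bound of $(1-1/q^{2})^{1/2}\exp\bigl(-\tfrac12(1+1/2+\cdots+1/r)\bigr) \ge (1-1/q^{2})^{1/2}e^{-1/2}(1+\log r)^{-1/2}$, which with our choice of $r$ collapses to $\left(\frac{1-1/q^{2}}{e(2+\log_q(n+1))}\right)^{1/2}$ after mild estimation.

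The main obstacle I anticipate is the accounting of the exponent: in the GL proof a single factor of $1/e$ falls out of Lemma \ref{crudebound}, whereas here the factor $1/(2d)$ on the exponent (rather than $1/d$) and the $q^{2d}$ (rather than $q^d$) conspire to give square-root behavior, so one must carry the exponent $1/2$ through the harmonic-sum estimate and the final choice of $r$. Once that is done, the remaining constants are mechanical. The self-conjugate polynomials never force a reduction, so handling their existence amounts only to verifying that the local factor $(1+1/q^d)$ is at least $1$; no separate asymptotic argument is needed for them.
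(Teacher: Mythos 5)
Your proposal follows the paper's proof almost exactly: same Wall parametrization, same use of Lemmas \ref{monpartition} and \ref{monpartition2} to reduce to one-row partitions, same observation that the self-conjugate factors contribute $\ge q^{d|\lambda|}$ and can be discarded so that only non-self-conjugate pairs cause a penalty, same bound $M(q;d)\le q^{2d}/(2d)$, and the same harmonic-sum/Lemma \ref{crudebound} endgame producing the exponent $1/2$.

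The one step that does not work as written is your choice of cutoff: you claim that $\sum_{d\le r}2d\,M(q;d)\ge n$ can be guaranteed by $q^{2r}-1\ge n$ ``via Lemma \ref{countirr} applied to $\F_{q^2}$.'' But $\sum_{d\mid r}2d\,M(q;d)$ counts only the nonzero elements of $\F_{q^{2r}}$ whose minimal polynomial is \emph{not} self-conjugate, so the self-conjugate elements must be subtracted from $q^{2r}-1$; by Lemma \ref{countemunit} (for $r$ odd) the correct identity is $\sum_{d\mid r}2d\,M(q;d)=q^{2r}-q^r-2$. Concretely, for $n=q^2-2$ your rule gives $r=1$, yet $2M(q;1)=q^2-q-2<n$, so the degree-one pairs do not exhaust the budget and your worst-case product is taken over too few factors. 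This is why the paper takes $r$ odd with $q^r\ge n+1$ (so that $q^{2r}-q^r-2\ge n$, using $n>1$), at the cost of $r\approx\log_q(n+1)$ rather than your $\tfrac12\log_q(n+1)$; either choice is compatible with the stated constant $(2+\log_q(n+1))^{1/2}$, and your smaller $r$ would survive the correction (one needs $q^{2r}\ge n+q^r+2$, still giving $r\le 2+\log_q(n+1)$), so the slip is repairable without changing the conclusion. Separately, $e^{-\frac12(1+\log r)}$ equals $(er)^{-1/2}$, not $e^{-1/2}(1+\log r)^{-1/2}$ as you wrote; since $1+\log r\le r$ this transcription error overstates the intermediate bound, but the final inequality you draw from it is the one that actually follows from $(er)^{-1/2}$ and $r\le 2+\log_q(n+1)$.
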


\begin{proof} For $n=1$ this is clear so suppose that $n>1$. The
conjugacy classes of $U(n,q)$ and their sizes were determined in
\cite{W}. They are parametrized by the following analog of Jordan
canonical form. For each monic irreducible polynomial $\phi \neq z$, one
picks a partition $\lambda(\phi)$ subject to the two constraints that
$\sum_{\phi} deg(\phi) |\lambda(\phi)| = n$ and
$\lambda(\phi)=\lambda(\tilde{\phi})$. The corresponding centralizers
sizes are due to Wall and can be usefully rewritten as

\[ \prod_{\phi \neq z, \phi = \tilde{\phi}} q^{deg(\phi) \sum_i
(\lambda(\phi)_i')^2} \prod_i (-1/q^{deg(\phi)})_{m_i(\lambda(\phi))} \]

\[ \cdot \left( \prod_{\{\phi,\tilde{\phi}\}, \phi \neq \tilde{\phi}}
q^{deg(\phi) \sum_i (\lambda(\phi)_i')^2} \prod_i
(1/q^{deg(\phi)})_{m_i(\lambda(\phi))} \right)_{q \mapsto q^2}. \] Here
the $q \mapsto q^2$ means (in the second product over polynomials) to
replace all occurrences of $q$ by $q^2$. Note that the second product is
over unordered conjugate pairs of non self-conjugate monic irreducible
polynomials.

Note that the bound in Lemma \ref{monpartition2} is greater than
$q^{|\lambda|}$ whereas the bound in Lemma \ref{monpartition} is less than
$q^{|\lambda|}$. Hence the minimum size centralizer will correspond to a
conjugacy class whose characteristic polynomial has only non
self-conjugate irreducible polynomials as factors. Let $\tilde{M}(q;i)$
denote the number of unordered pairs $\{\phi,\tilde{\phi}\}$ where $\phi$
is monic non self-conjugate and irreducible of degree $i$ with
coefficients in $F_{q^2}$. Then a lower bound for the smallest centralizer
size is $q^n \prod_{i=1}^r (1-1/q^{2i})^{\tilde{M}(q;i)}$ where $r$ is
such that $\sum_{i=1}^r 2i \tilde{M}(q;i) \geq n$. Take $r$ to be odd, and
observe by Lemma \ref{countemunit} that \[ \sum_{i=1}^r 2i \tilde{M}(q;i)
\geq \sum_{i|r} 2i \tilde{M}(q;i) = q^{2r}-q^r-2, \] and that
$q^{2r}-q^r-2 \geq n$ if $q^r \geq n+1$ (since $n>1$). Since
$\tilde{M}(q;i) \leq \frac{q^{2i}}{2i}$, the smallest centralizer size is
at least $q^n \prod_{i=1}^r (1-1/q^{2i})^{\frac{q^{2i}}{2i}}$. Arguing as
in the general linear case and applying Lemma \ref{crudebound} proves the
theorem.
\end{proof}

\subsection{Symplectic and orthogonal groups}

    To begin the study of minimum centralizer sizes in symplectic
and orthogonal groups, we treat the case of elements whose
characteristic polynomial is $(z \pm 1)^n$. This will be done by two
different methods. The first approach uses algebraic group techniques
and gives the best bounds. The second approach is combinatorial but of
interest as it involves a new enumeration of unipotent elements in
orthogonal groups. Note that there is no need to consider odd
dimensional orthogonal groups in even characteristic, as these are
isomorphic to symplectic groups.

\begin{prop} \label{centsize}
\begin{enumerate}
\item The minimum
centralizer size of an element in the group $Sp(2n,q)$ with
characteristic polynomial $(z \pm 1)^{2n}$ is at least $q^n$.
\item In odd
characteristic, the minimum centralizer size of an element with
characteristic polynomial $(z \pm 1)^{2n}$ in $O^{\pm}(2n,q)$ or $(z \pm
1)^{2n+1}$ in $O(2n+1,q)$ is at least $q^n$.
\item In even characteristic, the
minimum centralizer size of a unipotent element of $O^{\pm}(2n,q)$ is at
least $2q^{n-1}$.
\end{enumerate}
\end{prop}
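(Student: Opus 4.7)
The plan is to reduce all three parts to statements about unipotent elements and then invoke the explicit centralizer formulas of Wall, coupled with the general dimension bound for centralizers of unipotents in reductive groups.

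First I would observe that in odd characteristic, $-u$ and $u$ have the same centralizer, and the characteristic polynomial changes from $(z-1)^m$ to $(z+1)^m$ under this substitution (and in even characteristic, $-1=1$). So for parts (1), (2), and (3), it suffices to bound the centralizer of a unipotent element $u$ whose sole eigenvalue is $1$, i.e., an element of $G^F$ whose characteristic polynomial is $(z-1)^m$, which forces $u$ to be unipotent.

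For (1) and (2), the strategy is to combine the Steinberg--Springer bound $\dim C_G(u) \geq r$ (rank) with the explicit structure of the centralizer. For $u$ unipotent in a classical group $G$ with Jordan type $\lambda$ (subject to the usual parity restrictions for $Sp$ and $O$), Wall's formulas express $|C_{G^F}(u)|$ as a product of a pure $q$-power coming from the unipotent radical of $C_G(u)$ times a product of orders of smaller classical groups (one for each distinct Jordan block size), together with a contribution from $Z(G)^F$. I would then show combinatorially, by a partition-theoretic move argument in the spirit of Lemma \ref{monpartition} (slide a box from a shorter to a longer row), that the minimum over $\lambda$ is attained at the regular unipotent class, i.e.\ a single Jordan block (or the analogous minimal partition for $O^\pm$). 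For the regular unipotent in $Sp(2n,q)$, $C_G(u) = Z(G) \cdot R$ with $R$ a connected abelian unipotent group of dimension $n$, so $|C_{G^F}(u)| = |Z(G)^F| \cdot q^n \geq q^n$. The odd-characteristic orthogonal cases (2) are analogous: a regular unipotent in $SO^\pm(2n,q)$ or $SO(2n+1,q)$ has connected centralizer of dimension $n$ with unipotent radical contributing $q^n$ exactly, and the argument extends to $O$ because $[O:SO]=2$ can only increase the order.

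For part (3) (even characteristic orthogonal groups) the extra subtlety is that in characteristic 2 the group $O^\pm(2n,q)$ behaves differently from its odd-characteristic counterpart: the unipotent conjugacy classes follow the Aschbacher--Seitz classification and some classes live outside $SO^\pm(2n,q)$, including transvections (``reflections'') whose centralizer drops in dimension by one. This is the source of the weaker bound $2q^{n-1}$ rather than $q^n$. I would handle this case by splitting into unipotents of $SO^\pm(2n,q)$ (where the algebraic-group dimension argument still yields a centralizer of order at least $q^n$, hence at least $2q^{n-1}$) and unipotent elements in $O^\pm\setminus SO^\pm$ (transvections and their generalizations), for which the centralizer in $O^\pm$ is of the form $\langle t\rangle \times C$ with $t$ an appropriate commuting involution and $\dim C = n-1$, giving exactly $2q^{n-1}$ as the minimum.

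The main obstacle is precisely part (3): the characteristic-2 orthogonal case requires careful bookkeeping of the Aschbacher--Seitz normal forms to confirm that no unipotent class has centralizer smaller than $2q^{n-1}$, since the usual ``$\dim C_G(u)\geq r$'' bound is no longer directly available for the disconnected algebraic group, and one must work with connected component information and the extra structure of the underlying quadratic form over $\mathbb{F}_2$.
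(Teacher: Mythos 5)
Your overall shape is right (reduce to unipotent elements; get $q^n$ for parts (1)--(2) and handle the non-$SO$ unipotent elements separately in characteristic $2$), but the two steps that carry all the difficulty are asserted rather than proved, and neither is routine. For (1) and (2) you propose to minimize Wall's centralizer formula over Jordan types by a box-sliding argument ``in the spirit of Lemma~\ref{monpartition}.'' That lemma works for $GL$ because the formula is a pure product of $q$-powers and $(1/q)_m$ factors; for $Sp$ and $O$ the centralizer order is a $q$-power times orders of smaller symplectic and orthogonal groups whose type depends on the parity of the part and, for the orthogonal factors, on a sign $\epsilon$ not determined by the partition alone, so a box move changes which classical groups appear and the claimed monotonicity is genuinely unclear. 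Note also that the bare bound $\dim C_G(u)\ge r$ does not by itself give $q^r \mid |C_{G^F}(u)|$, since the reductive part of the centralizer contributes only factors of size about $q-1$ per dimension. The paper sidesteps all of this: for $g$ normalizing a Borel $B=TU$, the regular unipotent elements are dense in $U$ and have $r$-dimensional centralizer there, so by upper semicontinuity of stabilizer dimension $\dim C_U(g)\ge r$ for every such $g$, and the rational points of this unipotent group have order a multiple of $q^r$. No case analysis, no Wall formulas, no reduction to the regular class.

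For part (3) you correctly isolate the unipotent elements outside $SO$ as the source of the weaker bound $2q^{n-1}$, but the claim that their centralizer is $\langle t\rangle\times C$ with $\dim C=n-1$ is exactly the statement that needs proof, and you flag it yourself as ``the main obstacle'' without resolving it. The paper's argument here is the real content: embed the orthogonal group into $Sp(2n)$ in characteristic $2$, note that the coset $gU$ contains a dense set of regular unipotent elements of the symplectic group, observe that the orthogonal groups containing such an element form a $1$-dimensional family (invariant hyperplanes containing the image of $g-1$), so that $C_U(g)$ has codimension at most $1$ in the $r$-dimensional $C_A(g)$; the extra factor of $2$ then comes for free because $g$ itself lies in the centralizer but outside $SO$. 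Without this (or an equivalent computation from the characteristic-$2$ normal forms, which you do not carry out), part (3) is not established.
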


\begin{proof}  We first work in the
ambient algebraic group $G$ and connected component $H$.  Let $g \in G(q)$.
 Let $B$
be a Borel subgroup of $H$ normalized by $g$ and $U$ its unipotent radical.
First suppose that $g \in H$.
  It suffices to show that $C_U(g)$ has dimension at least $r$, the rank of $H$.
  For then the rational points in $C_U(g)$ have order a multiple of $q^r$
  as required (cf \cite{FG1}).  The centralizer in $B$ of a regular unipotent element in $B$
  has dimension exactly $r$ in $B$. Since these elements are dense in $U$, the same
  is true for any such element.

  Now suppose that $g$ is not in $H$.  This only occurs in even characteristic
  with $G$   an orthogonal group.
    Now $G$ embeds in a symplectic group $L$ of the same dimension.
  Let $A$ be a maximal unipotent subgroup of $L$ containing $U$.  Note that
  $gU$ contains regular unipotent elements of $L$ and so the subset of $gU$
  consisting of regular unipotent elements is dense in $gU$.

  We claim
  that $C_U(g)$ has dimension at least $r-1$.  Once we have established that claim,
  it follows as above that the centralizer in $H(q)$ of $g$ is divisible by $q^{r-1}$
  as required.   Since $g$ is not in $H$,   its centralizer in $G(q)$
  has order at least twice as large.
  Since the regular unipotent elements in $gU$ are dense, it suffices
  to prove the claim for such an element.

  Since $g$ is a regular unipotent element of $L$,  it
  follows that  $C_L(g) = C_A(g)$ has dimension $r$.  On the other hand, we see also that
  the set of orthogonal groups containing $g$ is a $1$-dimensional variety
  (the orthogonal groups containing $g$ are in bijection with the  $g$-invariant hyperplanes
  of the orthogonal module for $L$ that do not contain the $L$ fixed space -- since
  any such hyperplane must contain the image of $g-1$ which has codimension $2$,
  we see the set of hyperplanes is a $1$-dimensional variety).  Now $C_U(g)$ is precisely
  the stabilizer of the hyperplane corresponding to $H$ and so has codimension
  at most $1$ in $C_A(g)$.  Thus, $\dim C_A(g) \ge r-1$ (in fact equality holds).
  This proves the claim and completes the proof.
  \end{proof}

  We now give a more combinatorial approach to lower bounding
centralizer sizes of elements whose characteristic polynomial is $(z \pm
1)^n$; this is complementary and yields different information. A crucial
step in this approach is counting the number of unipotent elements in
symplectic and orthogonal groups. Steinberg (see \cite{C} for a proof)
showed that if $G$ is a connected reductive group and $F:G \mapsto G$ is a
Frobenius map, then the number of unipotent elements of $G^F$ is the
square of the order of a p-Sylow, where p is the characteristic. We remind
the reader that
\begin{enumerate}
\item $|Sp(2n,q)|=q^{n^2} \prod_{j=1}^n (q^{2j}-1)$.
\item $|O(2n+1,q)|=2q^{n^2} \prod_{j=1}^n (q^{2j}-1)$ (in odd characteristic).
\item $|O^{\pm}(2n,q)|=2q^{n^2-n} (q^n \mp 1) \prod_{j=1}^{n-1} (q^{2j}-1)$. \end{enumerate}
This implies that the number of unipotent elements in $Sp(2n,q)$ is $q^{2n^2}$. However the
orthogonal groups are not connected, so Steinberg's theorem is not directly applicable.
Nevertheless, in odd characteristic, unipotent elements always live in $\Omega$, so
Steinberg's theorem does imply that the number of unipotent elements in $O(2n+1,q)$ in
odd characteristic is $q^{2n^2}$, and that the number of unipotent elements of
$O^{\pm}(2n,q)$ in odd characteristic is $q^{2(n^2-n)}$.

Proposition \ref{unipenum} uses generating functions to treat orthogonal
groups in even characteristic; along the way we obtain a formula for the
number of unipotent elements (this turns out not to be a power of $q$ and
seems challenging from the algebraic approach). Two combinatorial lemmas
are needed.

\begin{lemma} \label{Euler} (Euler) \cite{A1}
\[ \prod_{j=1}^{\infty} (\frac{1}{1-\frac{u}{q^j}}) =
\sum_{n \geq 0} \frac{u^n q^{{n \choose 2}}}{(q^n-1) \cdots (q-1)}.\]
\end{lemma}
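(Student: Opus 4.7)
The plan is to prove Euler's identity via a simple functional equation satisfied by the infinite product, followed by solving a linear recurrence for the coefficients. Let $f(u) := \prod_{j=1}^{\infty} \frac{1}{1-u/q^j}$. The key observation is that pulling off the first factor gives
\[
f(u) = \frac{1}{1-u/q} \prod_{j=2}^{\infty} \frac{1}{1-u/q^j} = \frac{1}{1-u/q}\, f(u/q),
\]
that is, $(1-u/q)\, f(u) = f(u/q)$. One first should check that the product converges for $|u| < q$ (say), so $f$ is analytic in a neighborhood of $0$ and admits a power series expansion $f(u) = \sum_{n \geq 0} a_n u^n$ with $a_0 = 1$.

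Next I would compare coefficients of $u^n$ on both sides of $(1-u/q) f(u) = f(u/q)$. The left side contributes $a_n - a_{n-1}/q$ and the right side contributes $a_n/q^n$, yielding the recurrence
\[
a_n\!\left(1 - \frac{1}{q^n}\right) = \frac{a_{n-1}}{q},
\qquad \text{equivalently} \qquad
a_n = \frac{q^{n-1}}{q^n - 1}\, a_{n-1}.
\]
Iterating from $a_0 = 1$ gives
\[
a_n = \prod_{k=1}^{n} \frac{q^{k-1}}{q^k - 1} = \frac{q^{0+1+\cdots+(n-1)}}{(q-1)(q^2-1)\cdots(q^n-1)} = \frac{q^{\binom{n}{2}}}{(q^n-1)(q^{n-1}-1)\cdots(q-1)},
\]
which is exactly the coefficient on the right-hand side of the claimed identity.

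There is no real obstacle here; the only thing to be careful about is justifying that the functional equation determines $f$ uniquely among power series with $a_0 = 1$ (which is immediate from the recurrence, since each $a_n$ is determined by $a_{n-1}$) and noting convergence of the infinite product for $|u|$ small enough so that termwise manipulation of the series is valid. If one prefers an even more elementary route, one can instead expand each factor $1/(1-u/q^j)$ as a geometric series and interpret the coefficient of $u^n$ as a sum over partitions into parts of size $\geq 1$ with the $j$-th factor contributing parts of ``level'' $j$; collecting by the number of parts and using the $q$-binomial theorem yields the same closed form. Either approach is short, so I would go with the functional equation method as it is cleanest.
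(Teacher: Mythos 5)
Your argument is correct. Note that the paper itself gives no proof of this lemma: it is a classical identity of Euler, cited directly to Andrews' book on partitions, so there is no in-paper argument to compare against. Your functional-equation derivation --- observing that $f(u)=\prod_{j\ge 1}(1-u/q^j)^{-1}$ satisfies $(1-u/q)f(u)=f(u/q)$, extracting the recurrence $a_n(1-q^{-n})=a_{n-1}/q$, and iterating to get $a_n=q^{\binom{n}{2}}/\bigl((q^n-1)\cdots(q-1)\bigr)$ --- is a standard, complete, and self-contained proof (essentially the one Andrews gives), and your remarks on convergence of the product for $|u|<q$ and on the recurrence determining the coefficients uniquely from $a_0=1$ cover the only points that need justification. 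The alternative partition-counting route you sketch is also valid but unnecessary here.
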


    To state the second lemma, we require some notation (which will be used elsewhere
in this subsection as well). Given a a polynomial $\phi(z)$ with
coefficients in $\F_q$ and non vanishing constant term, define the
``conjugate'' polynomial $\phi^*$ by \[ \phi^* = \frac{z^{deg(\phi)}
  \phi(\frac{1}{z})}{\phi(0)}. \]
One calls $\phi$ self-conjugate if $\phi^*=\phi$. Note that
the map $\phi \mapsto \phi^*$ is an involution. We let $N^*(q;d)$ denote the
 number of monic irreducible self-conjugate polynomials of degree $d$
 with coefficients in $\F_q$, and let $M^*(q;d)$ denote the number of
 conjugate pairs of monic irreducible non-self conjugate polynomials
 of degree $d$ with coefficients in $\F_q$.

\begin{lemma} \label{fnplem} (\cite{FNP}) Let $f=1$ if the characteristic is even
 and $f=2$ if the characteristic is odd. Then
\[ \prod_{d \geq 1} (1-t^d)^{-N^*(q;2d)} (1-t^d)^{-M^*(q;d)} = \frac{(1-t)^f}{1-qt} \]
\[ \prod_{d \geq 1} (1+t^d)^{-N^*(q;2d)} (1-t^d)^{-M^*(q;d)} = 1-t. \] \end{lemma}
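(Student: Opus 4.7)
The plan is to derive both identities by enumerating monic polynomials over $\F_q$ with nonzero constant term in two ways, comparing a direct coefficient count to the product expansion that comes from unique factorization into irreducibles.

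For the first identity, I would compute the generating function
\[ S(t) = \sum_{n \geq 0} s_n t^n, \qquad s_n = \#\{\phi \in \F_q[z] : \phi \text{ monic self-conjugate of degree } n,\ \phi(0) \neq 0 \} \]
in two different ways. The condition $\phi^* = \phi$ forces $\phi(0) \in \{\pm 1\}$ and $a_k = \phi(0)\, a_{n-k}$ for each remaining coefficient; counting the free parameters in the four sub-cases ($\phi(0) = \pm 1$ and $n$ even or odd), noting that in the case $\phi(0) = -1$ with $n$ even the middle coefficient is forced to vanish, and summing the resulting series gives the closed form
\[ S(t) = \frac{(1+t)^f}{1-qt^2}. \]
On the other hand, a self-conjugate $\phi$ factors uniquely as a product of self-conjugate irreducibles (each with arbitrary multiplicity) and conjugate pairs $\{\psi,\psi^*\}$ of non-self-conjugate irreducibles (with both members appearing at equal multiplicity). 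Grouping contributions by type yields
\[ S(t) = (1-t)^{-f} \prod_{d \geq 1} (1-t^{2d})^{-N^*(q;2d)} \prod_{d \geq 1} (1-t^{2d})^{-M^*(q;d)}, \]
where the $(1-t)^{-f}$ accounts for the self-conjugate degree-one irreducibles $z-1$ (and $z+1$ in odd characteristic). Equating the two expressions for $S(t)$, the $(1+t)^f(1-t)^f = (1-t^2)^f$ combines, and the substitution $u = t^2$ produces the first identity.

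For the second identity, I would run the same argument with \emph{all} monic polynomials of nonzero constant term in place of the self-conjugate ones. Their generating function is $(1-t)/(1-qt)$ (from $q^n - q^{n-1}$ such polynomials in degree $n \geq 1$), while factoring into irreducibles -- now with each member of a non-self-conjugate pair contributing its own factor $(1-t^d)^{-1}$, giving $(1-t^d)^{-2M^*(q;d)}$ overall -- yields, after rearrangement, the auxiliary identity
\[ \prod_{d \geq 1} (1-t^{2d})^{-N^*(q;2d)} (1-t^d)^{-2M^*(q;d)} = \frac{(1-t)^{f+1}}{1-qt}. \]
Dividing this by the first identity (taken with variable $t$ rather than $u$), the elementary factorization $1-t^{2d} = (1-t^d)(1+t^d)$ collapses the $N^*$ part to $\prod_d (1+t^d)^{-N^*(q;2d)}$, the $M^*$ part simplifies to $\prod_d (1-t^d)^{-M^*(q;d)}$, and the right-hand side reduces to $1-t$, which is the second identity.

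The only step requiring real care is the direct coefficient count for $S(t)$, which must separate four cases by parity of degree and by the value $\phi(0) \in \{\pm 1\}$; everything else is a formal manipulation of Euler products. No step presents a genuine obstacle -- the proof is bookkeeping for the two involutions (Galois conjugation on roots and $\phi \mapsto \phi^*$) governing the factorizations.
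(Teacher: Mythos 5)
The paper does not prove this lemma at all: it is quoted from \cite{FNP} with a citation and no argument, so there is nothing internal to compare your proof against. Your self-contained derivation is correct, and it is essentially the standard polynomial-enumeration argument one would expect to find in \cite{FNP}. I checked the two computations that carry the weight. For the direct count, a self-conjugate $\phi$ of degree $n$ with $\phi(0)\neq 0$ has $\phi(0)=\pm 1$ and $a_k=\phi(0)a_{n-k}$, giving $q^{n/2}$ (resp.\ $q^{(n-1)/2}$) choices when $\phi(0)=1$ and $n$ is even (resp.\ odd), and in odd characteristic an extra $q^{(n-2)/2}$ (resp.\ $q^{(n-1)/2}$) when $\phi(0)=-1$; summing does give $(1+t)^f/(1-qt^2)$. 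On the factorization side, every self-conjugate irreducible other than $z\mp 1$ has even degree $2d$ and contributes $(1-t^{2d})^{-1}$, while a conjugate pair of degree-$d$ non-self-conjugate irreducibles, forced to occur with equal multiplicities, also contributes in degree $2d$, so the product is $(1-t)^{-f}\prod_{d}(1-t^{2d})^{-N^*(q;2d)-M^*(q;d)}$; equating and substituting $u=t^2$ yields the first identity. Your auxiliary identity follows from $N(q;d)=N^*(q;d)+2M^*(q;d)$ together with $\prod_d(1-t^d)^{-N(q;d)}=(1-t)/(1-qt)$, and dividing by the first identity collapses correctly via $1-t^{2d}=(1-t^d)(1+t^d)$. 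As a sanity check, $N^*(q;2)=(q+1-f)/2$ and $M^*(q;1)=(q-1-f)/2$ make the coefficients of $t$ on both sides of both identities agree ($q-f$ and $-1$ respectively). No gaps.
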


    Now we can enumerate unipotent elements in even characteristic
    orthogonal groups.

\begin{prop} \label{unipenum} Suppose that the characteristic is even.
Then the number of unipotent elements of $O^{\pm}(2n,q)$ is
$q^{2n^2-2n+1}(1+\frac{1}{q} \mp \frac{1}{q^n})$.
\end{prop}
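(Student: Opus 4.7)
The plan is to prove this by a generating function computation, parallel to the class-number identities established in Section \ref{boundnumber}. Let $U_n^{\pm}$ denote the number of unipotent elements of $O^{\pm}(2n,q)$ for $q$ even, and work with the sum $A(t)=\sum_n (U_n^+ + U_n^-)t^n$ and the difference $B(t)=\sum_n (U_n^+ - U_n^-)t^n$, so that $U_n^{\pm}=\tfrac12[t^n](A(t)\pm B(t))$.

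First, using Wall's parametrization of unipotent conjugacy classes of $O^{\pm}(2n,q)$ in characteristic $2$ together with his centralizer formulas \cite{W}, I would express $A(t)$ and $B(t)$ as infinite products in $t$. Summing $|O^{\pm}(2n,q)|/|C_{O^{\pm}}(u)|$ over unipotent classes $u$ factors as an independent contribution from each Jordan block size, so each of $A$ and $B$ naturally takes infinite product form. In $A(t)$ the two signs contribute constructively; in $B(t)$ the contributions from many block types cancel, leaving a sparser product indexed (essentially) by even block sizes.

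Second, I would simplify these products using the identities invoked elsewhere in the paper, in particular the Jacobi triple product
\[
\sum_{j\in\mathbb{Z}} t^{j^2}=\prod_{i\ge 1}(1-t^{2i})(1+t^{2i-1})^2
\]
from the proof of Theorem \ref{Oee}, together with the Gauss identities from Subsection \ref{O}. After simplification, both $A(t)$ and $B(t)$ reduce to rational functions of $t$ and $q$. Extracting $[t^n]$ and combining then yields $U_n^{\pm}=q^{2n^2-2n+1}(1+q^{-1}\mp q^{-n})$.

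The main obstacle will be the type bookkeeping in step one: in characteristic $2$, unipotent classes of orthogonal groups carry extra data on even-size Jordan blocks (recording the restriction of the quadratic form), and this type data is precisely what determines whether a given class sits in $O^+$ or $O^-$. Getting the delicate $\mp q^{-n}$ discrepancy between the plus and minus types to emerge cleanly from $B(t)$ — rather than being absorbed into lower order terms — requires setting up the product decomposition very carefully. Once the correct generating function identities are in hand, the remaining algebra is mechanical and mirrors the arguments used for $k(O^{\pm}(2n,q))$ in Theorem \ref{Oee}.
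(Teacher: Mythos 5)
Your outline identifies the right objects (the sum and difference generating functions for unipotent elements over the two types), but the core step is a genuine gap. You propose to obtain the product forms of $A(t)$ and $B(t)$ \emph{directly} from Wall's parametrization of unipotent classes of $O^{\pm}(2n,q)$ in characteristic $2$, asserting that the sum of $1/|C(u)|$ over unipotent classes ``factors as an independent contribution from each Jordan block size.'' That is precisely the computation the paper deliberately avoids: in characteristic $2$ the centralizer orders carry quadratic exponents in the dual partition together with the extra type data on even Jordan blocks, and converting $\sum_{\lambda,\mathrm{type}} t^{|\lambda|}/|C(\lambda,\mathrm{type})|$ into an infinite product would require a nontrivial identity in the spirit of Andrews' resolution of the Lusztig--Macdonald--Wall conjecture, which you neither supply nor cite for this purpose. (A smaller but real issue: your $A(t)$ and $B(t)$ are generating functions for the \emph{counts} $U_n^{\pm}$, which cannot have the product structure you need because of the non-multiplicative prefactor $|O^{\pm}(2n,q)|$; you must work with the proportions $u(O^{\pm}(2n,q))$.)

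The paper's proof sidesteps the direct summation entirely. It uses the cycle index factorization for orthogonal groups over irreducible polynomials: setting all variables to $1$, the full generating function for the sum of the two types is the known rational function $\frac{1+t}{1-t}$ (and $1$ for the difference), while the product of all factors corresponding to polynomials other than $z-1$ is computed explicitly from the Fulman--Neumann--Praeger identities (Lemma \ref{fnplem}). Dividing out then yields
$F^{+}(t)=\frac{1+t}{\prod_{j\geq 1}\bigl(1-t/q^{2j-1}\bigr)}$ and $F^{-}(t)=\prod_{j\ \mathrm{even}}\bigl(1-t/q^{j}\bigr)^{-1}$,
and the coefficients are extracted with Euler's identity (Lemma \ref{Euler}) rather than the Jacobi triple product. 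If you want to salvage your direct approach, you would need to actually establish the product form of the $z-1$ factor of the cycle index in characteristic $2$; as written, the proposal assumes the hardest step.
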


\begin{proof} Given a group $G$, we let $u(G)$ denote the proportion of
elements of $G$ which are unipotent. We define generating functions $F^{\pm}(t)$ by
\[ F^{\pm}(t) = 1 + \sum_{n \geq 1} t^{n} \left( u(O^+(2n,q)) \pm u(O^-(2n,q)) \right). \]
There is a notion of cycle index for the orthogonal groups
(see \cite{F1} or \cite{F2} for background),
and the cycle indices for the sum and difference of the orthogonal groups factor.
Setting all variables equal to 1 in the cycle index for the sum of $O^+(n,q)$ and
$O^-(n,q)$, it follows that \[ \frac{1+t}{1-t} =
F^+(t) \frac{\prod_{j \geq 1} (1-\frac{t}{q^{2j-1}})}{1-t}.\] Let
 us make some comments about this equation. Here the $F^+(t)$
 corresponds to the part of the cycle index for the polynomial
 $z-1$. The term $\frac{\prod_{j \geq 1}
 (1-\frac{t}{q^{2j-1}})}{1-t}$ corresponds to the remaining possible
 factors of the characteristic polynomial. This follows from the
 combinatorial identity \[ \prod_{d \geq 1} \prod_{r \geq 1}
 \left( 1+(-1)^r\frac{t^{d}}{q^{dr}} \right)^{-N^*(q;2d)}
 \left( 1-\frac{t^{d}}{q^{dr}} \right)^{-M^*(q;d)} = \frac{\prod_{j \geq 1}
 (1-\frac{t}{q^{2j-1}})}{1-t}\] which is a consequence of Lemma
 \ref{fnplem} after reversing the order of the products. Solving for
 $F^+(t)$, one finds that \[F^+(t) =
 \frac{1+t}{\prod_{j \geq 1} (1-\frac{t}{q^{2j-1}})}.\] Taking the
 coefficient of $t^n$ and using Lemma \ref{Euler},
 it follows that \begin{eqnarray*} & & u(O^+(2n,q)) + u(O^-(2n,q))\\ &
 = & \frac{1}{q^{n-1}(1-1/q^2) \cdots
 (1-1/q^{2n-2})} \left(1 + \frac{1}{q(1-1/q^{2n})} \right).
 \end{eqnarray*}

    Next we solve for $F^-(t)$. Setting all variables equal to 1
    in the cycle index for the difference of $O^+(n,q)$ and $O^-(n,q)$,
    it follows that
\[ 1 = F^-(t) \prod_{j \ even} (1-\frac{t}{q^j}).\] Here the $F^-(t)$
 corresponds to the part of the cycle index for the polynomial
 $z-1$. The other term on the right hand side corresponds to the
 remaining possible factors of the characteristic polynomial. This
 follows from the combinatorial identity \[ \prod_{d \geq 1} \prod_{r
 \geq 1} \left( 1-(-1)^r \frac{t^{d}}{q^{dr}} \right)^{-N^*(q;2d)}
 \left( 1-\frac{t^{d}}{q^{dr}} \right)^{-M^*(q;d)} = \prod_{j \ even}
 (1-\frac{t}{q^j}) \] which is a consequence of Lemma \ref{fnplem}
 after reversing the order of the products. Thus $F^-(t) = \prod_{j \
 even} (1-\frac{t}{q^j})^{-1}$. Taking the coefficient of $t^n$ and
 using Lemma \ref{Euler}, it follows that \[ u(O^+(2n,q)) -
 u(O^-(2n,q)) = \frac{1}{q^{2n}(1-1/q^2) \cdots (1-1/q^{2n})}.\]

    Having found formulas for $u(O^+(2n,q)) + u(O^-(2n,q))$ and
    $ u(O^+(2n,q)) - u(O^-(2n,q))$ one now solves for $u(O^{\pm}(2n,q))$ giving the
    statement of the proposition. \end{proof}

    Proposition \ref{unipcentcomb} gives lower bounds on centralizer sizes for elements
    in symplectic and orthogonal groups whose characteristic polynomial is
     $(z \pm 1)^n$. Note that the bound of Proposition \ref{centsize} was only slightly stronger.

\begin{prop} \label{unipcentcomb}
\begin{enumerate}
\item The centralizer size of an element in the group $Sp(2n,q)$ whose characteristic
 polynomial has only factors $(z \pm 1)^{2n}$ is at least $q^n (1-\frac{1}{q^2}-\frac{1}{q^4})$.

\item In odd characteristic, the centralizer size of an element with characteristic
polynomial $(z \pm 1)^{2n+1}$ in $O(2n+1,q)$ or $(z \pm 1)^{2n}$ in
$O^{\pm}(2n,q)$ is at least $q^n$.
\item In even characteristic, the centralizer size of a unipotent element in
$O^{\pm}(2n,q)$ is at least $q^{n-1}(1-1/q^2-1/q^4)$.
\end{enumerate}
\end{prop}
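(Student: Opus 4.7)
\medskip

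\noindent\textbf{Proof proposal for Proposition \ref{unipcentcomb}.}
The plan is to give a uniform argument based on the pigeonhole observation that for any conjugacy class $C = u^G$ of unipotent elements, $|C_G(u)| = |G|/|C| \ge |G|/N_u$, where $N_u$ denotes the total number of unipotent elements of $G$. Thus an explicit lower bound on $|G|/N_u$ gives the required lower bound on the minimum unipotent centralizer. For elements of characteristic polynomial $(z+1)^n$ in odd characteristic, note that multiplication by $-1$ (a central element) sends the element to a unipotent and does not change the centralizer, so it suffices to treat the genuinely unipotent case.

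First I would collect the values of $N_u$ in each case. For $Sp(2n,q)$, the algebraic group is connected, so Steinberg's theorem gives $N_u = |Sp(2n,q)|_p^2 = q^{2n^2}$. For $O(2n+1,q)$ and $O^{\pm}(2n,q)$ in odd characteristic, every unipotent element lies in $\Omega$ (the determinant and spinor norm are trivial on unipotents), $\Omega$ is connected, and Steinberg's theorem applied to $\Omega$ yields $N_u = q^{2n^2}$ and $N_u = q^{2(n^2-n)}$ respectively. For $O^{\pm}(2n,q)$ in characteristic $2$, I would invoke Proposition \ref{unipenum}, which gives $N_u = q^{2n^2-2n+1}(1 + 1/q \mp 1/q^n)$.

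Next I would substitute the standard orders of $Sp(2n,q)$ and $O^{\pm}(2n,q)$ recalled just before Proposition \ref{unipenum} and simplify. In each case the quotient $|G|/N_u$ factors as $q^n$ (or $q^{n-1}$) times a product of terms of the form $(1-1/q^{2j})$, up to a harmless factor coming from $(q^n\mp1)/q^n$ in the orthogonal cases and from the prefactor $(1+1/q\mp 1/q^n)$ in part (3). Concretely, in the symplectic case
\[
\frac{|Sp(2n,q)|}{q^{2n^2}} \;=\; q^n\prod_{j=1}^n(1-1/q^{2j}),
\]
and analogous identities hold in the other cases. Applying Lemma \ref{pent} to bound $\prod_{j\ge 1}(1-1/q^{2j}) \ge 1 - 1/q^2 - 1/q^4$, together with the trivial estimate $2(1\pm 1/q^n) \ge 1$ for the extra factors, one reads off the stated inequalities $q^n(1-1/q^2-1/q^4)$ in parts (1), $q^n$ in part (2), and $q^{n-1}(1-1/q^2-1/q^4)$ in part (3).

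The main obstacle is part (3): the ratio $|G|/N_u$ is no longer a clean monomial in $q$, and one must check carefully that the $(1+1/q\mp 1/q^n)$ denominator does not destroy the bound. The worst case is the $O^+$ one, where the numerator carries $(1-1/q^n)$ and the denominator carries $(1+1/q-1/q^n)$; a short direct check shows $2(1-1/q^n) \ge (1+1/q-1/q^n)$ whenever $q\ge 2$ and $n\ge 2$, which suffices since the proposition tacitly assumes the underlying space has dimension at least $3$. The remaining small-$q$ verifications are routine.
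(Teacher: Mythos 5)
Your proposal is correct and follows essentially the same route as the paper: the pigeonhole bound $|C_{G}(u)|\ge |G|/N_u$, Steinberg's count of unipotent elements (applied to $\Omega$ in the odd-characteristic orthogonal cases), Proposition \ref{unipenum} for even characteristic, and Lemma \ref{pent} to bound the resulting product, including the same explicit check that the factor $(1+1/q\mp 1/q^n)$ in part (3) is absorbed by $2(1\mp 1/q^n)$.
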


\begin{proof} For the first assertion, suppose without loss of generality that
the element is unipotent. By Steinberg's theorem the total number of
unipotent elements in $Sp(2n,q)$ is $q^{2n^2}$. Hence the sum of the
reciprocals of the centralizer sizes of unipotent elements is equal to
$\frac{q^{2n^2}}{|Sp(2n,q)|}$, from which it follows that the centralizer
size of any unipotent element is at least \[ \frac{|Sp(2n,q)|}{q^{2n^2}} =
q^n (1-1/q^2)
    \cdots (1-1/q^{2n}) \geq q^n (1-1/q^2-1/q^4). \] Note that the final inequality is
    Lemma \ref{pent}.

    For the second assertion, suppose without loss of generality that
    the element is unipotent.
By Steinberg's theorem the total number
 of unipotent elements in $O(2n+1,q)$ is $q^{2n^2}$. Thus the
 centralizer size of any unipotent conjugacy class of $O(2n+1,q)$ is
 at least \[ \frac{|O(2n+1,q)|}{q^{2n^2}} = 2q^n (1-1/q^2) \cdots
 (1-1/q^{2n}) \geq q^n \] where the inequality uses Lemma \ref{pent}
 and the fact that $q \geq 3$. Similarly, in the even dimensional case
 with $q$ odd, the centralizer size is at least
\[ \frac{|O^{\pm}(2n,q)|}{q^{2(n^2-n)}} = 2q^n (1-1/q^2)
\cdots (1-1/q^{2n-2})(1 \mp 1/q^n) \geq q^n. \]

In part 3 the characteristic is even, and using the count of unipotent
    elements in Proposition \ref{unipenum}, it follows that the centralizer size of any
    unipotent element of $O^{\pm}(2n,q)$ is at least \begin{eqnarray*} & &
    \frac{|O^{\pm}(2n,q)|} {q^{2n^2-2n+1}(1+\frac{1}{q} \mp \frac{1}{q^n})}\\
& = & \frac{2q^{n-1} (1-1/q^2) \cdots (1-1/q^{2n-2}) (1 \mp 1/q^n)}{(1+1/q \mp 1/q^n)}\\
& \geq & q^{n-1} (1-1/q^2) \cdots (1-1/q^{2n-2})\\
& \geq & q^{n-1}(1-1/q^2-1/q^4). \end{eqnarray*} \end{proof}

Theorem \ref{answer} is the main result of this subsection. Note that
there is no need to consider odd dimensional even characteristic
orthogonal groups, as these are isomorphic to symplectic groups.

\begin{theorem} \label{answer} \begin{enumerate}
\item The
centralizer size of an element of $Sp(2n,q)$ is at least \[q^n \left[
\frac{1-1/q}{2e (\log_q(4n)+4)} \right]^{1/2}.\]
\item The
centralizer size of an element of $O^{\pm}(2n,q)$ is at least \[ 2q^{n-1}
\left[ \frac{1-1/q}{2e (\log_q(4n)+4)} \right]^{1/2}.\]
\item The
centralizer size of an element of $SO^{\pm}(2n,q)$ is at least \[ q^{n}
\left[ \frac{1-1/q}{2e (\log_q(4n)+4)} \right]^{1/2}.\]
\item Suppose that $q$
is odd. The centralizer size of an element of $O(2n+1,q)$ is at least
\[q^n \left[ \frac{1-1/q}{2e (\log_q(4n)+4)} \right]^{1/2}.\]
\end{enumerate}
\end{theorem}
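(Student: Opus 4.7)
\medskip

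\noindent \textbf{Proof plan for Theorem \ref{answer}.}

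The plan is to mimic the strategy used for Theorems \ref{GLsmallcent} and \ref{Usmallcent}, using Wall's parametrization \cite{W} to reduce the centralizer computation to a product of factors indexed by the irreducible factors of the characteristic polynomial, and then to minimize this product over the combinatorial data (partitions attached to each polynomial, subject to a total-degree constraint). I will treat $Sp(2n,q)$ in detail; the orthogonal cases are analogous.

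Let $g\in Sp(2n,q)$. By Wall's description, the conjugacy data consists of a partition $\lambda(\phi)$ attached to every monic irreducible $\phi\ne z$, with $\lambda(\phi)=\lambda(\phi^*)$ for conjugate pairs, and subject to the constraint $\sum_\phi \deg(\phi)\,|\lambda(\phi)|=2n$. The centralizer factors as a direct product of three kinds of pieces. First, the $(z\pm1)$-pieces, occupying total dimension $2n_0$, contribute centralizer of order at least $q^{n_0}$ by Proposition~\ref{centsize}(1). Second, for each conjugate pair $\{\phi,\phi^*\}$ of degree $d$, the centralizer factor is a $GL_{|\lambda(\phi)|}(q^d)$-style centralizer, bounded below by $q^{d\,|\lambda(\phi)|}(1-1/q^d)$ via Lemma~\ref{monpartition}; this piece contributes $2d\,|\lambda(\phi)|$ to the ambient dimension $2n$. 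Third, for each self-conjugate irreducible $\phi$ of degree $d>1$ (necessarily even $d$ in odd characteristic), the centralizer factor is a $U_{|\lambda(\phi)|}(q^{d/2})$-style centralizer, bounded below by $q^{(d/2)|\lambda(\phi)|}(1+1/q^{d/2})$ via Lemma~\ref{monpartition2}, contributing $d\,|\lambda(\phi)|$ to $2n$. In all three cases the dimensional contribution to $n$ equals the exponent of $q$ in the lower bound, so
\[
|C_{Sp(2n,q)}(g)|\;\ge\;q^n\prod_{\{\phi,\phi^*\}}(1-1/q^{\deg\phi})\prod_{\phi=\phi^*,\,\deg\phi>1}(1+1/q^{\deg\phi/2}).
\]
Since the self-conjugate factors are $\ge 1$, only the conjugate-pair factors can hurt us.

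To minimize, I would use as many small-degree conjugate pairs as possible (each uses up $2d$ of the $2n$ budget while contributing $(1-1/q^d)$). Using Lemma~\ref{fnplem} (or a direct count), the number $M^*(q;d)$ of conjugate pairs of degree $d$ satisfies $M^*(q;d)\le q^d/(2d)$, and $\sum_{d=1}^r 2d\, M^*(q;d)\le q^{r+1}/(q-1)$. Thus choosing $r$ to be the smallest integer with $q^{r+1}/(q-1)\ge 2n$ — so $r\le \log_q(4n)+O(1)$ — gives
\[
|C_{Sp(2n,q)}(g)|\;\ge\;q^n\prod_{d=1}^r(1-1/q^d)^{M^*(q;d)}\;\ge\;q^n\prod_{d=1}^r(1-1/q^d)^{q^d/(2d)}.
\]
Applying Lemma~\ref{crudebound} in the form $(1-1/q^d)^{q^d/(2d)}\ge e^{-(1+1/q^d)/(2d)}$ and summing,
\[
\sum_{d=1}^r\frac{1+1/q^d}{2d}\;\le\;\tfrac12(1+\log r)+\tfrac{1}{2(q-1)},
\]
so the product is at least $e^{-1/(2(q-1))}/\sqrt{e\,r}$, which after absorbing the $(1-1/q)$ factor and bounding $r\le \log_q(4n)+4$ yields the claimed $q^n\bigl[(1-1/q)/(2e(\log_q(4n)+4))\bigr]^{1/2}$.

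For the orthogonal cases, the same decomposition applies, with the $(z\pm1)$-pieces now bounded using parts (2) or (3) of Proposition~\ref{centsize}. In $SO^\pm(2n,q)$ the determinant constraint forces both the $(z-1)$- and $(z+1)$-pieces to have even dimension, so both contribute $q^a$ per dimension $2a$, matching the $Sp$ case and giving the $q^n$ coefficient. In $O^\pm(2n,q)$ we allow two odd-dimensional $(z\pm1)$-pieces, losing one power of $q$ but gaining a factor of $2$ from the disconnectedness, whence the $2q^{n-1}$ coefficient. In $O(2n+1,q)$ in odd characteristic, the center $\mathbb{Z}/2$ is a direct factor, so the centralizer picks up an extra factor of $2$, and the $(z\pm1)$-piece bound of Proposition~\ref{centsize}(2) preserves the $q^n$ leading term. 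The combinatorial minimization of the remaining conjugate-pair factors is literally the same as for $Sp$. The main obstacle is bookkeeping: verifying that the $(z\pm1)$ contribution equals $q^{(\dim)/2}$ in all the subcases above (including the parity issues in $O^\pm(2n,q)$ and the even-characteristic orthogonal case, where Proposition~\ref{centsize}(3) gives only $2q^{n-1}$), and pinning down the universal constants so that the $r\le\log_q(4n)+4$ estimate suffices for every $n$ and every $q$.
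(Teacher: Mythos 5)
Your plan is essentially the paper's proof: the same reduction via Wall's parametrization to a product of factors indexed by the irreducible divisors of the characteristic polynomial, the same use of Proposition \ref{centsize} for the $(z\pm1)$-pieces and of Lemmas \ref{monpartition} and \ref{monpartition2} for the conjugate-pair and self-conjugate pieces, the observation that only the conjugate-pair factors fall below $1$, and the final minimization of $q^n\prod_{d}(1-1/q^d)^{M^*(q;d)}$ using $M^*(q;d)\le q^d/2d$ and Lemma \ref{crudebound}. Your bookkeeping for the orthogonal cases (parities of the $(z\pm1)$-multiplicities, the factor of $2$ for elements outside $SO$, the $2q^{n-1}$ from Proposition \ref{centsize}(3) in even characteristic) also matches the paper.

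The one step that fails as written is the choice of the cutoff $r$. To justify that the minimizing configuration can be realized using only conjugate pairs of degree at most $r$, you need a \emph{lower} bound $\sum_{d=1}^{r}2d\,M^*(q;d)\ge 2n$; instead you quote the \emph{upper} bound $\sum_{d=1}^{r}2d\,M^*(q;d)\le q^{r+1}/(q-1)$ and choose $r$ so that this upper bound exceeds $2n$, which establishes nothing about the sum itself. The required lower bound is true but needs control of the self-conjugate count $N^*$: the paper takes the cutoff to be a power of two $2^r$ and uses the identity from \cite{FNP},
$2^{r+1}M^*(q;2^r)=2^{r}N(q;2^r)-2^{r}N^*(q;2^r)\ge q^{2^r}-2q^{2^{r-1}}\ge q^{2^r}/2$ for $r\ge 2$,
so that $q^{2^r}\ge 4n$ suffices; this is exactly where the constants $\log_q(4n)+4$ in the statement come from. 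With that substitution your argument goes through.
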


\begin{proof} First consider the case $Sp(2n,q)$.
Wall \cite{W} parametrized the conjugacy classes of $Sp(2n,q)$ and found
their centralizer sizes. As in the general linear and unitary cases, the
formula is multiplicative with terms coming from self-conjugate
irreducible polynomials and also conjugate pairs of non-self conjugate
irreducible polynomials. By Lemma \ref{centsize} a size $k$ partition
corresponding to a polynomial $z-1$ or $z+1$ contributes at least a factor
of $q^{k/2}$. As with the unitary groups, one sees that a partition
$\lambda$ from a self-conjugate irreducible polynomial $\phi$ contributes
at least $q^{deg(\phi) \cdot |\lambda|/2}$ and that $\lambda$ associated
with a pair $\{\phi,\bar{\phi} \}$ with $\phi$ monic non self-conjugate
irreducible contributes at least $q^{deg(\phi) \cdot |\lambda|}
(1-1/q^{deg(\phi)})$. Then it follows that a lower bound for the smallest
centralizer size is $q^n \prod_{i=1}^{2^r} (1-1/q^i)^{M^*(q;i)}$ where
$2^r$ is chosen such that $\sum_{i=1}^{2^r} 2i M^*(q;i) \geq 2n$. From
\cite{FNP}, if $r \geq 1$ then \begin{eqnarray*} 2^{r+1} M^*(q;2^r) & = &
2^r N(q;2^r)-2^r N^*(q;2^r)\\
& = & \left( q^{2^r}-q^{2^{r-1}} \right) - 2^r N^*(q;2^r) \geq
q^{2^r}-2q^{2^{r-1}}. \end{eqnarray*} Note that if $r \geq 2$, then
$q^{2^r}-2q^{2^{r-1}} \geq \frac{q^{2^r}}{2}$. It follows that if $r \geq
2$ and $q^{2^r} \geq 4n$, then $\sum_{i=1}^{2^r} 2i M^*(q;i) \geq 2n$.
Thus we need a $2^r$ which is at least $\max\{ 4,\log_q(4n) \}$, and one
can find such a $2^r$ which is at most $2(\log_q(4n)+4)$. Since $M^*(q;i)
\leq q^i/2i$, arguing as in the general linear case proves the first
assertion of the theorem.

    For the remaining assertions the contribution to the centralizer size
coming from the part of the characteristic polynomial relatively prime to
$z^2-1$ is the same for symplectic and orthogonal groups. Thus it is
sufficient to focus on the part of the characteristic polynomial of the
form $(z-1)^a (z+1)^b$ where $b=0$ if the characteristic is even. For
$O^{\pm}(2n,q)$,  the contribution must be at least
$q^{\frac{a+b}{2}-1}$ -- for either the characteristic is odd and $a,b$
have the same parity and   part 2 of Proposition \ref{centsize} applies, or else
the characteristic is even  part 3 of Proposition \ref{centsize} applies.
Note that if the element is not in $SO$, then the centralizer size is doubled
in $0$ giving (3).   If the element is in $SO$, then $a$ and $b$ are even.
Thus, arguing as above, the minimum centralizer
size is $q^{\frac{a+b}{2}}$ since $a$ and $b$ are both even, and (3) follows.
For $O^{\pm}(2n+1,q)$ the contribution must be at least $
q^{\frac{a+b-1}{2}}$ since $a,b$ have unequal parity,  and use part 2 of
Proposition \ref{centsize}.
 \end{proof}

 For exceptional groups (or more generally for groups of bounded rank),
 we have:

 \begin{lemma} \label{except-centralizer}  Let $G$ be a connected
 simple exceptional algebraic group with $F$ a Frobenius endomorphism
 associated to the field of $q$ elements.     If $g \in G^F$, then
 $|C_{G^F}(g)| \ge q^r/26$.
 \end{lemma}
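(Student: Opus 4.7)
The plan is to mimic the strategy of Proposition~\ref{centsize}, passing first to the algebraic group to get a dimension bound on the centralizer, and then to convert this to a bound on rational points, with the final constant extracted from the finite list of exceptional types.

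First, I would use the Jordan decomposition $g=su=us$ with $s$ semisimple and $u$ unipotent, both lying in $G^F$. Setting $L=C_G(s)^0$, this is a connected reductive $F$-stable subgroup of $G$ containing $u$, and because any maximal torus through $s$ lies in $L$, the rank of $L$ equals $r$. Thus the problem is reduced to bounding $|C_L(u)^F|$ from below for a unipotent $u$ in a connected reductive group $L$ of rank $r$ defined over $\F_q$.

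Second, I would invoke the dimension argument from the proof of Proposition~\ref{centsize}: choose a Borel $B$ of $L$ containing $u$, with unipotent radical $U$; the centralizer in $B$ of a regular unipotent element has dimension exactly $r$, and since regular unipotents are dense in $U$, upper semicontinuity of fiber dimension yields $\dim C_U(u)\ge r$, hence $\dim C_L(u)^0\ge r$, and therefore $\dim C_G(g)^0\ge r$. Applying Lang--Steinberg to $C_G(g)^0$ gives a lower bound of the form $|C_{G^F}(g)|\ge c\cdot q^r$ for some positive constant $c$ depending only on the structure of the centralizer (a reductive group of bounded rank, possibly with a unipotent radical), because for any connected algebraic group of dimension $d$ over $\F_q$ with reductive-rank part $r'$ and unipotent radical of dimension $d-r'$, one has $|X^F|$ bounded below by a product of factors of the form $(q^{a_i}-\varepsilon_i)$ and $q^{b_j}$ summing to the right exponent, uniformly controlled because $r\le 8$.

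Third, to pin down the explicit constant $26$ (and not merely existence of some universal constant), I would appeal to L\"ubeck's explicit computations \cite{Lu} of conjugacy class numbers and centralizer orders in each exceptional type $^2B_2(q)$, $^2G_2(q)$, $G_2(q)$, $^2F_4(q)$, $^3D_4(q)$, $F_4(q)$, $E_6(q)$, $^2E_6(q)$, $E_7(q)$, $E_8(q)$, checking the minimum centralizer order in each case. For large $q$ the bound $|C_{G^F}(g)|\ge q^r(1-O(1/q))$ is comfortable; the worst cases are for very small $q$ (chiefly $q=2$ or $q=3$), where I would extract the ratio $|C_{G^F}(g)|/q^r$ directly from the tabulated class data and verify that it never drops below $1/26$.

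The main obstacle will be step three: translating the qualitative dimension bound into the sharp numerical constant $26$. The dimension argument alone only produces a constant depending (badly) on factors like $(1-1/q)^r$; obtaining $26$ requires either a careful analysis of the factorization of $|C_G(g)^{0,F}|$ according to the semisimple type of $L$ and the Jordan type of $u$, or direct invocation of L\"ubeck's tables for the finitely many worst cases, most likely occurring for a small-field exceptional group such as $E_8(2)$ or $E_7(2)$.
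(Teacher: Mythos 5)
Your proposal is correct and follows essentially the same route as the paper: establish $\dim C_G(g)\ge r$, convert this to a rational-point bound, and dispose of small $q$ by inspecting L\"ubeck's tables. The paper's version is just more streamlined at your ``step three'': from $\dim C_G(g)\ge r$ it deduces $|C_{G^F}(g)|\ge (q-1)^r = q^r(1-1/q)^r \ge q^r(2/3)^8 > q^r/26$ for all $q\ge 3$ (using $r\le 8$), so only the case $q=2$ requires consulting \cite{Lu}.
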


 \begin{proof}   Since $\dim C_G(g) \ge r$, it follows that
 $|C_{G^F}(g)| \ge (q-1)^r$.   Since $r \le 8$, the result follows
 for $q > 2$.  If $q=2$, the result follows by inspection (see \cite{Lu}).
 \end{proof}

Note that Theorem \ref{DD}
is an immediate consequence of
the  results in  this section.   In applications, we will  have to deal with
simple groups as well, so we state this  (in all cases except for type A,
the index of the simple group in the group of inner diagonal automorphisms
is at most $4$  -- in type $A$, we use the results for $GL(n,q)$ or $U(n,q)$
and divide by $(q \mp 1)\gcd(q \mp 1, n)$ -- one factor to pass to $SL$ or
$SU$ and the other factor for the center of these groups).   Thus, we have:

\begin{theorem} \label{D}   Let $S$ be a simple Chevalley group defined
over the field of $q$ elements with $r$ the rank of the ambient algebraic
group.   There is a universal constant $A$ such that if $x$ is an inner diagonal
automorphism of $S$, then
$$
|C_S(x)| \ge   \frac{q^r}{A (\min\{q,r\})(1 + \log_q(r))} \ge \frac{q^{r-1}}{A(1 + \log_q(r))}.
$$
\end{theorem}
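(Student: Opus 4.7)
The plan is to deduce Theorem \ref{D} from the centralizer bounds already established in Section \ref{mincent}: Theorem \ref{GLsmallcent} for $GL$, Theorem \ref{Usmallcent} for $U$, Theorem \ref{answer} for $Sp$ and the orthogonal groups, and Lemma \ref{except-centralizer} for exceptional groups. The only work left is to pass from the centralizer in the ambient matrix (or algebraic) group to the centralizer in the simple group $S$. If $\hat{G}$ denotes the ambient matrix group ($GL_n(q)$, $U_n(q)$, $Sp_{2n}(q)$, $O^{\pm}_{2n}(q)$, $O_{2n+1}(q)$, or $(G_{\mathrm{sc}})^F$ in the exceptional case), then an inner-diagonal automorphism $x$ of $S$ lifts to $\tilde{x} \in \hat{G}$, and
\[
|C_S(x)| \;\ge\; \frac{|C_{\hat{G}}(\tilde{x})|}{|Z(\hat{G})| \cdot [\mathrm{Inndiag}(S):S]},
\]
since $C_S(x) = C_{\mathrm{Inndiag}(S)}(x) \cap S$ and every subgroup meets $S$ in index at most $[\mathrm{Inndiag}(S):S]$.

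For types B, C, D, and exceptional, both $|Z(\hat{G})|$ and $[\mathrm{Inndiag}(S):S]$ are bounded by universal constants (at most $4$ and $4$ respectively). Combining with Theorem \ref{answer} for classical types then yields $|C_S(x)| \ge q^r \cdot c / \sqrt{1+\log_q r}$ for an absolute $c>0$; since $1/\sqrt{y} \ge 1/y$ for $y \ge 1$, this is at least $q^r/(A(1+\log_q r))$, which matches the statement with the $\min(q,r)$ factor absorbed into $A$. The exceptional case is even cleaner: Lemma \ref{except-centralizer} gives $|C_{\hat G}(\tilde x)| \ge q^r/26$, and the bounded center then gives the result.

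The type A case is the one that genuinely produces the $\min(q,r)$ factor. Here $r = n-1$, $S = PSL_n(q)$ (resp.\ $PSU_n(q)$), and $\mathrm{Inndiag}(S) = PGL_n(q)$ (resp.\ $PGU_n(q)$), so $|Z(\hat{G})| = q\mp 1$ and $[\mathrm{Inndiag}(S):S] = \gcd(n,q\mp 1)$. Applying Theorem \ref{GLsmallcent} gives
\[
|C_{GL_n(q)}(\tilde{x})| \;\ge\; \frac{q^n(1-1/q)}{e(1+\log_q(n+1))} \;=\; \frac{q^{n-1}(q-1)}{e(1+\log_q(n+1))},
\]
and dividing by $(q-1)\gcd(n,q-1)$ produces
\[
|C_S(x)| \;\ge\; \frac{q^{r}}{e\,\gcd(n,q-1)\,(1+\log_q(r+1))}.
\]
Bounding $\gcd(n,q-1) \le \min(n,q-1) \le 2\min(r,q)$ and absorbing constants into $A$ gives the first inequality. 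The unitary case is identical, using Theorem \ref{Usmallcent} in place of Theorem \ref{GLsmallcent} (the extra square root there only strengthens the bound in the relevant direction and is swallowed into $A$). The second inequality in the statement, $q^r/(A\min(q,r)(1+\log_q r)) \ge q^{r-1}/(A(1+\log_q r))$, is the trivial bound $\min(q,r) \le q$.

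There is no real obstacle: the argument is bookkeeping across the four classical families plus exceptional groups. The only subtlety worth flagging is that the $\min(q,r)$ factor in the conclusion is not an artifact of the proof method but genuinely reflects the center of $SL_n$ and $SU_n$, whose order $\gcd(n,q\mp 1)$ really can be as large as $\min(n,q)$; in every other family the analogous index is bounded.
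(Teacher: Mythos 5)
Your proposal is correct and takes essentially the same route as the paper: Theorem \ref{D} is deduced there from the centralizer bounds of Section \ref{mincent} by dividing by $(q\mp 1)\gcd(q\mp 1,n)$ in type A (which is exactly where the $\min\{q,r\}$ factor comes from) and by a universally bounded index (at most $4$) in all other types, with the exceptional case handled by Lemma \ref{except-centralizer}. Your bookkeeping, including the observation that the square roots in Theorem \ref{answer} only strengthen the bound, matches the paper's argument.
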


The result  also applies to the full orthogonal
group as well.  The result fails for other graph automorphisms and
field automorphisms.

\section{Conjugacy Classes of Maximal Subgroups and  Derangements}  \label{boston-shalev}

We want to obtain bounds on the number of conjugacy classes of
maximal subgroups of finite simple groups.   We will then combine
these results with our results on class numbers to  obtain very
strong  results on  the proportion of derangements in actions
of simple and almost simple  groups.  We define $m(G)$
to be the number of conjugacy classes of maximal subgroups of $G$.
Aschbacher and the second author \cite{AG} conjectured that
$m(G) < k(G)$, and proved this for $G$ solvable.
Note that if $G$ is an elementary abelian $2$-group,
$m(G)+1 = k(G)=|G|$.

A related conjecture (of Wall)
is that the number of maximal subgroups of a finite group $G$
is less than $|G|$.  Wall proved this for solvable groups.
See \cite{LPS} for more recent results.

First we  note  the following result, which we will not require
--- see \cite{LS3} and combine this
with \cite{LMS}.

\begin{lemma} \label{alt-max}  If $G=A_n$ or $S_n$,
then $m(G) \le n^{1 + o(1)}$.
\end{lemma}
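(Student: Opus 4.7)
The plan is to invoke the O'Nan--Scott theorem to split the conjugacy classes of maximal subgroups of $G \in \{A_n, S_n\}$ into three families: intransitive, transitive imprimitive, and primitive. The first two families give at most $n^{1+o(1)}$ classes by elementary counting, while the primitive family is controlled by the results from \cite{LS3, LMS}. The three bounds are then summed.

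First I would dispose of the intransitive maximal subgroups. These are conjugate to $(S_k \times S_{n-k}) \cap G$ for some $1 \le k \le n/2$, so there are at most $\lfloor n/2 \rfloor$ classes, giving the $n^1$ term. For the transitive imprimitive maximal subgroups, representatives are $(S_a \wr S_b) \cap G$ with $ab = n$ and $1 < a, b < n$. The number of such conjugacy classes is at most the number of nontrivial factorizations of $n$, which is bounded by $d(n)$, the number of divisors of $n$. Since $d(n) = n^{o(1)}$ by a classical theorem of Wigert, this family contributes only $n^{o(1)}$ classes.

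The main work, and the principal obstacle, is the primitive family. Here I would appeal to the O'Nan--Scott classification to reduce to the various types (affine, diagonal, product action, twisted wreath, and almost simple), and then use the sharp bounds from \cite{LMS} on the number of conjugacy classes of maximal subgroups of almost simple groups together with the bound in \cite{LS3} on the number of conjugacy classes of primitive maximal subgroups of $S_n$. The key input is that the number of conjugacy classes of primitive subgroups of $S_n$ other than $A_n$ itself is $n^{o(1)}$ (using that, for most socle types, the degree $n$ determines the possible socles very rigidly, and the number of conjugacy classes of maximal subgroups of each possible almost simple group of degree $n$ is polylogarithmic in $n$ by \cite{LMS}). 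The affine case $n = p^d$ is handled by bounding conjugacy classes of irreducible subgroups of $\mathrm{GL}(d,p)$, which also contributes $n^{o(1)}$.

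Adding the three estimates yields $m(G) \le n/2 + n^{o(1)} \le n^{1+o(1)}$, as required. The delicate point is the primitive bound, since a naive count of all primitive groups of degree $n$ allows classes coming from diverse almost simple socles; the efficiency of \cite{LMS} is essential to keep this contribution at the $n^{o(1)}$ level rather than something polynomial in $n$.
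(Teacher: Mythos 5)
Your proposal is correct and is essentially the argument the paper has in mind: the paper gives no proof of Lemma \ref{alt-max} beyond the instruction to ``see \cite{LS3} and combine this with \cite{LMS}'', and your decomposition into intransitive ($\le n/2$ classes), transitive imprimitive ($\le d(n)=n^{o(1)}$ classes), and primitive maximal subgroups (controlled via \cite{LS3} together with the \cite{LMS} bounds on classes of maximal subgroups of almost simple groups) is exactly the content of that combination. Nothing further is needed.
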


It follows by  \cite{LMS} that:

\begin{theorem} \label{boundedrank-max}
Let $G$ be an almost simple Chevalley group
of rank $r$ defined over the field  of $q$ elements.
 Then  $m(G) \le c(r) + 2r \log\log q$.
\end{theorem}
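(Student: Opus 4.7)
The plan is to combine the classification of maximal subgroups of almost simple Chevalley groups with the deep bound from \cite{LMS}. For classical $G$, Aschbacher's theorem partitions the maximal subgroups into the eight geometric families $\C_1, \ldots, \C_8$ together with an almost simple family $\mathcal{S}$; for exceptional $G$, the analogous classification of Liebeck and Seitz plays the same role. First I would bound the number of $G$-conjugacy classes in each family separately, then sum.

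For every geometric family other than the subfield family $\C_5$, inspection of the classification (Kleidman--Liebeck in the classical case, Liebeck--Seitz in the exceptional case) shows that the number of $G$-conjugacy classes is bounded by a function of $r$ alone. These contributions get absorbed into $c(r)$. The class $\mathcal{S}$ of almost simple maximal subgroups is the hardest family to control, but by \cite{LMS} its contribution too is bounded purely in terms of $r$ and so goes into $c(r)$.

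The $\C_5$ contribution is the only one that genuinely depends on $q$, and this is where the $2r \log\log q$ term arises. Write $q = p^e$. Up to conjugacy and outer twist, a maximal subfield subgroup of $G$ is the fixed-point subgroup of a Steinberg endomorphism $F_0$ satisfying $F_0^{\ell}$ conjugate to $F$ for some prime $\ell$ dividing $e$. The number of such primes is $\omega(e) \le \log_2 e \le \log\log q + O(1)$. For each such $\ell$, at most a constant multiple of $r$ many $G$-conjugacy classes of subfield subgroups can occur, accounting for the finitely many twisted forms available and for fusion under the extension $G/S$. Summing gives at most $2r \log\log q$ classes arising from $\C_5$, up to constants that are swept into $c(r)$.

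The main obstacle is the bound on the class $\mathcal{S}$: this is the deep content of \cite{LMS} and it is what makes a bound of the stated shape possible in the first place. Once that input is granted, the geometric-class counts are routine bookkeeping from the classification, and the subfield count is elementary number theory applied to the exponent $e = \log_p q$.
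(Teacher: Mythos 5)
Your proposal is correct and is essentially the argument the paper intends: the paper simply states that the theorem ``follows by \cite{LMS}'' and remarks that the $\log\log q$ term comes from subfield subgroups, which is exactly your decomposition (geometric Aschbacher classes plus $\mathcal{S}$ handled by \cite{LMS}, with $\mathcal{C}_5$ contributing $O(r)$ classes per prime divisor of $\log_p q$). The only caveat is that a multiplicative constant in front of $r\log\log q$ cannot literally be ``swept into $c(r)$,'' so pinning down the coefficient $2$ requires checking that each prime divisor of the field exponent yields at most $2r$ classes of subfield subgroups, but this is a bookkeeping point the paper itself does not belabor.
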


The $\log\log q$ term comes from subfield  groups.

This immediately  gives
a generalization of the
Boston-Shalev conjecture in the case of bounded rank.

\begin{theorem}   \label{bstronger}   Let  $G$ be an almost simple
group with socle $S$ a Chevalley group of  fixed rank $r$
defined over $\F_q$.   Assume that  $G$ is  contained
in  the group of inner-diagonal  automorphisms of  $S$.
Let $\mathcal{M}(G)$ denote the set of maximal  subgroups
of $G$ that do not  contain a maximal  torus of $S$.
Then
$$
\lim_{q \rightarrow \infty}  \frac{|\cup_{M \in \mathcal{M}(G)} M|}{|G|} =
0.
$$
\end{theorem}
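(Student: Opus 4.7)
The plan is to combine the bound on the number of conjugacy classes of maximal subgroups of $G$ (Theorem \ref{boundedrank-max}) with an individual estimate for each such class. Since Theorem \ref{boundedrank-max} gives that this number is at most $c(r) + 2r\log\log q = O(\log\log q)$ for $r$ fixed, writing
$$\frac{|\cup_{M \in \mathcal{M}(G)} M|}{|G|} \le \sum_{[M]} \frac{|\cup_{h \in G} M^h|}{|G|},$$
it suffices to show that each summand is $o(1/\log\log q)$ as $q \to \infty$.

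For each representative $M \in \mathcal{M}(G)$, I would use the elementary inequality
$$\frac{|\cup_h M^h|}{|G|} = \sum_{C : C \cap M \ne \emptyset} \frac{|C|}{|G|} \le \frac{k(M)}{\min_{x \in G}|C_G(x)|},$$
obtained by bounding the number of $G$-classes meeting $M$ by $k(M)$ and each $|C|/|G|$ by $1/\min|C_G(x)|$. Theorem \ref{D} gives $\min|C_G(x)| \ge q^r/(A\,\min\{q,r\}(1 + \log_q r))$, which for $r$ fixed and $q$ large is $\Theta(q^r)$. So the problem reduces to showing $k(M) = o(q^r/\log\log q)$ for each $M \in \mathcal{M}(G)$.

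The structural input comes from the classification of maximal subgroups (Aschbacher's theorem for classical types, together with bounded-rank theory for exceptional types): a maximal subgroup of $G$ not containing a maximal torus of $S$ must fall into one of three families. (i) Subfield subgroups defined over $\F_{q_0}$ with $q = q_0^k$, $k \ge 2$: applying Theorem \ref{A} to the corresponding Chevalley group over $\F_{q_0}$ gives $k(M) \le C(r)\, q_0^r \le C(r)\, q^{r/2}$. (ii) Subgroups whose Zariski closure in the ambient algebraic group $\tilde S$ is a proper connected reductive subgroup of rank $r' \le r-1$ --- the absence of a maximal torus of $S$ is precisely the condition forcing this rank drop, since any $F$-stable reductive subgroup of $\tilde S$ of maximal rank $r$ contains an $F$-stable maximal torus of $\tilde S$ whose fixed points lie in $M$; here Theorem \ref{A} applied to this smaller-rank Chevalley group, together with a bounded normalizer index, gives $k(M) \le C(r)\, q^{r-1}$. (iii) Almost simple $\mathcal{S}$-type subgroups whose order is bounded purely in terms of $r$, giving $k(M) \le f(r)$. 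In each of the three cases $k(M)/q^r$ decays at least like $q^{-1}$, so each summand is $O(1/q)$; summing over the $O(\log\log q)$ classes yields the desired $o(1)$.

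The main obstacle I foresee is rigorously executing the three-way structural split, in particular ensuring that case (ii) is correctly captured by a single application of Theorem \ref{A} to the reductive $F$-fixed points $H^F$, including for the twisted and exceptional types where ``maximal rank but not containing a maximal torus of $S$'' can be a subtle condition depending on the $F$-action on the root datum. A careful invocation of the classifications of Aschbacher and of Liebeck-Seitz is needed to guarantee exhaustiveness, and the low-rank cases $r \le 2$ might best be verified directly from the explicit lists of maximal subgroups of groups such as $PSL(2,q)$, $PSU(3,q)$, $Sp(4,q)$, and $G_2(q)$, where the enumeration of maximal subgroup classes is short and elementary.
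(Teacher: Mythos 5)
Your proposal follows the paper's argument essentially verbatim: bound the number of conjugacy classes of maximal subgroups by Theorem \ref{boundedrank-max}, bound the proportion of elements covered by one class via $|\cup_{g}M^g|/|G|\le k(M)\cdot\max_{x\in G}|C_G(x)|^{-1}$, invoke Theorem \ref{DD} (equivalently Theorem \ref{D}) for the centralizer lower bound, and use $k(M)=O(q^{r-1})$ for every maximal $M$ not containing a maximal torus, giving $O(\log\log q/q)\to 0$. The only difference is that where you sketch a three-way structural case analysis (subfield, lower-rank reductive closure, bounded order) to justify $k(M)=O(q^{r-1})$, the paper simply cites the structure theory of maximal subgroups from \cite{FG1} together with Corollary \ref{B}, so your extra work is a fleshing-out of a citation rather than a different route.
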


\begin{proof}   It follows by the basic results about maximal
subgroups of $G$ (cf \cite{FG1})  and Corollary \ref{B} that any maximal subgroup
$M$ of $G$ either
contains  a maximal torus  of $S$ or satisfies
$k(M) < Cq^{r-1}$ with $C$ a universal constant.
Applying Theorem \ref{DD}  (with $r$ fixed)  gives that
for any maximal subgroup $M$ of $G$ not containing a maximal torus,
$$
 |\cup_{g \in G} M^g |/|G| \leq k(M) \cdot \max_{x \in G} \frac{1}{|C_G(x)|}  <O(1/q).
 $$
Thus, by  Theorem \ref{boundedrank-max} and the fact that $r$ is fixed,
$$
 \frac{|\cup_{M \in \mathcal{M}(G)} M|}{|G|}  < O\big(\frac{ \log\log q}{q}\big),
 $$
 whence the result.   Indeed,  separating out the subfield case shows
 that $O(1/q)$ is an upper bound in the equation above.
\end{proof}

As in \cite{FG1}, this gives:

\begin{cor}  Let  $G$ be an almost simple
group with socle $S$ a Chevalley group of  fixed rank $r$
defined over $\F_q$.   Assume that  $G$ is  contained
in  the group of inner-diagonal  automorphisms of  $S$.
Let $M$ be a maximal subgroup of $G$ not  containing
$S$ and set $\Omega= G/M$.  Then
there exists a universal constant
 $\delta > 0$ such that $\delta(G,\Omega) > \delta$.
\end{cor}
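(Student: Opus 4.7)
The plan is to reduce the statement to what Theorem \ref{bstronger} already gives, combined with a separate treatment of the case where $M$ contains a maximal torus of $S$. Writing
\[
\delta(G,\Omega) \;=\; 1 - \frac{|\bigcup_{x\in G} M^x|}{|G|},
\]
it suffices to bound the right-hand ratio away from $1$ by a constant depending only on the fixed rank $r$. I will split the argument both by the size of $q$ and by the structural type of $M$.

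First, for $q$ bounded by a constant $q_0 = q_0(r)$ to be chosen below, the pairs $(G,M)$ up to isomorphism form a finite set, since $r$ is fixed and the almost simple groups with socle of rank $r$ over $\F_q$ with $q\le q_0$ are finite in number (and each has finitely many conjugacy classes of maximal subgroups). Jordan's theorem gives $\delta(G,\Omega)>0$ on this finite list, so the minimum $\delta_0=\delta_0(r)>0$ serves as a uniform lower bound in this regime.

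For $q>q_0$ I distinguish two subcases according to whether $M$ contains a maximal torus of $S$. In Subcase A, $M \in \mathcal{M}(G)$ in the notation of Theorem \ref{bstronger}, and the non-derangement proportion is bounded above by $|\bigcup_{M'\in\mathcal{M}(G)}\bigcup_{x\in G}(M')^x|/|G|$, which Theorem \ref{bstronger} shows tends to $0$ as $q\to\infty$ (with $r$ fixed). Hence choosing $q_0$ large enough forces $\delta(G,\Omega)\ge 1/2$ in this subcase. In Subcase B, $M$ contains a maximal torus $T$ of $S$; this is where Theorem \ref{bstronger} is silent and where the real work lies. Following the argument of \cite{FG1}, the plan is to exhibit a $G$-conjugacy class of maximal tori $T'$ of $S$ such that no $G$-conjugate of $T'$ is contained in $M$: any regular semisimple element of $\bigcup_x (T')^x$ then acts fixed-point-freely on $\Omega = G/M$, and the proportion of such elements in $G$ is bounded below by a positive constant $\delta_1=\delta_1(r)$, using that the number of classes of maximal tori is bounded in terms of $r$ and that a positive proportion of elements in a fixed torus class are regular. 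Taking $\delta = \min(\delta_0,1/2,\delta_1)$ finishes the proof.

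The main obstacle is Subcase B. The class-number and centralizer bounds of Corollary \ref{B} and Theorem \ref{DD}, which drive the fixed-point-ratio argument in Subcase A, are simply not sharp enough when $M$ contains a maximal torus (the counts $k(M)$ and $|C_G(x)|^{-1}$ can conspire to be comparable to $|M|/|G|$). Producing the avoided torus class $T'$ therefore requires a case-by-case inspection of the Aschbacher-type families that contain a maximal torus---parabolics, normalizers of classical subgroups, subfield subgroups, and torus normalizers---and verifying in each that some Weyl-group class of $F$-conjugacy is not realized inside $M$. This is precisely the structural input supplied by \cite{FG1} in the bounded-rank setting, which I intend to invoke rather than reprove.
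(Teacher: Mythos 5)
Your proposal is correct and is essentially the paper's own argument: the paper gives no explicit proof beyond the phrase ``As in \cite{FG1}, this gives,'' meaning precisely the combination you describe --- Theorem \ref{bstronger} disposes of maximal subgroups not containing a maximal torus for large $q$, the torus-containing case is handled by the avoided-torus-class/regular-semisimple argument of \cite{FG1}, and the finitely many groups with $q$ bounded are covered by Jordan's theorem. You have in fact written out more detail than the paper does.
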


{\it Remark:} An inspection  of the proof shows that the result holds for
the proportion of derangements in a given  coset of $S$.   This is no
longer true  if we allow field automorphisms.

\vspace{2mm}

We now want to consider what happens for increasing  $r$. In particular,
it suffices to consider  $r > 8$ and  so  we restrict our attention to
classical  groups.     The maximal subgroups have been classified by
Aschbacher  \cite{As} and we consider the families individually.

We recall Aschbacher's theorem on maximal subgroups of classical
Chevalley groups.   We refer the reader to the description of the subgroups in
\cite{As}.  See also \cite{KL}.

So let $G$ be a classical Chevalley group with natural module $V$ of
dimension $d$.  Then a subgroup $H$ of $G$ falls into the following nine
families.  In particular, a maximal subgroup is either in $\mathcal{S}$ or
is maximal in one of the families $\mathcal{C}_i$.  We will write
$\mathcal{C}_i(G)$ to denote the  maximal subgroups of $G$ that are in
the family $\mathcal{C}_i$. We let $\mathcal{S}(G)$ denote the maximal
subgroups of $G$ in $\mathcal{S}$.

\bigskip

\centerline
{{\sc Table 3} \quad Aschbacher Classes}
\begin{center}
\begin{tabular}{|c||c|} \hline
$\C_1$  &   $H$ preserves either a totally singular or
a nondegenerate subspace of $V$   \\  \hline
$\C_2$ &  $H$ preserves an additive decomposition of $V$ \\   \hline
$\C_3$ &  $H$ preserves an extension field structure of prime degree \\   \hline
$\C_4$ &  $V$ is tensor decomposable for $H$  \\  \hline
$\C_5$ &  $H$ is defined over a subfield of prime index \\  \hline
$\C_6$  & $d$ is a power of a prime and $H$ normalizes
a subgroup of symplectic type  \\   \hline
$\C_7$  & $V$ is tensor induced for $H$ \\   \hline
$\C_8$ &   $V$ is the natural module for a classical subgroup $H$; \\   \hline
$\mathcal{S}$ & $H$ is the normalizer of an almost simple group $S$
and $H$ is  not in $\C_i$ \\
\hline
\end{tabular}
\end{center}

\bigskip

It is easy to see (cf. \cite{GKS} and \cite[Lemmas 2.1, 2.4]{LPS}):

\begin{lemma} \label{tool1} The number of conjugacy classes of maximal subgroups
of $G$ in $ \cup_{H \in \mathcal{C}_i}$ is at most $8 r \log r  + r  \log \log q$.
\end{lemma}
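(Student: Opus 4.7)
The plan is to treat each Aschbacher family $\mathcal{C}_1,\ldots,\mathcal{C}_8$ separately and sum the resulting bounds, since in each family the conjugacy classes are parametrized by purely combinatorial data attached to the natural module. For $i\neq 5$ these data depend only on the dimension $d$ of the natural module, so the bound will be a polynomial in $r$; the $\log\log q$ dependence will come only from $\mathcal{C}_5$.

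First I would handle the families $\mathcal{C}_1,\mathcal{C}_2,\mathcal{C}_3,\mathcal{C}_4,\mathcal{C}_6,\mathcal{C}_7,\mathcal{C}_8$. A rough case-by-case count gives: at most $O(d)$ classes in $\mathcal{C}_1$ (one for each admissible totally singular or nondegenerate subspace dimension, together with a bounded number of form types); at most $O(\tau(d))=O(d^{\epsilon})$ classes in $\mathcal{C}_2$, coming from equipartitions of $d$; at most $O(\omega(d))=O(\log r)$ classes in $\mathcal{C}_3$, one per prime divisor of $d$; at most $O(\tau(d))$ classes in each of $\mathcal{C}_4$ and $\mathcal{C}_7$, from factorizations $d=d_1d_2$ or $d=d_1^t$; and $O(1)$ classes in each of $\mathcal{C}_6$ (only relevant when $d$ is a prime power) and $\mathcal{C}_8$ (a bounded list of classical form subgroups). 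These bounds are standard and essentially worked out in \cite{GKS} and \cite[Lemmas 2.1, 2.4]{LPS}. Adding them gives at most $cr\log r$ classes with $c$ explicitly computable; one can arrange the constant to be at most $8$.

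Next I would handle $\mathcal{C}_5$. A maximal subgroup in this family is the normalizer in $G$ of a classical group of the same natural dimension $d$ defined over a subfield $\F_{q_0}\subset\F_q$ of prime index $\ell$. The number of such primes $\ell$ dividing $f=\log_p q$ is $\omega(f)$, and a standard estimate gives $\omega(f)\leq \log f/\log\log f = O(\log\log q)$. For each such subfield there are at most a bounded multiple of $r$ classes, accounting for the possible choices of classical form on $V$ over $\F_{q_0}$ compatible with the form preserved by $G$. The total contribution is therefore at most $r\log\log q$. Summing gives the stated bound $8r\log r+r\log\log q$.

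The main obstacle is purely bookkeeping: making sure the constants in each family are absorbed into the $8r\log r$ term and that the case analysis in $\mathcal{C}_5$ (where one must keep track of how many classical subgroups of the subfield are conjugate inside $G$, particularly in type $D$ where twisted forms and outer automorphisms complicate the count) really only contributes $O(r)$ classes per subfield and not more. No deep input is required beyond the parametrization results already collected in the references cited just before the lemma statement.
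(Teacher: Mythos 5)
Your family-by-family count is exactly the intended argument: the paper gives no proof of this lemma at all, simply asserting it with a citation to \cite{GKS} and \cite[Lemmas 2.1, 2.4]{LPS}, where precisely this enumeration (each $\mathcal{C}_i$ with $i\neq 5$ contributing a number of classes controlled by the dimension $d$ of the natural module, hence $O(r\log r)$ in total, and $\mathcal{C}_5$ contributing the $r\log\log q$ term from the prime divisors of $\log_p q$) is carried out. The bookkeeping issues you flag are real but routine, and your totals sit comfortably under the stated bound.
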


It is straightforward to see using Corollary \ref{B} and the structure
of the maximal subgroups in $\mathcal{C}_i$ that:

\begin{lemma} \label{tool2} Let  $M \in \mathcal{C}_i(G)$ for $i > 3$.
There is a universal constant  $C$ such that
 $k(M) < Cq^{(r+1)/2}$.
\end{lemma}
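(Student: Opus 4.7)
The plan is to treat each Aschbacher family $\mathcal{C}_i$ for $i \in \{4, 5, 6, 7, 8\}$ separately. In every case, $M$ admits a structural description (from Aschbacher's theorem) as an extension of a (central) product of classical groups of controlled rank by a small outer group. I will combine this with Corollary~\ref{B} (applied to each classical constituent) and Lemma~\ref{Boblemma} (which bounds $k$ of an extension by the product of the $k$'s of the normal subgroup and the quotient) to get the desired bound.

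The three easier families are $\mathcal{C}_5$, $\mathcal{C}_6$, and $\mathcal{C}_7$, where the bound is in fact much stronger than $Cq^{(r+1)/2}$. In $\mathcal{C}_5$, up to bounded index $M$ is a classical group of the same type and same rank $r$ as $G$ but defined over a subfield $\F_{q_0}$ with $q = q_0^t$ and $t$ prime, so Corollary~\ref{B} gives $k(M) \le C q_0^r = C q^{r/t} \le C q^{r/2}$. In $\mathcal{C}_6$, $M$ normalizes an extraspecial (or symplectic-type) $\ell$-group on $V$ with $d = \ell^m$, and $M/Z(M)$ embeds in $Sp(2m,\ell)$, so $|M|$ is polynomial in $d$ of degree $O(\log d)$, negligible compared to $q^{(r+1)/2}$ once $d$ is large (with the remaining finitely many small cases trivial). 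In $\mathcal{C}_7$, $V = V_1^{\otimes t}$ and $M$ sits inside a wreath product $H \wr S_t$ with $H$ classical of rank at most $d^{1/t}$; since $t \cdot d^{1/t}$ is maximized at $t = 2$ (for $d$ large) with value $2\sqrt d$, the base group contributes $q^{O(\sqrt r)}$ and the $S_t$ quotient contributes only $t! \le (\log_2 d)!$, giving $k(M) \ll q^{(r+1)/2}$ for $r$ large.

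The two main cases, where the bound is tight, are $\mathcal{C}_8$ and $\mathcal{C}_4$. For $\mathcal{C}_8$, $M$ is essentially a classical group on the same natural module $V$ of a different type; a case check across the standard inclusions ($Sp, O, U$ inside $SL$; $U$ inside $Sp$ or $O$; etc.) shows that either the algebraic rank of $M$ is at most $\lceil d/2 \rceil$ (which matches $(r+1)/2$ in type A and is strictly smaller otherwise), or $M$ is defined over a proper subfield and Corollary~\ref{B} gives an even stronger bound. For $\mathcal{C}_4$, $V = V_1 \otimes V_2$ with $\dim V_i = d_i \ge 2$, $d_1 d_2 = d$, and $M$ contains a normal subgroup $H_1 \circ H_2$ with $H_i$ classical of rank $r_i$ on $V_i$; the form-type and parity constraints required for the tensor form on $V$ to be of the correct type force $r_1 + r_2 \le (r+1)/2$, with the extremal case at $(d_1, d_2) = (2, d/2)$ in type A giving exactly $r_1 + r_2 = d/2 = (r+1)/2$. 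Once this rank bound is established, Corollary~\ref{B} yields $k(H_i) \le C_0 q^{r_i}$, so $k(H_1 \circ H_2) \le C_0^2 q^{r_1+r_2}$, and Lemma~\ref{Boblemma} handles the bounded outer quotient. The main obstacle is the $\mathcal{C}_4$ rank bookkeeping, which requires careful enumeration of admissible tensor factorizations across each classical type, with the characteristic-$2$ symplectic--orthogonal subtleties being the most delicate.
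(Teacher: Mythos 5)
The paper gives no actual proof of this lemma --- it is asserted to be ``straightforward'' from Corollary~\ref{B} and the structure of the $\mathcal{C}_i$-subgroups --- so your write-up is the intended argument made explicit, and your treatment of $\mathcal{C}_5$, $\mathcal{C}_6$ and $\mathcal{C}_7$ is fine. But your case check for $\mathcal{C}_8$ does not close, and in fact the statement itself fails there. For $G=Sp(2n,q)$ with $q$ even, $\mathcal{C}_8(G)$ consists of the two maximal subgroups $M=O^{\pm}(2n,q)$: the ambient algebraic group of $M$ has rank $n=r$, there is no subfield saving, and by the paper's own Theorem~\ref{Oee} one has $k(M)\ge q^{n}/2=q^{r}/2$, which is not $O(q^{(r+1)/2})$ for any universal constant. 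Your parenthetical claim that the rank bound $\lceil d/2\rceil$ ``matches $(r+1)/2$ in type A and is strictly smaller otherwise'' is exactly where this escapes: outside type A one has $\lceil d/2\rceil=r$, not something below $(r+1)/2$. (This pair $O^{\pm}(2n,q)<Sp(2n,q)\cong O(2n+1,q)$ is really the stabilizer of a nondegenerate hyperplane of the $(2n+1)$-dimensional orthogonal space; it behaves like a $\mathcal{C}_1$-action, must be exempted from the lemma and from Theorem~\ref{stronger}, and handled with the subspace actions.)

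There is also a smaller slip in your $\mathcal{C}_4$ bookkeeping. The inequality $r_1+r_2\le(r+1)/2$ is correct in type A, but for $G=Sp(2n,q)$ with $q$ odd the admissible decomposition $Sp(2,q)\otimes O^{\epsilon}(n,q)$ with $n$ even has $r_1+r_2=1+n/2=(r+2)/2$, and similarly $Sp(2,q)\otimes Sp(2m,q)\le\Omega^{+}(4m,q)$ has $r_1+r_2=m+1=(r+2)/2$. Since the identified centre in these central products has order $2$, Lemma~\ref{Boblemma} gives $k(H_1\circ H_2)\ge k(H_1)k(H_2)/2$, which by the lower bounds of Section~\ref{boundnumber} is of order $q^{(r+2)/2}$; so the exponent $(r+1)/2$ is off by a factor of $q^{1/2}$ for these families, contradicting your claimed rank inequality. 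This second defect is harmless for the one place the lemma is used (Lemma~\ref{derangements-c38} only needs $k(M)=O(q^{r/2+O(1)})$), but the $\mathcal{C}_8$ defect is not: as written, neither the lemma nor your proof covers $O^{\pm}(2n,q)<Sp(2n,q)$ in even characteristic, and no bound of the required shape can hold for it.
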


The only subgroups that are close to that  bound are those in $\mathcal{C}_8$.

We will now prove:

\begin{theorem}  \label{stronger}
Let $G$ be a finite classical Chevalley group of rank $r$ over the field
of $q$ elements.   Let $X(G)$ denote the set of maximal  subgroups of $G$
contained  in  $\mathcal{S}(G) \cup_{i=4}^8   \mathcal{C}_i(G)$. For $r$
sufficiently large,
$$
\frac{|\cup_{H \in X(G)} H|}{|G|} < O(q^{-r/3}).
$$
\end{theorem}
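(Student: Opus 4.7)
The plan is to treat each Aschbacher family in $X(G) = \bigcup_{i=4}^{8} \mathcal{C}_i(G) \,\cup\, \mathcal{S}(G)$ separately. For any subgroup $M\leq G$, the standard estimate
$$
\frac{|\bigcup_{g\in G} M^g|}{|G|} \;\leq\; k(M)\cdot \max_{1\neq x \in M}\frac{1}{|C_G(x)|}
$$
holds, since $\bigcup_g M^g$ meets at most $k(M)$ of the conjugacy classes of $G$. Theorem \ref{D} bounds the denominator uniformly: $|C_G(x)|\geq q^{r-1}/(A(1+\log_q r))$ for every nonidentity $x\in G$. Summing this ratio over the conjugacy classes of maximal subgroups in each family reduces the theorem to bounding $k(M)$ and the number of such conjugacy classes per family.

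For the families $\mathcal{C}_i$ with $i\in\{4,5,6,7,8\}$, Lemma \ref{tool1} bounds the number of $G$-conjugacy classes by $O(r\log r+r\log\log q)$ and Lemma \ref{tool2} bounds $k(M)$ by $Cq^{(r+1)/2}$. Combining with Theorem \ref{D} yields a total contribution of $O(r(\log r)^{2}(\log\log q)\,q^{-(r-3)/2})$, which is comfortably smaller than $q^{-r/3}$ for $r$ large.

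For $\mathcal{S}(G)$, Theorem \ref{E} already supplies a class-count bound of $Ar(r+r^{1/2}p^{3r^{1/2}}+\log\log q) = q^{O(r^{1/2})}$. It remains to bound $k(M)$ for an individual $M\in\mathcal{S}(G)$ with socle $T$. I would use $k(M)\leq k(T)|\mathrm{Out}(T)|$ and split into three cases by the type of $T$: (i) $T$ alternating or sporadic, where $T$ embeds into $GL$ of dimension $d=O(r)$ so $|T|$ and hence $k(T)$ is subexponential in $r$; (ii) $T$ of Lie type in characteristic coprime to that of $G$, where Landazuri--Seitz lower bounds on minimal cross-characteristic representation degree force the defining parameters of $T$ to be polynomial in $r$, giving $k(T)=r^{O(1)}$; and (iii) $T$ of Lie type in the defining characteristic, where excluding $\mathcal{C}_5$ and $\mathcal{C}_8$ together with the Liebeck--Seitz classification forces the rank $r_0$ of $T$ to satisfy $r_0=O(r^{1/2})$, so Theorem \ref{A} yields $k(T)\leq q^{O(r^{1/2})}$. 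In every case $k(M) = q^{o(r)}$, so the per-class contribution is $q^{-r(1-o(1))}$, and multiplying by the class count of Theorem \ref{E} still gives $q^{-r(1-o(1))} = o(q^{-r/3})$ for $r$ large.

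The main obstacle is subcase (iii): showing that $\mathcal{S}$-type defining-characteristic subgroups really do have sublinear rank $r_0$. This rests on the Liebeck--Seitz theorem that the minimal faithful irreducible rational representation of a rank-$r_0$ simple algebraic group, other than the natural module in the classical cases, has dimension growing at least polynomially in $r_0$. Once this geometric input is granted, the rest of the argument is an assembly of bounds already established in earlier sections of the paper.
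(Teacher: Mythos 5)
Your overall skeleton is exactly the paper's: reduce to counting the $G$-classes meeting $\bigcup_{H\in X(G)}H$, bound each class's size via Theorem \ref{D}, dispose of $\mathcal{C}_4,\dots,\mathcal{C}_8$ via Lemmas \ref{tool1} and \ref{tool2} (this is verbatim Lemma \ref{derangements-c38}), and split $\mathcal{S}(G)$ by socle type. Two steps, however, need repair. First, your appeal to Theorem \ref{E} for the class count of maximal subgroups in $\mathcal{S}(G)$ is circular relative to the paper: Theorem \ref{E} is \emph{deduced} from the very lemmas that prove Theorem \ref{stronger} (the paper's $\mathcal{S}_1$--$\mathcal{S}_4$ lemmas each bound the number of conjugacy classes of maximal subgroups in that subfamily, and Theorem \ref{E} "follows immediately from the previous results"). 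You cannot quote it here without independently establishing the count; the good news is that your own case analysis (bounding the number of possible socles and, in defining characteristic, the number of restricted weights by $p^{O(r^{1/2})}$, times a factor $2r$ for representations that are equivalent but not $G$-conjugate) is precisely how the paper produces that count, so this is an omission of an argument you essentially already have rather than a wrong idea.

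Second, in case (i) the inference "$T$ embeds into $GL$ of dimension $d=O(r)$ so $|T|$ and hence $k(T)$ is subexponential in $r$" fails as stated: for the natural permutation module one has $T=A_m$ with $m$ as large as $d+2=O(r)$, and then $|T|=m!/2=e^{\Theta(r\log r)}$ is superexponential, so subexponentiality of $k(T)$ cannot be read off from $|T|$. The correct route (and the paper's) is the partition-function bound $k(A_m)\le k(S_m)\le Cm^{-1}e^{\pi\sqrt{2m/3}}$ (using Corollary \ref{altsym}), which gives $k(T)=e^{O(r^{1/2})}=q^{o(r)}$ and saves the estimate; the paper additionally invokes the bound $d\ge (m^2-5m+2)/2$ for non-natural modules to force $m=O(r^{1/2})$ there, but your cruder bound suffices once the partition function is used. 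A smaller point in case (iii): your merged defining-characteristic case glosses over the non-restricted (tensor-decomposable over a Frobenius twist) subgroups, where $T$ is a classical group of rank $m-1$ over $\F_{q^e}$ with $d=m^e$, $e\ge 2$; there $k(T)=O(q^{em})$ rather than $q^{O(\mathrm{rank})}$, but since $em\le 2(2r+1)^{1/2}+O(1)$ the bound $q^{O(r^{1/2})}$ survives, exactly as in the paper's treatment of $\mathcal{S}_3$.
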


We will prove this for each of the families, and taking unions implies the
result. We remark that the theorem applies to any subgroup $G$ between the
socle and the full isometry group of $V$. The result fails if we consider
almost simple groups with field automorphisms allowed (see \cite{GMS} for
examples of so called exceptional permutation groups and a classification
of primitive almost simple exceptional permutation actions).

Of course, a trivial corollary is Theorem \ref{C}.  Note also that our
estimate implies that the proportion of derangements in any
coset of the simple group tends to $1$ as well for the actions
considered in Theorem \ref{stronger}.

The idea of the proof is quite simple.   Let $X(G)$ denote
a set of subgroups of $G$ closed under conjugation.
We want to show that  the number of conjugacy classes
of $G$ that intersect some element of $X(G)$ is at most
$c(X)$.   Then using our results on a lower bound for centralizers
(or equivalently an upper bound for sizes of conjugacy classes),
we see that
$$
| \cup_{H \in X(G)} H| \le  \frac {c(X) A |G|(1 + \log_q r)}{q^{r-1}},
$$
or
$$
\frac{ | \cup_{H \in X(G)} H| }{|G|}  \le \frac {c(X) A (1 + \log_q r)}{q^{r-1}},
$$
where $A$ is a universal constant.
So we only need show that $c(X)$ is at most $O(q^{(2/3)r -2})$ in each
case.

We note that for a fixed simple group $S$, the number of embeddings of $S$
into $G$ is certainly bounded from above  by $2r k(\hat{S})$ where
$\hat{S}$ is the universal cover of $S$  (note that $k(\hat{S})$ is an
upper bound for the number of representations and the factor $2r$ comes
from the fact that we may have representations which are inequivalent in
the simple classical group but become conjugate in the full group of
isometries). The arguments vary slightly depending upon the family we are
considering but the basic idea is the same in all cases.

\begin{lemma}  \label{derangements-c38}
Let $G$ be a finite classical Chevalley group of rank
$r$ over the field of $q$ elements.   Let $X(G)$ denote the
set of maximal  subgroups of $G$  contained  in  $\mathcal{C}_i(G)$
for $i  > 3$.  Then  for $r$ sufficiently large,
$$
\frac{|\cup_{H \in X(G)} H|}{|G|} < O(q^{-r/3}).
$$
\end{lemma}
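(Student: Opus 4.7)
The plan is to execute the general strategy outlined in the paragraphs preceding the lemma: bound the number $c(X)$ of $G$-conjugacy classes meeting $\bigcup_{H \in X(G)} H$, and then control individual class sizes via the centralizer lower bound of Theorem~\ref{DD}. Since $X(G)$ is closed under $G$-conjugation, every element of $\bigcup_{H \in X(G)} H$ is $G$-conjugate to an element of some fixed representative $H$, and at most $k(H)$ distinct $G$-conjugacy classes can meet any single subgroup $H$. Hence $c(X) \le \sum_{[H]} k(H)$, where the sum runs over one representative $H$ per $G$-conjugacy class of maximal subgroup in $\bigcup_{i=4}^{8} \mathcal{C}_i(G)$.

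First I would apply Lemma~\ref{tool1} to bound the number of terms in the sum by $O(r \log r + r \log \log q)$, and Lemma~\ref{tool2} to bound each term by $k(H) \le C q^{(r+1)/2}$, giving
$$c(X) \;\le\; O\!\bigl((r \log r + r \log \log q)\, q^{(r+1)/2}\bigr).$$
Next I would apply Theorem~\ref{DD} to bound the size of any conjugacy class by
$$|x^G| \;=\; \frac{|G|}{|C_G(x)|} \;\le\; \frac{A\,|G|\,(1 + \log_q r)}{q^{r-1}}.$$
Combining the two bounds, since $\bigcup_{H \in X(G)} H$ is a union of at most $c(X)$ full $G$-classes,
$$\frac{\bigl|\bigcup_{H \in X(G)} H\bigr|}{|G|} \;\le\; \frac{c(X)\, A\,(1 + \log_q r)}{q^{r-1}} \;\le\; O\!\left( \frac{(r \log r + r \log \log q)(1 + \log_q r)}{q^{(r-3)/2}} \right).$$
To obtain the claimed $O(q^{-r/3})$ bound, it suffices to check that the numerator is $O(q^{(r-3)/2 - r/3}) = O(q^{r/6 - 3/2})$, which holds for all $r$ sufficiently large because the numerator grows only polynomially in $r$ and $\log q$ while $q^{r/6}$ grows exponentially in $r$.

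The only delicate point is the exponent balance in the final inequality: Theorem~\ref{DD} saves a factor $q^{r-1}$ while Lemma~\ref{tool2} costs $q^{(r+1)/2}$, leaving a margin of order $q^{(r-3)/2}$, and it is precisely this margin that absorbs both the polynomial-in-$r$ count from Lemma~\ref{tool1} and the $(1+\log_q r)$ factor from Theorem~\ref{DD}. The tightest subfamily is $\mathcal{C}_8$, where a maximal classical subgroup of the ambient rank can genuinely have $k(H)$ of order $q^{(r+1)/2}$ (e.g.\ an orthogonal subgroup of a symplectic group in odd characteristic), so Lemma~\ref{tool2} cannot be improved in general; fortunately nothing stronger is needed. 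No ingredient beyond Lemmas~\ref{tool1} and \ref{tool2} together with Theorem~\ref{DD} is required.
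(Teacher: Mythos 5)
Your proposal is correct and follows essentially the same route as the paper: Lemma~\ref{tool1} and Lemma~\ref{tool2} bound the number of $G$-classes meeting $\bigcup_{H\in X(G)}H$ by $O\bigl((r\log r + r\log\log q)\,q^{(r+1)/2}\bigr)$, and the centralizer lower bound (the paper invokes Theorem~\ref{D}, the simple-group version whose $q^{r-1}$ denominator is exactly the one you write down) caps each class size, leaving a margin of $q^{(r-3)/2}$ that absorbs the polynomial factors. The only cosmetic difference is that you cite Theorem~\ref{DD} while using the bound from Theorem~\ref{D}; otherwise the argument matches the paper's.
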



\begin{proof}  By Lemmas \ref{tool1} and \ref{tool2}, one knows that
$\cup_{H \in X(G)} H$ is the union of at most $ Cq^{(r+1)/2}(8 r \log r  +
r  \log \log q)$ conjugacy classes of $G$. By Theorem \ref{D}, each class
has size at most $A|G| (1 + \log_q(r))/q^{r-1}$, with $A$ a universal constant,
whence the result.
\end{proof}

We now  consider $\mathcal{S}(G)$.  It is convenient to
 split $\mathcal{S}(G)$ into $4$ subclasses defined as follows
(we keep notation as above).  First recall that if $S$ is a quasisimple
Chevalley group in characteristic $p$ and $V$ is an absolutely
irreducible module, then $V=V(\lambda)$ for some dominant
weight $\lambda$ (in particular, the representation extends
to the algebraic group).  Write $\lambda = \sum a_i \lambda_i$
with the $a_i$ nonnegative integers and
 the $\lambda_i$ are the fundamental weights.
A restricted representation is one with $a_ i< p$ for all $i$.
By the Steinberg tensor product theorem, every module is
a tensor product of Frobenius twists of restricted modules
(over the algebraic closure).   See \cite{Ja, St2}  for details
of this theory.

\begin{enumerate}
\item[$\mathcal{S}_1$]   $S$ is alternating or sporadic;
\item[$\mathcal{S}_2$]   $S$ is a Chevalley group in characteristic not
dividing $q$;
\item[$\mathcal{S}_3$]   $S$ is a Chevalley group in characteristic
dividing $q$ and the representation is not restricted; and
\item[$\mathcal{S}_4$]   $S$ is a Chevalley group in characteristic
dividing $q$, and the representation is restricted.
\end{enumerate}

\begin{lemma}   For $r$ sufficiently large, we have:
\begin{enumerate}
\item The number of conjugacy classes of maximal subgroups
in $\mathcal{S}_1(G)$ is at most  $ O(r^{1/2} e^{10{r^{1/4}}})  $;  and
\item $$
\frac{ |\cup_{M \in \mathcal{S}_1(G)} M|}{|G|} < O(q^{-r/3}).
$$
\end{enumerate}
\end{lemma}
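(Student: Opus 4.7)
The plan is to split $\mathcal{S}_1(G)$ by whether the socle $S$ is sporadic or alternating, and in the alternating case $S = A_m$, to split further by comparing $m$ to the dimension $d \le 2r+1$ of the natural module of $G$. Sporadic socles contribute $O(1)$ conjugacy classes of maximal subgroups (there are $26$ sporadics, each with boundedly many irreducible representations), and by Theorem \ref{D} their union has measure $O((1 + \log_q r)/q^{r-1})$ in $G$, which is absorbed into $O(q^{-r/3})$.

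The main input for the alternating case is the classification of small-dimensional faithful irreducible representations of $A_m$ in arbitrary characteristic $p$ (James, Wagner, Kleshchev--Tiep, and others): for $m$ sufficiently large, any faithful irreducible representation has dimension $\ge m-2$, and the only ones of dimension $\le m(m-5)/2$ are, up to duality and Frobenius twists, the fully-deleted permutation module (of dimension $m-1$ or $m-2$). I split into two ranges. \emph{Large $m$:} if $m > \sqrt{2d} + c$ for a suitable constant $c$, then any irreducible embedding $A_m \hookrightarrow G$ must use the fully-deleted permutation module, forcing $m \in \{d+1, d+2\}$ and hence contributing at most $O(1)$ conjugacy classes. \emph{Small $m$:} if $5 \le m \le \sqrt{2d} + c = O(r^{1/2})$, I bound the number of $G$-conjugacy classes of maximal subgroups with socle $A_m$ by the total number $k(A_m) \le p(m)$ of irreducible representations of $A_m$. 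The Hardy--Ramanujan bound $p(m) \le Ce^{\pi\sqrt{2m/3}}$, together with $m = O(r^{1/2})$, gives $p(m) \le e^{10 r^{1/4}}$ for $r$ large. Summing over the $O(r^{1/2})$ values of $m$ in this range yields statement (1).

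For (2), I apply Theorem \ref{D}: every element of $G$ has centralizer of order at least $q^{r-1}/(A(1 + \log_q r))$, so each $G$-conjugacy class has cardinality at most $A|G|(1 + \log_q r)/q^{r-1}$. Combining with (1):
\[
\frac{|\bigcup_{M \in \mathcal{S}_1(G)} M|}{|G|} \le \frac{C \cdot r^{1/2} e^{10 r^{1/4}}(1 + \log_q r)}{q^{r-1}}.
\]
Since $q \ge 2$, the denominator is at least $2^{r-1}$, while the numerator $r^{1/2} e^{10 r^{1/4}}(1 + \log_q r) = o(q^{2r/3})$ uniformly for $q \ge 2$ and $r$ sufficiently large, which gives the claimed bound $O(q^{-r/3})$.

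The main obstacle is pinning down the precise characteristic-$p$ version of the minimum-dimension classification for irreducible $A_m$-modules: one needs not only the absolute minimum dimension $m-2$ but also the sharp gap to the next threshold of order $m^2/2$, and one must handle all characteristics (including small primes dividing $m$, where the deleted permutation module may behave irregularly, and where one should also verify that such embeddings do not drop into one of the Aschbacher families $\mathcal{C}_i$---an eventuality that only tightens the count). The remaining work is bookkeeping, and no step uses new technical machinery beyond the centralizer bound of Theorem \ref{D}.
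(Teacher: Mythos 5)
Your part (1) is essentially the paper's argument: the dichotomy from the low-dimensional representation theory of $A_m$ (the paper invokes \cite[Lemma 6]{GT}, giving $d \ge (m^2-5m+2)/2$ outside the fully deleted permutation module, hence $m = O(r^{1/2})$ in the non-natural case), combined with the Hardy--Ramanujan bound on $k(A_m)$. But part (2) has a genuine gap. The quantity that must be multiplied by the maximal class size $A|G|(1+\log_q r)/q^{r-1}$ is the number of $G$-conjugacy classes of \emph{elements} meeting $\bigcup_{M} M$, which is bounded by $\sum_{[M]} k(M)$ over the classes of maximal subgroups --- not by the number of conjugacy classes of \emph{subgroups} from part (1). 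You pass directly from (1) to the displayed inequality, in effect treating each class of subgroups as contributing a single class of elements. This matters precisely in your large-$m$ branch: there $M$ is essentially $S_{d+1}$ or $S_{d+2}$ with $d \le 2r+1$, so $k(M)$ is of order $\exp(\pi\sqrt{4r/3})$, i.e.\ $e^{\Theta(r^{1/2})}$, which swamps the factor $r^{1/2}e^{10r^{1/4}}$ in your numerator; the displayed bound is therefore not established by your argument.

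The conclusion survives, but only after inserting the missing step: bound $k(M)$ for each family of maximal subgroups (for the deleted permutation module one gets at most $8$ classes of subgroups each with at most $k(S_{2r+3})$ classes, and these dominate), conclude that $\bigcup_M M$ meets at most $O(r^{-1}\exp(4r^{1/2}))$ conjugacy classes of $G$, and then observe that $\exp(4r^{1/2})(1+\log_q r)/q^{r-1} = O(q^{-r/3})$ uniformly for $q \ge 2$ and $r$ large. This is exactly how the paper closes the argument. A second, minor point: your count in (1) should carry an extra factor of order $r$ for the possibility that a single equivalence class of $d$-dimensional representations of $\widehat{A_m}$ yields several $G$-classes of subgroups (the paper's factor $2rk(\widehat{A_m})$); this is harmlessly absorbed by the slack in $e^{10r^{1/4}}$.
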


\begin{proof}  We may assume that
$r$ is sufficiently large such that there are no sporadic
groups in $\mathcal{S}_1(G)$ nor alternating groups
of degree less than $17$.

Let $d$ be the dimension of the natural module for
our classical group.  So $d \le 2r+1$.

It follows  by \cite[Lemma 6]{GT}  that either the module is the natural
permutation  module for the symmetric group or  the dimension $d$ of the
module satisfies $d \ge  (m^2 -  5m + 2)/2$.  Since $d \le 2r+1$, this implies that,
for $r$ sufficiently large, aside from the natural permutation module, $m \le 3r^{1/2}$.

By the comments preceding Lemma \ref{derangements-c38}, the number of
embeddings of $A_m$ into $G$ is at most $2rk(\widehat{A_m}) \leq 4r
k(A_m)$. Thus the number of conjugacy classes of elements of
$\mathcal{S}_1(G)$ is at most
$$
(4r) \sum_{m=5}^ {3r^{1/2}}  2k(A_m) \leq 12 r^{3/2}
k(A_{\lfloor 3r^{1/2} \rfloor}).$$ Recalling from Corollary \ref{altsym}
that $k(A_m) \le k(S_m)$, and using the bound $k(S_m) \le Cm^{-1} \exp[\pi
(2m/3)^{1/2}]$ which follows from known asymptotic behavior of the
partition function (\cite{A1}, p. 70), one concludes that the number of
conjugacy classes of elements of $\mathcal{S}_1(G)$ is at most
$O(re^{5r^{1/4}})$.

Excluding the natural  representations,  the number of conjugacy classes
of each $M \in \mathcal{S}_1(G)$ is at most  $2k(A_{\lfloor 3r^{1/2}
\rfloor}) \le O(r^{-1/2}e^{5r^{1/4}})$. Thus, excluding the embedding of
$A_m$ into $G$ via the natural module, the total number of conjugacy
classes of $G$ in the union of maximal subgroups in $\mathcal{S}_1(G)$ is
at most $O(r^{1/2} e^{10r^{1/4}})$.

Finally, consider the natural embedding of $A_m$ or $S_m$ into $G$. Then
$m=d+1$ or $d+2$ (depending upon the characteristic). Moreover, since the
representation is self dual, $G$ is either symplectic or orthogonal. Thus,
there are at most $8$ conjugacy classes of such maximal subgroups, each
with at most $k(S_{2r+3})$ classes.  Thus, these maximal subgroups
contribute at most $O(r^{-1} \exp[\pi(4r/3)^{1/2}])$  conjugacy classes of
$G$.

Thus,  the total number of conjugacy classes of $G$ represented in
$\cup_{M \in \mathcal{S}_1(G)}$ is at most $O(r^{-1}\exp(4r^{1/2}))$.
Using Theorem \ref{D} which gives an upper bound for the size of a
conjugacy class gives the result.
\end{proof}

\begin{lemma}   For $r$ sufficiently large, we have:
\begin{enumerate}
\item The number of conjugacy classes of maximal subgroups
in $\mathcal{S}_2(G)$ is at most  $O(r^3)$;  and
\item $$
\frac{ |\cup_{M \in \mathcal{S}_2(G)} M|}{|G|} < O(r^4 (1 + \log_q(r))/q^{r-1}).
$$
\end{enumerate}
\end{lemma}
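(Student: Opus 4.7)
The plan is to run the same template as the $\mathcal{S}_1$ argument, replacing the dimension bound for symmetric-group representations by a Landazuri--Seitz-style lower bound on the minimal cross-characteristic degree of a simple group of Lie type. Write $d=\dim V\le 2r+1$, and suppose $M\in\mathcal{S}_2(G)$ has socle $S$, a simple Chevalley group of rank $\ell_0$ over $\F_{q_0}$ with $\mathrm{char}(q_0)\ne\mathrm{char}(q)$. Since $S$ acts nontrivially and absolutely irreducibly on a composition factor of $V$, the Landazuri--Seitz theorem (with the refinements by Seitz--Zalesskii tabulated in \cite{KL}) bounds the degree of any nontrivial projective cross-characteristic representation of $S$ from below by $c\,q_0^{\ell_0}$, so
$$
q_0^{\ell_0} \ \le\ C_0\, r
$$
for a universal constant $C_0$.

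The next step is to enumerate possible socles. Since the number of types of simple groups of Lie type is bounded and the number of prime powers up to $N$ is $O(N)$, the number of isomorphism types of candidate socle is at most $\sum_{\ell_0\ge1}O\bigl((C_0r)^{1/\ell_0}\bigr)=O(r)$, dominated by $\ell_0=1$. For each such $S$, Corollary \ref{B} gives $k(\hat{S})\le 15.2\,q_0^{\ell_0}=O(r)$. Combining with the bound $2r\cdot k(\hat{S})$ on the number of $G$-conjugacy classes of embeddings $\hat{S}\hookrightarrow G$ stated just before Lemma \ref{derangements-c38}, the number of $G$-conjugacy classes of maximal subgroups in $\mathcal{S}_2(G)$ is at most
$$
\sum_{S} 2r\cdot k(\hat{S}) \ \le\ O(r)\cdot 2r\cdot O(r)\ =\ O(r^3),
$$
which is part (1).

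For part (2), each $M$ satisfies $M/S\hookrightarrow\mathrm{Out}(S)$, and the constraint $q_0^{\ell_0}\le C_0 r$ forces $|\mathrm{Out}(S)|\le 6(\ell_0+1)\log_p q_0 = O(\log r)$. Hence $k(M)\le k(S)\cdot|\mathrm{Out}(S)| = O(r)$, after absorbing the $\log r$ factor into the implicit constant of the $O(r^4)$ bound below (one may alternatively track it and note it is dominated by the $(1+\log_q r)$ factor from Theorem \ref{D}). Since every element of the union lies in some $G$-class meeting $M$ for some $M\in\mathcal{S}_2(G)$, the total number of $G$-classes so met is at most $\sum_{[M]} k(M)\le O(r^3)\cdot O(r) = O(r^4)$. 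By Theorem \ref{D}, each $G$-class has size at most $A|G|(1+\log_q r)/q^{r-1}$, so
$$
\frac{\bigl|\bigcup_{M\in\mathcal{S}_2(G)}M\bigr|}{|G|} \ \le\ \frac{O(r^4)(1+\log_q r)}{q^{r-1}},
$$
as desired.

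The principal obstacle will be stating the Landazuri--Seitz bound in a form that is uniformly of the shape $c\,q_0^{\ell_0}$ across every family: exceptional types, spin and half-spin modules for orthogonal socles, and small-rank accidental isomorphisms such as $\mathrm{PSL}(2,7)\cong\mathrm{PSL}(3,2)$. Each of these exceptional or borderline cases is recorded explicitly in \cite{KL} and in each the minimal cross-characteristic degree is still of the form $c\,q_0^{\ell_0}$ (possibly with a worse constant), so the uniform inequality $q_0^{\ell_0}\le C_0 r$ survives and the counting argument proceeds without further modification.
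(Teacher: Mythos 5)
Your proposal is correct and follows essentially the same route as the paper: minimal cross-characteristic projective degree bounds (the paper cites these via \cite[Table 2]{T}) give $q_0^{\ell_0}=O(r)$, hence $O(r)$ possible socles, $k(\hat{S})=O(r)$ representations for each, the extra factor of $2r$ for representations equivalent but not $G$-conjugate, and finally Theorem \ref{D} to bound class sizes. The one small wrinkle is your claim that the extra $\log r$ coming from $|\mathrm{Out}(S)|$ is dominated by the $(1+\log_q r)$ factor --- it is not when $q$ is large relative to $r$ --- but this is harmless, since applying Corollary \ref{B}(3) to the almost simple group $M$ yields $k(M)=O(q_0^{\ell_0})=O(r)$ directly, which is exactly how the paper obtains its bound (a).
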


\begin{proof} For part 1 we argue very much as in \cite{LPS}. Let $S$ be the socle.
By the results of various authors on minimal dimensions of projective
representations (see \cite[Table 2]{T}) and Corollary \ref {B}, it follows that
there is a universal positive constant $A$ such
  that

  \begin{enumerate}
  \item[(a)]  $k(M) \le Ar$ for any $M \in \mathcal{S}_2(G)$,
  \item[(b)]   there are at most $Ar$ possibilities for $S$ up to isomorphism,  and
  \item [(c)] $k(\hat{S}) \le Ar$ for each possible $S$.
  \end{enumerate}

  By (b) and (c), the number of conjugacy classes of maximal subgroups
  of $G$ in $\mathcal{S}_2(G)$ is at most $O(r^3)$  (the extra $r$ comes
  from the possibility of equivalent representations which are not conjugate
  in $G$), and so (1) holds.

  By (a) and (1), the number of conjugacy classes in the union of all maximal
  subgroups in $\mathcal{S}_2(G)$ is at most $O(r^4)$.  Now (2) follows by this and Theorem \ref{D}.
  \end{proof}

 \begin{lemma} For $r$ sufficiently large, we have:
\begin{enumerate}
\item The number of conjugacy classes of maximal subgroups
in $\mathcal{S}_3(G)$ is at most  $O( 2r \log r)$;  and
\item $$
\frac{ |\cup_{M \in \mathcal{S}_3(G)} M|}{|G|} < O(q^{-r/3}).
$$
\end{enumerate}
\end{lemma}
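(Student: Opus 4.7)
The strategy is to use Steinberg's tensor product theorem to strongly constrain the possible pairs $(S,V)$ corresponding to maximal subgroups in $\mathcal{S}_3(G)$, and then combine the resulting class count with Corollary \ref{B} and Theorem \ref{D}.

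Let $M \in \mathcal{S}_3(G)$ with socle $S$ a simple Chevalley group in characteristic $p$ dividing $q$, and let $V$ be the natural $G$-module, of dimension $d \le 2r+1$. Viewing $V$ as an irreducible module for the universal cover $\hat S$, Steinberg's tensor product theorem writes $V \cong V(\mu_0) \otimes V(\mu_1)^{(p)} \otimes \cdots \otimes V(\mu_k)^{(p^k)}$ with each $V(\mu_i)$ restricted; by definition of $\mathcal{S}_3$, some $\mu_i$ with $i \ge 1$ is nonzero. If two or more of these factors have dimension greater than $1$, then $M$ preserves a nontrivial tensor decomposition of $V$, forcing $M \in \mathcal{C}_4 \cup \mathcal{C}_7$ and contradicting $M \in \mathcal{S}$. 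Hence $V$ must be a pure Frobenius twist of a single restricted irreducible $\hat S$-module; and if the image of this restricted representation were realized over a proper subfield of $\F_q$, then $M$ would lie in $\mathcal{C}_5$ (or $\mathcal{C}_8$ if the module is natural), also excluded. Therefore each $M$ is parametrized up to $G$-conjugacy by a pair $(\hat S, j)$ with $j$ indexing the Frobenius twist.

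Invoking the known lower bounds on dimensions of nontrivial restricted irreducibles of simple algebraic groups (see \cite{Lu} and \cite[Table 2]{T}), the constraint $\dim V \le 2r+1$ together with the exclusion of natural modules forces the rank $r_S$ of $\hat S$ to satisfy $r_S = O(r^{1/2})$. The number of isomorphism types of admissible $\hat S$ is $O(r)$, and the number of $G$-inequivalent Frobenius twists per $\hat S$ is bounded by $O(\log_p q)$, yielding the count in part (1) after absorbing $\log_p q$ in the regime where the bound is nontrivial.

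For part (2), Corollary \ref{B} gives $k(M) \le 15.2\, q^{r_S} \le 15.2\, q^{Cr^{1/2}}$ for each $M \in \mathcal{S}_3(G)$, so the total number of $G$-conjugacy classes meeting $\bigcup_{H \in \mathcal{S}_3(G)} H$ is at most $O(r \log r) \cdot q^{Cr^{1/2}}$. Theorem \ref{D} bounds the size of each such class by $A|G|(1+\log_q r)/q^{r-1}$, whence
\[ \frac{|\bigcup_{H \in \mathcal{S}_3(G)} H|}{|G|} \;\le\; O\!\left(\frac{r \log r \cdot q^{Cr^{1/2}}(1+\log_q r)}{q^{r-1}}\right) \;=\; O(q^{-r/3}) \]
for $r$ sufficiently large, since $Cr^{1/2}$ is negligible compared to $2r/3$. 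The main obstacle is carrying out the rank-vs.-dimension reduction rigorously: one must verify via L\"ubeck's tables that every non-natural restricted irreducible of a simple algebraic group of dimension at most $2r+1$ actually comes from a group of rank $O(r^{1/2})$, handling the boundary cases (adjoint modules, exterior and symmetric squares, spin modules) with enough care that the resulting exponent genuinely beats $q^{2r/3}$.
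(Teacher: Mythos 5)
There is a genuine gap at the heart of your argument: the claim that if two or more Steinberg factors are nontrivial then ``$M$ preserves a nontrivial tensor decomposition of $V$, forcing $M \in \mathcal{C}_4 \cup \mathcal{C}_7$'' is false. Steinberg's theorem gives a tensor decomposition of $V \otimes \overline{\F}_q$, and this decomposition need not be defined over $\F_q$: the Frobenius $x \mapsto x^q$ can permute the twisted factors, in which case no $\F_q$-rational tensor decomposition is preserved and $M$ is \emph{not} in $\mathcal{C}_4$ or $\mathcal{C}_7$. This is precisely the situation that produces the class $\mathcal{S}_3$: the prototypical members are classical groups over an extension field $\F_{q^e}$ ($e>1$) with natural module $W$ of dimension $m$, acting on the $\F_q$-form of $W \otimes W^{(q)} \otimes \cdots \otimes W^{(q^{e-1})}$, of dimension $d = m^e$. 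Your conclusion that $V$ must be a pure Frobenius twist of a single restricted module would (since a pure twist of a restricted representation has the same image as the restricted representation, up to a field automorphism) make $\mathcal{S}_3(G)$ essentially coincide with $\mathcal{S}_4(G)$, i.e.\ empty of new content, and your subsequent parametrization by pairs $(\hat S, j)$ and the bound $k(M) \le 15.2\,q^{r_S}$ with $r_S$ the rank over $\F_q$ therefore do not address the actual subgroups in this family. (Your count in part (1) also carries a stray factor $\log_p q$ that the stated bound $O(2r\log r)$ does not permit, and which you cannot simply ``absorb.'')

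The paper's proof goes the other way: using \cite[Lemma 26]{GT2} it shows the normalizer of $M$ preserves the tensor decomposition over $\overline{\F}_q$, and maximality then forces $M$ to be a classical group over $\F_{q^e}$ with $V$ the tensor product of Frobenius twists of its \emph{natural} module, so that $d = m^e$ with $e>1$. The number of classes is then controlled by the number of ways to write $d \le 2r+1$ as a proper power (giving the $O(r\log r)$ count), and $k(M) \le O\big((q^e)^m\big) = O(q^{em})$ with $em \le 2\sqrt{d} \le 3r^{1/2}$, which is what makes part (2) work. If you want to salvage your write-up, you must replace the ``forcing $\mathcal{C}_4 \cup \mathcal{C}_7$'' step by this extension-field analysis; the final numerics ($q^{Cr^{1/2}}$ versus $q^{r-1}$) would then go through as you computed.
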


\begin{proof}  Since the representation  is not restricted and $M$ is maximal,
it  follows by Steinberg's tensor product theorem
 that the representation  must be the tensor  product of Frobenius twists
 of some restricted representation.   By \cite[Lemma 26]{GT2},
 $N_G(M)$ also preserves this tensor product (over the algebraic closure).

 Since $M$ is maximal,  this implies
 that $M$ is a classical group over a larger field and
 that $V$ is the tensor  product of Frobenius twists of the natural module
 for the classical group (and so this module is defined  over the smaller
 field).    Thus, there  will be at  most $2r \log r$ choices for  the class
 of $M$ (essentially depending upon writing
 the dimension as power of a positive integer).

 Indeed, let $d$ be the dimension of the natural module.  Write $d=m^e$
 with $e > 1$.   Then the socle of $M$ is a classical group
 over the field of size $q^e$ and of rank less than $m$.   Thus,
 by Corollary \ref{B}, $k(M) \le O(q^{em}) \le O(q^{3r^{1/2}})$.

It follows that $\cup_{M \in \mathcal{S}_3(G)} M$ contains at most
$O(r \log r q^{3r^{1/2}})$ conjugacy classes of $G$. Now apply Theorem \ref{D} to
conclude that (2) holds.
 \end{proof}

  \begin{lemma} For $r$ sufficiently large with
  $p$ the characteristic of $G$,  we have:
\begin{enumerate}
\item The number of conjugacy classes of maximal subgroups
in $\mathcal{S}_4(G)$ is at most  $O( r^{3/2}  p^{3r^{1/2}})$;  and
\item $$
\frac{ |\cup_{M \in \mathcal{S}_4(G)}  M|}{|G|} < O(q^{-r/2}).
$$
\end{enumerate}
\end{lemma}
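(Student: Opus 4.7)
Let $M \in \mathcal{S}_4(G)$ with socle $S$ a quasisimple Chevalley group of rank $s$ in the defining characteristic $p$, acting on the natural module $V$ of $G$ via an absolutely irreducible restricted representation $V(\lambda)$, where $\dim V \le 2r+1$. The proof has two pieces: (i) bound the number of $G$-conjugacy classes of such $M$, and (ii) combine with the size bound from Corollary \ref{B} and the centralizer lower bound from Theorem \ref{D}.

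First I would establish the rank bound $s = O(r^{1/2})$. This is the heart of the argument and relies on Lübeck's bounds (the tables in \cite{Lu} and related work) on the smallest dimensions of restricted irreducible representations of simple algebraic groups of rank $s$ in characteristic $p$: apart from a short list of ``small'' modules (natural, dual, spin, the adjoint module for small rank, low-dimensional minuscule or quasi-minuscule modules, and a handful of exceptional-type modules), every restricted irreducible has $\dim V(\lambda) \ge c s^{2}$ for a universal $c > 0$. The exceptional small modules with $\dim V \le 2r+1$ yield embeddings which either fall into the Aschbacher families $\mathcal{C}_1$--$\mathcal{C}_8$ (so they are not in $\mathcal{S}_4(G)$) or occur for only finitely many $(S,\lambda)$ per rank $s$; these give a negligible contribution. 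For the generic case we get $c s^2 \le 2r+1$, hence $s \le C r^{1/2}$ for a universal constant $C$.

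Next I would count conjugacy classes of maximal subgroups in $\mathcal{S}_4(G)$ with socle of rank $s \le C r^{1/2}$. For each such $s$ there are $O(1)$ possible Lie types; for each type the number of restricted dominant weights $\lambda = \sum_{i=1}^{s} a_i \lambda_i$ with $0 \le a_i < p$ is at most $p^s \le p^{Cr^{1/2}}$; and for each pair $(S,\lambda)$ the number of $G$-conjugacy classes of embeddings is $O(r)$ (from the bounded number of quasi-equivalent Galois/graph twists). Summing over $s$ from $1$ to $Cr^{1/2}$ gives
\[
m_4(G) := |\mathcal{S}_4(G)/\sim_G| \;\le\; \sum_{s=1}^{Cr^{1/2}} O(r) \cdot p^{s} \;=\; O\!\left(r^{3/2}\, p^{3r^{1/2}}\right),
\]
taking $C \le 3$ (after choosing $c$ appropriately in Step 1), which proves (1).

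For (2), Corollary \ref{B} gives $k(M) \le 100\, q^{s} \le 100\, q^{C r^{1/2}}$ for each $M \in \mathcal{S}_4(G)$, so the total number of $G$-conjugacy classes meeting $\bigcup_{M \in \mathcal{S}_4(G)} M$ is at most $m_4(G) \cdot 100 q^{Cr^{1/2}} = O\!\left(q^{O(r^{1/2})}\right)$. By Theorem \ref{D}, every $G$-class has size at most $A |G|(1+\log_q r)/q^{r-1}$, so
\[
\frac{|\bigcup_{M \in \mathcal{S}_4(G)} M|}{|G|} \;\le\; O\!\left(\frac{q^{O(r^{1/2})} (1+\log_q r)}{q^{r-1}}\right) \;=\; O(q^{-r/2})
\]
for all sufficiently large $r$, which is (2).

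\textbf{Main obstacle.} The crux is Step 1: extracting the quadratic-in-$s$ lower bound on $\dim V(\lambda)$ for restricted nontrivial weights outside a short list of exceptions, and verifying that the exceptional ``small'' modules either land in some $\mathcal{C}_i$ (hence are not in $\mathcal{S}_4$) or contribute only a bounded count per rank. This is precisely where the factor $p^{3r^{1/2}}$ appears and why it is natural to conjecture, as the authors note, that it can in fact be removed: if one could sharpen the dimension bound to $\dim V(\lambda) \ge p^{cs}$ outside the tame exceptions, the $p^{3r^{1/2}}$ factor would collapse to a polynomial in $r$.
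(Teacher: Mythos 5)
Your proposal is correct and follows essentially the same route as the paper: bound the rank of the socle by $O(r^{1/2})$ via L\"ubeck's lower bounds on dimensions of restricted irreducible modules, count restricted weights as at most $p^{3r^{1/2}}$ with an extra factor of $O(r^{3/2})$ for the choices of socle and for inequivalent $G$-classes of equivalent representations, and then combine $k(M)=O(q^{3r^{1/2}})$ from Corollary \ref{B} with the centralizer bound of Theorem \ref{D}. The only cosmetic difference is that the paper disposes of the bounded-dimension exceptions by explicitly setting aside the Ree-type groups, whereas you fold them into the "short list of small modules" discussion; the substance is the same.
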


\begin{proof}   The restricted representations of the groups
of Ree type (i.e.  one of ${^2}B_2, {^2}G_2, {^2}F_4$)
have bounded dimension and so we may ignore these.

If $S$ is an untwisted  Chevalley group over the field of
$s$ elements, then every restricted
representation is defined over that field and so $s$ must be the field
for the natural module for $G$.
If $S$ is twisted, then any representation is defined over the field of $s$
elements or $s^d$ elements where $d \le 3$  (depending upon the twist).

It also follows by \cite{Lu1} that the rank of $S$ is at most $3r^{1/2}$.

   Thus
there are most $Dr^{1/2}$ choices for $S$ for an absolute constant $D$ and
the number of possible representations is at most  $p^{3r^{1/2}}$. Thus,
the the number of possible conjugacy classes is at most
$O(r^{3/2}p^{3r^{1/2}})$; the extra $r$ comes from the possibility of
equivalent representations not conjugate in $G$. This proves (1).

By Corollary \ref{B} and the remarks above,  $k(M) \le O(q^{3r^{1/2}})$
for  each possible  $M$.

Thus, $\cup_{M \in \mathcal{S}_4(G)}  M$ is the union of at most
$O(r^{3/2}q^{3r^{1/2}}p^{3r^{1/2}})$ conjugacy classes of $G$. Now apply
Theorem \ref{D} to conclude that (2) holds.
\end{proof}

Putting the previous results together completes the proof
of Theorem \ref{stronger}.
Theorem \ref{E} also  follows immediately from the previous results.

\end{document}